 \renewcommand{\nomgroup}[1]{%
 \ifthenelse{\equal{#1}{H}}{\item[\textbf{Quantities appearing in the equations}]}{%
  \ifthenelse{\equal{#1}{L}}{\item[\textbf{Functional spaces, norms and constants}]}{%
 \ifthenelse{\equal{#1}{G}}{\item[\textbf{General notations}]}{}} } }
\newcommand{\dsp}{\displaystyle}
\newcommand{\RR}{{\mathbb R}}
\newcommand{\NN}{{\mathbb N}}
\newcommand{\HH}{{\mathbb H}}
\newcommand{\VV}{{\mathbb V}}
\newcommand{\WW}{{\mathbb W}}
\newcommand{\cA}{\mathcal{A}}
\newcommand{\cC}{\mathcal{C}}
\newcommand{\cF}{\mathcal{F}}
\newcommand{\cG}{\mathcal{G}}
\newcommand{\cHH}{H}
\newcommand{\cH}{\mathcal{H}}
\newcommand{\cL}{\mathcal{L}}
\newcommand{\cN}{\mathcal{N}}
\newcommand{\cQ}{\mathcal{Q}}
\newcommand{\ucQ}{\underline{\mathcal{Q}}}
\newcommand{\cR}{\mathcal{R}}
\newcommand{\cS}{\mathcal{S}}
\newcommand{\cU}{\mathcal{U}}
\newcommand{\ucU}{\underline{\mathcal{U}}}
\newcommand{\cT}{\mathcal{T}}
\newcommand{\uu}{\underline{u}}
\newcommand{\uq}{\underline{q}}
\newcommand{\uV}{\underline{V}}
\newcommand{\Vp}{\Vert V\Vert_{L^{\infty,p}}}
\newcommand{\Vpt}{\Vert V(t)\Vert_{L^{\infty,p}}}
\newcommand{\uCp}{\underline{C}_1}
\font\tenronde=rsfs10
\font\sevenronde=rsfs7
\font\fiveronde=rsfs5
\def\ronde{\fam\rondefam\tenronde}
\newcommand{\sS}{{\ronde S}}
\newcommand{\ucM}{\underline{\mathcal M}}
\newcommand{\eps}{\varepsilon }
\newcommand{\vp}{\varphi}
\newcommand{\D}{\partial }
\newcommand{\re}{\mathrm{Re\,}}
\newcommand{\mez}{\frac{1}{2}}
\newcommand{\CMTT}{C\big(T,M_1, M_2,M\big)}
\newtheorem{theo}{Theorem }[section]
\newtheorem{prop}[theo]{Proposition}
\newtheorem{cor}[theo]{Corollary}
\newtheorem{lem}[theo]{Lemme}
\newtheorem{defi}[theo]{Definition}
\newtheorem{ass}[theo]{Assumption}
\newtheorem{rem}[theo]{Remark}
\newtheorem*{theo*}{Theorem}
\numberwithin{equation}{section}
\title[The shoreline problem]{The shoreline problem for the one-dimensional shallow water and Green-Naghdi equations }
\author{D. Lannes and G. M\'etivier}
\address{Institut de Math\'ematiques de Bordeaux\\ Universit\'e de Bordeaux et CNRS UMR 5251\\ 351 Cours de la Lib\'eration \\ 33405 Talence Cedex, France}
\email{David.Lannes@math.u-bordeaux.fr, Guy.Metivier@math.u-bordeaux.fr}
\begin{document} 
\maketitle

\begin{abstract}
The Green-Naghdi equations are a nonlinear dispersive perturbation of the nonlinear shallow water equations,
more precise by one order of approximation. These equations are commonly used for the simulation of coastal flows, and in particular in regions where the water depth vanishes (the shoreline). The local well-posedness of the Green-Naghdi equations (and their justification as an asymptotic model for the water waves equations) has been extensively studied, but always under the assumption that the water depth is bounded from below by a positive constant. The aim of this article is to remove this assumption. The problem then becomes a free-boundary problem since the position of the shoreline is unknown and driven by the solution itself. For the (hyperbolic) nonlinear shallow water equation, this problem is very related to the vacuum problem for a compressible gas. The Green-Naghdi equation include additional nonlinear, dispersive and topography terms with a complex degenerate structure at the boundary. In particular, the degeneracy of the topography terms makes the problem loose its quasilinear structure and become fully nonlinear. Dispersive smoothing also degenerates and its behavior at the boundary can be described by an ODE with regular singularity.
These issues require the development of new tools, some of which of independent interest such as the study of the mixed initial boundary value problem for dispersive perturbations of characteristic hyperbolic systems, elliptic regularization with respect to conormal derivatives, or general Hardy-type inequalities.
\end{abstract} 


\section{Introduction}
 
\subsection{Presentation of the problem}

A commonly used model to describe the evolution of waves in shallow water is the nonlinear shallow water model, which is a system of equations coupling the water height ${\mathtt H}$ to the vertically averaged horizontal velocity ${\mathtt U}$. When the horizontal dimension  is equal to $1$ and denoting by ${\mathtt X}$ the horizontal variable and by $-H_0+{\mathtt B}({\mathtt X})$ a parametrization of the bottom, these equations read
$$
\begin{cases}
\D_{\mathtt t} {\mathtt H} + \D_{\mathtt X} ({\mathtt H} {\mathtt U }  ) = 0  , \\
 \D_{\mathtt t} {\mathtt U } +  {\mathtt U } \D_{\mathtt X} {\mathtt U }    + g \D_{\mathtt X} {\mathtt H} = -g\D_{\mathtt X}{\mathtt B} ,
\end{cases}
$$
where $g$ is the acceleration of gravity. These equations are known to be valid (see \cite{Alvarez-Samaniego:2008ai,iguchi2009} for a rigorous justification) in the shallow water regime corresponding to the condition $0<\mu \ll 1$, where the {\it shallowness parameter} $\mu$ is defined as
$$
\mu =\Big(\frac{\mbox{typical depth}}{\mbox{horizontal scale}}\Big)^2=\frac{H_0^2}{L^2},
$$
 where $L$ corresponds to the order of the wavelength of the waves under consideration.
Introducing the dimensionless quantities
$$
H:=\frac{{\mathtt H}}{H_0}, \quad U=\frac{{\mathtt U }}{\sqrt{gH_0}},\quad B=\frac{{\mathtt B}}{H_0},\quad t=\frac{{\mathtt t}}{L/\sqrt{gH_0}},\quad X=\frac{{\mathtt X}}{L}
$$
these equations can be written
\begin{equation}
 \label{SW} 
\begin{cases}
\D_t H + \D_X (H {U }  ) = 0  , \\
 \D_t {U } +  U \D_X {U}    +  \D_X H = -\D_X B .
\end{cases}
 \end{equation}
The precision of the nonlinear shallow water model \eqref{SW} is $O(\mu)$, meaning that $O(\mu)$ terms have been neglected in these equations  (see for instance \cite{Lannes_book}). A more precise model is furnished by the Green-Naghdi (or Serre, or fully nonlinear Boussinesq) equations. They include the $O(\mu)$ terms and neglect only terms of size $O(\mu^2)$; in their one-dimensional dimensionless form, they can be written\footnote{The dimensionless Green-Naghdi equations traditionally involve two other dimensionless parameters $\varepsilon$ and $\beta$ defined as
$$
\varepsilon=\frac{\mbox{amplitude of surface variations}}{\mbox{typical depth}},\qquad 
\beta=\frac{\mbox{amplitude of bottom variations}}{\mbox{typical depth}}.
$$
Making additional smallness assumptions on these parameters, one can derive simpler systems of equations (such as the Boussinesq systems), but since we are interested here in configurations where the surface and bottom variations can be of the same order as the depth, we set $\varepsilon=\beta=1$ for the sake of simplicity.} (see for instance \cite{Lannes_book})
 \begin{equation}
 \label{GN} 
\begin{cases}
\D_t H + \D_X (H {U }  ) = 0  , \\
{\bf D} (  \D_t {U } +  U \D_X {U} )   +  \D_X H +\mu {\bf Q}_1= -\D_X B ,
\end{cases}
 \end{equation}
 where $H=1+\zeta -B\geq 0$ is the (dimensionless) water depth and $U$ the (dimensionless) horizontal mean velocity. The dispersive operator ${\bf D}$ is given by
 $$
 {\bf D} U =    U  - \frac{\mu}{3H}  \D_X (H^3 \D_X U )+\frac{1}{2H}\big[\D_X(H^2 \D_X B U)-H^2\D_X B \D_X U\big]+(\D_X B)^2 U, 
$$
and the nonlinear term ${\bf Q}_1$ takes the form
$$
 {\bf Q}_1=\frac{2}{3H}\D_X \big(H^3(\D_X U)^2\big)+H(\D_X U)^2 \D_X B+\frac{1}{2H}\D_X(H^2U^2 \D_X^2 B)+U^2 \D_X^2 B\D_X B;
$$
of course, dropping $O(\mu)$ terms in \eqref{GN}, one recovers the nonlinear shallow water equations \eqref{SW}.

Under the assumption that the water-depth never vanishes, the local well-posedness of \eqref{GN} has been assessed in several references \cite{Li,AL2,Israwi201181,Iguchi_new}. However, for practical applications (for the numerical modeling of submersion issues for instance), the Green-Naghdi equations are used up to the shoreline, that is, in configurations where the water depth vanishes, see for instance \cite{Bonneton:2011jo,FILIPPINI2016381}.  Our goal here is to study mathematically such a configuration, i.e. to show that the Green-Naghdi equations \eqref{GN} are well-posed in the presence of a moving shoreline.

\begin{figure}
\begin{center}
\includegraphics[height=7.5cm]{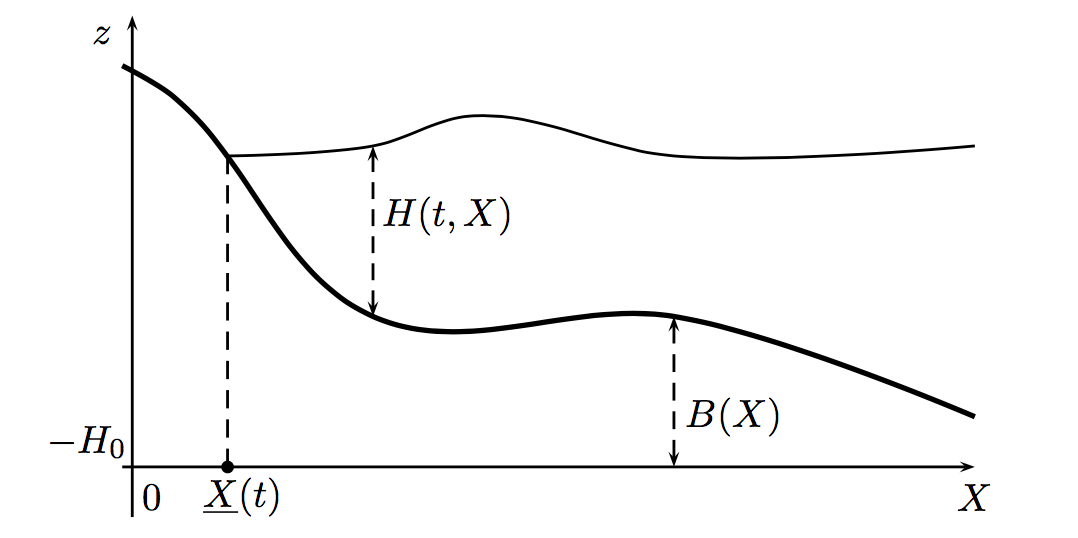} 
\caption{The shoreline}
\label{fig_shoreline}
\end{center}
\end{figure}

This problem is a free-boundary problem, in which one must find the horizontal coordinate $\underline{X}(t)$ of the shoreline (see Figure \ref{fig_shoreline}) and show that the Green-Naghdi equations \eqref{GN} are well-posed on the half-line $(\underline{X}(t),+\infty)$ with the boundary condition
\begin{equation}\label{Hnul}
 H\big(t,\underline{X}(t)\big) = 0.
\end{equation}
Time-differentiating this identity and using the first equation of \eqref{GN}, one obtains that $\underline{X}(\cdot)$ must solve the kinematic boundary condition 
\begin{equation}\label{kinematic}
\underline{X}'(t)=U\big(t,\underline{X}(t)\big).
\end{equation}

When  $\mu=0$, the Green-Naghdi equations reduce to the shallow water equations \eqref{SW} which, when the bottom is flat ($B=0$), coincide with the compressible isentropic Euler equations ($H$ representing in that case the density, and the pressure law being given by $P=g H^2$). The shoreline problem for the nonlinear shallow water equations with a flat bottom coincide therefore with the vacuum problem for a compressible gas with physical vacuum singularity in the sense of \cite{Liu1996}. This problem has been solved in \cite{JM1,CS1} ($d=1$) and \cite{CS2,JM2} ($d=2$). Mathematically speaking, this problem is a nonlinear hyperbolic system with a characteristic free-boundary condition. Less related from the mathematical viewpoint, but closely related with respect to the physical framework are \cite{Poyferre} and \cite{MingWang}, where a priori estimates are derived for the shoreline problem for the water waves equations (respectively without and with surface tension).

Though the problem under consideration here is related to the vacuum problem for a compressible gas, it is different in nature because the equations are no longer hyperbolic due the presence of the nonlinear dispersive terms ${\bf D}$ and ${\bf Q}_1$. Because of this, several important steps of the proof, such as the resolution of linear mixed initial boundary value problems, do not fall in existing theories and require the development of new tools.
The major new difficulty is that everything degenerates at the boundary $ H = 0$: strict hyperbolicity (when $\mu = 0$) is lost, the dispersion vanishes, the energy degenerates ;  the topography increases  the complexity  since it makes the problem fully nonlinear as we will explain later on.  An important feature of the problem is the structure of the degeneracy at the boundary. As in the vacuum problem for Euler, it allows to use Hardy's inequalities to ultimately get the  $L^\infty$ estimates which are necessary to deal with nonlinearities. The precise structure of the dispersion is crucial  and used at many places in the computations. Even  if they are not made explicit, except at time $t = 0$, the properties of ${\bf D}^{-1}$  are important.  The dispersion ${\bf D}$ appears as a degenerate elliptic operator
(see e.g. \cite{BolleyCamus} for a general theory). A similar problem was met in \cite{BreschMetivier} in the study of the lake equation with vanishing topography at the shore: the pressure was given by a degenerate elliptic equation. 
To sum up in one sentence, all this paper turns around the influence of the degeneracy at the boundary. 

\medbreak

Our main result is to prove the local in time (uniformly in $\mu$) well-posedness of the shoreline problem for the one-dimensional Green-Naghdi equations.  The precise statement if given in Theorem~\ref{theomain} below. Stability conditions are required. They are introduced in  \eqref{inita} and \eqref{initan} and discussed  there.   The spirit of the main theorem is given in the following qualitative statement.   Note that the case $\mu=0$ corresponds to the shoreline problem for the nonlinear shallow water equations \eqref{SW}. 

\begin{theo*}
For smooth enough initial conditions, and under certain conditions on the behavior of the initial data at the shoreline, there exists a non trivial time interval independent of $\mu \in [0,1]$ on which there exists a unique triplet $(\underline{X},H,U)$ such that $(H,U)$ solves \eqref{GN} with $H>0$ on $(\underline{X}(t),+\infty)$, and $H(\underline{X}(t))=0$.
 \end{theo*}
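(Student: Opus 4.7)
The plan is first to reduce the free-boundary problem to a fixed-boundary one on $(0,+\infty)$. Take \eqref{kinematic} as the ODE defining $\underline{X}(t)$ once $U$ is known at the boundary, and change variable to $x = X - \underline{X}(t)$; the Green--Naghdi system \eqref{GN} then becomes a transport--dispersive system on the half-line with the boundary condition $H|_{x=0}=0$. The natural initial regularity assumption---the \emph{physical vacuum} condition $\partial_x H|_{x=0}>0$---is what makes the boundary characteristic of the ``correct'' type, and imposes a factorization $H(t,x) = x\,\varrho(t,x)$ with $\varrho$ bounded above and below near the shoreline. Accordingly, the right functional setting is a scale of weighted conormal Sobolev spaces built on time derivatives and the conormal vector field $x\partial_x$, with $L^2$-weights tuned to the vanishing of $H$. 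The stability assumptions \eqref{inita}--\eqref{initan} encode precisely the initial regularity and compatibility conditions in these spaces.

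Next I would construct the solution by a Picard-type iteration. Given an iterate $(H_n,U_n)$ satisfying the boundary conditions, the first equation of \eqref{GN} yields $H_{n+1}$ by linear transport along $U_n$, and one checks that the incoming characteristic at the boundary carries the trace $H=0$ so that the physical-vacuum structure is preserved on a short time interval. For the velocity, one first inverts the $(H_n,B)$-dependent dispersive operator ${\bf D}_n$: this is a degenerate elliptic problem on $(0,+\infty)$ with a regular singularity at $x=0$, whose indicial equation of Euler type selects the admissible boundary behaviour and provides conormal elliptic regularity estimates that are uniform in $\mu$ and collapse smoothly to the identity at $\mu=0$. Injecting ${\bf D}_n^{-1}$ into the second equation of \eqref{GN} then gives, for $U_{n+1}$, a mixed initial--boundary value problem for a dispersive perturbation of a characteristic hyperbolic system; energy estimates in the conormal norms are derived by carefully commuting tangential and conormal fields, and Hardy-type inequalities are invoked to convert the resulting weighted $L^2$ control into the $L^\infty$ bounds required by the nonlinear terms.

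The main obstacle is to make the linear estimates at each iteration uniform with respect to $\mu\in[0,1]$, despite the fact that the topography terms in ${\bf D}$ and ${\bf Q}_1$ make the problem fully nonlinear: commuting ${\bf D}$ with conormal derivatives produces top-order contributions with apparent loss of derivatives, aggravated by the boundary degeneracy. The way out is to exploit the precise structure of ${\bf D}$, in particular its self-adjointness and positivity against the degenerate measure $H\,dx$, which gives the natural $\mu$-uniform energy identity, together with the boundary ODE analysis above to control the subprincipal contributions via the Hardy-type weighted inequalities. Once these uniform-in-$\mu$ bounds close, contraction in a lower-regularity norm yields the unique limit $(\underline{X},H,U)$ on a time interval independent of $\mu$, and the qualitative statement follows by noting that these bounds, being independent of $\mu$, are inherited by the shallow-water limit $\mu=0$.
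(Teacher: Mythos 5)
There is a genuine gap at the point you yourself identify as ``the main obstacle''. The topography terms do not merely produce commutators with an \emph{apparent} loss of derivatives: after differentiating the second equation by $X_m\in\{\partial_t, h_0\partial_x\}$ one is left with a genuinely new principal term $\mu\,{\bf l}\big(a(u)\,X_m q\big)$ with $a(u)=\partial_t(uB'(\varphi))$, i.e.\ the coefficient of the top-order operator depends on a derivative of the unknown. This makes the problem fully nonlinear, and in any Picard iteration the estimate for the $(n{-}1)$-th order norms of the new iterate requires the $n$-th order norms of the previous one, so the scheme does not close. Your proposed remedy --- ``exploit the self-adjointness and positivity of ${\bf D}$ against the degenerate measure'' together with Hardy inequalities --- yields exactly the basic $\mu$-uniform energy identity (which the paper also uses), but it does nothing about the derivative count. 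The paper's resolution is structural: one quasilinearizes by carrying the extended unknown $(V, \partial_t V, h_0\partial_x V)$ as independent variables, and then, because even the extended system still loses one derivative through the scheme, one adjoins an auxiliary equation ${\bf E}(\partial)V=V_1+{\bf F}_2V_2+{\bf F}_0V_0$ with ${\bf E}=\partial_t+(\kappa^2-(h_0\partial_x)^2)^{1/2}$, elliptic in $(\partial_t, h_0\partial_x)$ and tautological at the limit, whose regularizing effect (in $L^2$ in time) restores the missing derivative at each step. Without this (or a Nash--Moser substitute, which you do not set up either), your iteration is not defined beyond finitely many steps.

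Two secondary but substantive issues. First, the change of variables $x=X-\underline{X}(t)$ leaves the degenerate weight $H(t,x)$ time-dependent and unknown, so the weighted conormal spaces themselves would move with the solution; the paper instead uses the Lagrangian flow map, under which the depth becomes $h=h_0/\eta$ and all weights are taken with respect to the \emph{fixed} initial depth $h_0$, which is what makes the scale of spaces $L^2_s=h_0^{-s/2}L^2$ and the Hardy inequalities usable. Second, you propose to invert ${\bf D}_n$ at every time step via the regular-singularity/indicial analysis; in the paper that analysis is carried out only at $t=0$ (to compute the initial values of time derivatives, which is where the contact-angle smallness $\sqrt{\mu}\,h_0'(0)<\varepsilon$ enters), precisely because ${\bf D}^{-1}$ is not shown to act in the required weighted spaces for $t>0$; instead the operator ${\bf d}[\underline V]\partial_t$ is kept on the left-hand side and handled variationally through the mixed initial--boundary value theory of Section~\ref{Sex}.
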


\subsection{Outline of the paper}

In Section \ref{sectreform0}, we transform the equations \eqref{GN} with free boundary condition \eqref{Hnul} into a formulation which is more appropriate for the mathematical analysis, and where the free-boundary has been fixed. This is done using a Lagrangian mapping, together with an additional change of variables.

The equations derived in Section \ref{sectreform0} turn out to be fully nonlinear because of the topography terms. Therefore, we propose in Section \ref{sectlineq} to quasilinearize them  by writing the extended system formed by the original equations and by the equations satisfied by the time and conormal derivatives of the solution. 
The linearized equations thus obtained are studied in \S \ref{sectlinest} where it is shown that the energy estimate involve degenerate weighted $L^2$ spaces. The extended quasilinear system formed by the solution and its derivatives is written in \S \ref{sectquasi0}; this is the system for which a solution will be constructed in the following sections. 

Section \ref{sectstrategy} is devoted to the statement (in \S \ref{sectstate})  and sketch of the proof of the main result. The strategy consists in constructing a solution to the quasilinear system derived in \S \ref{sectquasi0} using an iterative scheme. For this, we need a higher order version of the linear estimates of \S \ref{sectlinest}. These estimates, given in \S \ref{sectHO},  involve Sobolev spaces with degenerate weights for which standard Sobolev embeddings fail. To recover a control on non-weighted $L^2$ norms and $L^\infty$ norms, we therefore need to use the structure of the equations and various  Hardy-type inequalities (of independent interest and therefore derived in a specific section). Unfortunately, when applied to the iterative scheme, these energy estimates yield a loss of one derivative; to overcome this difficulty,  we introduce an additional elliptic equation (which of course disappears at the limit) regaining one time and one conormal derivative;  this is done in \S \ref{sectiter}. \\
The energy estimates for the full augmented system involve the initial value of high order time derivatives; for the nonlinear shallow water equations ($\mu=0$), the time derivatives can easily be expressed in terms of space derivatives but the presence of the dispersive terms make things much more complicated when $\mu>0$; the required results are stated in \S \ref{sectIVTD} but their proof is postponed to Section \ref{Sect9}.\\
We then explain in \S \ref{sectexIBVP} how to solve the mixed initial boundary value problems involved at each step of the iterative scheme. There are essentially two steps for which there is no existing theory: the analysis of elliptic (with respect to time and conormal derivative) equations on the half line, and the theory of mixed initial boundary value problem for dispersive perturbations of hyperbolic systems. These two problems  being of independent interest, their analysis is postponed to specific sections. We finally sketch (in \S \ref{sectbounds} and \S \ref{sectconv}) the proof that the iterative scheme provides a bounded sequence that converges to the solution of the equations.

Section \ref{Sec6} is devoted to the proof of the Hardy-type inequalities that have been used to derive the higher-order energy estimates of \S \ref{sectHO}. We actually prove more general results for a general family of operators that contain the two operators $h_0\D_x$ and $h_0\D_x +2h_0'$ that we shall need here. These estimates, of independent interest, provide Hardy-type inequalities for $L^p$-spaces with various degenerate weights.

In Section \ref{sectcommut}, several technical results used in the proof of Theorem \ref{theomain} are provided. More precisely, the higher order estimates of Proposition \ref{propHO} are proved with full details in \S \ref{sectproofHO} and the bounds on the sequence constructed through the iterative scheme of \S \ref{sectiter} are rigorously established in \S \ref{sectproofbounds}.

The elliptic equation that has been introduced in \S \ref{sectiter} to regain one time and one conormal derivative in the estimates for the iterative scheme is studied in Section \ref{Sell}. Since there is no general theory for such equations, the proof is provided with full details. We first study a general family of elliptic equations (with respect to time and standard space derivatives) on the full line, for which classical elliptic estimates are derived. In \S \ref{sectHline}, the equations and the estimates are then transported to the half-line using a diffeomorphism that transforms standard space derivatives on the full line into conormal derivatives on the half line. Note that the degenerate weighted estimates needed on the half line require elliptic estimates with exponential weight in the full line.

In Section \ref{Sex} we develop a theory to handle mixed initial boundary value problems for dispersive perturbations of characteristic linear hyperbolic systems. To our knowledge, no result of this kind can be found in the literature. The first step is to assess the lowest regularity at which the linear energy estimates of \S \ref{sectlinest} can be performed. This requires duality formulas in degenerate weighted spaces that are derived in \S \ref{sectduality}. As shown in \S \ref{sectestLR}, the energy space is not regular enough to derive the energy estimates; therefore, the weak solutions in the energy space constructed in \S \ref{sectweak} are not necessarily unique. We show however in \S \ref{sectstrong} that weak solutions are actually strong solutions, that is, limit in the energy space of solutions that have the required regularity for energy estimates. It follows that weak solutions satisfy the energy estimate and are therefore unique. This weak=strong result is obtained by a convolution in time of the equations. Provided that the coefficients of the linearized equations are regular, we then show in \S \ref{sectsmooth} that if these strong solutions are smooth if the source term is regular enough. The last step, performed in \S \ref{sectlast}, is to remove the smoothness assumption on the coefficients.

Finally, Section \ref{Sect9} is devoted to the invertibility of the the dispersive operator at $t=0$ in various weighted space. These considerations are crucial to control the norm of the time derivative of the solution at $t=0$ in terms of space derivative, as raised in \S \ref{sectIVTD}. We reduce the problem to the analysis of an ODE with regular singularity that is analyzed in full details.
\bigbreak
\noindent
{\bf N.B.} A glossary gathers the main notations at the end of this article.
\bigbreak
\noindent
{\bf Acknowledgement.} The authors want to express their warmest thanks to Didier Bresch (U. Savoie Mont Blanc and ASM Clermont Auvergne) for many discussions about this work.

\section{Reformulation of the problem} \label{sectreform0}

This section is devoted to a reformulation of the shoreline problem for the  Green-Naghdi equations \eqref{GN}. The first step is to fix the free-boundary. This is done in \S \ref{sectlagr} and \S \ref{GNlagr} using a Lagrangian mapping. We then propose in \S \ref{sectqu} a change of variables that transform the equations into a formulation where the coefficients of the space derivatives in the leading order terms are time independent.
\subsection{The Lagrangian mapping}\label{sectlagr}
As usual with free boundary problems, we first use a diffeomorphism mapping the moving domain $(\underline{X}(t),+\infty)$ into a fixed domain $(\underline{X}_0,+\infty)$ for some time independent $\underline{X}_0$. A convenient way to do so is to work in Lagrangian coordinates.
More precisely, and with $\underline{X}_0=\underline{X}(t=0)$, we define for all times a diffeomorphism $\varphi(t,\cdot): (\underline{X}_0,+\infty)\to (\underline{X}(t),+\infty)$ by the relations
\begin{equation}\label{Lag1}
\D_t \vp (t,x) = U(t,\varphi(t,x)), \qquad \vp(0,x)=x;
\end{equation}
the fact that $\vp(t,\underline{x}_0)=\underline{x}(t)$ for all times stems from \eqref{kinematic}. {\it Without loss of generality, we can assume that $\underline{X}_0=0$.}\\
We also introduce the notation
\begin{equation}\label{Lag1bis} \eta = \D_x \vp
\end{equation}
and shall use upper and lowercases letters for Eulerian and Lagrangian quantities respectively, namely,
$$
h(t,x)=H(t,\vp(t,x)),\qquad u(t,x)=U(t,\vp(t,x)), \quad \mbox{etc.}
$$

\subsection{The Green-Naghdi equations in Lagrangian coordinates}\label{GNlagr}
Composing the first equation of \eqref{GN} with the Lagrangian mapping \eqref{Lag1}, and with $\eta$ defined in \eqref{Lag1bis}, we obtain
$$
\D_t h+\frac{h}{\eta}\D_x u=0;
$$
when combined with the relation 
$$
\D_t \eta - \D_x u = 0
$$
that stems from \eqref{Lag1bis}, this easily yields
$$
\D_t (\eta h)=0.
$$
We thus recover the classical fact that in Lagrangian variables, the water depth is given in terms of $\eta$ and of the water depth $h_0$ at $t=0$,
\begin{equation}\label{Lag2}
 h = \frac{h_0}{\eta}. 
\end{equation} 
In Lagrangian variables, the Green-Naghdi equations therefore reduce to the above equation on $\eta$ complemented by the equation on $u$ obtained by composing the second equation of \eqref{GN} with $\vp$,
\begin{equation}\label{GNlag}
\left\{ \begin{aligned}
& \D_t \eta - \D_x u = 0 \\
 & {\bf d}    \D_t u  + \frac{1}{\eta}  \D_x  h +\mu {\bf q}_1  =  - B'( \vp),
 \end{aligned} \right. 
\end{equation}
with $h=h_0/\eta$ and $\bf d$ defined as
$$
 {\bf d} u    =    u   - \frac{\mu}{3h\eta}  \D_x  (\frac{h^3}{\eta} \D_x u ) 
 +\frac{\mu}{2h\eta}\big[\D_x(h^2 B'(\varphi) u )-h^2 B'(\varphi) \D_x u \big]+\mu B'(\varphi)^2 u
$$
while the nonlinear term ${\bf q}_1$ is given by
$$
 {\bf q}_1=\frac{2}{3h\eta}\D_x \big(\frac{h^3}{\eta^2}(\D_x u)^2\big)+\frac{h}{\eta^2}(\D_x u)^2 B'(\varphi)+\frac{1}{2h\eta}\D_x(h^2u^2 B''(\varphi))+u^2 B''(\varphi)B'(\varphi).
$$
 
 \subsection{The equations in $(q,u)$ variables}\label{sectqu}
 
 In the second equation of \eqref{GNlag}, the term $1/\eta \D_x h$ in the second equation is nonlinear in $\eta$; it is possible and quite convenient to replace it by a linear term by introducing 
\begin{equation}\label{eq25}
q = \mez  \eta^{-2}  \quad\mbox{ and therefore } \quad  \eta = \eta (q) := (2 q) ^{-\mez}.
\end{equation}  
The resulting model is  
\begin{equation}
\label{mod317}
\left\{ \begin{aligned}
& c\D_t q  + \D_x u = 0 \\
 & {\bf d} \D_t u  +   {\bf l}q  + \mu {\bf q}_1=  - B'( \vp)  
 \end{aligned} \right. 
\end{equation}
where 
\begin{equation}\label{eqc}
c=c(q)=(2q)^{-3/2}>0,\quad  \mbox{ as long as }q>0
\end{equation}
(recall that $\eta_{\vert_{t=0}}=1$ and therefore $q_{\vert_{t=0}}=1/2$).
The operators ${\bf l}$ and ${\bf d}={\bf d}[V]$ are given by
\begin{equation}\label{eql}
{\bf l}=\frac{1}{h_0}\D_x (h_0^2 \cdot )=h_0 \D_x     +   2 h'_0 
\end{equation}
and, denoting $\uV=(\uq,\uu)$ and $\underline{\varphi}=x+\int_0^t \uu$, 
 \begin{align}
 \nonumber
 {\bf d}[\uV] u   =&    u    -   \mu \frac{4}{3h_0} \D_x  ( h_0^3 \uq^2  \D_x u ) 
  +\frac{\mu}{h_0}\big[\D_x(h_0^2  \uq B'(\underline{\varphi}) u )-h_0^2 q B'(\underline{\varphi}) \D_x u \big]\\
 \nonumber &+\mu B'(\underline{\varphi})^2 u\\
  \label{defd}
 =& u + \mu {\bf l} \big[ - \frac{4}{3}\uq^2 h_0 \D_xu+\uq uB'(\underline{\varphi}) \big]-\mu \uq B'(\underline{\varphi})h_0\D_x u+\mu B'(\underline{\varphi})^2u, 
 \end{align} 
and the nonlinear term ${\bf q}_1={\bf q}_1(V)$, with $V=(q,u)$, is
\begin{equation}\label{defq1}
 {\bf q}_1(V)={\bf l}\big[\frac{4}{3}h_0 \frac{q}{c}(\D_x u)^2+q u^2 B''(\varphi)\big]+\frac{h_0}{c} (\D_x u)^2 B'(\varphi)+u^2 B''(\varphi)B'(\varphi).
\end{equation}

 \begin{rem}
 Since by \eqref{Lag1} we have $\varphi(t,x)=x+\int_0^t u(s,x)ds$, we treat the dependence on $\varphi$ in the topography term as a dependence on $u$, hence the notation ${\bf q}_1(V)$ and not ${\bf q}_1(V,\varphi)$ for instance.
 \end{rem}
 
 
 \section{Quasilinearization of the equations} \label{sectlineq}

When the water depth does not vanish, the problem \eqref{mod317} is quasilinear in nature \cite{Israwi201181,Iguchi_new}, but at the shoreline, the energy degenerates and as we shall see, some topography terms make \eqref{mod317} a fully nonlinear problem. In order to quasilinearize it, we want to consider the system of equations formed by \eqref{formFL} together with the evolution equations formally satisfied by  $V_1:=X_1 V$ and $V_2:=X_2 V$, where  $X_1=\D_t$ and $X_2=h_0\D_x$ are chosen because they are tangent to the boundary. 
After giving some notation  in \S \ref{sectreform}, we derive in \S \ref{sectlin} the linear system satisfied by $V_1$ and $V_2$ and provide in \S \ref{sectlinest} $L^2$-based energy estimates for this linear system. We then state in \S \ref{sectquasi0} the quasilinear system satisfied by $(V,V_1,V_2)$ (the fact that it is indeed of quasilinear nature will be proved in Section \ref{sectstrategy}).\\
Throughout this section and the rest of this article, we shall make the following assumption.
\begin{ass}\label{assBh}
{\bf i.} The functions $h_0$ and $B$ are smooth on $\RR^+$. Moreover, $h_0$ satisfies the following properties,
$$
h_0(0)=0,\quad h_0'(0)>0, \quad h_0(x)>0 \mbox{ for all } x>0 \mbox{ and }\liminf_{x\to \infty}h_0(x)>0.
$$
{\bf ii.} We are interested in the shallow water regime corresponding to small values of $\mu$ and therefore assume that $\mu$ does not take large values, say, $\mu\in [0,1]$.
\end{ass}
\begin{rem}
In the context of a compressible gas, this assumption corresponds to a physical vacuum singularity \cite{Liu1996}; the equivalent of flows that are smooth up to vacuum in the sense of \cite{Serre} is not relevant here.
\end{rem}

\bigbreak

\noindent
{\bf N.B. } For the sake of simplicity, the dependance on $h_0$ and $B$ shall always be omitted in all the estimates derived.
\subsection{A compact formulation}\label{sectreform}

For all $\uV=(\uq,\uu)^T$, let us introduce the linear operator $\cL[\uV,\D]$ defined by
\begin{equation}\label{defL}
\cL[\uV,\D]V=\begin{cases}
c(\uq) \D_t q +\D_x u\\
{\bf d}[\uV]\D_t u+{\bf l} q
\end{cases}
\quad\mbox{ for all }V=(q,u)^T,
\end{equation}
so that an equivalent formulation of the equation \eqref{mod317} is given by the following lemma.
\begin{lem}
If $V$ is a smooth solution to \eqref{mod317}, then it also solves
\begin{equation}\label{formFL}
\cL[V,\D]V=\quad \cS(V,X_1V,X_2V)
\end{equation}
with, writing $\varphi(t,x)=x+\int_0^t u(s,x)ds$,
\begin{equation}
\label{defS}
\cS(V,V_1,V_2)=\left(\begin{array}{l}0\\
-B'(\varphi)-\mu {\bf l}\big[-\frac{4}{3} q X_2 u q_1+q u^2 B''(\varphi)\big] \\
\phantom{-B'(\varphi)-}- \mu X_2 u q_1 B'(\varphi)+\mu u^2 B'(\varphi)B''(\varphi)
\end{array}\right).
\end{equation}
\end{lem}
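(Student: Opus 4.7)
The statement is a direct verification: the first equation of \eqref{formFL} is identical to the first equation of \eqref{mod317} after recognizing $\cL[V,\D]V$, so the first component of $\cS$ is $0$. For the second equation, one has
\begin{equation*}
{\bf d}[V]\D_t u + {\bf l} q = -B'(\varphi) - \mu\, {\bf q}_1(V),
\end{equation*}
and the only work to do is to rewrite ${\bf q}_1(V)$, as defined in \eqref{defq1}, in a form that involves only the quasilinear variables $V$, $X_1V$, $X_2V$ rather than $\D_x u$, so that the right-hand side becomes a function of $(V,X_1V,X_2V)$ alone.

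The plan is to use the continuity equation in its Lagrangian form, that is, the first equation of \eqref{mod317}: since $c=c(q)>0$ for $q>0$, it yields the pointwise identity
\begin{equation*}
\D_x u = -c(q)\, \D_t q = -c(q)\, q_1, \qquad q_1 := X_1 q,
\end{equation*}
and therefore
\begin{equation*}
\frac{h_0}{c(q)}(\D_x u)^2 \;=\; \frac{\D_x u}{c(q)}\cdot h_0 \D_x u \;=\; -q_1\, (h_0 \D_x u) \;=\; -q_1\, X_2 u.
\end{equation*}
This is the single algebraic substitution that drives the computation: every occurrence of $h_0(\D_x u)^2/c$ in \eqref{defq1} is replaced by $-q_1\, X_2u$, up to an overall factor depending only on $q$.

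Plugging this into \eqref{defq1} and keeping the remaining terms (which involve only $q$, $u$ and $\varphi=x+\int_0^t u$, hence are already expressed in terms of $V$), one obtains
\begin{equation*}
{\bf q}_1(V) = {\bf l}\Bigl[-\tfrac{4}{3} q\, X_2 u\, q_1 + q u^2 B''(\varphi)\Bigr] \;\mp\; X_2u\, q_1\, B'(\varphi) \;+\; u^2 B'(\varphi) B''(\varphi),
\end{equation*}
and multiplying by $-\mu$ and adding $-B'(\varphi)$ produces exactly the second component of $\cS(V,X_1V,X_2V)$ displayed in \eqref{defS}. There is no subtle step here: the proof is a direct rearrangement, and the only thing worth highlighting is that the fully nonlinear quadratic $(\D_x u)^2$ contribution in ${\bf q}_1$ is converted to the bilinear form $q_1\, X_2 u$ in $\cS$ thanks to the continuity equation, which is precisely what allows \eqref{formFL} to be treated as a quasilinear system in the variables $(V,X_1V,X_2V)$ in the subsequent sections.
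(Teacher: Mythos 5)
Your proof is correct and follows exactly the paper's own argument: the first equation of \eqref{mod317} gives $\D_x u=-c(q)\,X_1q$, hence $h_0(\D_x u)^2/c=-X_1q\;X_2u$, which is precisely the substitution the paper uses to pass from \eqref{defq1} to the quasilinear form \eqref{equivq1}. The only loose end is your unresolved sign $\mp$: the substitution gives $-X_2u\,q_1\,B'(\varphi)$ inside ${\bf q}_1$ and $+u^2B'(\varphi)B''(\varphi)$ as the last term, so after multiplying by $-\mu$ one gets $+\mu X_2u\,q_1\,B'-\mu u^2B'B''$, meaning the signs of the last two terms as printed in \eqref{defS} carry a typo relative to \eqref{equivq1}; you should pin the sign down rather than leave it ambiguous, but the argument itself is the intended one.
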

\begin{proof}
One obtains directly that $V$ solves \eqref{formFL} with $\cS$ given by
$$
\cS(V,X_1V,X_2V)=\big(0,-B'(\varphi)-\mu {\bf q_1}(V)\big)^T
$$
and ${\bf q_1}(V)$ as defined in \eqref{defq1}.
In order to put it under the form given in the statement of the lemma, one just needs to use the first equation of \eqref{mod317} to rewrite ${\bf q}_1={\bf q}_1(q,u)$ under the form
\begin{equation}\label{equivq1}
{\bf q}_1={\bf l}\big[-\frac{4}{3} q X_2 u X_1 q+q u X_1 B'\big]- X_2 u X_1 q B'+u B'X_1 B'.
\end{equation}
\end{proof}


\subsection{Linearization}\label{sectlin}
As explained above, we want to quasilinearize \eqref{formFL}, by writing the evolution equations satisfied by $V$ and $X_m V$ ($m=1,2$). We therefore apply the vector fields  $X_1$ and $X_2$ to the two equations of \eqref{formFL}.
For the first equation, we have the following lemma, whose proof is straightforward and omitted.
\begin{lem}\label{lemmaLinear1}
If $V=(q,u)^T$ is a smooth enough solution to \eqref{mod317}, then one has, for $m=1,2$,
$$
c \D_t X_mq +\D_x X_mu=\cF^{(m)}(q,X_1 q,X_2q).
$$
with
$$
\cF^{(m)}(q,q_1,q_2)=-c'(q) q_1 q_m-c(q) \frac{X_m h_0}{h_0} q_1.
$$
\end{lem}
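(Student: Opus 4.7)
The plan is to apply the vector field $X_m$ (either $\partial_t$ or $h_0\partial_x$) to the first equation of \eqref{mod317}, namely $c(q)\,\partial_t q + \partial_x u = 0$, and then handle the two commutators that arise and substitute back using the same equation.

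First, differentiating $c(q)\,\partial_t q$ by $X_m$ gives, by the chain rule together with the identity $X_1 q = \partial_t q$, the nonlinear contribution $c'(q)\,(X_m q)(\partial_t q) = c'(q)\,q_m\,q_1$, plus the commuted term $c(q)\,X_m \partial_t q$. Since $h_0$ does not depend on $t$, we have $[X_m,\partial_t]=0$ for both $m=1,2$, hence $X_m\partial_t q = \partial_t X_m q$. Next, applying $X_m$ to $\partial_x u$ produces a commutator with $\partial_x$: for $m=1$ it vanishes, and for $m=2$ a direct calculation yields $[h_0\partial_x,\partial_x] = -h_0'\,\partial_x$. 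Both cases can be written uniformly as $[X_m,\partial_x] = -(X_m h_0/h_0)\,\partial_x$, which contributes a remainder $-(X_m h_0/h_0)\,\partial_x u$ when $X_m$ passes through $\partial_x$.

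Combining these computations yields
\[
c\,\partial_t X_m q + \partial_x X_m u = -c'(q)\,q_m\,q_1 + \frac{X_m h_0}{h_0}\,\partial_x u.
\]
The last step is to use the first equation of \eqref{mod317} once more to substitute $\partial_x u = -c(q)\,\partial_t q = -c(q)\,q_1$; the remaining term then becomes $-c(q)\,(X_m h_0/h_0)\,q_1$, and one recovers exactly the claimed form of $\mathcal{F}^{(m)}$.

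The whole argument reduces to two elementary commutator identities and one algebraic substitution using the equation itself, so there is no substantive obstacle; the only care needed is in tracking signs and applying the chain rule correctly. This is why the authors flag the proof as straightforward and omit it.
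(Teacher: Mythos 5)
Your proof is correct: the chain rule on $c(q)$, the commutator identity $[X_m,\D_x]=-(X_m h_0/h_0)\D_x$ (trivial for $m=1$, equal to $-h_0'\D_x$ for $m=2$), and the back-substitution $\D_x u=-c(q)q_1$ from the first equation of \eqref{mod317} together yield exactly the stated $\cF^{(m)}$. This is precisely the straightforward computation the paper omits, so there is nothing further to compare.
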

For the second equation, the following lemma holds. The important thing here is that the term $\mu {\bf l} (a(u)X_m q)$ cannot be absorbed in the right-hand-side. As explained in Remark \ref{remgnl} below, this terms makes the problem fully nonlinear.
\begin{lem}\label{lemmaLinear2}
If $V=(q,u)^T$ is a smooth enough solution to \eqref{mod317}, then one has, for $m=1,2$,
$$
{\bf d}[q]\D_t X_mu +{\bf l}\big[\big(1+\mu a(u)\big) X_mq\big]={\bf g}^{(m)} ,
$$
with ${\bf g}^{(m)}= g_0^{(m)}+\sqrt{\mu}{\bf l} g_1^{(m)}$ and
$$
a(u)=X_1(uB')\quad\mbox{ and }\quad
g_j^{(m)}=\cG^{(m)}_j (V,X_1V,X_2V)\qquad (j=1,2),
$$
and where, writing $\varphi_1=u$ and $\varphi_2(t,x):=h_0 +\int_0^t u_2$, one has
\begin{align*}
\frac{1}{\mu}\cG^{(m)}_0 (V,V_1,V_2)=&  (q_m B' +q B'' \varphi_m) X_2 u_1+q_1 B' X_2 u_m+X_2 u X_1 q_m B'\\
&- 2B'B'' \varphi_m u_1 - 2 u u_m B'B''-u^2 (B'B^{(3)}-(B'')^2)\varphi_m \\
&+X_2 u q_1 B''\varphi_m
-2 X_mh_0' \big(q u_1B' - \frac{4}{3}q^2 X_2 u_1\big)-\frac{1}{\mu}B'' \varphi_m,\\
\frac{1}{\sqrt{\mu}}\cG^{(m)}_1 (V,V_1,V_2)=& \frac{8}{3} q q_m X_2 u_1+ \frac{4}{3} q q_1 X_2 u_m+\frac{4}{3}q X_2 u X_1 q_m+\frac{4}{3}q_m X_2 u q_1\\
 &-u_1 q B''\varphi_m-q u_m u B''-qu^2 B^{(3)}\varphi_m.
\end{align*}
\end{lem}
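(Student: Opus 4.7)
The plan is to apply the vector field $X_m$ directly to the second equation of \eqref{mod317} and identify, among the resulting terms, the principal linear contributions forming ${\bf d}[V]\D_t X_m u + {\bf l}[(1+\mu a(u))X_m q]$, sweeping everything else into ${\bf g}^{(m)}$. Three elementary facts are used throughout: (i) $X_1=\D_t$ and $X_2=h_0\D_x$ commute with $\D_t$ (since $h_0$ is time-independent) and with each other; (ii) $[X_m,{\bf l}] = 2(X_m h_0')$ is a zero-th order operator, vanishing when $m=1$; (iii) by the chain rule, $X_m B^{(k)}(\varphi) = B^{(k+1)}(\varphi)\varphi_m$ for all $k$, with $X_1\varphi = u = \varphi_1$ and $X_2\varphi = h_0+\int_0^t X_2 u = \varphi_2$, matching the definitions in the statement.

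First I would rewrite the second equation of \eqref{mod317} in expanded form using \eqref{defd} and \eqref{equivq1} so that every $\mu$-correction is explicit, then apply $X_m$ term by term. The term ${\bf l} q$ produces ${\bf l} X_m q + 2(X_m h_0')q$, contributing the desired ${\bf l} X_m q$ to the principal part. In ${\bf d}[V]\D_t u$ and in $\mu{\bf q}_1$, whenever $X_m$ reaches the innermost $\D_t u$ the commutations $X_m\D_t = \D_t X_m$ and $X_m(h_0\D_x\D_t u) = h_0\D_x\D_t X_m u$ reconstruct the ${\bf d}[V]\D_t X_m u$ piece; whenever $X_m$ instead hits a coefficient ($q$, $q^2$, or $\varphi$ via $B^{(k)}$), it produces a source contribution. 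The boundary datum contributes $-X_m B'(\varphi) = -B''(\varphi)\varphi_m$.

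The heart of the computation is isolating the coefficient of $\mu{\bf l}[\,\cdot\,X_m q]$ among the source pieces. Exactly two places produce such a contribution: the term $\mu{\bf l}[q B'(\varphi)\D_t u]$ inside ${\bf d}[V]\D_t u$ produces $\mu{\bf l}[(X_m q) B'\D_t u]$ when $X_m$ hits $q$, and the term $\mu{\bf l}[q u\,X_1 B'(\varphi)] = \mu{\bf l}[q u^2 B''(\varphi)]$ in $\mu{\bf q}_1$ produces $\mu{\bf l}[(X_m q) u^2 B'']$ when $X_m$ hits $q$. Their sum is $\mu{\bf l}\bigl[(X_m q)(B'\D_t u + u^2 B'')\bigr] = \mu{\bf l}[a(u)\,X_m q]$, since a direct computation gives $a(u) = X_1(u B'(\varphi)) = B'\D_t u + u^2 B''$. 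Combined with the ${\bf l} X_m q$ already identified, this assembles the full principal term ${\bf l}[(1+\mu a(u))X_m q]$.

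All remaining terms constitute ${\bf g}^{(m)}$. Those still under an outer ${\bf l}$ (arising from $X_m$ hitting $q^2$ in $\mu{\bf l}[-\tfrac{4}{3} q^2 h_0\D_x\D_t u]$, from $X_m$ hitting $u$ or $\varphi$ in $\mu{\bf l}[q u B'(\varphi)]$, and from the analogous hits in $\mu{\bf l}[-\tfrac{4}{3}q X_2 u\,\D_t q + q u^2 B''(\varphi)]$ inside $\mu{\bf q}_1$) are collected into $\sqrt{\mu}\,{\bf l} g_1^{(m)}$; the $\sqrt{\mu}$-scaling is consistent with the original factor $\mu = \sqrt{\mu}\cdot\sqrt{\mu}$. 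The remaining pointwise contributions (from $-\mu q B'(\varphi) h_0\D_x\D_t u + \mu B'(\varphi)^2\D_t u$, $-\mu X_2 u\,X_1 q\,B'(\varphi)$, $\mu u^2 B'B''$, the $2(X_m h_0')q$ commutator, and $-B''(\varphi)\varphi_m$) go into $g_0^{(m)}$. I expect the main obstacle to be purely bookkeeping: tracking each chain-rule expansion of $B^{(k)}(\varphi)$, each product-rule hit on $q$, $q^2$, $u$, $B'(\varphi)$, and each commutator with ${\bf l}$ (in particular, for $m=2$, the non-zero contribution $X_2 h_0' = h_0 h_0''$), and then verifying termwise that the result reproduces the formulas for $\cG_0^{(m)}$ and $\cG_1^{(m)}$ given in the statement.
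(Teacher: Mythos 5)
Your proposal is correct and follows essentially the same route as the paper: apply $X_m$ to the second equation written via \eqref{defd} and the equivalent form \eqref{equivq1} of ${\bf q}_1$, use $[X_m,{\bf l}]=2X_mh_0'$ and the chain rule $X_mB^{(k)}(\varphi)=B^{(k+1)}(\varphi)\varphi_m$, and in particular you correctly identify the key structural point that the two contributions $\mu{\bf l}[(X_mq)B'X_1u]$ and $\mu{\bf l}[(X_mq)u^2B'']$ recombine into $\mu{\bf l}[a(u)X_mq]$. The remaining termwise bookkeeping you defer is exactly what the paper also compresses into "the result follows easily."
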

\begin{proof}
From the definition \eqref{defd} of ${\bf d}$, we have
$$
{\bf d} \D_tu = \D_t u + \mu {\bf l} \big[ - \frac{4}{3}q^2 h_0 \D_x\D_tu+q \D_t uB' \big]-\mu q B'h_0\D_x \D_t u+\mu (B')^2 \D_t u, 
$$
so that, applying the vector field $X $ (throughout this proof, we omit the subscript $m=1,2$), we get
\begin{align*}
X {\bf d} \D_t u&= {\bf d}\D_t X  u + \mu {\bf l} \big[ - \frac{8}{3} q X  q X_2X_1 u+X_1 u B'X q +X_1 u q XB'\big]\\
&-\mu X  q B' X_2X_1 u-\mu q X B' X_2X_1 u+\mu X \big((B')^2\big) X_1 u\\
&+ 2\mu X h_0' \big( - \frac{4}{3}q^2 X_2 X_1u+q X_1uB' \big),
\end{align*}
where we used the fact that $[X,{\bf l}]=2Xh_0'$.\\
Before computing $X{\bf q}_1$, we first replace ${\bf q}_1$ by its equivalent expression \eqref{equivq1}. Applying $X $ we find therefore
\begin{align*}
X{\bf q}_1=&{\bf l}\big[-\frac{4}{3} X q X_2 u X_1 q-\frac{4}{3} q X_2 Xu X_1 q-\frac{4}{3} q X_2 u X_1 X q\\
&\phantom{{\bf l}\big[}+Xq u X_1 B'+q X(u X_1 B')\big]
- X_2 X u X_1 q B'- X_2 u X_1 Xq B'\\
&- X_2 u X_1 q XB'+X(u B' X_1B').
\end{align*}
Since moreover $X {\bf l}q={\bf l}Xq +2 Xh_0' q$, one gets
\begin{align*}
-\frac{1}{\mu}g_0^{(m)}=& -X_m(qB')X_2X_1 u-X_1q B' X_2X_m u-X_2 u X_1X_m q B'\\
&+ X_m((B')^2)X_1 u + X_m(uB'X_1 B')-X_2 u X_1 q X_m B'\\
&+2 X_mh_0' \big(- \frac{4}{3}q^2 X_2 X_1u+q X_1uB'\big)+\frac{1}{\mu}X_m B',\\
-\frac{1}{\sqrt{\mu}}g_1^{(m)}=&- \frac{8}{3} q X_m  q X_2X_1 u- \frac{4}{3} q X_1  q X_2X_m u-\frac{4}{3}q X_2 u X_1 X_m q\\
&-\frac{4}{3}X_mq X_2 u X_1q
 +X_1 u q X_mB'+qX_m(uX_1 B'),
\end{align*}
and the result follows easily.
\end{proof}
The previous two lemmas suggest the introduction of the linear operator $\cL_{a}[\uV,\D]$ defined as
\begin{equation}\label{defLa}
\cL_{a}[\uV,\D]V=\begin{cases}
c(\uq) \D_t q +\D_x u\\
{\bf d}[\uV]\D_t u+{\bf l} \big[\big(1+\mu a(\uu)\big)q\big]
\end{cases}
\quad\mbox{ for all }V=(q,u)^T.
\end{equation}
Denoting $V_m=X_m V$, we deduce from Lemmas \ref{lemmaLinear1} and \ref{lemmaLinear2} that
\begin{equation}\label{formQL}
\cL_{a}[V,\D]V_m=\cS_m(V,V_1,V_2)
\end{equation}
where, with the notations of Lemmas \ref{lemmaLinear1} and \ref{lemmaLinear2} for $\cF^{(m)}$ and $\cG^{(m)}_j$, one has
\begin{equation}\label{defSm}
\cS_m(V,V_1,V_2)=\left(\begin{array}{c}
{\mathcal F}^{(m)}(q,q_1,q_2)\\
\boldsymbol{\cG}^{(m)}(V,V_1,V_2)
\end{array}\right)
\quad\mbox{ with }\quad
\boldsymbol{\cG}^{(m)}:=\cG^{(m)}_0+\sqrt{\mu} {\bf l} \cG^{(m)}_1.
\end{equation}
The next section is devoted to the proof of $L^2$-based energy estimates for \eqref{defLa}.
\subsection{Linear estimates}\label{sectlinest}
As seen above, an essential step in our problem is to derive a priori estimates for the linear problem
\begin{equation}
\label{lineq}
\left\{ \begin{aligned}
& c(\uq)  \D_t   q   +  \D_x u  =    f   \\
 & {\bf d}[\uV] \D_t  u  +    {\bf l} \big((1+\mu a(\uu) ) q\big)   =  {\bf g}\quad\mbox{ with }\quad {\bf g}:=g_0 + \sqrt{\mu}  {\bf l}  g_1,
 \end{aligned} \right. 
\end{equation}
where we recall that
$$
c(q)=(2q)^{-3/2}\quad\mbox{ and }\quad a(u)=X_1(uB'(\varphi)).
$$
As we shall see, \eqref{lineq} is symmetrized by multiplying the first equation by $h_0^2$ and the second one by $h_0$; since $c(\uq)>0$ is bounded away from zero and since $({\bf d} u,u)$ controls $\Vert h_0 u\Vert_{L^2}^2+\mu\Vert h_0\D_x u\Vert_{L^2}^2$ (see   the proof of Proposition \ref{enlin} below),  it is natural  to introduce  the  weighted $L^2$ spaces 
\begin{equation}\label{wL2}
L^2_s=h_0^{-s/2} L^2(\RR_+) \mbox{ with the norm } \| u \|^2_{L^2_s} =  \int_{\RR_+} h_0^s | u(x)|^2 dx,
\end{equation}
where $h_0$ is the water height at the initial time. We shall also need to work with the following weighted versions of the $H^1(\RR_+)$ space
 \begin{equation}\label{defH1}
  \cH^1_s  = \{  u \in  L^2_s\ :    \ \sqrt{\mu}h_0 \D_x u \in L^2_s\} \subset  L^2_s
   \end{equation}
   endowed with the norm
\begin{equation}
\label{normH1}
\Vert u \Vert_{\cH^1_s}^2=\Vert u\Vert_{L^2_s}^2+\mu \Vert h_0\D_x u\Vert_{L^2_s}^2
\end{equation}
(the $\mu$ in the definition of the norm is important to get energy estimates uniform with respect to $\mu\in [0,1]$). \\
The dual space of $\cH^1_1$ is then given by
\begin{equation}\label{defHm1}
 \\
  \cH^{-1}_1  = \{ {\bf g}:= g _0 + \sqrt{\mu}{\bf l}  g_1   \ : \     (g_0, g_1)  \in L^2_1\times L^2_1  \} \subset H^{ -1}_{loc} (\RR_+)
\end{equation}
(this duality property is proved in Lemma \ref{lem82} below), with 
\begin{equation}
\label{normHm1} 
\Vert {\bf g}\Vert_{\cH^{-1}_1}^2=\Vert g_0\Vert_{L^2_1}^2+\Vert g_1\Vert_{L^2_1}^2.
\end{equation}
This leads us to define the natural energy space $\VV$ for $V=(q,u)$ and its dual space $\VV'$ by
 \begin{equation}\label{defVV}
 \VV = L^2_2 \times \cH^1_1 \quad \mbox{ and }\quad \VV' = L^2_2 \times \cH^{-1}_1.
 \end{equation}
We can now state the $L^2$ based energy estimates for \eqref{lineq}. Note that these estimates are uniform with respect to $\mu \in [0,1]$.
 \begin{prop} \label{enlin}
Under Assumption \ref{assBh}, let  $T>0$ and assume that
 \begin{equation}\label{assq}
 \uq,\D_t \uq,\frac{1}{\uq}, \uu,\D_t \uu,\D_{t}^2 \uu,\frac{1}{1+\mu a(\uu)}\in L^\infty([0,T]\times \RR_+).
 \end{equation}
 If $(f,{\bf g})\in L^1([0,T];\VV')$,  then if $V=(q,u)$ is a smooth enough solution of  \eqref{lineq}, one has
 $$
 \forall t\in [0,T],\qquad  \big\|  V(t)  \big\|_{\VV} 
 \leq  \underline{c}_1\times\Big[
  \big\| V(0)  \big\|_{\VV} 
+ \int_0^t   \big\| \big(f(t'),{\bf g}(t')\big)  \big\|_{\VV'}  dt' \Big],
 $$
 where $\underline{c}_1$ is a constant of the form 
 \begin{equation}\label{eqC1}
 \underline{c}_1=\underline{c}_1\big(T,\big\Vert (\uq,\D_t \uq,\frac{1}{\uq},\frac{1}{1+\mu a(\uu)},\uu,\D_t \uu)\big\Vert_{L^\infty([0,T]\times \RR_+)}\big).
 \end{equation}
 \end{prop}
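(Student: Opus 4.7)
The plan is a classical $L^2$ energy estimate that leverages two structural observations. First, the operator $\mathbf{l}$ and the operator $\partial_x$ are adjoints up to sign in the $h_0$-weighted pairing on $\RR^+$: since $h_0(0)=0$, integration by parts gives
\[
\int_0^\infty h_0 u\, \mathbf{l}\phi\, dx \;=\; \int_0^\infty u\,\partial_x(h_0^2 \phi)\, dx \;=\; -\int_0^\infty h_0^2 \phi\,\partial_x u\, dx
\]
for smooth, decaying $u,\phi$. Second, the operator $\mathbf{d}[\uV]$ is symmetric for this same $h_0$-pairing, as a direct check from \eqref{defd} shows.

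\textbf{Step 1 (multipliers and energy identity).} I would multiply the first equation of \eqref{lineq} by $h_0^2 (1+\mu a(\uu)) q$, the second by $h_0 u$, integrate on $\RR^+$ and add. The cross terms $\int h_0^2 (1+\mu a(\uu)) q\,\partial_x u\,dx$ and $\int h_0 u\,\mathbf{l}((1+\mu a(\uu)) q)\,dx$ are exactly opposite by the identity above and therefore cancel. What remains produces
\[
\tfrac12\tfrac{d}{dt}\cE(t) \;=\; \cR(t) + \int h_0^2 (1+\mu a(\uu)) q f\,dx + (\mathbf{g}, u)_{h_0},
\]
with $\cE(t) = \int h_0^2 (1+\mu a(\uu)) c(\uq) q^2\,dx + (\mathbf{d}[\uV] u, u)_{h_0}$, where $\cR(t)$ is a sum of commutators produced by time-differentiating the coefficients $c(\uq)$, $(1+\mu a(\uu))$, and those of $\mathbf{d}[\uV]$. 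In particular $\partial_t a(\uu) = \partial_t^2 \uu\, B'(\underline{\vp}) + \text{(bounded terms in }\uu, \partial_t\uu\text{)}$, which explains the role of $\partial_t^2 \uu$ in the hypotheses \eqref{assq}. All such commutators are bounded by $\underline c_1 \cE(t)$ using the assumptions.

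\textbf{Step 2 (coercivity).} Expanding using \eqref{defd}, one finds
\[
(\mathbf{d}[\uV] u, u)_{h_0} = \|u\|_{L^2_1}^2 + \tfrac{4\mu}{3}\int h_0^3 \uq^2 (\partial_x u)^2\,dx + \mu \int \partial_x(h_0^2 \uq B'(\underline{\vp})) u^2\,dx + \mu \int h_0 B'(\underline{\vp})^2 u^2\,dx.
\]
The sign-indefinite third term is recast via IBP as $-2\mu\int h_0^2 \uq B'(\underline{\vp}) u\,\partial_x u$, then bounded by Cauchy--Schwarz with split weights (writing $h_0^2 = h_0^{1/2}\cdot h_0^{3/2}$) by $\delta\mu\|h_0\partial_x u\|_{L^2_1}^2 + C_\delta \mu \|u\|_{L^2_1}^2$. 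Choosing $\delta$ small and using $\uq^2 \geq c_0 > 0$ absorbs the first part into the principal elliptic contribution, leaving a lower-order residue $C_\delta\mu\|u\|_{L^2_1}^2$. Combined with $c(\uq), 1+\mu a(\uu) \geq c_0 > 0$, the augmented functional $\tilde{\cE}:=\cE + C\|u\|_{L^2_1}^2$ satisfies $c_0\|V\|_{\VV}^2 \leq \tilde{\cE}(t) \leq C_0\|V\|_{\VV}^2$.

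\textbf{Step 3 (sources, Gronwall, main obstacle).} The $f$-source is bounded by $C\|q\|_{L^2_2}\|f\|_{L^2_2}$; splitting $\mathbf{g} = g_0 + \sqrt\mu\,\mathbf{l} g_1$ and using the adjoint identity gives $(\mathbf{g}, u)_{h_0} = \int h_0 u g_0 - \sqrt\mu\int h_0^2 g_1\,\partial_x u$, whence $|(\mathbf{g}, u)_{h_0}| \leq \|u\|_{\cH^1_1}\|\mathbf{g}\|_{\cH^{-1}_1}$. Combining all steps yields $\tfrac{d}{dt}\tilde{\cE} \leq \underline c_1 \tilde{\cE} + \underline c_1 \tilde{\cE}^{1/2}\|(f,\mathbf{g})\|_{\VV'}$, and the standard integrating-factor argument gives $\tfrac{d}{dt}\tilde{\cE}^{1/2} \leq \tfrac{\underline c_1}{2}\tilde{\cE}^{1/2} + \tfrac{\underline c_1}{2}\|(f,\mathbf{g})\|_{\VV'}$; Gronwall delivers the claimed estimate with $\underline c_1$ of the form \eqref{eqC1}. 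The main obstacle is the coercivity of Step 2: the topography term $\mu\int\partial_x(h_0^2 \uq B'(\underline{\vp}))u^2$ is sign-indefinite and a direct bound would require $\partial_x\uq\in L^\infty$, which is not in the hypotheses. The IBP--Cauchy--Schwarz trick circumvents this by exploiting the quadratic structure in $u$ to trade the rough $x$-derivative for a lower-order $L^2_1$-residue that Gronwall absorbs; vanishing of boundary terms at $x=0$ throughout is guaranteed by $h_0(0)=0$ together with the smoothness of $V$.
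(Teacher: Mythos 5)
Your overall route is the paper's: the same multipliers $h_0^2(1+\mu a(\uu))q$ and $h_0 u$, the same cancellation of the cross terms via the adjunction $\int h_0 u\,{\bf l}\phi = -\int h_0^2\phi\,\D_x u$, the same duality splitting of ${\bf g}=g_0+\sqrt\mu\,{\bf l}g_1$ in the source estimate, and a Gronwall argument at the end; your identification of $\D_t^2\uu$ as the reason for that hypothesis is also correct. The one place you diverge is the coercivity of Step 2, and that is where the argument as written does not close. After Cauchy--Schwarz with a small parameter $\delta$, your residue is $C_\delta\,\mu\|u\|_{L^2_1}^2$ with $C_\delta$ blowing up as $\delta\to 0$; since nothing forces $C_\delta\,\mu<1$, the coefficient of $\|u\|_{L^2_1}^2$ in your lower bound for $({\bf d}[\uV]u,u)_{h_0}$ may be negative, and you compensate by passing to $\tilde\cE=\cE+C\|u\|^2_{L^2_1}$. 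But the identity you derived in Step 1 computes $\frac{d}{dt}\cE$, not $\frac{d}{dt}\tilde\cE$: the extra term contributes $2C\int h_0\, u\,\D_t u$, and this quantity is not controlled --- the equation only gives access to $({\bf d}[\uV]\D_t u, u)_{h_0}$, and extracting $\int h_0 u\,\D_t u$ from it would require mapping properties of ${\bf d}[\uV]^{-1}$ that are not available at this stage (the paper only makes them explicit at $t=0$, in Section \ref{Sect9}). So the differential inequality $\frac{d}{dt}\tilde\cE\leq\underline c_1\tilde\cE+\dots$ is unjustified and the Gronwall step fails as stated.

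The fix is to lose nothing in the cross term: the quadratic form $\frac43\uq^2X^2-2\uq B'(\underline\varphi)XY+B'(\underline\varphi)^2Y^2$ appearing in $({\bf d}[\uV]u,u)_{h_0}$ (with $X=h_0\D_xu$, $Y=u$) has nonpositive discriminant and equals $\frac13\uq^2X^2+\big(\uq X-B'(\underline\varphi)Y\big)^2$; this is exactly what the paper exploits by writing the energy density as an explicit sum of squares. It gives
$$
({\bf d}[\uV]u,u)_{h_0}\ \ge\ \|u\|^2_{L^2_1}+\tfrac13\,\mu\,\big\|\tfrac{1}{\uq}\big\|_{L^\infty}^{-2}\|h_0\D_xu\|^2_{L^2_1},
$$
so no augmentation of the energy is needed and the rest of your argument goes through verbatim. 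Equivalently, within your own scheme: take the Cauchy--Schwarz constant equal to $1$ rather than ``$\delta$ small'' and observe that the resulting residue $\mu\int h_0 B'(\underline\varphi)^2u^2$ is exactly paid for by the last term of $({\bf d}[\uV]u,u)_{h_0}$, which your absorption argument discards.
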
 
\begin{proof}

Remarking that
$$
\int h_0 ({\bf d}[\uV]u)u=\int h_0 u^2+\mu h_0 \big(\frac{2}{\sqrt{3}}h_0 \uq \D_x u -\frac{\sqrt{3}}{2}B'(\underline{\varphi}) u\big)^2+\mu h_0 \big(\frac{1}{2}B'(\underline{\varphi})u\big)^2
$$
where $\underline{\varphi}(t,x)=x+\int_0^t \uu(s,x)d s$,
the   density of energy is 
 $$
e = \mez \big[  h_0^2  \underline{c}(1+\mu \underline{a})      q^2 + h_0    u ^2  +\mu h_0 (\frac{2}{\sqrt{3}}h_0 \underline{q} \D_x    u -\frac{\sqrt{3}}{2}\underline{B}'   u)^2+\mu h_0 (\frac{1}{2}\underline{B}'   u)^2\big],
 $$
 with $\underline{c}=c(\uq)$, $\underline{a}=a(\uu)$ and $\underline{B}'=B'(\underline{\varphi})$.  We also set
 $$
 E(t):=\int e(t,x)dx. 
 $$
 We shall repeatedly use the following uniform (with respect to $\mu$) equivalence relations
 \begin{equation}\label{equiv1}
 E^\mez\leq C\big(\Vert (\frac{1}{\uq},\uq,\uu,\D_t\uu)\Vert_{L^\infty}\big) \Vert{V}\Vert_{\VV},\qquad
  \Vert{V}\Vert_{\VV}
 \leq C\big(\Vert( \frac{1}{\uq},\uq, \frac{1}{1+\mu \underline{a}})\Vert_{L^\infty}\big)E^{1/2}.
 \end{equation}
One  multiplies  the first equation of \eqref{lineq} by 
  $h_0^2   (1+\mu \underline{a})   q$ and the second by $h_0    u$. Usual integrations by parts show that  
 $$
\begin{aligned}
 \frac{d}{dt}E  =&  
  \mez  \int  \big[ \D_t (\underline{c}(1+\mu \underline{a})) \,   h_0^2    q^2  +h_0   u(\D_t {\bf d}[\uV])    u \big]
  \\
 &  +\int (  h_0^2   (1+\mu \underline{a})   f \,      q    +  h_0     g_0   \,      u   -  \sqrt{\mu} 
 h_0^2        g_1    \D_x    u    \big) . 
 \end{aligned}
 $$
 Remarking further that
 \begin{align*}
\int h_0   u(\D_t {\bf d}[\uV])    u&=\mu \int \big(\frac{8}{3} h_0^3 \uq \D_t \uq(\D_x    u)^2- 2h_0^2 \underline{B}' \D_t \uq    u \D_x    u\big)\\
&=\mu \int \big(\frac{8}{3} h_0^3 \uq \D_t \uq(\D_x    u)^2+ {\bf l}(\underline{B}' \D_t \uq ) h_0   u^2 \big),
 \end{align*}
 we easily deduce that
 \begin{align*}
 \nonumber
 \frac{d}{dt}E  \leq \underline{c}_1 \Vert{V}\Vert_{\VV}^2
 \nonumber
 + \Vert (f,{\bf g})\Vert_{\VV'}
 \Vert V\Vert_{\VV},
 \end{align*}
 with $\underline{c}_1$ as in the statement of the proposition. Integrating in time, using \eqref{equiv1}, and using a Gronwall type argument therefore gives the result. 
\end{proof}

\begin{rem}
\label{rempos}
\textup{The assumption \eqref{assq} contains two types of conditions:  $L^\infty$ bounds  and positivity conditions 
$\underline q >0$ and $1 + \mu a (\underline u) > 0$ which are essential to have a definite positive energy, thus for stability.  }

\end{rem}

 \subsection{The quasilinear system}\label{sectquasi0}
 
As  explained in Remark \ref{remgnl} below, the presence of the topography term $\mu {\bf l} (a(u)X_m q)$ in the equation for $X_m u$ derived in Lemma \ref{lemmaLinear2} makes the problem fully nonlinear. We therefore seek to quasilinearize it by writing an extended system for $V$ and $X_m V$.
 We deduce from the above that $V=(q,u)^T$ and $V_m=X_m V$ ($m=1,2$) solve the following system
 \begin{equation}\label{formQL2}
\begin{cases}
 \cL_{a}(V,\D)V_m&=\cS_m(V,V_1,V_2)\qquad (m=1,2),\\
 \cL (V,\D)V&=\cS(V,V_1,V_2),
 \end{cases}
  \end{equation}
  with $\cS$ and $\cS_m$ as defined in \eqref{defS} and \eqref{defSm} respectively.\\
As we shall show in the next sections, \eqref{formQL2} has a quasilinear structure in the weighted spaces associated to the energy estimates given in Proposition \ref{enlin}, or more precisely, to their higher order generalization.


\section{Main result} \label{sectstrategy}

In this section, we state and outline the proof of a local well posedness result for the shoreline problem for the Green-Naghdi equations \eqref{mod317}. Some necessary notations are introduced in \S \ref{sectnot} and the main result is then stated in \S \ref{sectstate}. An essential step in the proof is a higher order energy estimate stated in \S \ref{sectHO}; a sketch of the proof of this estimate is provided, but the details are postponed to Section \ref{sectcommut}.\\
In order to construct a solution, we want to construct an iterative scheme for the quasilinearized formulation \eqref{formQL2}. Unfortunately, with a classical scheme, the topography terms induce a loss of one derivative; in order to regain this derivative, we therefore introduce an additional variable and an additional elliptic equation (which of course become tautological at the limit). This elliptic equation is introduced in \S  \ref{sectiter} and its regularization properties (with respect to time and conormal derivatives) are stated; their proof, of specific interest, is postponed to Section \ref{Sell}. Solving each step of the iterative scheme also requires an existence theory for the linearized mixed initial value problem; the main result is given in \S \ref{sectexIBVP} (here again the detailed proof is of independent interest and is postponed, see Section \ref{Sex}).
The end of the proof of the main result consists in proving that the sequence constructed using the iterative scheme is uniformly bounded (see \S \ref{sectbounds}) and converges to a solution of the shoreline problem \eqref{mod317} (see \S \ref{sectconv}).



\subsection{Notations}\label{sectnot}

In view of the linear estimate of Proposition \ref{enlin},  it is quite natural to introduce for higher regularity 
based on the spaces $L^2_s$ introduced in \eqref{wL2} , using the derivatives 
$X^\alpha = X_1^{\alpha_1} X_2^{ \alpha_2} = \D_t^{\alpha_1} (h_0 \D_x)^{\alpha_2}$, $\alpha = (\alpha_1, \alpha_2)$. 
We use the following notations.

\begin{defi}
Given a Banach space  $B$  of functions on $\RR_+$,  $C^0_TB^n$ [resp; $L^\infty_TB^n$] [resp. $L^2_TB^n$] denotes the space of functions 
$u $ on $[0, T] \times \RR_+$ such that 
for all $|\alpha | \le n$, $X^\alpha u $ belongs to 
$C^0 ([0, T], B ) $ [resp; $L^\infty([0, T], B ) $] [resp. $L^2([0, T], B ) $], equipped with the obvious norm, 
which is the $L^\infty$ norm or $L^2$ norm of 
\begin{equation}\label{wL2n}
\nonumber
\| u (t)  \|_{B^n} =  \sum_{\vert \alpha\vert \leq n } \|  X^\alpha (t, \cdot ) \|_B . 
\end{equation}
\end{defi} 
We use this definition  for $B = L^2_s$, $\cH^{\pm 1}_s$ or $B = \VV, \VV'$, with  the associated notations 
$$
\| u(t) \|_{L^{2, n}_s} , \quad \| u(t) \|_{\cH^{\pm 1,  n}_s}, \quad  \| U(t) \|_{\VV^n} ,  \quad \| F(t) \|_{\VV'^n}. 
$$
When $B= L^\infty$, we simply write $ L_T^{\infty,p}$ for 
$L^\infty_T L^{\infty,p}$ 
which is  equipped with the norm
\begin{equation}\label{defLinfnT}
\Vert  f   \Vert_{L_T^{\infty,p}}=   \sup_{t \in [0, T] } \Vert  f (t)  \Vert_{L^{\infty,p}}
\end{equation}
where 
\begin{equation}\label{defLinfn}
\Vert  f (t)  \Vert_{L^{\infty,p}}=\sum_{\vert \alpha\vert\leq p}\Vert X^\alpha f (t, \cdot) \Vert_{ L^\infty(\RR_+)}.
\end{equation}


\subsection{Statement of the result}\label{sectstate}

Our main result states that the Cauchy problem for \eqref{mod317} can be solved  locally in time. 
More importantly, for initial data satisfying bounds independent of $\mu$, 
the solutions will exist on an interval of time $[0, T]$ independent of $\mu$. 
We look for solutions  in spaces 
$C^0_T \VV^n$, for $n$ large enough. However, functions in  this space  are not necessarily 
bounded, because of the weights. To deal with nonlinearities,  we have to add additional $L^\infty$ bounds 
on  low order derivatives.  
More precisely we look for solutions  in    
\begin{equation}\label{smoothness}
\begin{cases}
    q  \in C^0_T L^{2,n}_2 \cap C^0_T L^{2,n-1}_1    \cap C^0_T L^{2,n-2} \cap L^{\infty,p}_T  \\
      u   \in C^0_T L^{2,n}_1\cap  C^0_T L^{2, n-1}  \cap L^{\infty,p}_T \\
      \sqrt \mu h_0 \D_x u  \in C^0_T L^{2,n}, 
 \end{cases}
 \end{equation}
satisfying  uniform  bounds in these spaces, where $n$ and $p$ are integers such that  $n \ge 14$ and $2p\geq n\geq p+7$.

Next we describe admissible initial conditions. 
Following Remark~\ref{rempos}, the stability conditions $q > 0$ and $1 + \mu a (u)$ must be satisfied 
at $ t = 0$. The first one is satisfied since  \eqref{eq25} implies that the initial for $q$ is $q^0=1/2$. 
Next, recall that  $a(u)=\D_t (uB'(\varphi))$.  Because, by definition, $\D_t \varphi=u$ and $\varphi(t=0,x)=x$, one has
$$
a(u) (0, x)   =\D_t  u(0, x)  B'(x)+ u(0, x)  B''(x)u(0, x) . 
$$
Thus the condition $1 + \mu a(u) \ge \delta > $ at $t=0$ involves the time derivative $\D_t u$ at $t=0$.  Using 
that $\vp (0, x) = x$ and the equation \eqref{mod317}  under the form  
\begin{equation}
\D_t u=-{\bf d}^{-1}\big( {\bf l}q +\mu {\bf q}_1(V)+B'(\varphi)\big)_{\vert_{t=0}}, 
\end{equation}
(we refer to Section~\ref{Sect9} for the the invertibility of ${\bf d}$)  we see that the right-hand-side 
only involves $u^0$. Hence 
The condition $1 + \mu a(u) \ge \delta >  0$ at $t=0$ can therefore be expressed as a condition on the initial data $u^0$; using the convenient notation of Schochet \cite{schochet1986}, we shall write this condition
\begin{equation}
\label{inita}
\exists \delta>0, \qquad "1 + \mu a(u^0) \ge \delta   "  . 
\end{equation} 
Our result also requires a smallness condition on the contact angle at the origin, which can be formulated as follows
\begin{equation}\label{initan}
\sqrt{\mu} h_0'(0)<\eps,
\end{equation}
 for some $\eps>0$. We can now state our main result. 
\begin{theo}\label{theomain}
 Let $n\geq 14$ and assume that Assumption $\ref{assBh}$ holds. There exists $\eps>0$ such that for all $u^0\in  H^{n+2} (\RR^+)$ and $\mu \in [0,1]$ verifying \eqref{inita} and \eqref{initan} there exists $T=T(\Vert{u^0}\Vert_{H^{n+2}},\delta^{-1})>0$ and a unique classical solution $(q,u)$ to \eqref{mod317} with initial data $(1/2,u^0)$ and  satisfying \eqref{smoothness}.
 \end{theo}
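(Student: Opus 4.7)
The plan is to construct a solution via an iterative scheme on the quasilinearized extended system \eqref{formQL2} for $(V,V_1,V_2)=(V,X_1V,X_2V)$ in Lagrangian coordinates. The free boundary has already been fixed at $x=0$ by the Lagrangian change of variables \eqref{Lag1}--\eqref{eq25}, so the remaining task is to produce, for each iterate, a solution of a linear mixed initial-boundary value problem of the form $\cL_a[\uV,\D]V^{(k+1)}=\cS(V^{(k)},V_1^{(k)},V_2^{(k)})$, and then to close uniform bounds in the spaces \eqref{smoothness}. The key structural obstacle, flagged in Lemma~\ref{lemmaLinear2}, is the topography contribution $\mu{\bf l}\bigl(a(u)X_mq\bigr)$: it sits inside the principal part of $\cL_a$, so freezing $V$ in it would normally cost one derivative per iteration. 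Following the outline of \S\ref{sectiter}, I would compensate this loss by coupling each hyperbolic-dispersive step to an auxiliary elliptic equation (becoming tautological at the limit) that regains exactly one time and one conormal derivative.

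For the a priori control, I would promote Proposition~\ref{enlin} to a higher-order version on $\VV^n$ by applying $X^\alpha=\D_t^{\alpha_1}(h_0\D_x)^{\alpha_2}$ to \eqref{lineq}. Since $X_1,X_2$ are tangent to $\{x=0\}$, the commutators $[X^\alpha,\cL_a[\uV,\D]]$ fall into the conormal Sobolev framework and produce source terms estimable in $\VV'^{n-1}$. The weighted nature of $L^2_s$ destroys standard Sobolev embeddings, so to reach $L^\infty$ norms of low-order derivatives, and in particular to propagate the stability condition \eqref{inita}, I would invoke the general Hardy inequalities of Section~\ref{Sec6} applied to $h_0\D_x$ and to ${\bf l}=h_0\D_x+2h_0'$; this is precisely where $h_0'(0)>0$ in Assumption~\ref{assBh} enters, giving a Hardy exponent strictly greater than one, and where the smallness \eqref{initan} is needed to keep $1+\mu a(u)\ge\delta/2$ uniformly in $\mu$, hence the energy density of Proposition~\ref{enlin} uniformly positive definite.

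Solving the linearized step at the regularity provided by $\VV$ requires an existence theory not available in the literature, because of the joint degeneracy of hyperbolicity, of dispersion, and of the energy at $x=0$. I would build it as in Section~\ref{Sex}: construct weak solutions by duality after establishing the adjoint energy estimate through the duality formulas in $L^2_s$ of \S\ref{sectduality}; then prove weak$=$strong by a time-convolution argument, which yields uniqueness and the energy inequality; finally remove the smoothness assumption on the coefficients by approximation. A parallel ingredient is the evaluation at $t=0$ of the time derivatives $\D_t^k u\big|_{t=0}$, needed both to give meaning to \eqref{inita} and to initialize the higher-order estimates: these are obtained by repeatedly inverting ${\bf d}$ at $t=0$, which reduces to an ODE with regular singularity at $x=0$ and is handled in Section~\ref{Sect9}.

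With these bricks in place, the scheme is run on a time interval $T=T(\Vert u^0\Vert_{H^{n+2}},\delta^{-1})$ chosen so that the positivity conditions and the $L^\infty$ bounds underlying $\uc_1$ in \eqref{eqC1} are preserved; the higher-order estimate combined with the one-derivative gain from the elliptic step closes a uniform bound of the iterates in the norm associated with \eqref{smoothness}, independent of $\mu\in[0,1]$. Convergence is obtained by estimating consecutive differences in a lower-order norm (the usual loss-of-derivative trick for quasilinear problems), which gives a contraction on a possibly smaller interval; passage to the limit recovers a solution of \eqref{mod317}, and uniqueness follows from the linear energy estimate applied to the difference of two solutions. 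The main obstacle, in my view, is not any single step but the tight coordination forced by the three simultaneous degeneracies at the shoreline: matching the Hardy exponents dictated by the weights $L^2_s$ with the conormal order $n\ge 14$ and the low-order $L^\infty$ threshold $2p\ge n\ge p+7$ of \eqref{smoothness}, while keeping the elliptic regularization compatible with the energy space $\VV$, is the delicate combinatorial heart of the argument.
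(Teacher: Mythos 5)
Your proposal follows essentially the same route as the paper: quasilinearization via the extended system, higher-order weighted estimates with a one-derivative loss from the topography term, the auxiliary elliptic equation of \S\ref{sectiter} to regain that derivative, the weak/strong existence theory of Section~\ref{Sex} for the linearized mixed problem, the inversion of ${\bf d}$ at $t=0$ from Section~\ref{Sect9}, and convergence by contraction in a lower-order norm. One correction: the smallness condition \eqref{initan} on $\sqrt{\mu}\,h_0'(0)$ is not what guarantees $1+\mu a(u)\ge\delta/2$ (that is the role of \eqref{inita}, propagated by continuity on a short time interval); rather, \eqref{initan} ensures the indicial condition \eqref{condindic} so that $({\bf d}^0)^{-1}$ acts in the Sobolev spaces needed to define the initial time derivatives $V^j_{in}$.
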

\begin{rem}
With the dimensional variables used in the introduction, one observes that $\sqrt{\mu}h'(0)={\mathtt H}'(0)$. Since moreover, the angle $\alpha_0$ at the contact line is given by the formula
$$
\alpha_0=\arctan\Big( \frac{{\mathtt H}'(0)}{1-({\mathtt H}'(0)+{\mathtt B}'(0)){\mathtt B}'(0)}\Big),
$$
one has $\alpha_0\approx {\mathtt H}'(0)$ when ${\mathtt H'}(0)$ is small, and this is the reason why we say that the condition \eqref{initan} is a smallness condition on the angle at the contact line. Note that a smallness condition on the contact angle is also required to derive a priori estimates for the shoreline problem with the free surface Euler equations \cite{Poyferre,MingWang}.
\end{rem}

\begin{rem} As already said, the condition \eqref{inita} is necessary for the $L^2$ linear stability, since it is required for    the energy to be  positive.  The status of the other  condition \eqref{initan} is more subtle. It is a necessary condition for 
the inverse ${\bf d}^{-1}$ at time $t = 0$ to act in Sobolev spaces. So is has something to do with the consistency of the model for smooth solutions and, at least, seems necessary to construct smooth solutions from smooth initial data. 
\end{rem}


\subsection{Higher order linear estimates}\label{sectHO}

We derived in \S \ref{sectlinest} some $L^2$-based energy estimates for the linear system
\begin{equation}
\label{lineq2}
\left\{ \begin{aligned}
& c(\uq)  \D_t   q   +  \D_x u  =    f   \\
 & {\bf d}[\uV] \D_t  u  +    {\bf l} \big((1+\mu a(\uu) ) q\big)   =  {\bf g}\quad\mbox{ with }\quad {\bf g}=g_0 + \sqrt{\mu}  {\bf l}  g_1.
 \end{aligned} \right. 
\end{equation}
This section is devoted to the proof of higher order estimates. Before stating the main result, let us introduce the following notations, with $\uV=(\uq,\uu)$  and $\cS=(f,{\bf g})$,
\begin{equation}\label{defconstantes}
\begin{cases}
&  {\mathfrak m}_1(V;T) \dsp:= \big\|  V   \big\|_{L_T^{\infty,p}} +\big\Vert (\frac{1}{q},\frac{1}{1+\mu a(u)})\big\Vert_{L^\infty_T} ,\\
&  {\mathfrak m}_2(V;T)\dsp :=   \big\Vert q \big\Vert_{L^\infty_T (L^{2,p+3}\cap L^{2,n-2}_1)}  +
  \big\Vert u \big\Vert_{L^\infty_TL^{2,n-2}} ,  \\
&  {\mathfrak m}(V;T)  \dsp := 
  \big\Vert u \big\Vert_{L_T^{\infty,p+1}} + \big\Vert V\big\Vert_{L^\infty_T \VV^{n-1}} 
   \\
& \tilde {\mathfrak m}( V;T)  \dsp :=  \big\Vert V \big\Vert_{L^2_T\VV^n} +
\big\Vert q \big\Vert_{L^2_TL^{2,n-1}_1}, 
 \\
&  \begin{aligned}  {\mathfrak s}(\cS;T)   \dsp   :=    
 \big\Vert f  &\big\Vert_{C^0_T(L^{2,n-1}_2\cap L^{2,p+3})} \\& 
 +  \big\Vert (g_0,g_1)\big\Vert_{C^0_T (L^{2,n-1}_1\cap L^{2,n-2}\cap L^{\infty,p})}  .
 \end{aligned}
 \end{cases}
 \end{equation}
 Roughly speaking, ${\mathfrak m}_1$ is used to control the constants that appear in the $L^2$ linear estimate of Proposition \ref{enlin}; ${\mathfrak m}_2$ controls quantities that do not have the correct weight to be controlled by the $n-1$-th order energy norm, but that do not have a maximal number of derivatives; ${\mathfrak m}$ is basically the $(n-1)$-th order energy norm; $\widetilde{\mathfrak m}$ is used to control the $n$-th order energy norm (the reason why it involves an $L^2$ rather than $L^\infty$ norm in time is that the control of the $n$-th order energy norm comes from the elliptic regularization of \S  \ref{sectiter}); finally ${\mathfrak s}$ is used to control the source terms.
 \begin{rem}
 \textup{ Note that the parameter $\mu$ enters in the definition of $ \tilde {\mathfrak m}$ since, by \eqref{normH1}, }  
 $$
 \big\Vert V \big\Vert^2_{L^2_T\VV^n}  =  \big\Vert q \big\Vert^2_{L^2_TL^{2,n}_2}
 + \big\Vert u\big\Vert^2_{L^2_TL^{2,n}_1} + \mu \big\Vert h_0\D_x u  \big\Vert^2_{L^2_TL^{2,n}_1}. 
 $$
 \end{rem}

 The higher order estimates can then be stated as follows (note that they are uniform with respect to $\mu \in [0,1]$).

\begin{prop}\label{propHO}
Let $n\geq 14$. Under Assumption \ref{assBh}, let $T>0$, $\uV=(\uq,\uu)$, $\cS=(f,{\bf g})$, and let also $M_1$, $M_2$, $M$, $\widetilde M$ and $S$ be five constants such that
\begin{equation}\label{constMetc}
\begin{array}{lll}
  {\mathfrak m}_1(\uV;T) \leq  M_1,&{\mathfrak m}_2(\uV;T) \leq  M_2,&\quad
  {\mathfrak s}(\cS;T) \leq S,\\
{\mathfrak m}(\uV;T)\leq M ,& \widetilde{\mathfrak m}(\uV;T) \leq \widetilde M .& 
\end{array}
\end{equation}
 There exists a smooth function $\cT(\cdot)$, with a nondecreasing dependence on its arguments, such that
 if $T>0$ satisfies $T\cT\big( M_1, M_2,M,\widetilde{M}\big)<1$, any smooth enough solution  $V=(q,u)$ of \eqref{lineq2} on $[0,T]$ satisfies the a priori estimate
 $$
\Vert V\Vert_{L^\infty_T\VV^{n-1}} \leq C(T, M_1) \big[C_0+\sqrt{T}C(T,M_1,M_2,M,\widetilde{M}) S  \big],
 $$
 where $C_0$ is a constant depending only on the initial data of the form
 $$
 C_0=C_0\big( \Vert V(0)\Vert_{\VV^{n-1}},\Vert u(0)\Vert_{L^{\infty,p+2}},\Vert q(0)\Vert_{L^{2,p-1}\cap L^{\infty,p-1}}\big).
 $$
\end{prop}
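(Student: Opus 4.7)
The plan is to derive higher-order estimates from the $L^2$-based estimate of Proposition~\ref{enlin} by differentiating the system with the tangential vector fields $X^\alpha = X_1^{\alpha_1} X_2^{\alpha_2}$ for $|\alpha|\leq n-1$, applying Proposition~\ref{enlin} componentwise, and controlling the resulting commutators via the Hardy-type inequalities from Section~\ref{Sec6}. More precisely, I would apply $X^\alpha$ to \eqref{lineq2} and use that $X_1=\D_t$ commutes with all coefficients and with $\D_x$, ${\bf l}$, while $X_2=h_0\D_x$ satisfies $[X_2,\D_t]=0$, $[X_2,\D_x]=-h_0'\D_x$, and $[X_2,{\bf l}]=2h_0 h_0''$. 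This yields, for each multi-index $\alpha$,
\begin{equation*}
\cL_a[\uV,\D]\, X^\alpha V \;=\; X^\alpha \cS \;+\; \cR^\alpha,
\end{equation*}
where $\cS=(f,{\bf g})$ and $\cR^\alpha$ gathers the commutators of $X^\alpha$ with $c(\uq)\D_t$, with ${\bf d}[\uV]\D_t$, and with the multiplication operator $q\mapsto {\bf l}\big((1+\mu a(\uu))q\big)$. Feeding each of these equations into Proposition~\ref{enlin}, and noting that the constant $\underline{c}_1$ appearing there depends only on the $L^\infty$ norms of $(\uq,\D_t\uq,1/\uq,\uu,\D_t\uu,1/(1+\mu a(\uu)))$, all of which are controlled by ${\mathfrak m}_1\leq M_1$, one obtains a bound of the form
\begin{equation*}
\|X^\alpha V(t)\|_\VV \;\leq\; C(T,M_1)\Big[\|X^\alpha V(0)\|_\VV + \int_0^t \big(\|X^\alpha \cS\|_{\VV'}+\|\cR^\alpha\|_{\VV'}\big)\,ds\Big],
\end{equation*}
which already explains the leading factor $C(T,M_1)$ in the claim.

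\textbf{Commutator analysis.} The hard part is controlling $\cR^\alpha$ in $L^1_T\VV'$. Each commutator produces sums of products of the form $(X^\beta \uV)\cdot (X^\gamma V)$ with $|\beta|+|\gamma|\leq|\alpha|+1\leq n$, plus lower-order terms coming from $X_2$ falling on $h_0$ and from the brackets above. I would split these products into two regimes. When $\uV$ carries few derivatives ($|\beta|\leq p$), these factors are bounded pointwise by ${\mathfrak m}_1\leq M_1$, and the estimate reduces to controlling $\|X^\gamma V\|_{\VV'}$ with $|\gamma|\leq n-1$, which is $\leq\|V\|_{L^\infty_T\VV^{n-1}}$. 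In the symmetric regime where $V$ carries few derivatives, one uses the low-order $L^\infty$ bound ${\mathfrak m}_2$ combined with the Sobolev/Hardy-type embeddings of Section~\ref{Sec6} to pay for the lack of the ``correct'' weight; this is where the term ${\mathfrak m}_2$ is needed (in particular to bound quantities like $q$ in $L^{2,p+3}$ where the standard weighted embedding of $\VV^{n-1}$ fails). All these contributions produce bounds of the form $C(M_1,M_2,M)\,\|V\|_{L^\infty_T\VV^{n-1}}$ integrated in time, which yield a factor $T$.

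\textbf{Top-order terms and the role of $\widetilde{\mathfrak m}$.} The delicate commutators are those with $|\beta|=n-1$ or $n$ falling on $\uV$, coming in particular from $[X^\alpha,{\bf d}[\uV]\D_t]u$ and from the multiplication by $1+\mu a(\uu)$ inside ${\bf l}$ in the second equation—this is precisely the ``fully nonlinear'' term identified in Lemma~\ref{lemmaLinear2}. These factors of $\uV$ cannot be bounded in $L^\infty_T$ but are controlled in $L^2_T$ through $\widetilde{\mathfrak m}\leq\widetilde M$ (which precisely encodes the $n$-th order in $L^2_T$ produced by the elliptic regularization of \S\ref{sectiter} and the auxiliary $q$-estimate in $L^2_T L^{2,n-1}_1$). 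Pairing these top-order pieces against the low-order $L^\infty$ factors of $V$ (controlled by ${\mathfrak m}$, via Sobolev/Hardy embeddings from $\VV^{n-1}$) and applying Cauchy--Schwarz on $(0,t)$ converts the $L^2_T$ bound into an $L^1_T$ bound at the price of a factor $\sqrt{T}$; similarly, bounding the source $X^\alpha\cS$ term in $L^1_T\VV'$ using ${\mathfrak s}(\cS;T)\leq S$ together with Cauchy--Schwarz contributes $\sqrt{T}\, C(\ldots)\,S$. This is how the $\sqrt{T}$ factor and the full dependence $C(T,M_1,M_2,M,\widetilde M)$ arise in the second term of the claimed estimate.

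\textbf{Closing the loop.} Summing over $|\alpha|\leq n-1$ produces, after integration,
\begin{equation*}
\|V\|_{L^\infty_T\VV^{n-1}} \;\leq\; C(T,M_1)\Big[C_0 + T\,C(M_1,M_2,M)\,\|V\|_{L^\infty_T\VV^{n-1}} + \sqrt{T}\,C(T,M_1,M_2,M,\widetilde M)\,S\Big],
\end{equation*}
where $C_0$ involves the initial data norms listed in the statement (the appearance of $L^{\infty,p+2}$ and $L^{\infty,p-1}\cap L^{2,p-1}$ on $u(0)$ and $q(0)$ reflects the need to evaluate the low-order Sobolev/Hardy estimates and the source-like contributions at $t=0$). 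The smallness hypothesis $T\,\cT(M_1,M_2,M,\widetilde M)<1$ allows the middle term to be absorbed in the left-hand side, and a short Gronwall-type argument yields the announced bound. The single hardest technical step will be the top-order commutator analysis sketched above: keeping track of the degenerate weights $h_0^s$ simultaneously with the loss of one derivative created by $a(\uu)$ requires a careful application of the Hardy-type inequalities of Section~\ref{Sec6}, and it is precisely here that the decomposition into the four quantities ${\mathfrak m}_1,{\mathfrak m}_2,{\mathfrak m},\widetilde{\mathfrak m}$ is tailored so that the estimate closes uniformly in $\mu\in[0,1]$.
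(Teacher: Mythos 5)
Your overall architecture matches the paper's proof: differentiate \eqref{lineq2} by $X^\alpha$, estimate the commutators, feed each differentiated system into Proposition~\ref{enlin} (which accounts for the $C(T,M_1)$ prefactor), control the top-order occurrences of $\uV$ in $L^2_T$ via $\widetilde{\mathfrak m}\leq\widetilde M$ and Cauchy--Schwarz to produce the $\sqrt{T}$ factor, and absorb the remaining $T\,C(\cdot)\,\Vert V\Vert_{L^\infty_T\VV^{n-1}}$ term under the smallness hypothesis. The identification of the topography term $a(\uu)$ and of $[X^\alpha,{\bf d}[\uV]\D_t]$ as the source of the derivative loss is also correct.

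There is, however, one step that would fail as you state it, and it is the heart of the argument. You claim that the low-order factors of the \emph{solution} $V$ appearing in the commutators --- namely $\Vert V\Vert_{L^{\infty,p}}$ and the wrongly-weighted norms such as $\Vert q\Vert_{L^{2,n-2}_1}$ --- are ``controlled via Sobolev/Hardy embeddings from $\VV^{n-1}$.'' No such embedding exists: the paper stresses that functions in $\VV^{n-1}$ need not be bounded precisely because of the degenerate weights $h_0^s$, and $L^{2}_2$ control of $X^\alpha q$ gives no $L^2_1$ control. The correct mechanism (Steps 3--4 of the paper, Propositions~\ref{PropHq} and \ref{propcontinf}) is to use the \emph{equations themselves}: the second equation, rewritten as ${\bf l}\,q^{(\alpha)}=g_0^{(\alpha)}-X_1X^\alpha u+\cdots$ for a suitable modification $q^{(\alpha)}$ of $(1+\mu a(\uu))X^\alpha q$, expresses the conormal derivative ${\bf l}$ of $q^{(\alpha)}$ in terms of quantities already controlled by the energy; only then do the Hardy inequalities of Corollaries~\ref{coroH2} and \ref{coroH3} trade that derivative for the improved weight or the $L^\infty$ bound. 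Similarly the first equation is used to express $\D_x X^\alpha u$ and recover $\Vert X^\alpha u\Vert_{L^\infty}$ through the ordinary Sobolev embedding. This detour through the equations is also exactly where the specific initial-data norms $\Vert u(0)\Vert_{L^{\infty,p+2}}$ and $\Vert q(0)\Vert_{L^{2,p-1}\cap L^{\infty,p-1}}$ enter $C_0$ (via a time-integration bound of the form $\cQ_{p-1,2}(t)\leq\Vert q(0)\Vert_{L^{2,p-1}}+t(\cdots)$) and where a first smallness condition on $T$ is needed to close the $L^{\infty,p}$ bound self-consistently. Your sketch, which attributes these norms only vaguely to ``evaluating low-order estimates at $t=0$,'' does not supply this; also note that ${\mathfrak m}_2$ and ${\mathfrak m}$ are bounds on the background $\uV$, not on the solution $V$, so they cannot substitute for this step.
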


\begin{rem}
\label{remgnl}
We emphasize here that the estimate above induces a loss of one derivative in the sense that we need $n$ $X$-derivatives on 
$\uV$ to get estimates of the $(n-1)$-th $X$-derivatives of the solution. This is due to the topography term $a(\uu)$ in \eqref{lineq2}. It is therefore the topography that makes the problem fully nonlinear.
\end{rem}

 \begin{proof}
 We only provide here a sketch of the proof; the details are postponed to Section \ref{sectcommut}.
 Introduce the quantities 
\begin{equation}\label{notaQmj}
\begin{array}{ll}
\cQ_{m,j} (t) =\Vert q(t) \Vert_{L^{2,m}_{2-j}}, & \quad \ucQ_{m,j}(t) =\Vert \uq (t) \Vert_{L^{2,m}_{2-j}} \qquad (j=0,1,2),\\
\cU_{m,j}(t) =\Vert u(t) \Vert_{L^{2,m}_{1-j}}, & \quad \ucU_{m,j}(t) =\Vert \uu (t) \Vert_{L^{2,m}_{1-j}}  \qquad (j=0,1),
\end{array}
\end{equation}
and, for $j=0$, we simply write $\cQ_m=\cQ_{m,0}$, etc. When $j=0$ these quantities correspond to the components of the $m$-th order energy norm; when $j\neq 0$, the number of derivatives involved is the same, but the weight is not degenerate enough to allow a direct control by the energy norm.\\
Throughout this proof, we denote by $p$ an integer such that $p+7\leq n \leq 2p$ (such an integer exists since we assumed that $n\geq 14$).

\smallbreak
{\bf Step 1.} Applying $X^\alpha$,  to the first equation of \eqref{lineq2}, we obtain
\begin{equation}\label{eqstep1}
c(\uq)\D_t X^\alpha q +\D_x X^\alpha u=f^{(\alpha)},
\end{equation}
where the source term $f^{(\alpha)}$ satisfies on $[0,T]$ and for $\vert \alpha\vert \leq n-1\leq 2p$,
\begin{equation}
\label{eststep1}
\big\|  f ^{(\alpha)} (t)  \big\|_{L^2_2}    \le   S
+C( M_1)  \Big( M \Vert q(t) \Vert_{L^{\infty,p}}  + \cQ_{n-1} (t)   \Big). 
\end{equation}
This estimate is  proved in Proposition \ref{PropNL1} below.
\smallbreak

{\bf Step 2.} Applying $X^\alpha$, with $\vert \alpha\vert \leq n-1\leq 2p$, to the second equation of \eqref{lineq2}, we obtain
\begin{equation}\label{eqstep2}
 {\bf d}[\uV] \D_t  X^\alpha u  +    {\bf l} \big((1+\mu a(\uu) ) X^\alpha q\big)   =  {\bf g}^{(\alpha)} 
 \end{equation}
where $ {\bf g}^{(\alpha)}=g_0^{(\alpha)} + \sqrt{\mu}  {\bf l}  g_1^{(\alpha)}$
satisfies on $[0,T]$ and with $\cH^{-1}_1$ as in \eqref{defHm1},
\begin{equation}
 \label{eststep2}
 \begin{aligned}
 \big\|  {\bf g}^{(\alpha)}  (t) \big\|_{\cH_1^{-1}}    \le   
    C (T,M_1)  \Big(
 \| V(t) \|_{L^{\infty, p} }  \big( M + \| \uu (t) \|_{L^{2, n}_1}  + \| \uq(t) \|_{L^{2, n-1}_1}  \big) \\
+\cQ_{n-2,1 }(t) +\cU_{n-1}(t) \Big)  + S . 
\end{aligned}
\end{equation} 
This assertion will be proved in Proposition \ref{PropNL2}. 
We now need to control  the terms $\cQ_{n-2,1}$ and $\Vert V\Vert_{L^{\infty,p}}$ that appear in \eqref{eqstep1} and \eqref{eqstep2} in terms of the energy norm $\Vert V\Vert_{\VV^{n-1}}$; roughly speaking, we must trade some derivatives to gain a better weight. This is what we do in the following two steps.

\smallbreak
{\bf Step 3.} To control  $\cQ_{n-2,1}$, we use the Hardy  inequality 
$$
\Vert  f (t) \Vert_{L^2_1}\lesssim \Vert {\bf l} f(t) \Vert_{L^2_1}+\Vert f(t)  \Vert_{L^2_2}
$$
which is proved in Corollary \ref{coroH2}. 
Using the definition \eqref{defd} of ${\bf d}[\uV]$, the equation \eqref{eqstep2} implies that 
\begin{equation}\label{orag}
    {\bf l} q^{(\alpha)}  =  g_0^{(\alpha)} +\mu \uq \underline{B}'X_2X_1X^\alpha u-\mu (\underline{B}')^2X_1X^\alpha u.
\end{equation}
with $\underline{B}'=B'(\underline{\varphi})$ and
\begin{align*}
q^{(\alpha)}&=(1+\mu a(\uu) ) X^\alpha q-\sqrt{\mu} g_1^{(\alpha)}+\mu \frac{4}{3}\uq^2 X_2 X_1 X^\alpha u+\mu \uq \underline{B}' X_1 X^\alpha u.
\end{align*}
Using the above Hardy inequality on this equation satisfied by $q^{(\alpha)}$ for $\vert \alpha\vert \leq n-2$, we show in Proposition \ref{PropHq} that
 \begin{align*}
\cQ_{n-2, 1} (t)  \le C (T,M_1,M_2,M) \big[C_0+ \| V(t) \|_{L^{\infty, p}}  +\Vert V  \Vert_{L^\infty_T\VV^{n-1}} +S\big].
\end{align*}
\smallbreak

{\bf Step 4.} Control of $\Vp$. We need a control on $X^\alpha u$ and $X^\alpha q$ in $L^\infty$ for $\vert \alpha\vert \leq p$. For $X^\alpha u$, we use the Sobolev embedding
$$
\Vert X^\alpha u\Vert_{L^\infty}\lesssim \Vert X^\alpha u\Vert_{L^2}+ \Vert \D_x X^\alpha u\Vert_{L^2}
$$
and use the equation \eqref{eqstep1} to control the last term. 
For $X^\alpha q$, we need another Hardy inequality  proved in Corollary \ref{coroH3}
$$
\Vert f (t) \Vert_{L^\infty}\lesssim \Vert {\bf l}u (t)  \Vert_{L^\infty\cap L^2}+\Vert f(t) \Vert_{L^2},
$$
which we apply to \eqref{orag}. This is the strategy used in Proposition \ref{propcontinf} to prove that for $T$ small enough (how small depending only on $M$), one has
$$
 \Vert V(t) \Vert_{L^{\infty,p}}\leq C(T,M_1,M_2,M)\big[ C_0+\Vert V\Vert_{L^\infty_T\VV^{p+6}}+S\big].
 $$
 
 \smallbreak
{\bf Step 5.}  Since $p+6\leq n-1$, we deduce from Steps 1-4 that
\begin{align*}
\big\| f^{(\alpha)}(t) \big\|_{L^2_2}& \leq \CMTT \big[C_0+\Vert V\Vert_{L^\infty_T\VV^{n-1}}+S\big], \\
\big\|   {\bf g}^{(\alpha)}(t)\big\|_{\cH_1^{-1}}  &  \le  \CMTT\big[C_0+\Vert V\Vert_{L^\infty_T\VV^{n-1}}+S\big] (1+\ucU_n(t)+\ucQ_{n-1,1}(t)).
\end{align*}
Using the linear energy estimates of Proposition \ref{enlin} and summing over all $\vert \alpha\vert \leq n-1$, this implies that
\begin{align*}
\Vert V\Vert_{L^\infty_T\VV^{n-1}} \leq C(T, M_1) \Big[C_0+&\CMTT\big(C_0+\Vert V\Vert_{L^\infty_T\VV^{n-1}}+S\big)  \\
&\qquad\times\int_0^T(1+\ucU_n(t)+\ucQ_{n-1,1}(t))dt \Big].
\end{align*}
Remarking that 
\begin{align*}
\int_0^T(1+\ucU_n(t)+\ucQ_{n-1,1}(t))dt&\leq T+\sqrt{T}  \vert \ucU_n,\ucQ_{n-1,1} \vert_{L^2(0,T)} \\
&\leq T+\sqrt{T}\widetilde{M},
\end{align*}
we finally get
$$
\Vert V\Vert_{L^\infty_T\VV^{n-1}} \leq C(T, M_1)
  \big[C_0+\sqrt{T}C(T,M_1,M_2,M,\widetilde{M})\big(C_0+\Vert V\Vert_{L^\infty_T\VV^{n-1}}+S\big)  \big].
$$
For $T$ small enough (how small depending only on $M_1$, $M_2$, $M$ and $\widetilde{M}$, this implies that
\begin{align*}
\Vert V\Vert_{L^\infty_T\VV^{n-1}} \leq C(T,& M_1) \big[C_0+\sqrt{T}C(T,M_1,M_2,M,\widetilde{M})S  \big],
\end{align*}
which completes the proof of the proposition.
\end{proof}
 
 
 \subsection{The iterative scheme}\label{sectiter}
 
 We derived in the previous section higher order energy estimates for the linearized equations from which a priori estimates for the nonlinear problem can be deduced. The question is how to pass from these estimates to an existence theory. One possibility (in the spirit of \cite{CS1,CS2}), could be to use a parabolic regularization of the equations. To  avoid boundary layers, such a regularization should be degenerate at the boundary; in the presence of dispersive and topography terms no general existence theorem seems available and we therefore choose to work with an alternative approach based on elliptic regularization.

The goal of this section is to propose an iterative scheme to solve the quasilinearized equation \eqref{formQL2}. A naive tentative would be to consider
the following iterative scheme
$$
 \begin{cases}
 \cL_{a}(V^k,\D)V^{k+1}_m&=\cS_m(V^k,V^k_1,V^k_2)\qquad (m=1,2),\\
 \cL (V^k,\D)V^{k+1}&=\cS(V^k, V_1^k,V_2^k),
 \end{cases}
 $$
 with $\cL$ and $\cL_a$ as in \eqref{defL} and \eqref{defLa} respectively. With such an iterative scheme however, the nonlinear estimates of Proposition \ref{propHO} say that if $V^{k}$ and $V_m^{k}$ ($m=1,2$) have respectively $\VV^{n}$ and  $\VV^{n-1}$ regularity then  $V^{k+1}$ and $V_m^{k+1}$ have only $\VV^{n-1}$ regularity: there is a loss of one derivative, as already noticed in Remark~\ref{remgnl}. If we know that $V_m^{k+1}=X_m V^{k+1}$ then the $\VV^n$ regularity for $V^{k+1}$ is recovered, but this information  is not propagated by the iterative scheme (even though it is true at the limit).

A usual way to circumvent the loss of derivatives is to use a Nash-Moser scheme, but here the definition of the
the smoothing operators would be delicate because we need weighted and non weighted norms. So we proceed in a different way and  introduce an additional variable $V_0$ whose purpose is to make the regularity of the family $(V^{k})_k$ one order higher than the regularity of $(V_1^k,V_2^k)_k$. Instead of \eqref{formQL2}, we rather consider
 \begin{equation}\label{formQL3}
\begin{cases}
 \cL_{a}(V,\D)V_m&=\cS_m(V,V_1,V_2)\qquad (m=1,2),\\
 \cL (V,\D)V&=\cS(V,V_1,V_2),\\
 {\bf E}(\D)V&=F(V_0,V_1,V_2)
 \end{cases}
  \end{equation}
and the iterative scheme we shall consider should therefore be of the form
 $$
 \begin{cases}
 \cL_{a}(V^k,\D)V_m^{k+1}&=\cS_m(V^k,V_1^k,V_2^k)\qquad (m=1,2),\\
 \cL (V^k,\D)V_0^{k+1}&=\cS(V^k,V_1^k,V_2^k),\\
 {\bf E}(\D)V^{k+1}&=F(V_0^{k+1},V_1^{k+1},V_2^{k+1})
 \end{cases}
 $$
 with ${\bf E}(\D)$ an elliptic operator (in time and space) so that $V^{k+1}$ is more regular than $F(V_0^{k+1},V_1^{k+1},V_2^{k+1})$, and the sequence is defined for all $k$. 
 
 We choose the elliptic equation 
 \begin{equation}
 \label{ellipeq} 
 {\bf E} (\D) V =  V_1 + {\bf F}_2 V_2 + {\bf F}_0 V_0 
 \end{equation}
 in such a way that it is tautological when $ V_0 = V$,  $ V_1 = X_1 V$ and $ V_2 =  X_2 V$. 
 We choose 
  \begin{equation} \label{defE}
 {\bf E} = \D_t + {\bf P}  , \quad
 {\bf F}_2 =  -   X_2    P^{ - 1} , \quad {\bf F}_0 = \kappa^2 P^{-1} 
 \end{equation} 
 where ${\bf P} = (\kappa^2 - X_2^2)^\mez = ( \kappa^2 - (h  \D_x)^2 ) ^\mez $  and $\kappa > 2$.  
 Note that this corresponds to a scalar  equation for each component $q$ and $u$. We consider the Cauchy problem
 for \eqref{ellipeq}. The independent proof of the next proposition is given in Section~\ref{Sell}. Note that the gain of one derivative only occurs if we consider an $L^2$-norm in time (this is the reason why we had to introduce the constant $\widetilde{\mathfrak m}(V;t)$ in \eqref{defconstantes}.
 \begin{prop}\label{propell}
  Let $n-1\in \NN$. For $(V_0, V_1, V_2)$ in   $L^\infty([0,T];\VV^{n-1})$ and initial data in $\VV^n$, the Cauchy problem for 
  \eqref{ellipeq} has a unique solution in $L^2([0,T];\VV^{n})\cap C([0,T];\VV^{n-1})$ and 
 \begin{align*}
\Vert V \Vert_{L^2([0,T];\VV^n)}&\lesssim \Vert V(0)\Vert_{\VV^n} +  \sqrt{T}\big(\Vert V_0\Vert_{L^\infty_T\VV^{n-1}}+\Vert V_1\Vert_{L^\infty_T\VV^{n-1}}+ \Vert V_2\Vert_{L^\infty_T\VV^{n-1}}\big)\\
 \Vert V\Vert_{L^\infty_T\VV^{n-1}}  &\lesssim \Vert V(0)\Vert_{\VV^{n-1}} +  T\big(\Vert V_0\Vert_{L^\infty_T\VV^{n-1}}+\Vert V_1\Vert_{L^\infty_T\VV^{n-1}}+ \Vert V_2\Vert_{L^\infty_T\VV^{n-1}}\big).
 \end{align*}
Moreover, the following $L^\infty$ bounds also hold,
 $$
 \Vert V\Vert_{L^{\infty,p}_T} \lesssim \Vert V(0)\Vert_{L^{\infty,p}} + \sqrt{T} \big(  \Vert V_0\Vert_{L^{\infty,p}} + \Vert V_1\Vert_{L^{\infty,p}} + \Vert V_2\Vert_{L^{\infty,p}} \big).
 $$
 \end{prop}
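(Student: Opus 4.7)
My plan is to decouple the vector system \eqref{ellipeq} into two independent scalar equations, one for each component of $V=(q,u)^T$, since ${\bf E}(\D)$, ${\bf F}_0$ and ${\bf F}_2$ all act diagonally. I then transport each scalar problem $(\D_t + {\bf P})v = f$ from $[0,T]\times\RR_+$ to $[0,T]\times\RR_y$ via the diffeomorphism determined by $\D_y = h_0\D_x$. Under Assumption~\ref{assBh} (with $h_0$ vanishing linearly at $0$ and bounded away from $0$ at infinity) this is a smooth diffeomorphism of $\RR_+$ onto $\RR$; the conormal field $X_2$ becomes $\D_y$, the operator ${\bf P}$ becomes the Fourier multiplier $P = (\kappa^2 - \D_y^2)^{1/2}$ on $\RR$, and the polynomial weights $h_0^s$ become exponential weights near $-\infty$. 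In particular $L^2_s$ and $\VV^n$ correspond to exponentially weighted $L^2$ and $H^n$-type spaces on $\RR_y$, and on the full line both $X_1 = \D_t$ and $X_2 = \D_y$ commute with $P$, which is the key simplification.

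The $L^2_T\VV^n$ estimate is proved by testing $(\D_t + P)v = f$ against $Pv$ on $\RR_y$: self-adjointness of $P$ and the absorption $(f,Pv)\le \tfrac12\|f\|^2+\tfrac12\|Pv\|^2$ give
\begin{equation*}
\sup_{t\le T}\|P^{1/2}v(t)\|_{L^2_y}^2 + \|Pv\|_{L^2_T L^2_y}^2 \lesssim \|P^{1/2}v(0)\|_{L^2_y}^2 + \|f\|_{L^2_T L^2_y}^2.
\end{equation*}
Since $P$ is elliptic of order one, $\|Pv\|_{L^2_y}\sim\|v\|_{H^1_y}$, so this is a parabolic-type gain of one space derivative in $L^2_T$. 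Commuting $X^\alpha$ with $\D_t + P$ for $|\alpha|\le n-1$ (at no cost on the full line) and summing produces the $L^2_T H^n_y$ control of $v$, which after transport back to $\RR_+$ is exactly the $L^2_T\VV^n$ bound. The $\sqrt{T}$ factor is generated by $\|f\|_{L^2_T\VV^{n-1}}\le \sqrt{T}\|f\|_{L^\infty_T\VV^{n-1}}$, and the control of the right-hand side $F(V_0,V_1,V_2)=V_1+{\bf F}_2V_2+{\bf F}_0V_0$ in $\VV^{n-1}$ is immediate because ${\bf F}_2 = -X_2P^{-1}$ is of order $0$ and ${\bf F}_0 = \kappa^2 P^{-1}$ is of order $-1$. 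The initial values $X_1^kV(0)$ required when commuting time derivatives are computed recursively from $\D_t V(0) = F(V_0(0),V_1(0),V_2(0)) - {\bf P}V(0)$ and controlled by $\|V(0)\|_{\VV^n}$.

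The $L^\infty_T\VV^{n-1}$ estimate follows from the weaker energy identity obtained by testing against $v$ itself: $\sup_t\|v(t)\|_{L^2_y}\le\|v(0)\|_{L^2_y}+T\|f\|_{L^\infty_T L^2_y}$, iterated by commuting $X^\alpha$ for $|\alpha|\le n-1$. The $L^{\infty,p}_T$ bound is obtained from the Duhamel representation $v(t)=e^{-tP}v(0)+\int_0^t e^{-(t-s)P}f(s)\,ds$: the kernel of $e^{-tP}$ on $\RR$ is the nonnegative inverse Fourier transform of $e^{-t\sqrt{\kappa^2+\xi^2}}$, of $L^1$-mass $e^{-t\kappa}\le 1$, so $e^{-tP}$ is a contraction on $L^\infty$, and the Duhamel integral combined with a Cauchy--Schwarz splitting in time yields the bound after commuting $X^\alpha$ for $|\alpha|\le p$.

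The main obstacle is the interaction between the nonlocal multiplier $P$ and the exponential weights arising from $h_0^s$ under the change of variable: since $P$ does not commute with multiplication by $e^{-sy/2}$, one must establish the weighted equivalence $\|Pv\|_{L^2_s(\RR)}\sim \|v\|_{H^1_s(\RR)}$ together with commutator bounds of the form $\|e^{sy/2}[P,e^{-sy/2}]\|_{L^2\to L^2}\lesssim 1$ in order for the full-line energy estimates to transport cleanly to the weighted spaces $L^2_s$ and $\VV^n$ on $\RR_+$. This weighted elliptic theory with exponential weights on $\RR$ is the substantive content of \S\ref{Sell}; once it is available, the three bounds of the proposition follow uniformly in $\mu\in[0,1]$, the $\mu$-dependence entering only through the $\sqrt\mu$-rescaling of $h_0\D_x$ built into the definition of $\cH^1_s$.
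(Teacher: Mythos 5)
Your overall architecture is the same as the paper's: decouple the two scalar components, transport to the full line by the logarithmic change of variables that turns $h_0\D_x$ into $\D_y$, treat $\D_t+P$ with $P=(\kappa^2-\D_y^2)^{1/2}$ as a constant-coefficient Fourier-multiplier problem that commutes with $X_1$ and $X_2$, and read the degenerate weights $h_0^s$ as exponential weights at $y=-\infty$. Your derivation of the smoothing estimate by testing against $Pv$ is a legitimate energy-method alternative to the paper's explicit Fourier/Duhamel representation \eqref{solmellipeq}, and the recursion for $X_1^kV(0)$ is fine. Two steps, however, are genuinely under-justified.

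First, the weighted full-line theory is asserted rather than proved, and you defer it to \S\ref{Sell} --- which is circular, since Proposition \ref{propell} \emph{is} what that section establishes. The paper's device is not a commutator bound but direct conjugation: $P_\delta=e^{\delta y}Pe^{-\delta y}$ has symbol $((\eta+i\delta)^2+\kappa^2)^{1/2}$, which still satisfies $\re p_\delta(\eta)\gtrsim\langle\eta\rangle$ precisely when $|\delta|<\kappa$; since $L^2_s(\RR_+)$ transports to the \emph{sum} space ${\bf H}^0_{s+1,0}(\RR)$ and the relevant exponents are $\delta=(s+1)/2\le 3/2$ for $s\le 2$, this is exactly where the hypothesis $\kappa>2$ (in fact $\kappa\ge 3$ in Proposition \ref{propell76}) enters. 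Your sketch never uses $\kappa>2$, and a mere $L^2$-boundedness of $e^{sy/2}[P,e^{-sy/2}]$ is not by itself enough to reproduce the coercivity needed for the $L^2_T H^1$ gain in the weighted space; you need the ellipticity of the conjugated symbol.

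Second, your $L^{\infty,p}$ argument has a real gap: the source of \eqref{ellipeq} is $V_1+{\bf F}_2V_2+{\bf F}_0V_0$ with ${\bf F}_2=-X_2P^{-1}$ and ${\bf F}_0=\kappa^2P^{-1}$ nontrivial multipliers of order $\le 0$, and such multipliers do \emph{not} act on $L^\infty$. So you cannot bound $\|f(s)\|_{L^\infty}$ by $\sum_j\|V_j(s)\|_{L^\infty}$ and then use positivity of the kernel of $e^{-(t-s)P}$; that step only handles the $V_1$ term. The paper's fix is to estimate the kernel $\Psi(t,\cdot)$ of the \emph{composition} $e^{-tP}Q$, showing $\|\Psi(t,\cdot)\|_{L^1}\lesssim t^{-1/2}$ via Plancherel applied to $\Psi$ and $y\Psi$; integrating $(t-s)^{-1/2}$ over $[0,t]$ is what produces the $\sqrt{T}$ in the stated bound (a contraction semigroup acting on a bounded source would only give a factor $T$). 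Your ``Cauchy--Schwarz splitting in time'' gestures at this but does not confront the fact that $Q$ alone is unbounded on $L^\infty$.
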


 As a conclusion, the iterative scheme we shall consider is the following
 \begin{equation}\label{scheme}
 \begin{cases}
 \cL_{a}(V^k,\D)V_m^{k+1}&=\cS_m(V^k,V_1^k,V_2^k)\qquad (m=1,2),\\
 \cL (V^k,\D)V_0^{k+1}&=\cS(V^k,V_1^k,V_2^k),\\
 {\bf E}(\D)V^{k+1}&=V^k_1 + {\bf F}_2 V^k_2 + {\bf F}_0 V^k_0
 \end{cases}
 \end{equation}
 (the choice of the first iterate $k=1$ will be discussed in \S \ref{sectexIBVP} below).
 We take the natural initial data  :  first we choose
 \begin{equation}
 \label{inititer1}
 V^{k}_{|  t = 0} =  V^{k}_0{}_{| t = 0}  = (\mez, u^0), \quad V^{k}_2{}_{| t = 0}
 = (0,  X_2 u^0) . 
 \end{equation}  
For the initial value of $V^{k}_1$, we take the data given by the equation \eqref{mod317} evaluated at $t = 0$
 \begin{equation}
\label{inititer2}V^{k}_1{}_{| t = 0}
 =   \big( - \frac{1}{c (q^0)} \D_x u^0, {\bf d}_0^{-1} (B'(x) - {\bf l} q^0 - \mu q_1{}_{ |t = 0} ) \big) 
 \end{equation}  
where ${\bf d}_0$ is the operator ${\bf d}$ at time $0$, which is known since it involves only the initial values of $q$ and $\vp$, that is $q^0 = \mez$ and $\vp^0 = x$ (the invertibility properties of ${\bf d}_0$ are discussed in Section \ref{Sect9}).


\subsection{The initial values of the time derivatives} \label{sectIVTD}

Because the right-hand-side of the energy estimates involve norms of $\D_t^j V^k _{| t = 0} $ (through $C_0$ in Proposition \ref{propHO} for instance), we have to show that theses quantities remain bounded through the iterative scheme. 

The initial values of $\D_t^j V$ are computed by induction on $j$, writing the equation \eqref{formFL} under the form 
$$
\D_t V  = \cA  (V) 
$$
where $\cA$ is a non linear operator acting on $V$. However, $\cA$ involves ${\bf d}^{-1}$, and it is easier to commute first the equation \eqref{formFL} with $\D_t^j$, before applying ${\bf d}^{-1}$. 
This yields an induction formula
\begin{equation}
\label{initdtj}
\D_t^{j+1} V_{| t = 0} = \cA _j  ( V_{| t = 0}, \ldots, \D_t^{j} V_{| t = 0}) 
\end{equation}
where the $\cA_j$ are non linear operators which involve only $\D_x$ derivatives and 
${{\bf d}^0}^{-1}$, 
where ${\bf d}^0 = { \bf d}_{| t = 0}$ is independent of the initial value $u^0$.

In particular, starting from $V^0_{in}  = (\mez, u^0)$ with $u^0$ sufficiently smooth,  this formula defines by induction 
functions $V^j_{in} $, as long as ${\bf d}_0$ can be inverted. In particular, if this allows to define a smooth enough $V_{app}$  such that  
\begin{equation}
\label{Vapp}
\D_t^j V_{app}{}_{| t = 0}  = V^j_{in}, \qquad j \le n . 
\end{equation} 
then  $V_{app} $ is an approximate solution of \eqref{formFL} in the sense of Taylor expansions up to order $n-1$. This is made precise in Section~\ref{Sect9} where we prove the following proposition that will play a central role in the construction of the first iterate of the iterative scheme in the next section.
\begin{prop}
\label{propvapp}
There is $\epsilon_n > 0$, which depends   only on  $n$, such that, for $\sqrt{\mu}h'(0) \le \eps_n$ and  $u^0\in H^{n+2} (\RR_+)$, the 
$V^j_{in} $ are well defined in $H^{ n+1 -j}  (\RR_+)$ so that 
there is a $V_{app} \in H^{n+1} ([0, 1] \times \RR_+)$ satisfying \eqref{initdtj}.  
\end{prop}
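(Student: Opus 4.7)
The plan is to define the family $(V^j_{in})_{0\le j\le n}$ inductively via the recursion \eqref{initdtj}, to prove by induction that $V^j_{in}\in H^{n+1-j}(\RR_+)$, and then to build $V_{app}$ as a Seeley-type extension of the prescribed Taylor jet at $t=0$. Concretely, applying $\D_t^j$ to the first equation of \eqref{mod317} and evaluating at $t=0$ gives, since $c(q^0)=c(1/2)=1$,
\[
\D_t^{j+1} q_{|t=0}= -\D_x\,\D_t^j u_{|t=0} + R^j_q\bigl(V^0_{in},\dots,V^j_{in}\bigr),
\]
where $R^j_q$ is polynomial in $V^0_{in},\dots,V^j_{in}$ and their $\D_x$-derivatives. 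For the $u$-component I would rewrite the second equation of \eqref{mod317} as ${\bf d}[V]\D_t u=-{\bf l}q-\mu{\bf q}_1(V)-B'(\vp)$ and commute $\D_t^j$ through, exchanging $\D_t^j{\bf d}[V]$ with ${\bf d}[V]\D_t^j$ by commutator formulas of the type computed in Lemmas~\ref{lemmaLinear1}--\ref{lemmaLinear2}. Since at $t=0$ one has $\vp=x$ and $q^0=1/2$, the operator ${\bf d}[V]_{|t=0}={\bf d}_0$ is independent of $u^0$, and I obtain
\[
{\bf d}_0\bigl(\D_t^{j+1} u_{|t=0}\bigr)=R^j_u\bigl(V^0_{in},\dots,V^j_{in}\bigr),
\]
with $R^j_u$ polynomial in the $V^k_{in}$ ($k\le j$), their $\D_x$-derivatives, and the smooth functions $B^{(\ell)}$, $h_0^{(\ell)}$. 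This is exactly \eqref{initdtj}, and the only nonlocal operation involved is ${\bf d}_0^{-1}$.

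Next, I would prove by induction on $j\le n$ that $V^j_{in}\in H^{n+1-j}(\RR_+)$. The base $j=0$ uses $u^0\in H^{n+2}\subset H^{n+1}$. For the inductive step, Moser-type product estimates applied to $R^j_q$ (which is allowed since $H^{n+1-j}$ is a $1$D algebra for $j\le n$) control the $q$-component. The $u$-component requires inverting ${\bf d}_0$ and this is where Section~\ref{Sect9} enters: because $h_0(0)=0$ and $h_0'(0)>0$, the equation ${\bf d}_0 u=F$ is an ODE with a regular singular point at the origin whose indicial exponents are governed by $\mu\, h_0'(0)^2$; the smallness condition $\sqrt{\mu}\,h_0'(0)\le\eps_n$, i.e.\ \eqref{initan}, keeps these exponents away from the integer values that would obstruct $H^s$-solvability for $s\le n+1$ and yields ${\bf d}_0^{-1}$ as an isomorphism gaining two derivatives on the appropriate weighted scale. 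This is the technical heart of the argument, and the reason $\eps_n$ must depend on $n$: larger $n$ means more integer indicial values to avoid. Combined with product estimates on $R^j_u$, this gives $V^{j+1}_{in}\in H^{n-j}(\RR_+)$ and closes the induction.

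Finally, given the jet $(V^j_{in})_{0\le j\le n}$ with $V^j_{in}\in H^{n+1-j}(\RR_+)$, the standard trace/extension theorem for $H^{n+1}([0,1]\times\RR_+)$ produces an extension with the prescribed time derivatives at $t=0$; an explicit such extension is
\[
V_{app}(t,x):=\sum_{j=0}^n \frac{t^j}{j!}\,\chi(\lambda_j t)\,V^j_{in}(x),
\]
where $\chi\in C_c^\infty(\RR)$ equals $1$ near $0$ and the $\lambda_j$ are chosen large enough in terms of $\Vert V^j_{in}\Vert_{H^{n+1-j}}$ so that each summand lies in $H^{n+1}([0,1]\times\RR_+)$. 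Then $V_{app}\in H^{n+1}([0,1]\times\RR_+)$ and $\D_t^k V_{app}{}_{|t=0}=V^k_{in}$ for $0\le k\le n$, so \eqref{Vapp} and hence \eqref{initdtj} hold. The only genuinely hard step is the inversion of ${\bf d}_0$ above: it is degenerate elliptic with a regular-singular point at the shoreline, standard elliptic theory does not apply, and the ODE analysis of Section~\ref{Sect9}, tracking the indicial exponents as functions of $\mu\, h_0'(0)^2$, is what makes the entire scheme work.
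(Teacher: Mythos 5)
Your overall scheme is the one the paper follows: commute $\D_t^j$ with \eqref{mod317}, evaluate at $t=0$, invert ${\bf d}_0$ via the regular-singularity analysis of Section~\ref{Sect9}, and extend the jet. Two of your steps, however, do not work as written. First, the indicial mechanism is not a matter of ``integer values to avoid'': writing $e_\mu(r)$ for the indicial polynomial of the model operator ${\bf d}^{00}$, with real roots $r_1<-1<-\mez<r_2$, solvability of ${\bf d}^0u=f$ with $f\in \cHH^k$ requires the \emph{one-sided} condition $e_\mu(k-\mez)>0$, i.e.\ $k-\mez<r_2$ (Lemma~\ref{lem93}, Proposition~\ref{prop95}); since $r_2$ grows like $(\sqrt\mu\,h_0'(0))^{-1}$, this is exactly what forces $\sqrt\mu\,h_0'(0)\le\eps_n$ with $\eps_n$ depending on $n$. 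Second, the induction hypothesis $V^j_{in}\in H^{n+1-j}$ alone does not close uniformly in $\mu$: the recursion for $u^{j+1}$ has the form \eqref{inducID2}, whose dispersive part $\mu(h_0\D_x\cN_0+\cN_1)$ contains products with up to two factors $h_0\D_x u^{j'}$, while $({\bf d}^0)^{-1}$ gains only $\sqrt\mu\,h_0\D_x$ and $\mu(h_0\D_x)^2$ (Propositions~\ref{prop95} and \ref{prop96}), not unweighted derivatives. One must therefore propagate $\sqrt\mu\,h_0\D_x V^j_{in}\in H^{n+1-j}$ alongside $V^j_{in}\in H^{n+1-j}$, as in \eqref{inducID3}; this is also the only reason the hypothesis is $u^0\in H^{n+2}$ rather than $H^{n+1}$ (it is needed to start the induction with $h_0\D_x u^0\in H^{n+1}$), which your base step ``$u^0\in H^{n+2}\subset H^{n+1}$'' does not account for.

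Finally, your explicit extension $V_{app}=\sum_j\frac{t^j}{j!}\chi(\lambda_jt)V^j_{in}(x)$ does not lie in $H^{n+1}([0,1]\times\RR_+)$: for $j\ge1$ the derivative $\D_x^{n+1}$ of the $j$-th summand would require $V^j_{in}\in H^{n+1}$, which you do not have, and enlarging $\lambda_j$ only helps with time derivatives. Either invoke the surjectivity of the trace map $u\mapsto(\D_t^ju_{|t=0})_{j\le n}$ from $H^{n+1}([0,1]\times\RR_+)$ onto $\prod_jH^{n+1-j-\mez}(\RR_+)$, which suffices since $V^j_{in}\in H^{n+1-j}$, or use a Borel-type construction with frequency-dependent cutoffs $\chi(t\langle D_x\rangle)$ in place of $\chi(\lambda_jt)$.
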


 From now on, we assume that the condition $\sqrt{\mu}h'(0) \le \eps_n$ is satisfied.

\medbreak

An important  remark is  that the $V^j_{in}$ remain the initial data of the time derivatives of the solutions, all along the 
 iterative scheme \eqref{scheme}. 
\begin{prop}
\label{propinit}
Suppose that $(\uV, \uV_0, \uV_1, \uV_2) $ is smooth and satisfies for $j \le n-1$, 
\begin{equation}
\label{initk} 
\left\{\begin{aligned} 
&
\D_t^j \uV{}_{| t = 0}  = \D_t^j \uV_0{}_{| t = 0}= V^j_{in},  \quad  
\\
&\D_t^j \uV_1 {}_{| t = 0} = V^{j+1}_{in}, 
 \quad  
\D_t^j \uV_2 {}_{| t = 0} = X_2 V^{j}_{in}, 
\end{aligned}\right. 
\end{equation} 
then any smooth solution $(V, V_0, V_1, V_2) $ of 
\begin{equation}
\label{theeq} 
 \begin{cases}
 \cL_{a}(\uV ,\D)V_m &=\cS_m( \uV, \uV_1, \uV_2 )\qquad (m=1,2),\\
 \cL (\uV ,\D)V_0 &=\cS(\uV, \uV_1, \uV_2 )\\
 {\bf E}(\D)V &=F(V_0,V_1,V_2)
 \end{cases}
 \end{equation}
with initial conditions
\begin{equation}
 \label{theinit}
 \begin{cases}
 V_{|  t = 0} =  V_0{}_{| t = 0}  = (\mez, u^0), \quad V_2{}_{| t = 0}
 = (0,  X_2 u^0) . 
 \\
V_1{}_{| t = 0}
 =  V^1_{in} =  \big( - \frac{1}{c (q^0)} \D_x u^0, {\bf d}_0^{-1} (B'(x) - {\bf l} q^0 - \mu q_1{}_{ |t = 0} ) \big) 
\end{cases}
 \end{equation}  
also satisfies  the conditions \eqref{initk}. 
\end{prop}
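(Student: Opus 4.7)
The plan is to proceed by induction on $j\in\{0,1,\ldots,n-1\}$, establishing the four identities of \eqref{initk} for $(V,V_0,V_1,V_2)$ simultaneously at each step. The base case $j=0$ is immediate from the prescribed initial data \eqref{theinit}, once one notes that $X_2(\mez,u^0)=(0,X_2u^0)=X_2V^0_{in}$ because $q^0=\mez$ is constant.

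For the inductive step ($1\le j\le n-1$), assume the four identities for all $i\le j-1$. I first deal with $V_0$ and $V_m$ ($m=1,2$) by applying $\D_t^{j-1}$ to the equations $\cL(\uV,\D)V_0=\cS(\uV,\uV_1,\uV_2)$ and $\cL_a(\uV,\D)V_m=\cS_m(\uV,\uV_1,\uV_2)$ and evaluating at $t=0$. The principal parts single out $c(\uq^0)\D_t^jq_0|_{t=0}$, ${\bf d}_0\D_t^ju_0|_{t=0}$, and their analogues for $V_m$; a Leibniz expansion shows that all remaining contributions involve only $\D_t^i\uV|_{t=0},\D_t^i\uV_1|_{t=0},\D_t^i\uV_2|_{t=0}$ (determined by \eqref{initk} to equal $V^i_{in},V^{i+1}_{in},X_2V^i_{in}$) together with $\D_t^iV_0|_{t=0},\D_t^iV_m|_{t=0}$ for $i\le j-1$ (known by the inductive hypothesis). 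Inverting ${\bf d}_0$, which is permissible under the standing hypothesis $\sqrt{\mu}h_0'(0)\le\eps_n$ by Proposition~\ref{propvapp}, yields explicit formulas for $\D_t^jV_0|_{t=0}$ and $\D_t^jV_m|_{t=0}$. The crucial observation is that these formulas arise from the universal recurrence \eqref{initdtj} defining $V^j_{in}$ (respectively from its $X_m$-differentiated form \eqref{formQL}) by the substitution $(V,X_1V,X_2V)\mapsto(\uV,\uV_1,\uV_2)$, and by hypothesis the time derivatives of these inputs at $t=0$ coincide numerically with those of an exact solution satisfying $\D_t^iV|_{t=0}=V^i_{in}$. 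Consequently $\D_t^jV_0|_{t=0}=V^j_{in}$, and $\D_t^jV_m|_{t=0}=\D_t^j(X_mV)|_{t=0}$, which equals $V^{j+1}_{in}$ for $m=1$ (since $X_1=\D_t$) and $X_2V^j_{in}$ for $m=2$ (since $X_2$ is $t$-independent).

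I then close the induction using the elliptic equation. Applying $\D_t^{j-1}$ to $(\D_t+{\bf P})V=V_1+{\bf F}_2V_2+{\bf F}_0V_0$ at $t=0$ and inserting the values just obtained together with the inductive hypothesis,
\begin{align*}
\D_t^jV|_{t=0}
&=V^j_{in}+{\bf F}_2(X_2V^{j-1}_{in})+{\bf F}_0V^{j-1}_{in}-{\bf P}V^{j-1}_{in}\\
&=V^j_{in}+\bigl[-X_2{\bf P}^{-1}X_2+\kappa^2{\bf P}^{-1}-{\bf P}\bigr]V^{j-1}_{in}=V^j_{in},
\end{align*}
the last equality because ${\bf P}$ commutes with $X_2$ (as it is a function of $X_2$) and $\kappa^2-X_2^2={\bf P}^2$, so the bracketed operator vanishes.

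The hard part will be the precise identification of recurrences carried out in the first sub-step: conceptually it is clear, because the hyperbolic/dispersive lines of \eqref{theeq} are obtained from the original system \eqref{formFL} and from \eqref{formQL} by the substitution $(V,X_1V,X_2V)\mapsto(\uV,\uV_1,\uV_2)$, but to make it rigorous one must expand the Leibniz products carefully and verify that every term of $\D_t^{j-1}$ applied to the scheme's coefficients matches, at $t=0$, the corresponding term in the universal recurrence \eqref{initdtj}. The cancellation in the elliptic step is, by contrast, a direct algebraic consequence of the design \eqref{defE} of ${\bf E},{\bf F}_0,{\bf F}_2$, which was chosen precisely to render \eqref{ellipeq} tautological when $V_0=V$, $V_1=X_1V$, $V_2=X_2V$.
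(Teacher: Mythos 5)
Your proof is correct and follows essentially the same strategy as the paper's: induction on $j$, using the fact that the equations and initial data determine the $\D_t^j V_*{}_{|t=0}$ uniquely, and identifying the resulting recurrence with the one defining the $V^j_{in}$ (equivalently, with the Taylor coefficients of the approximate solution of \eqref{formQL3}). You supply more detail than the paper does — in particular the explicit cancellation $-X_2{\bf P}^{-1}X_2+\kappa^2{\bf P}^{-1}-{\bf P}=0$ in the elliptic step — but the underlying argument is the same.
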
 

\begin{proof}
The proof is by induction on $j$. This is true for $j = 0$ by the choice of the initial conditions.   Expliciting the time derivative, it is clear that the 
$\D_t^j V_*{}_{| t = 0} $ can be computed by induction on $j$, in a unique way. 
Therefore it is sufficient to show that the solution to \eqref{theeq}, \eqref{theinit} satisfies the required condition \eqref{initk}
This is true because 
$ (\D_t V , X_2 V, V, V) $ is  an approximate solution of \eqref{formQL3}  in the sense of Taylor expansions up to order $n-2$. 
\end{proof}


 \subsection{Construction of solutions for the linearized mixed initial boundary value problem}
 \label{sectexIBVP}
 
 We have already proved in Proposition \ref{propell} that the initial value problem for the elliptic equation 
 is well posed. In order to construct a sequence of approximate solutions $(V^k)_k$ using the iterative scheme \eqref{scheme}, it remains to solve linear problems of the form
 \begin{equation}\label{eqlinex}
 \cL_a(\uV,\partial)V= F.
 \end{equation}

 They do not enter in a known framework, because of the dispersive term of the second equation and also because of the weights. However, one can solve such systems using a  scheme which we now sketch. 
 The key ingredient  are the high-order a priori estimates 
 proved in Proposition \ref{propHO}. 
 We proceed as follows. 
 
\quad 1.  Assume first that  the coefficients $a$ and $\underline V$ are very smooth. The linear system can be cast in a variational form, and the {\sl a priori} estimates for the backward problem imply the existence of weak solutions in 
weighted $L^2$ spaces. 

\quad 2. Using tangential  mollifications (convolutions  in time) and variations on Friedrichs' Lemma, one proves that 
the weak solutions are strong, that is limit of smooth solutions. Therefore they satisfy the {\sl a priori} estimates 
in  $L^2$ and in weighted Sobolev spaces. 

\quad 3. Approximating the coefficients, this implies the existence of solutions when $a$ and $\underline V$ have 
the limited smoothness.

\quad 4. The gain of weights and $L^\infty$ estimates are proved using Hardy type inequalities. 
 
 Details are given in Section~\ref{Sex} below.  
 The next proposition summarizes the useful conclusion for the Cauchy problem with vanishing initial condition.

 \medbreak

 Consider  $\underline V $  and $F $ such that the quantities 
 \begin{equation}
   {\mathfrak m}_1(\uV ;T) ,  \quad  \tilde {\mathfrak m}_2(\uV;T),\quad 
 {\mathfrak m} (\uV;T),  \quad 
\tilde {\mathfrak m}(\uV;T), \quad \tilde {\mathfrak s}(F;T) , 
 \end{equation}    
defined at \eqref{defconstantes} are finite. Suppose in addition that 
\begin{equation}
\label{initnul}
\D_t^jF_{| t = 0} = 0 , \quad j \le n-2. 
\end{equation}
\begin{prop}\label{propexist}
 Suppose that $F \in L^2_T \VV'{}^{n-1}$ satisfies \eqref{initnul}.  Then, the Cauchy problem for  \eqref{eqlinex}  with initial data  $V_{| t = 0} = 0$ 
   has a unique solution in $C^0_T\VV^{n-1}$ and 
 $   \D_t^j V_{| t = 0} = 0 $ for $ j \le n-2$. 
\end{prop}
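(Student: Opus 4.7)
The plan is to follow the four-step strategy outlined immediately before the proposition, first constructing solutions for smooth data and coefficients via duality and then removing the smoothness assumptions by approximation. Assuming first that $\uV$ and $a(\uu)$ are smooth on $[0,T]\times\RR_+$ (with the positivity conditions $\uq>0$ and $1+\mu a(\uu)\ge\delta>0$), I would consider the backward adjoint problem $\cL_a^*[\uV,\D]W=G$ with terminal data $W_{|t=T}=0$. The same symmetrization used in the proof of Proposition \ref{enlin} (multiplying the first adjoint equation by $h_0^2(1+\mu\underline{a})q$, the second by $h_0 u$, and integrating by parts backwards in time) yields a backward energy estimate in $\VV=L^2_2\times\cH^1_1$. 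Combined with the duality identification of $\VV'=L^2_2\times\cH^{-1}_1$ stated at \eqref{defHm1}, a standard Hahn--Banach/Riesz argument applied to the linear form $W\mapsto\int_0^T\langle F,W\rangle_{\VV',\VV}\,dt$ defined on the range of the adjoint operator produces a weak solution $V\in L^\infty_T\VV$ of $\cL_a[\uV,\D]V=F$ satisfying the homogeneous initial condition $V_{|t=0}=0$.

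Next, I would show that every such weak solution is in fact a strong solution, i.e.\ a limit in $C^0_T\VV$ of smooth solutions. For a Friedrichs mollifier $\rho_\eps$ in time (with zero-extension below $t=0$ compatible with the vanishing initial condition), set $V^\eps:=\rho_\eps*_t V$; then
$$
\cL_a[\uV,\D]V^\eps=\rho_\eps*_t F+\bigl[\rho_\eps*_t,\cL_a[\uV,\D]\bigr]V,
$$
and the commutator, which involves only time derivatives of smooth coefficients, tends to zero in $L^1_T\VV'$ by a Friedrichs-type lemma adapted to the weighted spaces $L^2_s$ and $\cH^{\pm 1}_s$. Since $V^\eps$ is smooth enough for Proposition \ref{enlin} to apply directly, passing to the limit $\eps\to 0$ yields the $L^2$ energy estimate for $V$ itself, hence uniqueness of the weak solution and $V\in C^0_T\VV$.

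For higher regularity I would commute the equation with $X^\alpha$ for $|\alpha|\le n-1$, treating $X^\alpha V$ as a solution of the same linear system with source $F^{(\alpha)}$ of the form analyzed in Steps 1--2 of the proof of Proposition \ref{propHO}, and iterating the above weak=strong scheme. The a priori bound of Proposition \ref{propHO} (with $C_0=0$, as guaranteed below) then yields $V\in C^0_T\VV^{n-1}$. To remove the smoothness assumption on $\uV$, I would approximate by smooth coefficients $\uV^{(k)}$ preserving the norms in \eqref{defconstantes} together with the positivity $1+\mu a(\uu^{(k)})\ge\delta/2$; linearity and the uniform estimate make the corresponding solutions $V^{(k)}$ Cauchy in $C^0_T\VV^{n-1}$, with a limit solving the original equation. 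The vanishing of time derivatives at $t=0$ is propagated by induction: differentiating $\cL_a[\uV,\D]V=F$ in time $j$ times and evaluating at $t=0$ gives an expression for $\D_t^{j+1}V_{|t=0}$ involving ${\bf d}_0^{-1}$ (invertible by Section \ref{Sect9} thanks to \eqref{initan}) applied to a polynomial in $\D_t^k V_{|t=0}$ and $\D_t^k F_{|t=0}$ for $k\le j$; starting from $V_{|t=0}=0$ and using \eqref{initnul}, one obtains $\D_t^jV_{|t=0}=0$ for all $j\le n-2$.

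The main obstacle is the Friedrichs-type lemma in Step 2: the degeneracy of ${\bf d}[\uV]\D_t$ at the shoreline prevents a direct appeal to the classical theory, and the commutator $[\rho_\eps*_t,{\bf d}[\uV]\D_t]$ must be estimated uniformly in $\eps$ in the dual weighted space $\cH^{-1}_1$. One has to exploit the factored form \eqref{defd} of ${\bf d}[\uV]$, writing each commutator as $\sqrt{\mu}{\bf l}(\cdot)$ plus bounded zeroth-order contributions, and use the identification $(\cH^1_1)'=\cH^{-1}_1$ to absorb the boundary terms that would otherwise arise when integrating by parts at $x=0$. A subsidiary difficulty is the approximation of coefficients in Step 3, where the smooth approximants $\uV^{(k)}$ must preserve simultaneously the high-order weighted norms, the $L^\infty$-type bounds contained in ${\mathfrak m}_1$, and the pointwise positivity of $1+\mu a(\uu^{(k)})$, constraining the construction of the approximating sequence.
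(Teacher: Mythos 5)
Your architecture coincides with the paper's (weak existence by duality against the backward adjoint problem, weak\,=\,strong by time mollification with a Friedrichs-type commutator lemma in the weighted spaces, higher regularity by commutation, then approximation of the coefficients), and your identification of the commutator $[\rho_\eps*_t,{\bf d}[\uV]\D_t]$ as the delicate point of the weak\,=\,strong step is on target. But there is a genuine gap in the higher-regularity step. Commuting with $\D_t$ is fine (Lemmas \ref{lem813}--\ref{lemsm814}), but you cannot ``treat $X^\alpha V$ as a solution of the commuted system and iterate the weak\,=\,strong scheme'' when $\alpha$ contains a factor $X_2=h_0\D_x$: to make sense of the commuted equation you must already know that $h_0\D_x p$ lies in $L^2_2$ and $(h_0\D_x)^2u$ in the energy space, which is precisely what is to be proved, and this regularity cannot be read off the second equation because ${\bf l}p$ is only determined modulo the dispersive term ${\bf d}[\uV]\D_t u$, which itself carries two conormal derivatives of $u$. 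Eliminating $u$ via the first equation leaves the degenerate operator $(1+b_0\tilde c\,h_0\D_t^2)$ acting on $h_0{\bf l}p$; the paper's Lemma \ref{Grosseficelle} provides the a priori estimate for exactly this operator, and that is the step that unlocks the differentiation in $h_0\D_x$ in Lemma \ref{lemsm815}. Without an argument playing this role your induction on $|\alpha|$ does not start.

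Two secondary points. First, you assert that the approximating solutions $V^{(k)}$ are Cauchy in $C^0_T\VV^{n-1}$; estimating $V^{(k)}-V^{(l)}$ at order $n-1$ requires placing $(\cL_a[\uV^{(k)},\D]-\cL_a[\uV^{(l)},\D])V^{(l)}$ in $L^2_T\VV'{}^{n-1}$, which consumes $n$ derivatives of the coefficients and runs into the loss of derivative of Remark \ref{remgnl}. The paper avoids this: the uniform bound of Proposition \ref{propHO} gives boundedness of $(V^{(l)})$ in $C^0_T\VV^{n-1}$, one passes to weak limits, and the limit is identified as a strong (hence unique) solution; the vanishing of $\D_t^jV_{|t=0}$ is likewise inherited from the approximants rather than recomputed via ${\bf d}_0^{-1}$. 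Second, a symmetric mollifier $\rho_\eps*_t$ is awkward near $t=T$ on the backward-extended interval; the causal mollifier $J_\eps$ of \eqref{molli}, averaging only over past times, is the natural choice on $(-\infty,T]$ and yields the clean commutation identity \eqref{comm1b} used throughout \S\ref{sectstrong}.
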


Together with Propositions~\ref{propell} for the elliptic equation and \ref{propinit}  to treat the initial condition, 
one can now solve the linearized equations \eqref{theeq} with the initial conditions~\eqref{theinit} and define the iterates  $[V^k] := (V_1^k,V_2^k,V_0^k,V_2^k)$.  We proceed as follows.

Consider a smooth initial data $u^0 \in   H^{n+2} (\RR_+)$. Using  Proposition~\ref{propvapp}   introduce  
$V_{app} \in H^{n+1} ( [0, T] \times \RR_+) $ satisfying the conditions \eqref{Vapp}.
We start the iteration scheme with  
\begin{equation}
\label{itere1}
[V^1] = \big(V_1^1,V_2^1,V_0^1,V^1 \big) = \big(V_{app},V_{app},\D_t V_{app},  X_2 V_{app}\big),  
\end{equation} 
and next define the sequence $([V^k] )_k$ by induction by solving \eqref{scheme} \eqref{inititer1} \eqref{inititer2}. 
Indeed, assume that 
the quantities 
 \begin{equation}
 \label{inducmin}
 \begin{cases}
   \quad \Vert V_m^k \Vert_{L^{\infty,p}}, \quad 
   {\mathfrak m}(V^k;T),  \quad  \tilde {\mathfrak m}(V^k;T)\\
 {\mathfrak m}_1(V^k;T), \quad 
{\mathfrak m}_2(V^k;T), \quad  {\mathfrak m}_2(V_m^k;T)\\
  \end{cases}
 \end{equation}
 are finite and that 
 \begin{equation}
\label{inititerk} 
\left\{\begin{aligned} 
&
\D_t^j V^k{}_{| t = 0}  = \D_t^j V^k_0{}_{| t = 0}= V^j_{in},  \quad  
\\
&\D_t^j V^k_1 {}_{| t = 0} = V^{j+1}_{in}, 
 \quad  
\D_t^j V^k_2 {}_{| t = 0} = X_2 V^{j}_{in}  
\end{aligned}\right.  , \qquad j \le n-1. 
\end{equation} 
 The following lemma is proved in the next section.
 \begin{lem}
 \label{lemgb49}
  The quantities ${\mathfrak s}(\cS^{k};T)$ and ${\mathfrak s}(\cS_m^{k};T)$ associated to 
  the source terms 
  $ \cS^k=\cS(V^k,V_1^k,V_2^k)$ and  $\cS_m^k(V^k,V_1^k,V_2^k)$ with $\cS$ and $\cS_m$ as given in \eqref{defS} and \eqref{defSm}, are finite. 
 \end{lem}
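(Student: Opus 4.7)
The plan is to verify, term by term, that every nonlinear expression appearing in $\cS(V^k,V_1^k,V_2^k)$ (see \eqref{defS}) and in $\cS_m(V^k,V_1^k,V_2^k)$ (see \eqref{defSm}, together with the formulas for $\cF^{(m)}$ and $\cG^{(m)}_j$ in Lemmas~\ref{lemmaLinear1} and \ref{lemmaLinear2}) can be bounded, in the norms that make up ${\mathfrak s}$, by a continuous function of the quantities listed in \eqref{inducmin}. Each source term is a polynomial in the components of $V^k,V_1^k,V_2^k$, multiplied by smooth coefficients depending on $x$ (through $h_0$ and its derivatives) and on $\varphi^k(t,x)=x+\int_0^t u^k$ (through $B'(\varphi^k)$, $B''(\varphi^k)$ and higher derivatives of $B$), together with powers of $q^k$ and $1/q^k$.

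First I would set up three elementary tools. (i) A Moser type product inequality in the weighted spaces $L^{2,k}_s$, of the form
\[
\|fg\|_{L^{2,k}_s}\ls \|f\|_{L^{\infty,p}}\|g\|_{L^{2,k}_s}+\|f\|_{L^{2,k}_s}\|g\|_{L^{\infty,p}},
\]
valid for $k\le 2p$; the hypothesis $p+7\le n\le 2p$ built into \eqref{smoothness} guarantees that such a split is always possible. (ii) The Fa\`a~di~Bruno formula for commutators of $X^\alpha$ with $B^{(j)}(\varphi^k)$ and with rational functions of $q^k$: all commutators produce only $X$-derivatives of $u^k$ or of $q^k$, every one of which is already controlled by the constants in \eqref{inducmin}, while the $L^\infty$ bound on $1/q^k$ from ${\mathfrak m}_1$ prevents the denominators from exploding. (iii) The mapping property ${\bf l}=h_0\D_x+2h_0':L^{2,k+1}_{s-1}\to L^{2,k}_s$, which lets me trade one derivative for one unit of weight when putting a term inside the $\sqrt\mu\,{\bf l}g_1$ piece of the decomposition ${\bf g}=g_0+\sqrt\mu\,{\bf l}g_1$.

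With these tools in hand I would check each of the (roughly twenty) monomial contributions to $\cS$ and $\cS_m$ by choosing the appropriate $L^\infty/L^2$ split and the appropriate piece to place inside $g_1^{(m)}$. For instance, in $\cF^{(m)}=-c'(q^k)q_1^k q_m^k-c(q^k)(X_mh_0/h_0)q_1^k$ the factor $c'(q^k)$ is $L^\infty$-bounded thanks to the lower bound on $q^k$ in ${\mathfrak m}_1$, $X_mh_0/h_0$ is smooth in $x$, and the product $q_1^k q_m^k$ is controlled in $L^{2,n-1}_2\cap L^{2,p+3}$ by tool~(i) together with ${\mathfrak m}(V_m^k;T)$ and $\|V_m^k\|_{L^{\infty,p}}$. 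The terms in $\cG_0^{(m)}$ are all of the form (smooth function of $\varphi^k$)$\times$(polynomial in $V^k,V_1^k,V_2^k$ of degree at most four), and are controlled in $L^{2,n-1}_1\cap L^{2,n-2}\cap L^{\infty,p}$ by the same product argument; the term $-B''(\varphi^k)\varphi^k_m/\mu$ is bounded using only the smoothness of $B$ and the $L^\infty$ control of $u^k$ and $\varphi^k_m=X_m\varphi^k$. The terms in $\cG_1^{(m)}$ carry an extra factor of $q^k$ or $u^k$, which is $L^\infty$-bounded, and are placed inside $g_1^{(m)}$, so that tool~(iii) reduces the weight requirement from $h_0^2$ to $h_0$. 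The same analysis, using the equivalent form \eqref{equivq1} for ${\bf q}_1$, treats the single nontrivial component of $\cS$.

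The main obstacle is purely combinatorial: for each contribution one must verify simultaneously that the chosen split respects the total derivative count $n-1$ and that the exact weight matches the target space ($L^{2,n-1}_2\cap L^{2,p+3}$ for $f$, $L^{2,n-1}_1\cap L^{2,n-2}\cap L^{\infty,p}$ for $g_0$ and $g_1$). Because \eqref{inducmin} simultaneously controls high-order weighted $L^2$ norms through ${\mathfrak m}$ and $\widetilde{\mathfrak m}$, low-order weighted $L^2$ norms through ${\mathfrak m}_2$, and low-order $L^\infty$ norms through ${\mathfrak m}_1$ and $\|V_m^k\|_{L^{\infty,p}}$, no term ever saturates the regularity budget. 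Consequently each piece is bounded by a continuous function of the constants in \eqref{inducmin}, which proves that ${\mathfrak s}(\cS^k;T)$ and ${\mathfrak s}(\cS_m^k;T)$ are finite.
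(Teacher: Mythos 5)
Your generic strategy (weighted Moser products, Fa\`a di Bruno for $B^{(j)}(\varphi^k)$, the mapping property of ${\bf l}$) is indeed how the paper disposes of most terms, but your key claim that ``no term ever saturates the regularity budget'' is false, and the terms that do saturate it are precisely what makes this lemma nontrivial. Concretely, $\cG^{(m)}_0$ and $\cG^{(m)}_1$ contain $X_2u\,X_1q_m\,B'$ and $\tfrac43 q\,X_2u\,X_1q_m$. To bound $\cG^{(m)}_j$ in $L^\infty_TL^{2,n-1}_1$ you must apply $X^\alpha$ with $|\alpha|=n-1$, and the worst contribution is $X_2u\cdot X^\alpha X_1 q_m$, which carries $n$ conormal derivatives of $q_m$ — one more than any of the quantities in \eqref{inducmin} or the $\VV^{n-1}$ bound on $V_m^k$ controls. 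No choice of $L^\infty/L^2$ split rescues this. The paper's proof (Lemma \ref{lemsource}) resolves it by substituting the equation satisfied by the iterate, $X_1q_m=\tfrac{1}{c(q)}\bigl(-\D_xu_m+\cF_m(q,q_1,q_2)\bigr)$ (see \eqref{eqX1qm}), which converts the offending $n$-th derivative of $q_m$ into $X^\alpha X_2 u_m$ (controlled, with the $\sqrt\mu$ prefactor, by the $\cH^{1,n-1}_1$ part of the energy norm) plus lower-order terms. Your plan never invokes the equations of the scheme, so it cannot close.

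There are three further points of the same nature that your plan glosses over. First, in terms like $X^\alpha(q_mqX_2u_1)$ the leading piece $(h_0X^\alpha q_m)\,q\,\D_xu_1$ requires $\|\D_xu_1\|_{L^\infty}$: the $L^{\infty,p}$ norms only control $h_0\D_xu_1$, not $\D_xu_1$ near $x=0$, and again the paper gets $\|\D_xu_j\|_{L^\infty}\le C(M_1)$ from the first equation, $\D_xu_j=-c(q)X_1q_j+\cF_j$ (see \eqref{dxuj}). Second, for the unweighted $L^{2,n-2}$ bound, $X^\alpha X_2u_j$ with $|\alpha|\le n-2$ is only controlled via $\|u_j\|_{L^{2,n-1}}$ (one derivative too many) unless one again substitutes the equation on $q_j$. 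Third, the $L^{\infty,p}$ bound on $\cG^{(m)}_j$ costs $\|(u,u_1,u_2)\|_{L^\infty_TL^{\infty,p+1}}$ — one derivative above the controlled $L^{\infty,p}$ level — which the paper recovers by writing $u=u^0+\int_0^t\D_tu$ and invoking Lemma \ref{propuinf}, at the price of a factor $T$. These substitutions and time integrations are the substance of the proof; without them your term-by-term verification fails on the very terms responsible for the fully nonlinear character of the problem.
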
 
 
 We look for  $[V^{k+1} ]  $ as $[V^1] + [ \delta V^k]$, where 
 $[ \delta V^k] = (\delta V_1^k,\delta V_2^k,\delta V_0^k,\delta V^k)$ solves a system of the form
  \begin{equation}\label{scheme0}
 \begin{cases}
 \cL_{a}(V^k,\D) \delta V_m^{k}&= \delta \cS^k_m \qquad (m=1,2),\\
 \cL (V^k,\D) \delta V_0^{k}&= \delta \cS^k ,\\
 {\bf E}(\D) \delta V^{k}&= \delta F^k , 
 \end{cases}
 \end{equation}
with vanishing initial condition $ [\delta V^k]_{| t = 0} = 0$. By  Proposition~\ref{propinit}, $[V^{k}]$ is an approximate solution of \eqref{formQL3} in the sense of Taylor expansion at  $t = 0$, and thus the source term
$[\delta \cS^k ] = (\delta \cS^k_1, \delta \cS^k_2, \delta \cS^k , \delta F^k) $
 satisfies: 
\begin{equation}
\D_t^j  [\delta \cS^k] _{| t = 0} = 0, \quad j \le n-2. 
\end{equation} 
Hence,   Propositions~\ref{propexist} and \ref{propell} imply the following result: 

\begin{prop}
\label{propgb410}
Under the assumptions above, 
the equation \eqref{scheme0} \eqref{inititer1} \eqref{inititer2} has a solution 
$[V^{k+1} ] = (V_1^{k+1},V_2^{k+1},V_0^{k+1},V^{k+1}) $  with each term in 
$C^0_T\VV^{n-1}$. Moreover, it satisfies 
\eqref{inititerk}. 
\end{prop}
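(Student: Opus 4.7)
The strategy is to reduce to a perturbed problem $[V^{k+1}] = [V^1] + [\delta V^k]$ in which $[\delta V^k]$ satisfies the linear system \eqref{scheme0} with zero initial data, solve $[\delta V^k]$ using the linear theories of Propositions \ref{propexist} and \ref{propell}, and then recover $[V^{k+1}]$. The main preliminary task is to verify that $[V^1]$, built from the approximate solution $V_{app}$ of Proposition \ref{propvapp}, is compatible both with the prescribed initial data \eqref{inititer1}--\eqref{inititer2} and with the formal Taylor expansion at $t=0$ of any putative solution $[V^{k+1}]$.

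Explicitly, Proposition \ref{propvapp} produces $V_{app} \in H^{n+1}([0,1] \times \RR_+)$ with $\D_t^j V_{app}|_{t=0} = V^j_{in}$ for $j \le n$, the $V^j_{in}$ being determined by the recursion \eqref{initdtj}. The components of $[V^1]$ in \eqref{itere1} are then chosen so that
$$
\D_t^j V^1|_{t=0} = \D_t^j V_0^1|_{t=0} = V^j_{in}, \quad \D_t^j V_1^1|_{t=0} = V^{j+1}_{in}, \quad \D_t^j V_2^1|_{t=0} = X_2 V^j_{in}
$$
for $j \le n-1$, matching the Taylor coefficients dictated by Proposition \ref{propinit} applied formally through \eqref{inititerk}. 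Consequently the residuals
$$
\delta \cS^k_m = \cS^k_m - \cL_a(V^k, \D) V^1_m, \quad \delta \cS^k = \cS^k - \cL(V^k, \D) V_0^1, \quad \delta F^k = V_1^k + {\bf F}_2 V_2^k + {\bf F}_0 V_0^k - {\bf E}(\D) V^1
$$
have vanishing time derivatives at $t=0$ up to order $n-2$, and by Lemma \ref{lemgb49} together with the smoothness of $V_{app}$ they lie in the function spaces required in the hypotheses of Propositions \ref{propexist} and \ref{propell}.

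Applying Proposition \ref{propexist} to each of the three evolutionary equations for $\delta V_m^k$ ($m=1,2$) and $\delta V_0^k$ then yields unique solutions in $C^0_T \VV^{n-1}$ whose time derivatives vanish at $t=0$ through order $n-2$. Next, Proposition \ref{propell}, applied to the elliptic equation for $\delta V^k$ with right-hand side $\delta V_1^k + {\bf F}_2 \delta V_2^k + {\bf F}_0 \delta V_0^k \in L^\infty_T \VV^{n-1}$ and zero initial data (which lies trivially in $\VV^n$), produces $\delta V^k \in L^2_T \VV^n \cap C^0_T \VV^{n-1}$. Adding $[V^1]$, whose components are smooth by Proposition \ref{propvapp} and hence in $C^0_T \VV^{n-1}$, gives $[V^{k+1}] = [V^1] + [\delta V^k]$ with each component in $C^0_T \VV^{n-1}$. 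The Taylor expansion property \eqref{inititerk} for $k+1$ then follows by applying Proposition \ref{propinit} directly to $[V^{k+1}]$, since this tuple satisfies \eqref{theeq} with $\uV = V^k$ and the initial conditions \eqref{theinit}.

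The main difficulty is the algebraic check that the perturbation sources $[\delta \cS^k]$ indeed vanish to order $n-2$ at $t=0$. This relies on an inductive argument that mirrors the proof of Proposition \ref{propinit}: the Taylor coefficients of $V^k, V_0^k, V_1^k, V_2^k$ at $t=0$ are forced by the equations and the initial data, and by the induction hypothesis \eqref{inititerk} they agree with the $V^j_{in}$ produced recursively by \eqref{initdtj} from the single datum $u^0$. Once this compatibility is established, the existence and regularity of $[V^{k+1}]$ follow cleanly from the linear existence theorems of Sections \ref{Sell} and \ref{Sex}.
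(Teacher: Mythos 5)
Your proposal is correct and follows essentially the same route as the paper: decompose $[V^{k+1}]=[V^1]+[\delta V^k]$, use Proposition~\ref{propinit} together with the induction hypothesis \eqref{inititerk} to see that the residual sources $[\delta\cS^k]$ vanish to order $n-2$ at $t=0$, then invoke Proposition~\ref{propexist} for the evolutionary equations and Proposition~\ref{propell} for the elliptic one. (One minor slip: the right-hand side of the elliptic equation for $\delta V^k$ is $V^k_1+{\bf F}_2V^k_2+{\bf F}_0V^k_0-{\bf E}(\D)V^1$, built from the $k$-th iterates as in your first display, not from the $\delta V^k_j$; this does not affect the argument since it lies in $L^\infty_T\VV^{n-1}$ either way.)
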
 
The last step needed is the following proposition, proved in the next section together with precise bounds on the different quantities. 
\begin{prop}
\label{propgb411} 
The quantities 
$ \Vert V_m^{k+1} \Vert_{L^{\infty,p}_T}$, 
   ${\mathfrak m}(V^{k+1};T) $ ,  $\tilde {\mathfrak m}(V^{k+1};T) $, 
$ {\mathfrak m}_1(V^{k+1};T)$, 
$ {\mathfrak m}_2(V^{k+1};T)$, ${\mathfrak m}_2(V_m^{k+1};T)
$
 are finite. 
\end{prop}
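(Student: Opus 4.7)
The plan is to establish the six finiteness claims by chaining together the abstract existence and energy estimates already at our disposal with the Hardy-type inequalities used in the proof of Proposition~\ref{propHO}. Proposition~\ref{propgb410} already guarantees that $V^{k+1}, V_0^{k+1}, V_1^{k+1}, V_2^{k+1}$ lie in $C^0_T \VV^{n-1}$, so what remains is to upgrade this membership into control of the specific weighted, tangential and $L^\infty$ quantities that enter the definitions in \eqref{defconstantes}. Throughout, the source terms $\cS^k$ and $\cS^k_m$ are already known to satisfy ${\mathfrak s}(\cS^k;T), {\mathfrak s}(\cS^k_m;T) < \infty$ by Lemma~\ref{lemgb49}, and the initial data contribute a finite $C_0$ by Proposition~\ref{propvapp}.

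First, Proposition~\ref{propHO} applied to each of the first two lines of \eqref{scheme} (the operator $\cL$ being the case $a\equiv 0$ of $\cL_a$) controls $\Vert V_m^{k+1}\Vert_{L^\infty_T \VV^{n-1}}$ and $\Vert V_0^{k+1}\Vert_{L^\infty_T \VV^{n-1}}$. Feeding these bounds together with the initial condition $V^{k+1}_{|t=0} \in \VV^n$ into Proposition~\ref{propell} applied to the third line of \eqref{scheme} yields $V^{k+1} \in L^2_T \VV^n \cap C^0_T \VV^{n-1}$ together with the $L^{\infty,p+1}$ bound on $u^{k+1}$, which handles ${\mathfrak m}(V^{k+1};T)$ and $\widetilde{\mathfrak m}(V^{k+1};T)$. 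To pass from these energy-level bounds to the weighted components of ${\mathfrak m}_2(V^{k+1};T)$ and ${\mathfrak m}_2(V_m^{k+1};T)$, which require $L^{2,n-2}_1$ control on the $q$-component, we reproduce Step~3 of the proof of Proposition~\ref{propHO}: from the second equation of \eqref{scheme} one expresses ${\bf l}q^{(\alpha)}$ in terms of the source and of already-controlled $u$-derivatives, and the Hardy inequality of Corollary~\ref{coroH2} trades the ${\bf l}$-derivative for one extra power of $h_0$ in the weight. The $L^{\infty,p}$ bounds on $V_m^{k+1}$ and on $V^{k+1}$ are obtained by Step~4 of the same proof: Sobolev embedding on the $u$-components (with $\D_x u$ controlled through \eqref{eqstep1}) and the Hardy inequality of Corollary~\ref{coroH3} on the $q$-components.

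The main obstacle is the positivity part of ${\mathfrak m}_1(V^{k+1};T)$, namely $\inf q^{k+1} > 0$ and $\inf (1 + \mu a(u^{k+1})) > 0$. At $t=0$ both bounds hold with margin $\delta$ thanks to the choice of initial data \eqref{inititer1}--\eqref{inititer2} and the stability assumption \eqref{inita}, and, by Proposition~\ref{propinit}, $\D_t V^{k+1}_{|t=0}$ is fixed to $V^1_{in}$ so the positivity margin is the same as that produced by $u^0$. Since $p\ge 7$, the $L^{\infty,p}_T$ control already established on $V^{k+1}$ and on $V_1^{k+1}$ embeds $q^{k+1}$ and $a(u^{k+1})$ into $\mathrm{Lip}([0,T];L^\infty(\RR_+))$, so there exists $T_k \in (0,T]$ such that $q^{k+1}(t,\cdot) \ge \delta/2$ and $1 + \mu a(u^{k+1})(t,\cdot) \ge \delta/2$ on $[0,T_k]$; hence $\Vert 1/q^{k+1}\Vert_{L^\infty} + \Vert (1+\mu a(u^{k+1}))^{-1}\Vert_{L^\infty} < \infty$ and ${\mathfrak m}_1(V^{k+1};T_k) < \infty$. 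The fact that $T_k$ may a priori depend on $k$ is not an issue at this stage: a uniform positive lower bound on the time of existence, independent of $k$, will be extracted from the uniform bounds on the iterates in \S\ref{sectbounds}, at which point we may replace $T$ by that uniform time in all the preceding estimates without altering the argument.
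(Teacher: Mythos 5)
Your proposal is correct and follows essentially the same route as the paper: the paper's proof of this proposition is subsumed in the proof of Proposition~\ref{propbounds} (Lemmas~\ref{lemMM} and \ref{lemsource} in \S\ref{sectproofbounds}), which chains Proposition~\ref{propHO}, Proposition~\ref{propcontinf}, Proposition~\ref{PropHq} and the elliptic regularization of Proposition~\ref{propell}, and treats the positivity conditions in ${\mathfrak m}_1$ exactly as you do, by integrating $\D_t q^{k+1}$ and $\D_t a(u^{k+1})$ in time from the initial margin $\delta$. Your deferral of the $k$-uniformity of the positivity time is consistent with the paper, which at the corresponding step also shrinks $T$ (there, depending only on the constants $M_1,M_2,M,\widetilde M,N_1,N_2,S$ rather than on $k$).
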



 \subsection{Bounds on the sequence $(V_1^k,V_2^k,V_0^k,V^k)_k$}\label{sectbounds}

We have just constructed  
  a sequence $(V_1^k,V_2^k,V_0^k,V^k)_k$.
   We want to prove that it converges as $k\to \infty$ to a solution $(V_1,V_2,V_0,V)$ of the quasilinearized equations \eqref{formQL3}. The first step consists in establishing the following uniform bounds on this sequence, where we used the notations given in \eqref{defconstantes},  and for some constants $M_1,M_2,M,\widetilde{M},N_1,N_2,S$ to be chosen carefully,
   \begin{equation}\label{induc}
 \begin{cases}
   {\mathfrak m}(V^k;T) \leq M,&\\
  \tilde {\mathfrak m}(V^k;T) \leq \widetilde{M},&\\
 {\mathfrak m}_1(V^k;T)\leq M_1, &\quad\mbox{and}\quad  \Vert V_m^{k}\Vert_{L^{\infty,p}} \leq N_1\quad \,\, (m=0,1,2),\\
{\mathfrak m}_2(V^k;T)\leq M_2& \quad \mbox{and}\quad  {\mathfrak m}_2(V_m^k;T)\leq N_2\quad (m=0,1,2),
  \end{cases}
 \end{equation}
 and a constant $S$ such that
 \begin{equation}
 \label{inducter}
  {\mathfrak s}(\cS^{k};T)\leq S,\quad \mbox{and}\quad{\mathfrak s}(\cS_m^{k};T)\leq S,
  \end{equation}
for $m=0,1,2$ and $k\in \NN$.

 \begin{prop}\label{propbounds}
 There exists $T>0$ and some nonnegative constants $M_1$, $M_2$, $M$, $\widetilde{M}$, $N_1$, $N_2$ and $S$ such that the bounds \eqref{induc} hold for all $k\in {\mathbb N}$.
 \end{prop}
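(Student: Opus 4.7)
The plan is to prove \eqref{induc}--\eqref{inducter} by induction on $k$. I first fix the constants $M_1, M_2, M, \widetilde{M}, N_1, N_2, S$ in terms of the initial data $u^0$ and of the smooth approximate solution $V_{app}$ furnished by Proposition \ref{propvapp}: each is chosen strictly larger than twice the corresponding quantity evaluated on the first iterate $[V^1] = (V_{app}, V_{app}, \D_t V_{app}, X_2 V_{app})$, and the constants in ${\mathfrak m}_1$ are chosen with room to spare around the positivity thresholds $q(0) = \mez$ and $1 + \mu a(u^0) \geq \delta$. The time $T > 0$ is chosen last, small enough to close all the smallness arguments simultaneously. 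The base case $k = 1$ then holds by construction.

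\textbf{Inductive step for $V_m^{k+1}$ and $V_0^{k+1}$.} Assume \eqref{induc}--\eqref{inducter} at rank $k$. Applying Proposition \ref{propHO} to $\cL_a(V^k, \D) V_m^{k+1} = \cS_m^k$ and $\cL(V^k, \D) V_0^{k+1} = \cS^k$ yields
\[
\| V_m^{k+1} \|_{L^\infty_T \VV^{n-1}} + \| V_0^{k+1} \|_{L^\infty_T \VV^{n-1}} \leq C(T, M_1)\bigl[\, C_0 + \sqrt{T}\, C(T, M_1, M_2, M, \widetilde{M})\, S \,\bigr],
\]
where $C_0$ depends only on $\|u^0\|_{H^{n+2}}$ because, by Proposition \ref{propinit}, the initial data $\D_t^j V_m^{k+1}{}_{| t=0}$ and $\D_t^j V_0^{k+1}{}_{| t=0}$ coincide with the fixed functions $V^{j+1}_{in}$ and $V^j_{in}$ determined from $u^0$. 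The source-term hypothesis for Proposition \ref{propHO} is verified via Lemma \ref{lemgb49}, which gives ${\mathfrak s}(\cS^k; T) + {\mathfrak s}(\cS_m^k; T) \leq S$ as soon as $S$ is taken large in terms of the other fixed constants. For $T$ small enough depending only on these fixed constants the bound above is $\leq M/3$.

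\textbf{Inductive step for $V^{k+1}$ and low-order norms.} The bound on $V^{k+1}$ comes from Proposition \ref{propell} applied to ${\bf E}(\D) V^{k+1} = V_1^k + {\bf F}_2 V_2^k + {\bf F}_0 V_0^k$. The $L^2_T \VV^n$ estimate gives the leading part of $\widetilde{\mathfrak m}(V^{k+1}; T)$, bounded by $\|V(0)\|_{\VV^n} + C\sqrt{T}\, M$, and the weighted component $\|q^{k+1}\|_{L^2_T L^{2,n-1}_1}$ is recovered by Corollary \ref{coroH2} (the Hardy inequality used in Step 3 of the sketch of Proposition \ref{propHO}). The companion $L^\infty_T \VV^{n-1}$ bound of Proposition \ref{propell} carries a factor $T$ rather than $\sqrt T$ and gives ${\mathfrak m}(V^{k+1}; T) \leq M/3$, while the $L^{\infty, p}$ estimate of the same proposition controls $\|V^{k+1}\|_{L^{\infty,p}_T} \leq N_1$. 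The $L^\infty$ components of ${\mathfrak m}_1(V_m^{k+1}; T)$ and ${\mathfrak m}_1(V_0^{k+1}; T)$ are then obtained from the $\VV^{n-1}$ bound by the one-dimensional Sobolev embedding (valid since $p + 1 \leq n - 1$) combined with Corollary \ref{coroH3}, exactly as in Step 4 of the sketch of Proposition \ref{propHO}; the weighted norms in ${\mathfrak m}_2$ and the bounds on the $N_2$-type quantities follow from the same Hardy trades applied to the $\VV^{n-1}$ estimates. Finally, writing $V^{k+1}(t) = V^{k+1}(0) + \int_0^t \D_s V^{k+1}$ and controlling the integrand in $L^\infty$ by the constants just established shows that for $T$ small the positivity conditions $q_0^{k+1} \geq \mez - CT \geq 1/4$ and $1 + \mu a(u_0^{k+1}) \geq \delta - CT \geq \delta/2$ persist on $[0, T]$, closing the ${\mathfrak m}_1$ bound. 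The bound \eqref{inducter} at rank $k+1$ follows from a second application of Lemma \ref{lemgb49}.

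\textbf{Main obstacle.} The delicate point is the apparently circular dependence among the seven constants, each of which enters the prefactors $C(T, M_1, M_2, M, \widetilde{M})$ produced by Propositions \ref{propHO} and \ref{propell}. The resolution is to freeze all seven constants first, purely in terms of initial data and $V_{app}$, and to pick the single $T > 0$ last, small enough that each factor $\sqrt T$, $T$, and $T\cT(M_1, M_2, M, \widetilde{M})$ appearing above absorbs the associated prefactor below the $1/3$ threshold (so that summing the three contributions leading to the bound on each constant still leaves the chosen value dominant). The resulting smallness condition on $T$ depends only on $\|u^0\|_{H^{n+2}}$ and $\delta^{-1}$, in agreement with the statement of Theorem \ref{theomain}.
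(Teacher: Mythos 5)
Your overall architecture matches the paper's: induction on $k$, Proposition \ref{propHO} for the $\VV^{n-1}$ bounds on $V_m^{k+1}$ and $V_0^{k+1}$, Proposition \ref{propell} to transfer these to $V^{k+1}$, time-integration for the persistence of the positivity conditions, and a final smallness condition on $T$. However, there are two genuine gaps. First, the source-term bound \eqref{inducter} at rank $k+1$ is the main technical content of the proof and you do not establish it: you invoke Lemma \ref{lemgb49}, but that lemma only asserts finiteness, and the paper explicitly notes that its proof is contained in the proof of this very proposition (it is Lemma \ref{lemsource}). Proving ${\mathfrak s}(\cS_m^{k+1};T)\leq S$ requires estimating the quadratic terms $\cG_0^{(m)},\cG_1^{(m)}$ in $L^{2,n-1}_1$, $L^{2,n-2}$ and $L^{\infty,p}$; the delicate points are that one must use the equations (see \eqref{dxuj} and \eqref{eqX1qm}) to trade $\D_x u_j$ and $X_1 q_m$ for quantities controlled by the induction hypothesis, and that the $L^{\infty,p}$ bound on $\cG_j^{(m)}$ itself involves $S$, so it closes only because that occurrence of $S$ carries a factor $T$ (see \eqref{estLinf}). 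None of this appears in your write-up.

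Second, your scheme for fixing the constants --- each ``strictly larger than twice the corresponding quantity on $[V^1]$'', all frozen simultaneously --- does not work. The a priori estimates produce bounds of the form $C(T,M_1)\,C_0+\sqrt{T}(\cdots)$ and $C(T,M_1,M_2,M)(1+C_0+S)$, and the prefactors $C(T,M_1)$, $C(T,M_1,M_2,M)$ do \emph{not} tend to $1$ as $T\to 0$: they are Gronwall and elliptic constants depending on the frozen $M_i$'s. Hence the leading, non-$T$-weighted contribution to the bound on, say, ${\mathfrak m}(V^{k+1};T)$ is $C(T,M_1)C_0$, which need not be dominated by twice the first-iterate value, and your ``$1/3$ threshold'' argument only applies to the terms that actually carry a power of $T$. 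The constants must be chosen \emph{hierarchically}: $M_1,M_2,\widetilde M$ in terms of $C_0$ and $\delta^{-1}$; then $M\geq \cC(T,C_0,M_1)$; then $N_1$ in terms of $M_1,M_2,M,S$ and $N_2$ in terms of these and $N_1$ --- exactly the ordering of Lemma \ref{lemMM} --- with the residual circularity between $N_1,N_2$ and $S$ broken by the $T$-factor mentioned above. Only after this ordering is the single smallness condition $T\,\cT(M_1,M_2,M,\widetilde M,N_1,N_2,S)<1$ meaningful.
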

 \begin{proof}
 Here again, we only sketch the proof and postpone the details to \S \ref{sectproofbounds}.\\
The proof is  by induction on $k$. In order to show that \eqref{induc}$_{k+1}$ holds if  \eqref{induc}$_k$ is satisfied, we first derive the necessary bounds on
$V_m^{k+1}$ ($m=0,1,2$) which are a consequence of the higher order estimates of Proposition \ref{propHO} for the $\VV^{n-1}$ estimates, and of Proposition \ref{propcontinf} for the estimates based on $L^\infty$. The required estimates on $V^{k+1}$ are then deduced from the estimates on $V_m^{k+1}$ using the elliptic regularization properties stated in Proposition \ref{propell}. These results are rigorously stated and proved in Lemma \ref{lemMM}.\\
These upper bounds are then used to prove Lemma \ref{lemsource}, which provides the required estimates on $\cS^{k+1}$ and $\cS^{k+1}_m$.
\end{proof}

\begin{rem}
\textup{We note that the proof of the proposition which gives precise bounds,  includes a proof of the Lemma~\ref{lemgb49} 
and Proposition~\ref{propgb411} above. } 
\end{rem}

\subsection{Convergence and end of the proof of Theorem \ref{theomain}}\label{sectconv}

We show here that the sequence constructed in the previous sections converges to a solution of \eqref{formQL3}, and that the solution $(V_1,V_2,V_0,V)$ satisfies $V_0=V$, $V_1=V$ and $V_2=X_2 V$ if these identities are satisfied at $t=0$. It follows that $V$ is the solution claimed in the statement of Theorem \ref{theomain}.

Let us write $W^{k+1}:=V^{k+1}-V^k$, $W^{k+1}_m:=V_m^{k+1}-V_m^k$ ($m=0,1,2$). 
From \eqref{scheme0}, these quantities solve
\begin{equation}\label{eqdiff}
\begin{cases}
 \cL_{a}(V^k,\D)W_m^{k+1}=&\widetilde\cS_m^k,\qquad (m=1,2)\\
 \cL (V^k,\D)W_0^{k+1}=&\widetilde\cS^k,\\
 {\bf E}(\D)W^{k+1}=&W^k_1 + {\bf F}_2 W^k_2 + {\bf F}_0 W^k_0 ,
 \end{cases}
\end{equation}
with, using again the notation $\cS_m^k=\cS_m(V^k,V_1^k,V_2^k)$, etc., 
\begin{align*}
\widetilde\cS_m^k&:=\big(\cS_m^k-\cS_m^{k-1}\big) -\big(\cL_{a}(V^k,\D)-\cL_{a}(V^{k-1},\D)\big) V_m^k,\\
\widetilde\cS^k&:=\big(\cS^k-\cS^{k-1}\big) -\big(\cL(V^k,\D)-\cL(V^{k-1},\D)\big) V_0^k.
\end{align*}
Using the bounds proved on the sequences $(V^k)_k$ and $(V_m^k)_k$ in Proposition \ref{propbounds} one easily gets that the right-hand-side in \eqref{eqdiff} has a Lipschitz dependence on $(W^k,W_0^k,W_1^k,W_2^k)$. Taking a smaller $T$ if necessary, one can therefore classically show that 
the series $V^{k+1}-V^0=\sum_{j=1}^{k+1}W^j$ and $V_m^{k+1}=V_m^0+\sum_{j=1}^{k+1}W_m^j$ converge in $\VV$ to some functions $V$ and $V_m$ in $C([0,T];\VV)$. Using again the bounds provided by Proposition \ref{propbounds} and interpolation inequalities, one obtains that $(V,V_0,V_1,V_2)$ is a classical solution of 
$$
 \begin{cases}
 \cL_{a}(V,\D)V_m&=\cS_m(V,V_1,V_2)\qquad (m=1,2),\\
 \cL (V,\D)V_0&=\cS(V,V_1,V_2),\\
 {\bf E}(\D)V&=V_1 + {\bf F}_2 V_2 + {\bf F}_0 V_0
 \end{cases}
$$
and that $V\in L^2([0,T];\VV^n)$ and $V_0,V_m\in L^\infty([0,T];\VV^{n-1})$ for $m=1,2$.\\
We now need to prove that $V_m=X_m V$ and $V=V_0$ if these quantities coincide at $t=0$. Differentiating the equation on $V_0$ with respect to $X_m$, one gets
$$
\cL_{a}(V,\D)X_mV_0=\widetilde\cS(V_0,V,V_1,V_2),
$$
where the exact expression for $\widetilde\cS(V_0,V,V_1,V_2)$ can be obtained as for Lemma \ref{lemmaLinear2}.
Writing $Z_m:=V_m-X_m V_0$, one obtains therefore that
$$
 \cL_{a}(V,\D)Z_m=\cS_m(V,V_1,V_2)-\widetilde\cS(V_0,V,V_1,V_2)  \qquad (m=1,2),
$$
and (using the equation to substitute $X_1q_m$ as in \eqref{eqX1qm} below), one easily gets that the right-hand-side has a Lipschitz dependence on $V_m-X_m V=Z_m+X_m (V-V_0)$ and $V-V_0$ in $L^2([0,T];\VV)$. Remarking further that
$$
{\bf E}(\D)V_0=X_1 V_0 + {\bf F}_2 X_2 V_0 + {\bf F}_0 V_0.
$$
and taking the difference with the above equation on $V$, we get through Proposition \ref{propell} that $V-V_0$ is controlled in $L^2([0,T];\VV^1)$ by $Z_m$ in $L^2([0,T];\VV)$. We get therefore   from Gronwall's inequality that $V_0=V$, $V_1=V$ and $V_2=X_2 V$ if these identities are satisfied at $t=0$, which concludes the proof of Theorem \ref{theomain}.

 \section{Hardy type inequalities} 
\label{Sec6} 
As explained in the previous section, we shall need Hardy type inequalities to obtain non-weighted estimates on $X^\alpha u$ and $X^\alpha q$ using the equations. We prove here several general Hardy type inequalities of independent interest; the inequalities we shall actually use are the particular cases stated in Corollaries \ref{coroH1}, \ref{coroH2} and \ref{coroH3}. Throughout this section, we shall denote by $h$ any function $h\in C^1([0,\infty))\cap L^\infty(\RR^+)$ satisfying
\begin{equation}\label{assh}
h(0)=0, \quad h'(0)>0, \quad h(x)>0\mbox{ for all }x>0,\mbox{ and } \liminf_{ x\to \infty} h(x)>0.
\end{equation}
We also need to introduce the operator $D_\alpha$ defined as
\begin{equation}\label{eqDalpha}
D_\alpha u := h \D_x u+\alpha h'(x)u=h^{1-\alpha}\D_x (h^\alpha u)
\end{equation}
\begin{prop} 
\label{Hardy1} 
Let $p\in [1,\infty]$ and $\alpha,\sigma\in \RR$ be such that $\sigma>\alpha-1/p$.
If $h$ is as in \eqref{assh} and $h^\sigma D_\alpha u \in L^p (\RR^+)$ and $h^{\sigma+1} u \in L^p(\RR^+)$, then $h^\sigma u \in L^p(\RR^+)$ and
$$
\Vert h^\sigma  u \Vert_{L^p} \lesssim \Vert h ^\sigma D_\alpha u\Vert_{L^p} +\Vert h^{\sigma+1} u\Vert_{L^p}.
$$
\end{prop}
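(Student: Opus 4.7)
The plan is to reduce the problem to a pure weighted Hardy inequality via the substitution $w=h^\alpha u$, then to split $\RR^+$ into a neighborhood $[0,\delta]$ of the boundary, where the structure $h\sim h'(0)\,x$ gives the classical Hardy behaviour, and a region $[\delta,+\infty)$ where $h$ is bounded above and below so that the two weights $h^\sigma$ and $h^{\sigma+1}$ are equivalent.

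Since $D_\alpha u = h^{1-\alpha}\D_x(h^\alpha u)=h^{1-\alpha}w'$, the three quantities become, with $\gamma:=\sigma-\alpha$, $h^\sigma u=h^\gamma w$, $h^\sigma D_\alpha u=h^{\gamma+1}w'$, and $h^{\sigma+1}u=h^{\gamma+1}w$, while the hypothesis $\sigma>\alpha-1/p$ reads $\gamma>-1/p$. It is therefore enough to prove
\[
\|h^\gamma w\|_{L^p}\lesssim \|h^{\gamma+1}w'\|_{L^p}+\|h^{\gamma+1}w\|_{L^p}.
\]
I would fix $\delta>0$ so small that $h'(x)\ge h'(0)/2$ (hence $h(x)\ge (h'(0)/2)\,x$) on $[0,2\delta]$; on $[\delta,+\infty)$ the assumptions $\liminf_\infty h>0$ and $h\in L^\infty$ yield $c\le h\le C$, so $h^\gamma$ and $h^{\gamma+1}$ are comparable and the estimate is immediate on this piece.

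It remains to handle $[0,\delta]$. For $p<\infty$, I would integrate the derivative of the weight to get
\[
(p\gamma+1)\int_0^\delta h^{p\gamma}h'|w|^p\,dx = h(\delta)^{p\gamma+1}|w(\delta)|^p - p\int_0^\delta h^{p\gamma+1}|w|^{p-2}\re(\bar w w')\,dx,
\]
the boundary term at $0$ vanishing because $p\gamma+1>0$ and $h(0)=0$. On the left, $h'\ge h'(0)/2$ gives a positive multiple of $\int_0^\delta h^{p\gamma}|w|^p$. The cross term on the right factors as $(h^\gamma|w|)^{p-1}\cdot(h^{\gamma+1}|w'|)$, so a Young inequality absorbs an $\epsilon$-fraction into the left-hand side and leaves $\lesssim \|h^{\gamma+1}w'\|_{L^p}^p$. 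The trace-type boundary contribution $h(\delta)^{p\gamma+1}|w(\delta)|^p$ is handled by averaging the elementary bound $|w(\delta)|^p\lesssim |w(y)|^p+\int_\delta^{2\delta}|w'|^p$ over $y\in[\delta,2\delta]$, which together with $c\le h\le C$ on that interval reproduces the required right-hand side. For $p=\infty$ (so $\gamma>0$), I would instead use the pointwise identity $(h^{-\gamma}\cdot h^\gamma w)'=w'$, which yields
\[
h(x)^\gamma w(x) = \Big(\tfrac{h(x)}{h(\delta)}\Big)^{\!\gamma} h(\delta)^\gamma w(\delta) + h(x)^\gamma\!\int_\delta^x w'(y)\,dy,
\]
bound the first term by $h(\delta)^{-1}\|h^{\gamma+1}w\|_\infty$, and estimate the integral by $h(x)^\gamma\int_x^\delta h(y)^{-\gamma-1}dy\lesssim 1/\gamma$, using $h(y)\ge cy$ and $\gamma>0$.

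The main technical point is to ensure that the boundary contribution at $x=\delta$ coming from the integration by parts can be re-expressed as part of the global right-hand side rather than feeding back onto the quantity being estimated; this is achieved by the local-trace averaging over $[\delta,2\delta]$. The hypothesis $\gamma>-1/p$ enters at precisely the two places where boundary behaviour at $x=0$ would otherwise be divergent — the vanishing of $h(0)^{p\gamma+1}$ for finite $p$ and the convergence of $h(x)^\gamma\int_x^\delta h^{-\gamma-1}$ for $p=\infty$ — which is why the condition is sharp.
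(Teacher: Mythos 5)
Your proof is correct, but it follows a genuinely different route from the paper's. The paper solves $D_\alpha u_1=f$ explicitly on the compactly supported piece $u_1=\chi u$, writing $h(x)^\sigma u_1(x)=-\int_x^\infty h(y)^{\alpha-1}h(x)^{\sigma-\alpha}f(y)\,dy$, and then obtains the $L^p$ bound from the change of variables $y=tx$ together with Minkowski's integral inequality, the condition $\sigma>\alpha-1/p$ appearing as the convergence of $\int_1^\infty t^{\alpha-\sigma-1-1/p}\,dt$; the error terms $h\chi'u$ from the cutoff are exactly what produces $\Vert h^{\sigma+1}u\Vert_{L^p}$. You instead run the classical integration-by-parts proof of Hardy's inequality on $w=h^\alpha u$, absorbing the cross term by Young and controlling the trace at $x=\delta$ by averaging. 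The paper's kernel argument treats all $p\in[1,\infty]$ in one stroke and is the template it reuses (with the kernel supported on the other side) for Proposition \ref{Hardy2}, where the inequality in the exponent is reversed; your argument is more elementary and self-contained but needs the separate $p=\infty$ case. One small point to tighten: you assert that the boundary term $h(x)^{p\gamma+1}|w(x)|^p$ vanishes as $x\to0$, which is not immediate since $|w|$ could a priori blow up; the clean fix is to integrate by parts on $[\eps,\delta]$, observe that the term $-h(\eps)^{p\gamma+1}|w(\eps)|^p$ has a favorable sign and can simply be dropped, and then let $\eps\to0$ by monotone convergence after the absorption step. With that adjustment the argument is complete.
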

\begin{proof}
Let $\chi$ be a smooth positive function such that $\chi(0)=1$ and for some $X_2>0$, $\chi(x)=0$ for all $x\geq X_2$. We decompose $u$ into
$$
u=u_1+u_2,\qquad u_1:=\chi u,\qquad u_2:=(1-\chi)u.
$$
Let $f:=D_\alpha u_1$; one has 
$$
h(x)^\sigma u_1(x)=-\int_x^\infty \frac{h(y)^{\alpha-1}}{h(x)^{\alpha-\sigma}}f(y)dy\\
$$
Since $u_1$ and $f$ are supported in $[0,X_2]$, one has
\begin{align*}
h(x)^\sigma\vert u_1(x) \vert &\lesssim \int_{x}^\infty \frac{y^{\alpha-\sigma-1}}{x^{\alpha-\sigma}}y^\sigma\vert f(y)\vert dy\\
&=\int_1^\infty t^{\alpha-\sigma- 1}\vert (tx)^\sigma f(tx)\vert dt,
\end{align*}
and therefore
$$
\Vert h^\sigma u_1\Vert_{L^p} \leq \big( \int_1^\infty t^{\alpha-\sigma -1 -1/p} dt\big) \Vert x^\sigma f\Vert_{L^p}
$$
Remarking that $\Vert x^\sigma f\Vert_{L^p}\lesssim \Vert h^\sigma D_\alpha u\Vert_{L^p} +\Vert h^{\sigma+1} u\Vert_{L^p}$ (recall that $h(x)\sim x$ on $[0,X_2]$), we deduce that
$$
\Vert h^\sigma u_1\Vert_{L^p} \lesssim \Vert h^\sigma D_\alpha u\Vert_{L^p} +\Vert h^{\sigma+1} u\Vert_{L^p}
$$
provided that the integral in $t$ converges, which is the case if $\sigma   > \alpha-1/p$.\\
Since for $u_2$, one trivially has $\Vert h^\sigma u_1\Vert_{L^p} \lesssim \Vert h^{\sigma+1} u\Vert_{L^p}$, the result follows.
\end{proof}
We shall use in this paper the following direct corollary of Proposition \ref{Hardy1} (just take $p=2$, $\sigma=0$ and $\alpha=0$).
\begin{cor}\label{coroH1}
Assume that $h$ is as in \eqref{assh} and that $h \D_x u\in L^2(\RR^+)$ and $h^{\sigma_1}u\in L^2(\RR^+)$ for some $0\leq \sigma_1\leq 1$. Then $u\in L^2(\RR^+)$ and
$$
\Vert u \Vert_2\lesssim \Vert h\D_x u\Vert_2+\Vert h^{\sigma_1} u \Vert_2.
$$
\end{cor}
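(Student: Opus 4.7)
The plan is to deduce Corollary~\ref{coroH1} as an almost immediate consequence of Proposition~\ref{Hardy1}, with a short preliminary reduction to handle the weight parameter $\sigma_1 \in [0,1]$.

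First I would observe that since $h \in L^\infty(\RR^+)$, for any $\sigma_1 \in [0,1]$ the function $h^{1-\sigma_1}$ is bounded on $\RR^+$, so
\begin{equation*}
\Vert h u\Vert_{L^2} \;=\; \Vert h^{1-\sigma_1} \cdot h^{\sigma_1} u\Vert_{L^2} \;\leq\; \Vert h\Vert_{L^\infty}^{1-\sigma_1}\, \Vert h^{\sigma_1} u\Vert_{L^2}.
\end{equation*}
Hence the hypothesis $h^{\sigma_1} u \in L^2(\RR^+)$ immediately upgrades to $h u \in L^2(\RR^+)$ with a controlled norm.

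Next I would apply Proposition~\ref{Hardy1} with the parameters $p = 2$, $\sigma = 0$, $\alpha = 0$. The compatibility condition $\sigma > \alpha - 1/p$ reads $0 > -\tfrac12$, which is satisfied. In this case $D_\alpha u = D_0 u = h\D_x u$, and the proposition gives directly $u \in L^2(\RR^+)$ together with
\begin{equation*}
\Vert u\Vert_{L^2} \;\lesssim\; \Vert h\D_x u\Vert_{L^2} + \Vert h u\Vert_{L^2}.
\end{equation*}
Combining this with the bound $\Vert h u\Vert_{L^2} \lesssim \Vert h^{\sigma_1} u\Vert_{L^2}$ from the first step yields the claimed inequality.

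There is essentially no obstacle to overcome: the real work is already done in Proposition~\ref{Hardy1}, and all that the corollary adds is the flexibility in the weight $\sigma_1$, which is handled by the trivial boundedness argument above. The only point worth emphasizing is that the assumption $h(0)=0$ with $h$ bounded forces $h$ to remain small near the origin, so the weaker hypothesis $h^{\sigma_1} u \in L^2$ (allowing more singular $u$ at $x=0$ when $\sigma_1 > 0$) is still strong enough once combined with the control on $h\D_x u$ provided by the underlying Hardy estimate.
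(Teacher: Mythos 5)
Your proof is correct and is exactly the route the paper takes: the paper obtains this corollary by applying Proposition~\ref{Hardy1} with $p=2$, $\sigma=0$, $\alpha=0$, and your preliminary observation that $\Vert h u\Vert_{L^2}\leq \Vert h\Vert_{L^\infty}^{1-\sigma_1}\Vert h^{\sigma_1}u\Vert_{L^2}$ is precisely the (implicit) reduction needed to accommodate the parameter $\sigma_1\in[0,1]$.
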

\begin{prop}
\label{Hardy2} 
Let $p\in [1,\infty]$ and $1/p+1/q=1$, and assume that $p\geq q$.
Let $u$ be compactly supported and assume that for $\sigma  < \alpha - 1/p$, 
one has $h^{\sigma}  D_\alpha  u \in  L^p (\RR^+)$ and $h^{\sigma_1} u \in L^q(\RR^+)$ where $\sigma \leq \sigma_1\leq \alpha -1/q$; then 
$ h^{\sigma} u \in L^p(\RR^+)$
and
$$
\Vert h^\sigma u \Vert_{L^p}\lesssim \Vert h^{\sigma}  D_\alpha  u \Vert_{L^p}.
$$
\end{prop}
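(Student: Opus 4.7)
The plan is to mirror the argument used for Proposition \ref{Hardy1} but with the direction of integration reversed. Setting $v := h^\alpha u$, the identity $D_\alpha u = h^{1-\alpha}\partial_x(h^\alpha u)$ gives $\partial_x v = h^{\alpha-1}D_\alpha u$. Where Proposition \ref{Hardy1} recovered $v$ by integrating from infinity (which forced $\sigma > \alpha - 1/p$ for convergence), here the condition $\sigma < \alpha - 1/p$ suggests that I should instead reconstruct $v$ by integrating upwards from $0$, i.e. aim to establish
$$
v(x) \;=\; \int_0^x h(y)^{\alpha-1}\, D_\alpha u(y)\,dy,
$$
and then estimate $\|h^\sigma u\|_{L^p}$ by a Minkowski/dilation argument analogous to Step~1 of the previous proof.

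The crux of the proof is the justification of this integral representation, which splits into two sub-steps. First I would verify that $h^{\alpha-1}D_\alpha u$ is locally integrable on $[0,\infty)$. Writing $h^{\alpha-1}D_\alpha u = h^{\alpha-1-\sigma}\cdot (h^\sigma D_\alpha u)$ and applying H\"older with exponents $p$ and $q$, local integrability near $0$ reduces to $h^{\alpha-1-\sigma}\in L^q_{\mathrm{loc}}$, i.e.\ $(\alpha-1-\sigma)q>-1$, which is exactly $\sigma<\alpha-1/p$. Therefore $v$ is continuous on $[0,\infty)$ and admits a limit $L:=v(0)$. Second, I need to show $L=0$. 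If $L\neq 0$ then $u(x)\sim L\,h(x)^{-\alpha}$ as $x\to 0$, and a direct computation of $\int_0^\varepsilon h^{\sigma_1 q}|u|^q\,dx$ gives $\sim |L|^q\int_0^\varepsilon y^{(\sigma_1-\alpha)q}dy$, which diverges precisely under the hypothesis $\sigma_1\le\alpha-1/q$, contradicting $h^{\sigma_1}u\in L^q$. Hence $L=0$ and the integral representation holds.

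With the representation in hand, the estimate itself is routine: using $h(x)\sim x$ on the compact support of $u$ and the change of variable $y=tx$, one gets
$$
h(x)^\sigma|u(x)|\,\lesssim\,\int_0^1 t^{\alpha-\sigma-1}\bigl|(h^\sigma D_\alpha u)(tx)\bigr|\,dt,
$$
and Minkowski's inequality, together with the scaling $\|f(t\cdot)\|_{L^p(\mathbb R_+)}=t^{-1/p}\|f\|_{L^p}$, yields
$$
\|h^\sigma u\|_{L^p}\,\lesssim\,\|h^\sigma D_\alpha u\|_{L^p}\int_0^1 t^{\alpha-\sigma-1-1/p}\,dt,
$$
which converges exactly under $\sigma<\alpha-1/p$. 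The region away from $0$ is handled using compact support of $u$ and the fact that $h$ is bounded above and below there; the hypothesis $p\ge q$ enters naturally at this point to compare $L^p$ and $L^q$ norms on a bounded interval.

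\emph{Main obstacle.} The delicate step is the boundary analysis giving $v(0)=0$: since $u$ is only controlled in a weighted $L^q$-sense, the pointwise asymptotics $u\sim L h^{-\alpha}$ cannot be used literally and must be replaced by a rigorous argument, e.g.\ passing to the limit in the identity $v(x)-v(\varepsilon)=\int_\varepsilon^x h^{\alpha-1}D_\alpha u$ and showing, via Lebesgue differentiation or a direct sub-sequence argument on $\int_0^\varepsilon h^{\sigma_1 q}|u|^q$, that the only constant compatible with the two weighted integrability hypotheses is zero. All three assumptions ($\sigma<\alpha-1/p$, $\sigma_1\le\alpha-1/q$, $p\ge q$) play their role in this boundary analysis, and getting their interplay clean is the real work of the proof.
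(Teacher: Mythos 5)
Your proof is correct and is essentially the paper's own argument in a different dress: your $v=h^\alpha u$ with boundary value $L$ is exactly the paper's decomposition $u = u_1 + c\,h^{-\alpha}$ with $u_1(x)=h(x)^{-\alpha}\int_0^x h^{\alpha-1}D_\alpha u$, and both proofs kill the constant via the non-integrability of $h^{(\sigma_1-\alpha)q}$ near the origin, then conclude with the same Minkowski/dilation bound. The worry in your final paragraph is unfounded: once local integrability of $h^{\alpha-1}D_\alpha u$ up to $x=0$ gives continuity of $v$ at $0$, the lower bound $|u|\gtrsim |L|\,h^{-\alpha}$ near $0$ is literal and the contradiction is rigorous.
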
 
\begin{proof}
Let $X_2>0$ be such that $u$ is supported in $[0,X_2]$. With  $ f = D_\alpha u   $  introduce 
 $$
u_1 (x) =  \int_0^x \frac{h(y)^{\alpha-1}} {h (x)^\alpha} f(y) dy . 
$$
Since $\sigma<\alpha-1+1/q$, one has $h(y)^{\alpha-\sigma-1 }  \sim  y^{\alpha - \sigma -1} \in L^{q}  ([0, X_2])$; therefore,  the integral converges and 
$$
| h(x)^\sigma u_1 (x) | \lesssim  \int_0^x \frac{y^{\alpha-1- \sigma}} {x^{\alpha- \sigma}}    g(y)  dy = 
 \int_0^1 t^{\alpha - \sigma-1} g (t x) dt 
$$
with $g(y) = | y^\sigma f(y)| $. Moreover, since  $\alpha- \sigma - 1/p > 0$, 
$$
\| h^\sigma u_1 \|_{L^p} \le  
\frac{1}{\alpha- \sigma - 1/p}   \| g \|_{L^p}\lesssim \Vert h^\sigma D_\alpha u\Vert_{L^p}.
 $$
 Thus $h^\sigma u_1$ and hence  $h^{\sigma_1} u_1$  belong to $L^p$. Since $p\geq q$, their restriction to $[0,X_2]$ also belongs to $L^q([0,X_2])$. From the assumption made on $u$, we deduce  that $h^{\sigma_1}(u-u_1)\in L^q([0,X_2])$.
 Remarking further that $D_\alpha (u - u_1) = 0$, we have  $u - u_1 = c h^{- \alpha} $ and therefore  $h^{\sigma_1}(u-u_1)\sim c x^{\sigma_1-\alpha}$; this quantity  has to be in $L^q([0,X_2])$ and the condition on $\sigma_1$ 
 implies that this is possible only if $c = 0$. 
 Hence $u = u_1$ and the proposition is proved. 
\end{proof}
Taking $p=q=2$ in Proposition \eqref{Hardy2}, one gets the following corollary.
\begin{cor}\label{coroH2}
Assume that $\sigma<\alpha-1/2$ and that $h^\sigma D_\alpha u\in L^2(\RR^+)$ and $h^{\sigma_1}u\in L^2(\RR^+)$ with $\sigma +1\geq \sigma_1\geq \sigma$ and $\sigma_1\leq \alpha-1/2$. Then one has $h^\sigma u\in L^2(\RR^+)$ and
$$
\Vert h^\sigma u\Vert_{L^2}\lesssim \Vert h^\sigma D_\alpha u\Vert_{L^2}+\Vert h^{\sigma_1} u \Vert_{L^2}.
$$
\end{cor}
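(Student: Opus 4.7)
The plan is to reduce Corollary \ref{coroH2} to Proposition \ref{Hardy2} by a cut-off argument that removes the compact support hypothesis present in the latter. Choose a smooth cut-off $\chi$ on $\RR^+$ with $\chi \equiv 1$ on $[0, X_1]$ and $\chi \equiv 0$ on $[X_2, \infty)$, where $0 < X_1 < X_2$ are fixed so that $h$ is bounded from above and below by positive constants on $[X_1, \infty)$ (possible since $h > 0$ on $(0, \infty)$, $\liminf_{x \to \infty} h(x) > 0$, and $h \in L^\infty$). Decompose $u = u_1 + u_2$ with $u_1 = \chi u$ and $u_2 = (1-\chi) u$, and treat each piece separately.

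For the compactly supported piece $u_1$, the hypotheses of Proposition \ref{Hardy2} (applied with $p = q = 2$) hold: the conditions $\sigma < \alpha - 1/2$ and $\sigma \leq \sigma_1 \leq \alpha - 1/2$ are exactly those assumed in the corollary. The proposition yields $\Vert h^\sigma u_1 \Vert_{L^2} \lesssim \Vert h^\sigma D_\alpha u_1 \Vert_{L^2}$. Using the commutator identity
\[
D_\alpha u_1 = \chi \, D_\alpha u + h \chi' u
\]
and noting that $h \chi' u$ is supported on $[X_1, X_2]$ where $h$ is bounded between two positive constants, this residual term is controlled by $\Vert u \Vert_{L^2([X_1, X_2])} \lesssim \Vert h^{\sigma_1} u \Vert_{L^2}$. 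Altogether,
\[
\Vert h^\sigma u_1 \Vert_{L^2} \lesssim \Vert h^\sigma D_\alpha u \Vert_{L^2} + \Vert h^{\sigma_1} u \Vert_{L^2}.
\]

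For the piece $u_2$, supported in $[X_1, \infty)$, the weight $h$ is bounded above and below by positive constants on the support; since $\sigma \leq \sigma_1$, the function $h^{\sigma - \sigma_1}$ is uniformly bounded there, so
\[
\Vert h^\sigma u_2 \Vert_{L^2} \lesssim \Vert h^{\sigma_1} u_2 \Vert_{L^2} \lesssim \Vert h^{\sigma_1} u \Vert_{L^2}.
\]
Summing the two contributions gives the claimed inequality. The argument is essentially bookkeeping; the only point that requires care is the commutator contribution $h \chi' u$, which is harmless precisely because $\chi'$ lives on a region where $h$ is comparable to a positive constant, so every power of $h$ is controlled there. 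No new analytic idea beyond Proposition \ref{Hardy2} and the hypothesis \eqref{assh} on $h$ is needed.
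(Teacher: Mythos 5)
Your proof is correct and follows essentially the same route as the paper: the same cut-off decomposition $u=\chi u+(1-\chi)u$, the same commutator identity $D_\alpha u_1=\chi D_\alpha u+h\chi' u$, and the same application of Proposition \ref{Hardy2} with $p=q=2$. The only (harmless) difference is that by forcing $\chi'$ to be supported away from the origin you control the commutator term without invoking the hypothesis $\sigma+1\geq\sigma_1$, which the paper uses for that purpose.
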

\begin{proof}
Let $\chi$ be a smooth positive function such that $\chi(0)=1$ and for some $X_2>0$, $\chi(x)=0$ for all $x\geq X_2$. We decompose $u$ into
$$
u=u_1+u_2,\qquad u_1:=\chi u,\qquad u_2:=(1-\chi)u.
$$
Remarking that
$$
D_\alpha u_1=\chi D_\alpha u +h\chi' u,
$$
one has $h^\sigma D_\alpha u_1\in L^2(\RR^+)$ since $\sigma+1\geq \sigma_1$, and one can apply Proposition \ref{Hardy2} to $u_1$ with $p=q=2$. This yields
$$
\Vert h^\sigma u_1\Vert_{L^2}\lesssim \Vert h^\sigma D_\alpha u\Vert_2+\Vert h^{\sigma_1} u\Vert_2.
$$
Since $u_2$ is supported away from the origin, we also get from \eqref{assh} that
$$
\Vert h^\sigma u_2\Vert_{L^2} \lesssim \Vert h^{\sigma_1} u\Vert_{L^2}
$$
and the result follows.
\end{proof}
We shall also need the following corollary corresponding to the case $p=\infty$, $q=1$.
\begin{cor}\label{coroH3}
If  $\sigma\leq  \alpha-1$, $h^\sigma D_\alpha u\in L^2\cap L^\infty(\RR^+)$ and $h^{\sigma+1} u\in L^2(\RR^+)$ then $h^\sigma u \in L^2(\RR^+)$ and
$$
\Vert h^\sigma u \Vert_{L^\infty}\lesssim \Vert h^\sigma D_\alpha u\Vert_{L^\infty}+\Vert h^\sigma D_\alpha u\Vert_{L^2}+\Vert h^{\sigma} u \Vert_{L^2}.
$$
\end{cor}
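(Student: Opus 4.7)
The strategy is to adapt the argument of Proposition~\ref{Hardy2} to the borderline case $p=\infty$, $q=1$. As in Corollaries~\ref{coroH1} and~\ref{coroH2}, I would first reduce to the case of a compactly supported function near the origin via a cutoff $\chi$ with $\chi(0)=1$ and $\chi\equiv 0$ on $[X_2,\infty)$, writing $u=u_1+u_2$ with $u_1:=\chi u$ and $u_2:=(1-\chi)u$. The piece $u_2$ is handled directly: it is supported where $h$ is bounded above and away from zero, so the first-order ODE $h u_2' = (1-\chi)D_\alpha u - h\chi' u - \alpha h' u_2$ lets one bound $u_2$ in $H^1$ on any fixed interval by $\|h^\sigma D_\alpha u\|_{L^2} + \|h^\sigma u\|_{L^2}$, and hence in $L^\infty$ by one-dimensional Sobolev embedding.

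For the localized piece $u_1$, I would explicitly construct the particular solution of $D_\alpha \tilde u = D_\alpha u_1$ that vanishes at the origin,
\[
\tilde u(x) = \int_0^x \frac{h(y)^{\alpha-1}}{h(x)^\alpha}\, D_\alpha u_1(y)\, dy,
\]
and perform the change of variables $y=tx$, exactly as in the proof of Proposition~\ref{Hardy2}, to obtain
\[
|h(x)^\sigma \tilde u(x)| \lesssim \int_0^1 t^{\alpha-\sigma-1}\, |h(tx)^\sigma D_\alpha u_1(tx)|\,dt.
\]
Since $\sigma\leq \alpha-1$ yields $\alpha-\sigma\geq 1$, the $t$-integral converges and one deduces $\|h^\sigma \tilde u\|_{L^\infty}\lesssim \|h^\sigma D_\alpha u_1\|_{L^\infty}$; the commutator contribution $h\chi' u$ produced by $D_\alpha(\chi u) = \chi D_\alpha u + h\chi' u$ is supported away from the origin and can be absorbed into $\|h^\sigma D_\alpha u\|_{L^\infty}$ plus the $L^2$ terms on the right-hand side via a standard Sobolev embedding on the support of $\chi'$.

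It remains to identify $u_1$ with $\tilde u$. The difference satisfies $D_\alpha(u_1-\tilde u)=0$, hence equals $c\,h^{-\alpha}$ for some constant $c$; weighted by $h^\sigma$, one has $h^\sigma u_1\in L^2$ (since $\|h^\sigma u\|_{L^2}$ is finite in the stated bound) and $h^\sigma \tilde u\in L^\infty\subset L^2_{\mathrm{loc}}$, so $c\, h^{\sigma-\alpha}$ must be square-integrable near the origin. But $\sigma-\alpha\leq -1$ together with $h(y)\sim y$ makes $h^{\sigma-\alpha}\sim y^{\sigma-\alpha}$ behave like $y^{-1}$ or worse, which is not in $L^2$ near $0$, forcing $c=0$. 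The main subtlety I anticipate is that one cannot bootstrap $\|h^\sigma u\|_{L^2}$ from $\|h^{\sigma+1}u\|_{L^2}$ and $\|h^\sigma D_\alpha u\|_{L^2}$ using Corollary~\ref{coroH2}, because that corollary requires $\sigma+1\leq \alpha-1/2$, which is strictly stronger than the present $\sigma\leq\alpha-1$; this is precisely why $\|h^\sigma u\|_{L^2}$ must appear explicitly on the right-hand side rather than being absorbed into the other norms.
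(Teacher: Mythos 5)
Your proposal is correct and follows essentially the same route as the paper: the same cutoff decomposition $u=u_1+u_2$, the $p=\infty$, $q=1$ endpoint of Proposition~\ref{Hardy2} for the piece near the origin (you re-derive the kernel estimate and the identification $u_1=\tilde u$ inline rather than citing the proposition, and you treat $u_2$ via the ODE, but the content is identical), and a one-dimensional Sobolev embedding away from the origin to absorb the commutator and far-field contributions, exactly as in the paper's final step. Your closing remark that $\Vert h^{\sigma} u\Vert_{L^2}$ cannot be bootstrapped from the other norms and must remain on the right-hand side is consistent with how the paper's estimate is structured.
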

\begin{proof}
With the same decomposition as in the proof of the corollary above, we have by the proposition that
$$
\Vert h^\sigma u_1 \Vert_{L^\infty}\lesssim \Vert h^\sigma D_\alpha u_1\Vert_{L^\infty}\lesssim \Vert h^\sigma D_\alpha u\Vert_{L^\infty} +\Vert h^{\sigma+1} u \Vert_{L^\infty}.
$$
Since $u_2$ is supported away from zero, we also deduce from \eqref{assh} that
$$
\Vert h^\sigma u_1 \Vert_{L^\infty}\lesssim \Vert h^{\sigma+1} u \Vert_{L^\infty};
$$
the result follows therefore from the observation that
\begin{align*}
 \Vert h^{\sigma+1} u \Vert_{L^\infty}&\lesssim \Vert h^{\sigma+1} u \Vert_{L^2}+\Vert \D_x (h^{\sigma+1} u)\Vert_{L^2}\\
 &\lesssim \Vert h^{\sigma} u \Vert_{L^2}+\Vert h^\sigma D_\alpha u\Vert_{L^2}.
\end{align*}
\end{proof}


\section{Technical details for the proof of Theorem \ref{theomain}} \label{sectcommut}

We prove here some technical results stated in Section \ref{sectstrategy} and used to prove Theorem \ref{theomain}. The first one is the proof of the higher order estimates of Proposition \ref{propHO}, presented in \S \ref{sectproofHO} below. The second one is the proof of the bounds on the sequence of approximate solutions constructed using the iterative scheme \eqref{scheme}; this is done in \S \ref{sectproofbounds}. Other elements of the proof of Theorem \ref{theomain} are of independent interest and are therefore presented in specific sections: see Section \ref{Sec6} for the Hardy estimates, Section \ref{Sell} for the analysis of the elliptic equation, and Section \ref{Sex} for the resolution of the mixed initial boundary value problem for the linearized equations.

\medbreak

{\it We recall that, for the sake of clarity, we do not track the dependance on $h_0$ and $B$ in the various constants that appear in the proof}.\\
Before proceeding further, let us state the following product and commutator estimates, whose proof is straightforward and therefore omitted. Recalling that the spaces $L^{2,s}_j$ and $L^{\infty,p}$ have been  introduced in \eqref{wL2} and \eqref{defLinfn} respectively, we have
 for all $m\geq 1$ and $\beta \in \NN^2$ such that $\vert \beta\vert\leq m$, and for $j=0,1,2$,
\begin{align}
\label{comm1}
\Vert X^\beta (fg)\Vert_{L^\infty}&\lesssim \Vert f \Vert_{\infty,m} \Vert g \Vert_{L^{\infty,m}}\\
\label{comm2}
\Vert [X^\beta,f]g\Vert_{L^2}&\lesssim \Vert f\Vert_{L^{\infty,{[\frac{m}{2}]}}} \Vert   g\Vert_{L^{2,m-1}_j}+\Vert g\Vert_{L^{\infty,{[\frac{m}{2}]}}} \Vert   f\Vert_{L^{2,m}_j}.
\end{align}
We also use the simplified notations 
$$
\cQ_{m,j}(t)=\Vert q(t)\Vert_{L^{2,m}_{2-j}},\quad \cU_{m,j}(t)=\Vert u(t)\Vert_{L^{2,m}_{1-j}},\quad \mbox{etc}
$$
 as in \eqref{notaQmj}, as well as
 $$
 \cQ_{m,j}[t]=\sup_{t'\in [0,t]}\cQ_{m,j}(t'), \quad \cU_{m,j}[t]=\sup_{t'\in [0,t]}\cU_{m,j}(t'), \quad \mbox{etc.}
 $$
\subsection{Technical details for the proof of Proposition \ref{propHO}}\label{sectproofHO}

Proposition \ref{propHO} has been proved in \S \ref{sectHO} assuming several technical results that we establish here.

The following proposition gives the equation on $X^\alpha q$ obtained by applying $X^\alpha$ to the first equation of \eqref{lineq2} and a control of the residual that has been used in Step 1 of the proof of Proposition \ref{propHO}.
\begin{prop}  
\label{PropNL1}
 Let $T>0$ and $\uV,\cS$ satisfy the bounds \eqref{constMetc}. Any smooth solution $V=(q,u)$  satisfies on  $[0,T]$, and for  $|\alpha |  \le m \le 2p  $, 
  $$
c(\uq)\D_t X^\alpha q +\D_x X^\alpha u=f^{(\alpha)},
$$
where, for $j \in \{ 0, 1, 2\} $ and $t \in [0, T]$,
$$
\big\| h_0^{ 1 - j/2}  f ^{(\alpha)} (t)  \big\|_{L^2}    \le    \Vert h_0^{1-j/2} f  (t) \Vert_{L^{2,m}}+C(M_1)   \big(\Vert q\Vert_{L_T^{\infty,p}}\ucQ_{m, j} (t)   + \cQ_{m, j} (t)   \big).
$$
\end{prop}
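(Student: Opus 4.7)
The plan is to apply $X^\alpha$ to the first equation of \eqref{lineq2} and to identify $f^{(\alpha)}$ with the commutators left over once we pull $X^\alpha$ through $c(\uq)\D_t$ and $\D_x$. Since both $X_1=\D_t$ and $X_2=h_0\D_x$ commute with $\D_t$ (the fact that $h_0$ is time-independent being crucial here), we have $X^\alpha\D_t q=\D_t X^\alpha q$, and the desired equation follows with
$$
f^{(\alpha)} \;=\; X^\alpha f \;-\; [X^\alpha,c(\uq)]\D_t q \;-\; [X^\alpha,\D_x]u.
$$
The three contributions are then estimated separately in $L^2_{2-j}$; the first is controlled directly by $\Vert h_0^{1-j/2} f\Vert_{L^{2,m}}$, so that the remaining task is to bound each commutator by $C(M_1)\big(\Vert q\Vert_{L^{\infty,p}_T}\ucQ_{m,j}+\cQ_{m,j}\big)$.

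For $[X^\alpha,c(\uq)]\D_t q$, I would expand the commutator by the Leibniz rule and split according to whether the number of $X$-derivatives falling on $c(\uq)$ is at most $p$ or exceeds $p$. In the first case, $X^\beta c(\uq)$ is bounded in $L^\infty$ by $C(M_1)$ thanks to the uniform upper and lower bounds on $\uq$ provided by $\mathfrak m_1(\uV;T)\le M_1$ and the fact that $c$ is smooth; the remaining factor $X^\gamma\D_t q$ is then placed in $L^2_{2-j}$ with norm at most $\cQ_{m,j}$. In the second case, the number of $X$-derivatives falling on $\D_t q=X_1 q$ is at most $p-1$, so $X^\gamma\D_t q$ is controlled in $L^\infty$ by $\Vert q\Vert_{L^{\infty,p}_T}$, while a Moser-type estimate applied to the smooth function $c$ (using only $L^\infty$ bounds on $\uq$ and $1/\uq$) yields $\Vert X^\beta c(\uq)\Vert_{L^2_{2-j}}\le C(M_1)\ucQ_{m,j}$. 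This is the content of the commutator estimate \eqref{comm2} applied with weight index $2-j$, and produces exactly the expected contribution.

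The main obstacle is the third commutator $[X^\alpha,\D_x]u$: $u$-norms do not appear on the right-hand side of the claimed bound, so they have to be eliminated using the first equation itself, namely $\D_x u=f-c(\uq)\D_t q$. Since $[X_1,\D_x]=0$, it suffices to understand $[X_2^{\alpha_2},\D_x]u$. Writing $\D_x X_2^{\alpha_2} u = h_0^{-1} X_2^{\alpha_2}(h_0\D_x u)$ and applying the Leibniz rule gives
$$
[X^\alpha,\D_x]u \;=\; -\sum_{k=0}^{\alpha_2-1}\binom{\alpha_2}{k}\,\omega_{\alpha_2-k}(x)\,X^{(\alpha_1,k)}\D_x u,
$$
where $\omega_j(x):=h_0^{-1}X_2^j h_0$ is a smooth bounded function on $\RR_+$ for $j\ge 1$ (because $X_2^j h_0$ vanishes at least like $h_0$ at the boundary, under Assumption~\ref{assBh}) and $\omega_0=1$. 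Substituting $\D_x u = f-c(\uq)\D_t q$ in the identity, each resulting term carries total $X$-order at most $m-1$, so that the $L^2_{2-j}$ norm of $[X^\alpha,\D_x]u$ is controlled by $\Vert f\Vert_{L^{2,m-1}_{2-j}}+\Vert c(\uq)\D_t q\Vert_{L^{2,m-1}_{2-j}}$; the same Moser-type reasoning as in the previous paragraph bounds this by $\Vert h_0^{1-j/2} f\Vert_{L^{2,m}}+C(M_1)\big(\Vert q\Vert_{L^{\infty,p}_T}\ucQ_{m,j}+\cQ_{m,j}\big)$.

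Summing the three contributions then yields the announced estimate, uniformly in $t\in[0,T]$ and $|\alpha|\le m$.
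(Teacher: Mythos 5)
Your proof is correct and reaches the paper's estimate by essentially the same mechanism (Leibniz/Fa\`a di Bruno expansion, splitting according to whether more than $p$ derivatives land on the coefficient, and the crucial fact that $X_2^j h_0=O(h_0)$). The one organizational difference is in how the space-derivative term is handled: the paper first multiplies the equation by $h_0$, rewriting it as $h_0 c(\uq)\D_t q+X_2u=h_0f$, so that the second term commutes exactly with $X^\alpha$ and no $u$-dependent residual ever appears; you instead keep $\D_x u$, compute $[X^\alpha,\D_x]u$ explicitly via the weights $\omega_j=h_0^{-1}X_2^jh_0$, and then eliminate $\D_x u$ by re-substituting $\D_x u=f-c(\uq)\D_t q$. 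The two routes produce the same $f^{(\alpha)}$ (it is determined by the identity it satisfies), and your extra substitution step is exactly what the paper's conjugation by $h_0$ achieves automatically; the paper's version is marginally cleaner, yours makes the origin of each residual term more explicit. The only point to watch is that the Moser bound $\Vert X^\beta c(\uq)\Vert_{L^2_{2-j}}\le C(M_1)\,\ucQ_{m,j}$ is used only for $|\beta|\ge 1$, which is guaranteed inside a commutator; with that noted, the argument is complete.
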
 
\begin{proof}
Consider the first equation written as 
$$
h_0 c(\uq)\D_t q  - X_2 u = 0. 
$$
In this form, the second term commutes with $X^\alpha$. Applying $ h_0^{-1} X^\alpha$ to this equation we obtain that $f^{(\alpha)}$ is a linear combination 
  \begin{equation}\label{eqfalpha}
  f^{(\alpha)}  =X^\alpha f+ \sum  c_* (\uq)  \frac{1}{h_0}X^{\alpha_0}h_0X^{\alpha_1} \uq \ldots X^{\alpha_{k-1}}\uq X^{\alpha_k} q ,    
\end{equation} 
where  the  $c_*$ are derivatives of the function $c(\cdot)$ and the indices satisfy $  \sum | \alpha_i | = | \alpha | + 1$ and $ 1 \le | \alpha_i| \le | \alpha |$ if $i\ge 1$.   Moreover, there is at most one $i\geq 1$ such that $| \alpha_{i} | >p$ (if none, we can choose  ${i}$ as we want). If $i=k$ (i.e. if the higher order derivative is on $q$),   and remarking that $X_2^{\alpha_2}h_0=O(h_0)$, we have
\begin{align*}
\big\| h_0^{1- j/2}  \frac{1}{h_0}X_2^{\alpha_0}h_0X^{\alpha_1} q \ldots X^{\alpha_k} q (t) \big\|_{L^2} 
&\lesssim M_1^{k-1}  \big\| h_0^{1- j/2}  X^{\alpha_k} q (t)\big\|_{L^2} \\
&\lesssim  M_1^{k-1} \cQ_{m, j}(t).  
\end{align*}
If $i<k$ (i.e. if the higher order derivative is on $\uq$),   the same quantity is bounded from above by 
$$
M_1^{k-2} \Vert q(t)\Vert_{L^{\infty,p}} \big\| h_0^{1- j/2}  X^{\alpha_i} \uq (t)\big\|_{L^2} 
\lesssim  M_1^{k-2}  \Vert q(t)\Vert_{L^{\infty,p}}  \ucQ_{m, j}(t),
$$
and the proposition follows easily.
\end{proof}

Similarly, an equation on $X^\alpha u$ is obtained by applying $X^\alpha$ to  the second equation of \eqref{lineq2}; the control of the residual provided below has been used in Step 2 of the proof of Proposition \ref{propHO}. We recall that $a(\uu)=X_1(\uu B'(\underline{\varphi}))$ and that $\underline{U}_m[t]=\sup_{t'\in [0,t]}\underline{U}_m(t')$. 
\begin{prop}  
\label{PropNL2}
Let $T>0$ and $\uV,\cS$ satisfy the bounds \eqref{constMetc}. Any smooth solution $V=(q,u)$  satisfies on  $[0,T]$, and for   $|\alpha |  \le m $ and $m+1\le 2p  $, 
$$
 {\bf d}[\uV] \D_t  X^\alpha u  +    {\bf l} \big((1+\mu a(\uu) ) X^\alpha q\big)   =  g_0^{(\alpha)} + \sqrt{\mu}  {\bf l}  g_1^{(\alpha)}.
$$
where the source terms $g_0^{(\alpha)} $ and $   g_1^{(\alpha)}$ satisfy on $[0,T]$,
\begin{align*} 
\nonumber
\big\|  (g_0 ^{(\alpha)} ,g_1 ^{(\alpha)} )(t) \big\|_{L^2_1}   \le 
\uCp\big[&\Vpt\big(\ucQ_{m,1}(t)+\ucU_m[t]+\ucU_{m+1}(t)\big)\\
&+\cQ_{m-1,1}(t)+\cU_{m}(t)\big] +\big \Vert (g_0,g_1) (t)\big\Vert_{L^{2,m}_1},
\end{align*} 
with $\uCp=C(T,\ucM_p(T))$, as well as
\begin{align*} 
\nonumber
\big\|   (g_0 ^{(\alpha)} ,g_1 ^{(\alpha)} ) (t)\big\|_{L^2}   \le   \uCp\big[&\Vpt\big(\ucQ_{m,2}(t)+\ucU_{m,1}[t]+\ucU_{m+1,1}(t)\big)\\
&+\cQ_{m-1,2}(t)+\cU_{m,1}(t)\big] +\big\|   (g_0  ,g_1 ) (t)\big\|_{L^{2,m}}  
\end{align*} 
and
\begin{align*}
\big\| (g_{0}^{(\alpha)},g_{1}^{(\alpha)}) (t)\big\|_{L^\infty} \le \uCp\Big[& \Vpt \big(\Vert \uq(t)\Vert_{L^{\infty,m}}+\Vert \uu(t)\Vert_{L^{\infty,m+1}}\big)\\
&+\Vert q (t)\Vert_{L^{\infty,m-1}}+\Vert u (t)\Vert_{L^{\infty,m+1}}\Big]+\Vert (g_0,g_1)(t)\Vert_{L^{\infty,m}}.
\end{align*}
\end{prop}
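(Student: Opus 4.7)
The approach mirrors that of Proposition~\ref{PropNL1}: apply $X^\alpha$ to the second equation of \eqref{lineq2} and isolate the principal quasilinear part ${\bf d}[\uV]\D_t X^\alpha u + {\bf l}\bigl((1+\mu a(\uu))X^\alpha q\bigr)$ on the left-hand side, so that the right-hand side consists of $X^\alpha{\bf g}$ plus commutator remainders. The new feature compared to Proposition~\ref{PropNL1} is that ${\bf d}[\uV]$ and ${\bf l}\bigl((1+\mu a(\uu))\,\cdot\,\bigr)$ themselves contain an outer ${\bf l}$ factor (see \eqref{defd}), so one must check that these remainders naturally organize into the structured form $g_0^{(\alpha)}+\sqrt{\mu}\,{\bf l} g_1^{(\alpha)}$ required by the statement.

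Concretely, using \eqref{defd} together with $[X_1,{\bf l}]=0$ and $[X_2,{\bf l}]=2 h_0' X_2$, one expands
$$
X^\alpha({\bf d}[\uV]\D_t u) = {\bf d}[\uV]\D_t X^\alpha u + \mu R_1^{(\alpha)} + \mu\,{\bf l}\,R_2^{(\alpha)},
$$
$$
X^\alpha\bigl({\bf l}((1+\mu a(\uu))q)\bigr) = {\bf l}\bigl((1+\mu a(\uu))X^\alpha q\bigr) + R_3^{(\alpha)} + \mu\,{\bf l}\,R_4^{(\alpha)},
$$
and sets $g_0^{(\alpha)}:=X^\alpha g_0-\mu R_1^{(\alpha)}-R_3^{(\alpha)}$, $g_1^{(\alpha)}:=X^\alpha g_1-\sqrt{\mu}\,R_2^{(\alpha)}-\sqrt{\mu}\,R_4^{(\alpha)}$. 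Each $R_j^{(\alpha)}$ is, as in \eqref{eqfalpha}, a finite sum of products
$$
c_*(\uq,\underline{\varphi})\,X^{\beta_0}(\mathrm{smooth})\,X^{\beta_1}\uV\cdots X^{\beta_{k-1}}\uV\,X^{\beta_k}V',
$$
with $V'\in\{u,\D_t u, q\}$, $\sum|\beta_i|\leq|\alpha|+1$, and at most one $|\beta_i|>p$; the dependence through $\underline{\varphi}=x+\int_0^t\uu$ is harmless since each derivative of $B^{(j)}(\underline{\varphi})$ only produces factors controlled by $M_1$ and by the smoothness of $B$.

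The three norm estimates then follow by the same dichotomy as in Proposition~\ref{PropNL1}: split each product according to whether the highest-order derivative lands on a $\uV$-factor or on $V'$. In the first case the remaining factors of $V$ and $\uV$ are bounded in $L^\infty$ by $\|V\|_{L^{\infty,p}}$ and by $M_1$ respectively, producing the $\Vpt(\ucQ_{m,j}+\ucU_{m,j}+\ucU_{m+1,j})$ contribution---the $+1$ shift on $\uu$ coming from the $\D_t u$ inside ${\bf d}[\uV]$, and the weight index $j\in\{0,1,2\}$ matching the $L^2_{2-j}$ space in which the estimate is performed. In the second case one obtains the $\cQ_{m-1,j}+\cU_{m,j}$ contribution, the $-1$ shift on $q$ being due to the fact that $X^\alpha q$ has already been isolated on the left. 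The $L^2_1$, $L^2$, and $L^\infty$ bounds are then obtained by applying the product/commutator inequalities \eqref{comm1}--\eqref{comm2} with weight exponent $s=2$, $s=0$, and in their $L^\infty$ analogue respectively, the weight $h_0$ inside ${\bf l}$ providing the extra factor needed to reach $L^2_1$.

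The main obstacle is the bookkeeping: one must (i) split $\mu=\sqrt{\mu}\cdot\sqrt{\mu}$ so as to absorb one $\sqrt{\mu}$ into $g_1^{(\alpha)}$ and keep the estimate uniform in $\mu\in[0,1]$; (ii) carefully identify which commutator remainders admit an outer ${\bf l}$ factor (and hence belong to $g_1^{(\alpha)}$) and which do not (and hence belong to $g_0^{(\alpha)}$), by systematically tracking the $[X_2,{\bf l}]=2h_0'X_2$ commutators between the outer $X^\alpha$ and the inner derivatives inside ${\bf d}[\uV]$; (iii) treat the time-integrated dependence $\underline{\varphi}=x+\int_0^t\uu$, so that $T$ enters the constants (through $\uCp$) but no derivative is lost on the factors $B^{(j)}(\underline{\varphi})$ or $a(\uu)$.
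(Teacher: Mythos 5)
Your proposal follows essentially the same route as the paper: apply $X^\alpha$ to the second equation, keep the quasilinear part ${\bf d}[\uV]\D_t X^\alpha u+{\bf l}\big((1+\mu a(\uu))X^\alpha q\big)$ on the left, organize the commutator remainders into the structured form $g_0^{(\alpha)}+\sqrt{\mu}\,{\bf l}\,g_1^{(\alpha)}$, and estimate them with the product and commutator inequalities \eqref{comm1}--\eqref{comm2} via the ``at most one high-order factor'' dichotomy. Two points need correcting, though. First, the commutator you quote is wrong as written: since ${\bf l}=h_0\D_x+2h_0'$, one has $[X_2,{\bf l}]u=2(X_2h_0')\,u=2h_0h_0''\,u$, i.e.\ multiplication by the bounded function $2X_2h_0'$, not the first-order operator $2h_0'X_2$; it is precisely the fact that $[X^\alpha,{\bf l}]=[X^\alpha,2h_0']$ is a zeroth-order (and $O(h_0)$-compatible) remainder that makes the term $g_0^{ii}$ harmless. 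Second, and more substantively, your claim that every derivative of $B^{(j)}(\underline{\varphi})$ ``only produces factors controlled by $M_1$'' fails when the top-order derivative lands on $\underline{\varphi}$: for $|\alpha_l|>p$ the factor $X^{\alpha_l}\underline{\varphi}$, which is $\int_0^t X^{\alpha_l}\uu\,ds$ up to taking time derivatives, must be measured in the weighted $L^2$ norm; this is exactly the content of the paper's Lemma \ref{lem51} and is the source of the time-supremum $\ucU_m[t]$ (as opposed to $\ucU_m(t)$) in the statement of the proposition. Your final enumeration of contributions does list the $\ucU_{m,j}$ term, so the bookkeeping lands in the right place, but the justification as written would not produce it; inserting Lemma \ref{lem51} (or an equivalent chain-rule estimate for $X^\alpha B'(\underline{\varphi})$ and $X^\alpha X_1 B'(\underline{\varphi})$) closes that gap.
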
   
\begin{proof}
We only prove the fist and third estimates; the second one is established like the first one, with a straightforward adaptation. We first state the following lemma that we shall use to control the topography terms.
\begin{lem}
\label{lem51} 
Let $T>0$. For $j = 0, 1$,  $ |\alpha | \le m \le 2p$ and $t \in [0, T]$ ,
 $$
 \big\| h_0^{(1-j)/2}  X^{\alpha}  B' (\underline{\vp})  (t) \big\|_{L^2}  \lesssim  \uCp   \ucU_{m, j} [t] .  
 $$
 The same control holds for $X^\alpha X_1 B' (\underline{\vp}) $.
\end{lem} 
\begin{proof}[Proof of the lemma] By the chain rule, 
 \begin{equation}\label{XalphaB}
 X^{\alpha}  B' (\underline{\vp}) = \sum  B_*(\underline{\vp}) X^{\alpha_1} \underline{\vp} \ldots X^{\alpha_n} \underline{\vp}   ,\quad  \ \sum \alpha_j = \alpha, \   \alpha_j \ne 0,
\end{equation} 
where the $B_*$ are derivatives of the function $B'$.
Because  $\D_t  \underline{\vp}  = \uu $, we easily conclude that for $j= 0, 1$ 
 $$
 \big\|  X^{\alpha_k } \underline{\vp} (t)\big\|_{L^\infty} \lesssim 1 +tM_1 , \qquad
   \big\| B_*(\underline{\varphi})h_0^{(1-j)/2}X^{\alpha_l  } \underline{\vp}(t) \big\|_{L^2} \lesssim 1+t\ucU_{m, j}[t]
 $$
 if $| \alpha_k | \le p   $ and $|\alpha_l  | \le m $ respectively. Since at most one index $|\alpha_l |  > p$, 
 the first estimate of the lemma follows. The fact that the same estimate also holds for $X^\alpha X_1 B' (\underline{\vp})$ stems from the fact that this term is of the form \eqref{XalphaB} with $X^{\alpha_j}\underline{\vp}$ replaced by $X^{\alpha_j} \uu$ for some $j$.
  \end{proof} 
 Applying $X^\alpha$, with $\vert \alpha\vert \leq m$, to the second equation of \eqref{lineq2}, we obtain
 \begin{align*}
  {\bf d}[\uV] \D_t  X^\alpha u  &+    {\bf l} \big((1+\mu a(\uu) ) X^\alpha q\big)   =  -[X^\alpha, {\bf d}[\uV]] \D_t u-\mu {\bf l}\big([X^\alpha,a(\uu)]q\big)\\
&  -[X^\alpha,2h_0'] \big((1+\mu a(\uu) ) q\big)  
  +X^\alpha g_0 + \sqrt{\mu}  X^\alpha {\bf l}  g_1,
 \end{align*}
 where we used the fact that $[X^\alpha,{\bf l}]=[X^\alpha,2h_0']$.
 We now consider and give controls on the different components of the right-hand-side of the above equation.\\
 - {\it Control of $-[X^\alpha, {\bf d}[\uV]] \D_t u-\mu {\bf l}\big([X^\alpha,a(\uu)]q\big)$}. From the definition \eqref{defd} of ${\bf d}[\uV]$,  we can write
 $$
 -[X^\alpha, {\bf d}[\uV]] \D_t u-\mu {\bf l}\big([X^\alpha,a(\uu)]q\big)=g_{0}^i+\sqrt{\mu}{\bf l} g_{1}^i,
 $$
 with 
 \begin{align*}
 g_{0}^i=&   \mu [X^\alpha,\uq\underline{B}']X_2X_1 u-\mu [X^\alpha,(\underline{B}')^2]X_1 u \\
 &+\mu [X^\alpha,2h_0']\big(\frac{4}{3}\uq^2 X_2X_1 u+\uq X_1u \underline{B}'\big) \\
 g_{1}^i=&\sqrt{\mu}\frac{4}{3}[X^\alpha , \uq^2] X_2 X_1 u
-\sqrt{\mu} [X^\alpha,\underline{B}'\uq] X_1 u- \sqrt{\mu}[X^\alpha,a(\uu)]q,
 \end{align*}
where we recall that $a(\uu)=X_1(\uu B'(\underline{\varphi}))$.\\
 From the product and commutator estimates \eqref{comm1}-\eqref{comm2}, we deduce that for $j=0,1$, one has
 \begin{align*}
  \Vert  g_{j}^i (t)\Vert_{L^2_1}\leq \uCp \big[&\Vpt\big(\ucQ_{m,1}(t)+\ucU_{m}[t]+\ucU_{m+1}(t)\big)+\cQ_{m-1,1}(t)+\cU_{m}(t)\big].
 \end{align*}
 - {\it Control of $[X^\alpha,2h_0'] \big((1+\mu a(\uu) )  q\big)=:g_{0}^{ii}$.} Since $[X_1,h_0']=0$ and $[X_2,h_0']=h_0 h_0''=O(h_0)$, and recalling that $a(\uu)=X_1(\uu B'(\underline{\varphi}))$, one readily deduces from  the product and commutator estimates \eqref{comm1} and \eqref{comm2} that
 $$
  \Vert  g_{0}^{ii}(t) \Vert_{L^2_1}\leq \uCp\big[\Vpt\ucU_{m}[t]+\cQ_{m-1}(t)\big].
 $$
 - {\it Control of $X^\alpha g_0 + \sqrt{\mu}  X^\alpha {\bf l}  g_1$.} We can write
   $$
  X^\alpha g_0 +\sqrt{ \mu}  X^\alpha {\bf l}  g_1=g_0^{iii}+\sqrt{\mu} {\bf l}g_1^{iii}
  $$
  with
  $$
  g_0^{iii}=X^\alpha g_0+\mu [X^\alpha,2h_0'] g_1\quad \mbox{ and }\quad g_1^{iii}=X^\alpha g_1,
  $$
  so that we easily get
  $$
  \Vert g_0^{iii}(t)\Vert_{L^2_1} + \Vert g_1^{iii}(t)\Vert_{L^2_1}
  \lesssim \Vert (g_0,g_1)(t)\Vert_{L^{2,m}_1}.
  $$
  The second estimate of the proposition then follows directly by setting $g_j^{(\alpha)}=g_j^i+g_0^{ii}+g_j^{iii}$ for $j=0,1$.\\
 Finally, for the $L^\infty$ estimates we use \eqref{comm1} together with the following straightforward commutator estimate, 
$$
\forall \vert \beta \vert \leq  m,\qquad \Vert [X^\beta,f]g\Vert_{L^\infty}\lesssim \Vert f\Vert_{L^{\infty,{[\frac{m}{2}]}}} \Vert  g\Vert_{L^{\infty,m-1}}+\Vert g\Vert_{L^{\infty,{[\frac{m}{2}]}}} \Vert  f\Vert_{L^{\infty,m}},
$$
and  follow the same steps as above.
\end{proof}

  The controls provided by Propositions \ref{PropNL1} and \ref{PropNL2} involve the quantities $\cQ_{n-2,1}$ and $\Vp$ that need to be controlled in terms of the energy norm $\Vert V\Vert_{\VV^{n-1}}$. For the first quantity, this means that we need to gain a factor $h_0$ in the weighted $L^2$-norms on $q$, possibly loosing one derivative. This is done in the following proposition which is based on the Hardy type inequalities derived in Section \ref{Sec6}, and which has been used to prove Step 3 of the proof of Proposition \ref{propHO}.
 \begin{prop} 
 \label{PropHq}  Let $T>0$ and $\uV,\cS$ satisfy the bounds \eqref{constMetc}. There is a constant $\uCp=C(T,M_1)$ such that for all smooth solution $V=(q,u)$  of \eqref{lineq2} on  $[0,T]$, the following three properties hold, for all $t\in [0,T]$,\\
{\bf i.} For $m\leq 2p$, one has
 \begin{align*}
&\cU_{m,1}(t)\lesssim \Vert f(t)\Vert_{L^{2,m}_2}+\cU_m(t)+
\uCp\big(\Vpt\ucQ_{m}(t)+\cQ_{m+1}(t) \big),
\end{align*}
{\bf ii.} For $p<m+1 \le  2p$, one has
\begin{align*}
&\cQ_{m, 1} (t) \le \uCp\big[\Vert q(0)\Vert_{L^{2,p-1}} +\Vpt\big(1+\ucQ_{m,1}(t)+\ucU_{m+1}[t]\big)\\
&\phantom{\cQ_{m, 1}  \le \uCp\big[}+\cQ_m[t]+\cU_{m+1} (t)+\big \Vert  (g_0,g_1) (t)\big\Vert_{L^{2,m}_1}\big],
\end{align*}
{\bf iii.} For $p<m+1 \le  2p$, one has
\begin{align*}
\cQ_{m, 2} (t) \le \uCp\big[&\Vert q(0)\Vert_{L^{2,p-1}}+\Vpt\big(1+\ucQ_{m,2}(t)+\ucQ_{m+1}(t)+\ucU_{m+1,1}[t]\big)\\
&+\cQ_{m+2}[t]+\cU_{m+1}(t) +\big\Vert  f(t)\big\Vert_{L^{2,m+1}_2}  + \big \Vert  (g_0,g_1)(t) \big\Vert_{L^{2,m}}\big].
\end{align*}
 \end{prop}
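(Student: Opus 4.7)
The plan is to combine the differentiated equations provided by Propositions~\ref{PropNL1} and~\ref{PropNL2} with the Hardy inequalities of Section~\ref{Sec6} in order to turn the degenerate energy norms into the stronger weighted norms appearing on the left-hand-side of~(i)--(iii). The three items correspond, respectively, to gaining one power of $h_0$ on $u$ (passing from $L^2_1$ to $L^2$), one power on $q$ (from $L^2_2$ to $L^2_1$), and two powers on $q$ (from $L^2_2$ to $L^2$), each time at the cost of one extra derivative on the source or on the other unknown.

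For~(i), I would just rewrite the order-$\alpha$ form of the first equation as $X_2 X^\alpha u=h_0 f^{(\alpha)}-h_0 c(\uq)X_1 X^\alpha q$ and apply Corollary~\ref{coroH1} (Hardy for $D_0=h_0\D_x$) with $\sigma_1=1/2$:
$$
\Vert X^\alpha u\Vert_{L^2}\ls \Vert X_2 X^\alpha u\Vert_{L^2}+\Vert X^\alpha u\Vert_{L^2_1}\ls \Vert f^{(\alpha)}\Vert_{L^2_2}+C(M_1)\Vert X_1 X^\alpha q\Vert_{L^2_2}+\Vert X^\alpha u\Vert_{L^2_1}.
$$
Proposition~\ref{PropNL1} controls $\Vert f^{(\alpha)}\Vert_{L^2_2}$ and the middle piece is absorbed in $\cQ_{m+1}$; summation over $|\alpha|\le m$ yields~(i).

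For~(ii) and~(iii), the core is to derive an equation of the form ${\bf l}P_\alpha=R_\alpha$ from the order-$\alpha$ second equation. Expanding ${\bf d}[\uV]X_1 X^\alpha u$ via~\eqref{defd} and regrouping every ${\bf l}$-term on the left yields exactly the formulas of Step~3 of the proof of Proposition~\ref{propHO}, with
$$
P_\alpha=(1+\mu a(\uu))X^\alpha q+\mu\big(-\tfrac{4}{3}\uq^2 X_2 X_1 X^\alpha u+\uq\underline{B}' X_1 X^\alpha u\big)-\sqrt{\mu}g_1^{(\alpha)}
$$
and $R_\alpha=g_0^{(\alpha)}-X_1X^\alpha u+\mu\uq\underline{B}' X_2 X_1 X^\alpha u-\mu(\underline{B}')^2 X_1 X^\alpha u$. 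Since ${\bf l}=D_2$ in the notation of~\eqref{eqDalpha}, I would then apply Corollary~\ref{coroH2} with $(\sigma,\sigma_1,\alpha)=(1/2,1,2)$ for~(ii) and $(0,1,2)$ for~(iii), obtaining
$$
\Vert P_\alpha\Vert_{L^2_{2-j}}\ls \Vert R_\alpha\Vert_{L^2_{2-j}}+\Vert P_\alpha\Vert_{L^2_2}\qquad (j=1,2).
$$
The uniform lower bound $1+\mu a(\uu)\ge \delta>0$ coming from ${\mathfrak m}_1(\uV;T)\le M_1$ then lets me extract $X^\alpha q$ from $P_\alpha$ up to $u$-pieces and $g_1^{(\alpha)}$, and Proposition~\ref{PropNL2} supplies the bounds on $\Vert R_\alpha\Vert_{L^2_{2-j}}$ and $\Vert P_\alpha\Vert_{L^2_2}$.

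The main technical obstacle is the dispersive correction $\mu X_2 X_1 X^\alpha u$, which naively carries $m+2$ derivatives on $u$ whereas only $m+1$ are permitted by $\cU_{m+1}$. I would handle it through the Leibniz identity $fh_0\D_x w={\bf l}(fw)-({\bf l}f)w$, which absorbs $\mu\uq\underline{B}'X_2 X_1 X^\alpha u$ directly into the ${\bf l}$-term on the left, trading a second-order $u$-derivative for a first-order one and thereby fitting it into $\cU_{m+1}$. For the $L^2$-level bound needed in~(iii), the first equation $\D_x u=f-c(\uq)X_1 q$ further lets me exchange $h_0\D_x(X_1 X^\alpha u)$ against $f$ and $X_1^2 X^\alpha q$, which is how the $\Vert f\Vert_{L^{2,m+1}_2}$ and $\cQ_{m+2}[t]$ terms of~(iii) come in. The initial datum $\Vert q(0)\Vert_{L^{2,p-1}}$ finally enters through the low-order time derivatives of $a(\uu)$, $\uq$ and $B'(\underline{\varphi})$, which are written as $F(t)=F(0)+\int_0^t X_1F$ and evaluated against their $t=0$ profiles via Section~\ref{sectIVTD}. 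Beyond those delicate points the proof reduces to weighted bookkeeping, summation over $|\alpha|\le m$, and an application of the product/commutator estimates~\eqref{comm1}--\eqref{comm2} to ensure that no $1/h_0$ factor ever survives at the boundary.
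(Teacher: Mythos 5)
Your overall architecture coincides with the paper's. Part (i) is obtained exactly as you describe (first equation plus Corollary \ref{coroH1} with $\sigma_1=1/2$, then Proposition \ref{PropNL1}); for (ii)--(iii) the paper likewise rewrites the order-$\alpha$ second equation as ${\bf l}q^{(\alpha)}=R_\alpha$ with $q^{(\alpha)}$ equal to your $P_\alpha$, applies Corollary \ref{coroH2} with the same parameters $(\sigma,\sigma_1,\alpha)=(1/2,1,2)$ and $(0,1,2)$, extracts $X^\alpha q$ using the bound on $1/(1+\mu a(\uu))$ contained in $M_1$, and feeds in Proposition \ref{PropNL2}. Your handling of the second-order dispersive terms (Leibniz identity to push $\mu\uq\underline{B}'X_2X_1X^\alpha u$ back under ${\bf l}$, and substitution of the first equation to trade $X_2X_1X^\alpha u$ for $f$- and $q$-terms at the $L^2$ level) is a legitimate variant of what the paper does, which controls these terms by the $\sqrt{\mu}\,h_0\D_x$ part of the order-$(m+1)$ energy norm and, for (iii), by invoking part (i) at order $m+1$ — the same substitution in disguise.

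The genuine gap is your account of the term $\Vert q(0)\Vert_{L^{2,p-1}}$. It does not come from the coefficients $a(\uu)$, $\uq$, $B'(\underline{\varphi})$: those belong to the fixed background $\uV$ and are already packaged into $M_1$, $M_2$. It comes from the solution $q$ itself. Proposition \ref{PropNL2} leaves a commutator contribution $\cQ_{m-1,1}(t)$ (resp.\ $\cQ_{m-1,2}(t)$) in the bound for $g_0^{(\alpha)}$, so the Hardy step only yields a recursion of the form $\cQ_{m,1}\ls \cQ_{m-1,1}+\cdots$, which must be iterated downward to $m=p$ and then closed by controlling the under-weighted low-order norm $\cQ_{p-1,1}$. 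No further Hardy inequality can reach that base case (the weight is already too weak relative to the energy norm); the paper closes it by writing $X^\alpha q(t)=X^\alpha q(0)+\int_0^t\D_tX^\alpha q$, splitting $\RR_+$ into $(0,1)$ and $(1,\infty)$, and using the $L^{\infty,p}$ norm near the shoreline and $\cQ_p[t]$ at infinity, which is precisely where both $\Vert q(0)\Vert_{L^{2,p-1}}$ and the factor $\Vpt\cdot 1$ in (ii)--(iii) originate (see \eqref{estinduc}). As written, your proposal neither performs this downward induction nor supplies a correct substitute for its base case, so the chain of estimates does not terminate.
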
 
  \begin{rem}
 A quick look at the proof shows that when $m=p$ or $m=p+1$, the estimate given in the first point of the proposition can be simplified into
 \begin{equation}\label{Hqsimp}
\cU_{p,1}(t)\lesssim \Vert f(t)\Vert_{L^{2,p}_2}+\cU_p(t)+
\uCp \cQ_{p+1}(t)
\end{equation}
and
 \begin{equation}\label{Hqsimp2}
\cU_{p+1,1}(t)\lesssim \Vert f(t)\Vert_{L^{2,p+1}_2}+\cU_{p+1}(t)+
\uCp \big(\cQ_{p+1}(t)+\ucQ_{p+1}(t)\Vert X_1 q(t)\Vert_{L^\infty}\big).
\end{equation}
 \end{rem}
 \begin{proof}
 {\it Throughout this proof, the dependence on $t$ is omitted when no confusion is possible.}\\
{\bf i.} Control of $\cU_{m, 1} $.
We first rewrite the equation \eqref{eqstep1} under the form
$$
h_0 \D_x X^\alpha u=h_0 f^{(\alpha)}-c(\uq)h_0\D_t X^\alpha q,
$$
for $| \alpha | \le m$. Using the Hardy inequality provided by Corollary \ref{coroH1} (with $\sigma_1=1/2$), we deduce
 $$
 \big\| X^\alpha u \big\|_{L^2} \lesssim   \big\| h_0^\mez X^\alpha u  \big\|_{L^2} + 
  \big\| c(\uq)h_0 X^\alpha \D_t q   \big\|_{L^2}  +  \big\|h_0  f^{(\alpha)} \big\|_{L^2} ,
 $$
which, together with the control on $\big\|h_0  f^{(\alpha)}  \big\|_{L^2}$ provided by  Proposition \ref{PropNL1} yields the result.\\
{\bf ii.} Control of $\cQ_{m, 1} $. Using the definition \eqref{defd} of ${\bf d}[\uV]$, we can rewrite the equation \eqref{eqstep2} on $X^\alpha q$ under the form
\begin{equation}\label{Hardyq}
    {\bf l} q^{(\alpha)}  =  -X_1 X^\alpha u+g_0^{(\alpha)} +\mu \uq \underline{B}'X_2X_1X^\alpha u-\mu (\underline{B}')^2X_1X^\alpha u
\end{equation}
with
\begin{equation}\label{defqalpha}
q^{(\alpha)}=(1+\mu a(\uu) ) X^\alpha q-\sqrt{\mu} g_1^{(\alpha)}+\mu \frac{4}{3}\uq^2 X_2 X_1 X^\alpha u+\mu \uq \underline{B}' X_1 X^\alpha u,
\end{equation}
so that
$$
X^\alpha q=\frac{1}{1+\mu a(\uu) }\Big[q^{(\alpha)}+\sqrt{\mu} g_1^{(\alpha)}+\mu \frac{4}{3}\uq^2 X_2 X_1 X^\alpha u-\mu \uq \underline{B}' X_1 X^\alpha u\Big].
$$
In particular, for $| \alpha | \le m$,    we have
$$
\Vert  X^\alpha q\Vert_{L^2_1}\leq \uCp\times \big( \big\|  q^{(\alpha)} \big\|_{L^2_1}+\Vert \mu  g_1^{(\alpha)}\Vert_{L^2_1}+\cU_{m+1} \big).
$$
We now use on \eqref{Hardyq} the Hardy inequality provided by Corollary \ref{coroH2} (with $\alpha=2$, $\sigma=1/2$ and $\sigma_1=1$) to obtain
\begin{align}
\nonumber
\big\|  q^{(\alpha)} \big\|_{L^2_1} &\lesssim  \uCp \cU_{m+1 }   +    
\big\|  g_{0}^{(\alpha)}  \big\|_{L^2_1} + \big\|  q^{(\alpha)} \big\|_{L^2_2}\\
\label{esthardqa}
&\lesssim \uCp(\cQ_m+\cU_{m+1})  +    
\big\|  (g_{0}^{(\alpha)}  , g_{1}^{(\alpha)} ) \big\|_{L^2_1}.
\end{align}
It follows that 
$$
\Vert  X^\alpha q\Vert_{L^2_1}\lesssim \uCp(\cQ_m+\cU_{m+1}  ) +    
\big\| (g_{0}^{(\alpha)}  , g_{1}^{(\alpha)} ) \big\|_{L^2_1}.$$
Using Proposition \ref{PropNL2} and summing over all $\vert \alpha\vert \leq m$, this yields
\begin{align*}
\cQ_{m, 1}  \le \uCp\big[&\Vp\big(\ucQ_{m,1}+\ucU_m[t]+\ucU_{m+1}\big)\\
&+\cQ_{m-1,1}+\cQ_m+\cU_{m+1} +\big \Vert  (g_0,g_1) \big\Vert_{L^{2,m}_1}\big],
\end{align*}
and by induction, we deduce that
\begin{align}
\nonumber
\cQ_{m, 1}  \le \uCp\big[&\Vp\big(\ucQ_{m,1}+\ucU_m[t]+\ucU_{m+1}\big)\\
\label{intermestQ}
&+\cQ_{p-1,1}+\cQ_m+\cU_{m+1} +\big \Vert  (g_0,g_1) \big\Vert_{L^{2,m}_1}\big],
\end{align}
so that we are therefore left to control $\cQ_{p-1,1}$. We actually prove below a stronger result, namely, a control of $\cQ_{p-1,2}$. For $| \alpha | \le    p-1$,  integrating $\D_t X^\alpha q$ and using Assumption \ref{assBh} on $h_0$ for the second inequality, we see that on $[0,T]$,
\begin{align*}
\big\| X^\alpha q  (t) \big\|_{L^2} & \le
\big\| X^\alpha q  (0) \big\|_{L^2}+t\Big(\sup_{[0,t]}\Vert \D_t X^\alpha q\Vert_{L^2(0,1)}+\sup_{[0,t]}\Vert \D_t X^\alpha q\Vert_{L^2(1,\infty)}\Big)\\
& \le \big\| X^\alpha q  (0)  \big\|_{L^2} + t \Big(\sup_{[0,t]}\Vert \D_t X^\alpha q\Vert_{L^\infty(0,1)}+\sup_{[0,t]}\Vert h_0 \D_t X^\alpha q\Vert_{L^2(1,\infty)}\Big)
\end{align*}
Summing over all $| \alpha | \le    p-1$ and using Assumption \ref{assBh}, this yields
\begin{equation}\label{estinduc}
\cQ_{p-1,2}(t) \le  \underline C_1 \Big(  \Vert q(0)\Vert_{L^{2,p-1}} + t    \big(\Vert q\Vert_{L^\infty_tL^{\infty,p}}+Q_p[t]\big) \Big) .
\end{equation}
Plugging this estimate into \eqref{intermestQ}, we get the second estimate of the proposition.\\
{\bf iii.} Control of $\cQ_{m, 2} $. We proceed as for ${\bf ii.}$ but replace \eqref{esthardqa} by the inequality obtained by using Corollary \ref{coroH2} (with $\alpha=2$, $\sigma=0$ and $\sigma_1=1$), namely, for $\vert \alpha\vert\leq m$,
\begin{align}
\nonumber
\big\|  q^{(\alpha)} \big\|_{L^2} &\lesssim  \uCp\cU_{m+1,1 }   +    
\big\|  g_{0}^{(\alpha)}  \big\|_{L^2} + \big\|  q^{(\alpha)} \big\|_{L^2_2}\\
\label{estL2g0}
&\lesssim \uCp(\cQ_m+\cU_{m+1,1})  +    
\big\|  (g_{0}^{(\alpha)}  , g_{1}^{(\alpha)} ) \big\|_{L^2}.
\end{align}
Using Proposition \ref{PropNL2} and proceeding as for {\bf ii.}, this implies that 
\begin{align*}
\cQ_{m, 2}  \le \uCp\big[&\Vp\big(\ucQ_{m,2}+\ucU_{m+1,1}[t]\big)\\
&+\cQ_{m-1,2}+\cQ_m+\cU_{m+1,1} +\big \Vert  (g_0,g_1) \big\Vert_{L^{2,m}}\big].
\end{align*}
Using the result established in {\bf i.}, we deduce that
\begin{align*}
\cQ_{m, 2}  \le \uCp\big[&\Vp\big(\ucQ_{m,2}+\ucQ_{m+1}+\ucU_{m+1,1}[t]\big)\\
&+\cQ_{m-1,2}+\cQ_{m+2}+\cU_{m+1} +\big\Vert  f\big\Vert_{L^{2,m+1}_2}  + \big \Vert  (g_0,g_1) \big\Vert_{L^{2,m}}\big].
\end{align*}
By induction, and with \eqref{estinduc}, we finally obtain the result.
 \end{proof} 
 As said above, we need a control of $\Vp$ in terms of the energy norm $\Vert V\Vert_{\VV^{n-1}}$. The proposition below, also based on Hardy-type inequalities,  establishes the result used to prove Step 4 of the proof of Proposition \ref{propHO}.
 \begin{prop}\label{propcontinf}
 Let $T>0$ and $\uV, \cS$ satisfy the bounds \eqref{constMetc} and  let $p\geq 2$ and $p+5\leq n-1$.  There exists a nondecreasing function of its arguments $\cT(\cdot)$ such that if $T\cT(M_1,M_2,M)<1$, any smooth solution $V=(q,u)$  of \eqref{lineq2}   satisfies the following estimate, 
 $$
 \Vert V\Vert_{L^{\infty,p}_T}\leq C(T,M_1,M_2,M)\big[ C_0+\Vert V\Vert_{L^\infty_T\VV^{p+6}}+S\big],
 $$
 with $C_0=\Vert u(0)\Vert_{L^{\infty,p+2}}+\Vert q(0)\Vert_{L^{2,p-1}\cap L^{\infty,p-1}}$.
 \end{prop}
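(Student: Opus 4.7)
The approach I would take follows Step~4 in the proof of Proposition~\ref{propHO}: combine the one-dimensional Sobolev embedding $\Vert f\Vert_{L^\infty(\RR_+)}\lesssim \Vert f\Vert_{L^2}+\Vert \D_x f\Vert_{L^2}$, the Hardy inequalities of Corollaries~\ref{coroH1} and~\ref{coroH3}, and the linear equations~\eqref{lineq2} themselves, so as to transfer the sought $L^\infty$ control onto the weighted $L^2$ energy bound $\Vert V\Vert_{L^\infty_T\VV^{p+6}}$. The room provided by the hypothesis $p+5\le n-1$ is precisely what absorbs the losses from Sobolev, from commutators, and from iterating the residual estimates of Propositions~\ref{PropNL1}, \ref{PropNL2} and \ref{PropHq}. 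For $|\alpha|\le p$, applying the Sobolev embedding to $X^\alpha u$ reduces the problem to bounding $X^\alpha u$ and $\D_x X^\alpha u$ in $L^2$: the unweighted $L^2$ norm $\cU_{p,1}$ is controlled by Proposition~\ref{PropHq}(i), and the derivative is eliminated using the first equation in~\eqref{eqstep1} as $\D_x X^\alpha u=f^{(\alpha)}-c(\uq)X_1 X^\alpha q$, with Proposition~\ref{PropNL1} supplying $\Vert f^{(\alpha)}\Vert_{L^2}$ and Proposition~\ref{PropHq}(iii) at level $m=p+1$ supplying $\Vert X_1 X^\alpha q\Vert_{L^2}=\cQ_{p+1,2}$ in terms of $\cQ_{p+3}$ and $\cU_{p+2}$, both sitting inside $\VV^{p+6}$.

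The $L^\infty$ bound on $X^\alpha q$ at the same order is more delicate: I would apply Corollary~\ref{coroH3} (with $\alpha=2$, $\sigma=0$) to the auxiliary quantity $q^{(\alpha)}$ defined in~\eqref{defqalpha}, obtaining
\[
\Vert q^{(\alpha)}\Vert_{L^\infty}\;\lesssim\;\Vert {\bf l}\,q^{(\alpha)}\Vert_{L^\infty\cap L^2}+\Vert q^{(\alpha)}\Vert_{L^2}.
\]
By \eqref{Hardyq}, ${\bf l}\,q^{(\alpha)}$ is an explicit combination of $g_0^{(\alpha)}$, $X_1 X^\alpha u$ and $X_2 X_1 X^\alpha u$: the $L^\infty$ norm of the $u$-terms requires $L^{\infty,p+2}$-control of $u$, which is obtained by running the Sobolev-plus-equation argument of the first paragraph one level higher (admissible since $p+2$ is still at most $n-3$); the $L^\infty$ control of $g_0^{(\alpha)}$ is the third inequality of Proposition~\ref{PropNL2}; the $L^2$ counterparts belong to $\VV^{p+2}$; and $\Vert q^{(\alpha)}\Vert_{L^2}$ is handled by \eqref{estL2g0}. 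The bound on $\Vert X^\alpha q\Vert_{L^\infty}$ is then recovered from~\eqref{defqalpha} by dividing by $1+\mu a(\uu)$, which is bounded away from zero uniformly in terms of $M_1$ (cf.\ Remark~\ref{rempos}), and moving the $\mu$-order $u$-terms to the right-hand side where they are already controlled.

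These inequalities do not close directly, as they contain self-referential contributions of the form $\Vpt\cdot C(M_1,M_2,M)$, produced by the $\Vert q\Vert_{L^{\infty,p}_T}\cdot \ucQ$ product in Proposition~\ref{PropNL1} and by the $\Vpt\cdot(1+\ucQ+\ucU)$ factors appearing through Proposition~\ref{PropHq}; the prefactor is not intrinsically small. To close the loop I would adapt the time-integration device used to derive \eqref{estinduc}: writing $X^\alpha V(t)=X^\alpha V(0)+\int_0^t X_1 X^\alpha V(s)\,ds$ and estimating the integrand on a fixed neighborhood of $x=0$ in $L^\infty$ and away from it in $h_0$-weighted $L^2$, each self-referential $\sup_{[0,t]}\Vpt$ picks up a factor of $t$. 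Choosing $\cT(M_1,M_2,M)$ so that $T\cT(M_1,M_2,M)<1$ forces this prefactor below $1/2$, allowing absorption into the left-hand side, while the boundary contributions at $t=0$ collapse to $\Vert u(0)\Vert_{L^{\infty,p+2}}+\Vert q(0)\Vert_{L^{2,p-1}\cap L^{\infty,p-1}}=C_0$. The main technical obstacle I anticipate is the careful bookkeeping of weighted Sobolev indices so that every intermediate Sobolev and Hardy step remains within the $\VV^{p+6}$ budget, and verifying that every self-referential occurrence of $\Vpt$ genuinely acquires the advertised factor of $t$ through the integration rather than merely a constant bounded in terms of $M$.
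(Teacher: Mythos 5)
Your proposal follows essentially the same route as the paper: Sobolev embedding plus the first equation for the $L^\infty$ bounds on $X^\alpha u$ (the paper's Lemma~\ref{propuinf}), Corollary~\ref{coroH3} applied to $q^{(\alpha)}$ via \eqref{Hardyq} for the $L^\infty$ bounds on $X^\alpha q$ (Lemma~\ref{lem55}), and time integration at one derivative below to give every self-referential sup-norm a factor of $t$ before absorbing for small $T$. The one bookkeeping point you flag but leave open --- making sure the self-referential $\Vert q\Vert_{L^{\infty,p}}$ enters only through order-$(p-1)$ quantities so that integration yields $t\Vert q\Vert_{L^{\infty,p}}$ rather than the uncontrolled $t\Vert q\Vert_{L^{\infty,p+1}}$ --- is exactly what the paper's Lemma~\ref{lem68} resolves, by re-deriving the $\cQ_{p,2}$ and source-term bounds so that only $\Vert u\Vert_{L^{\infty,p}}$ and $\Vert q(0)\Vert+t\Vert q\Vert_{L^{\infty,p}_t}$ appear on the right.
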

 \begin{proof}
The proof is decomposed into three lemmas. The first one gives a control of the sup norms of $u$ since they can be obtained directly; the second lemma provides some necessary controls on the source terms that allow the use of the Hardy type inequalities used in the third lemma to get upper bounds on the sup norms of $q$. {\it Throughout this proof, the time dependence in the norms is omitted when no confusion is possible.}
 \begin{lem}\label{propuinf}
 Let $T>0$ and $\uV,\cS$ satisfying the bounds \eqref{constMetc}. There is a constant $\uCp=C(T,M_1)$ such that for all smooth solution $V=(q,u)$  of \eqref{lineq2} on  $[0,T]$, 
  and for $m\geq p$ such that $m +2\le  2p$, one has for all $t\in [0,T]$,
\begin{align*}
\Vert u(t)\Vert_{L^{\infty,m}} & \le  \uCp \Big[\Vert q(0)\Vert_{L^{2,p-1}}+\Vpt\big(\ucQ_{m,2}(t)
+\ucQ_{m+2}(t)+\ucU_{m+2,1}[t] \big)\\
&+\Vert V(t)\Vert_{ \VV^{m+3}}+\Vert  f(t)\Vert_{L^{2,m+2}_2}+\big\| (g_0  , g_1 ) (t) \big\|_{L^{2,m+1}}\Big] +\Vert f(t)\Vert_{L^{2,m}}.
\end{align*}
 \end{lem}
 
\begin{proof}[Proof of the lemma]
From a classical Sobolev embedding and the equation \eqref{eqstep1} on $X^\alpha u$, we have
for all $\vert \alpha\vert \leq m$,
 \begin{align*}
 \big\| X^\alpha u(t) \big\|_{L^\infty}  &\lesssim  \big\| X^\alpha u(t) \big\|_{H^1} \\
& \lesssim   \big\| X^\alpha u(t)  \big\|_{L^2}  +  \big\|  c(\uq)X^\alpha \D_t q (t)  \big\|_{L^2} 
 +  \big\| f^{(\alpha)}   (t)  \big\|_{L^2} . 
 \end{align*}
The estimate of $f^{(\alpha)}$ given by Proposition~\ref{PropNL1}  implies that
\begin{equation}\label{eqst1}
\Vert u(t)\Vert_{L^{\infty,m}}  \le \cU_{m, 1}   + \uCp\big(\Vert V(t)\Vert_{L^{\infty,p}}\ucQ_{m,2}(t)+\cQ_{m+1, 2} (t)\big) +\Vert f(t)\Vert_{L^{2,m}};
\end{equation}
using the first and third point of Proposition \ref{PropHq} to  control  $ \cU_{m, 1}  $ and $\cQ_{m+1, 2} $  respectively, we get the result.
  \end{proof}
  The following lemma gives some bounds that will be used in the Hardy inequalities used to bound $q$ from above in $L^\infty$ norm. Note that the difference between the estimate on $\cQ_{p,2}$ in the second point of the lemma with respect with the third estimate of Proposition \ref{PropHq} is that the former only requires $\Vert u\Vert_{L^{\infty,p}}$ in the right-hand-side (instead of $\Vert V\Vert_{L^{\infty,p}}$).

  \begin{lem}\label{lem68}
 Let $T>0$ and $\uV,\cS$ satisfy the bounds \eqref{constMetc}. There is a constant $\uCp=C(T,M_1)$ such that for all smooth solution $V=(q,u)$  of \eqref{lineq2} on  $[0,T]$, the following properties hold for all $\vert \alpha\vert \leq p$. \\
  {\bf i.} The source terms are uniformly bounded on $ [0,T]$ in $L^2$ and $L^\infty$ norms,
\begin{align*}
 \begin{aligned} \Vert (g_0^{(\alpha)},g_1^{(\alpha)})(t)\Vert_{L^2} &\leq 
 \Vert (g_0,g_1)(t)\Vert_{L^{2,p}}
 \\
+  & \uCp \big( \cU_p(t)+\cQ_{p+1}(t)+\big(\Vert q(0)\Vert_{L^{2,p-1}}+t\Vert q\Vert_{L_t^{\infty,p}}\big)\Vert \uu \Vert_{L_t^{\infty,p+1}}\big), 
 \end{aligned}
 \\
 \begin{aligned} \Vert (g_0^{(\alpha)},g_1^{(\alpha)})\Vert_{L^\infty}&\leq \Vert (g_0,g_1)(t)\Vert_{L^{\infty,p}}
 \\
+ &  \uCp \big( \Vert u(t)\Vert_{L^{\infty,p+1}}+\big(\Vert q(0)\Vert_{L^{\infty,p-1}}+t\Vert q\Vert_{L_t^{\infty,p}}\big)\Vert \uu\Vert_{L_t^{\infty,p+1}}\big). 
\end{aligned}
\end{align*}
  {\bf ii.} If  $p\geq 2$, one has the following estimate
 $$ \begin{aligned}
\cQ_{p,2}(t)\leq 
& \uCp \Big[ \Vert (g_0,g_1)(t)\Vert_{L^{2,p}}+\Vert f(t)\Vert_{L^{2,p+1}_2}  +\Vert V(t)\Vert_{\VV^{p+1}}\\
 &  +\big(\Vert q(0)\Vert_{L^{2,p-1}}+\Vert \D_t q (0)\Vert_{L^\infty}+t\Vert q\Vert_{L_t^{\infty,p}}\big)\big(\Vert \uu(t)\Vert_{L^{\infty,p+1}}+\ucQ_{p+1}(t)\big)\Big].
\end{aligned}
$$
  \end{lem}
  
  \begin{proof}[Proof of the lemma] {\bf i.} For $\vert \alpha\vert \leq p$, we can use the expression for $g_0^{(\alpha)}$ and $g_1^{(\alpha)}$ given in the proof of Proposition \ref{PropNL2} to obtain using direct estimates in $L^2$,
  $$
  \Vert (g_0^{(\alpha)},g_0^{(\alpha)}) (t)\Vert_{L^2}\leq \uCp \big( \cU_{p,1}(t)+  \Vert \uu\Vert_{L_t^{\infty,p+1}}  \cQ_{p-1,2}(t)\big).
  $$
  Together with \eqref{Hqsimp} and \eqref{estinduc}, this implies the result.\\
For the $L^\infty$-estimate, we proceed similarly, but taking the sup norm  instead of the $L^2$-norm,
 $$
  \Vert (g_0^{(\alpha)},g_1^{(\alpha)})\Vert_{L^\infty}\leq \uCp \Big( \Vert u\Vert_{L^{\infty,p+1}}+\Vert \uu\Vert_{L_t^{\infty,p+1}}\Vert q\Vert_{L^{\infty,p-1}}\Big)+\Vert (g_0,g_1)\Vert_{L^{\infty,p}}.
  $$
  Since $\Vert q\Vert_{L^\infty_tL^{\infty,p-1}}\leq \Vert q(0)\Vert_{L^{\infty,p-1}}+t \Vert q\Vert_{L^\infty_tL^{\infty,p}}$, this implies the result.\\
{\bf ii.} For the second point, we get from \eqref{estL2g0} that
$$
\cQ_{p,2}\leq \uCp \big(\cQ_p+\cU_{p+1,1}\big)+\sum_{\vert \alpha\vert\leq p}\Vert (g_0^{(\alpha)},g_1^{(\alpha)})\Vert_{L^2}
$$
and we can use the first point of the lemma to get
$$
\cQ_{p,2}\leq \uCp \big(\cQ_{p+1}+\cU_{p+1,1}+(\Vert q(0)\Vert_{L^{2,p-1}}+t\Vert q\Vert_{L_t^{\infty,p}})\Vert \uu\Vert_{L^{\infty,p+1}}\big)+\Vert (g_0,g_1)\Vert_{L^{2,p}}.
$$
Together with \eqref{Hqsimp2}, and observing that $\Vert X_1 q\Vert_{L_t^\infty}\leq \Vert X_1 q(0)\Vert_{L^\infty}+t \Vert q\Vert_{L_t^{\infty,p}}$ if $p\geq 2$, this implies the result.
    \end{proof}

A control of the  $L^\infty$ norms of $q$ and of its derivatives is then provided by the following lemma.  

\begin{lem}\label{lem55}
Let $T>0$ and $\uV,\cS$ satisfy the bounds \eqref{constMetc}. Any smooth solution $V=(q,u)$  of \eqref{lineq2}  satisfies the following estimate provided that  $p\geq 2$ and $p+5\leq n-1$,
$$
\big\| q   \big\|_{L_t^{\infty,p}}   \leq C(T,M_1,M_2,M)\big( C_0+\Vert V\Vert_{L^\infty_t\VV^{p+6}}+S\big),
$$
with $C_0=\Vert u(0)\Vert_{L^{\infty,p+2}}+\Vert q(0)\Vert_{L^{2,p-1}\cap L^{\infty,p-1}}$.
 \end{lem} 

\begin{proof}
Again we use the equation \eqref{Hardyq} on $q^{(\alpha)}$, with $q^{(\alpha)}$ as defined in \eqref{defqalpha}, for $\vert \alpha\vert \leq p$. Using Corollary \ref{coroH3} (with $\alpha=2$ and $\sigma=0$) we see that 
$q^{(\alpha)}$ satisfies
$$
 \big\| q^{(\alpha)}   \big\|_{L^\infty} \lesssim   \big\| q^{(\alpha)}   \big\|_{L^2} 
 +  \big\|  g_{0}^{(\alpha)}- X_1 X^\alpha u+\mu  \uq \underline{B}' X_2 X^\alpha X_1 u-\mu (\underline{B}')^2 X^\alpha X_1 u   \big\|_{L^2\cap L^\infty} 
$$
from which one readily gets using the definition \eqref{defqalpha} of $q^{(\alpha)}$ that
$$
\Vert q\Vert_{L^{\infty,p}}\leq \uCp \big( \cQ_{p,2}+\cU_{p+1,1}+\Vert u\Vert_{L^{\infty,p+2}}+\sum_{\vert\alpha\vert \leq p}\Vert g_0^{(\alpha)}\Vert_{L^2\cap L^\infty}\big).
$$
We now use Lemma \ref{lem68}{\bf .ii} to control $\cQ_{p,2}$, \eqref{Hqsimp2} for $\cU_{p+1,1}$ and Lemma \ref{lem68}{\bf .i} for $\Vert g_0^{(\alpha)}\Vert_{L^2\cap L^\infty}$; this yields 
\begin{align*}
\big\| q   \big\|_{L^{\infty,p}} \leq \uCp\Big[ & \Vert f\Vert_{L^{2,p+1}_2} +\Vert (g_0,g_1)\Vert_{L^{2,p}\cap L^{\infty,p}} + \Vert V\Vert_{\VV^{p+1}}+\Vert u\Vert_{L^{\infty,p+2}}
\\ &  +\big(\Vert q(0)\Vert_{L^{2,p-1}\cap L^{\infty,p-1}} +t\Vert q\Vert_{L_t^{\infty,p}}\big)  \big(\Vert \uu\Vert_{L^{\infty,p+1}}+\ucQ_{p+1}\big) 
 \Big].
 \end{align*}
 Since $\Vert u\Vert_{L^{\infty,p+2}}\leq \Vert u(0)\Vert_{L^{\infty,p+2}}+t \Vert u\Vert_{L_t^{\infty,p+3}}$, we can use Lemma \ref{propuinf}, \eqref{constMetc} and the fact that $p+5\leq n-1$ to obtain the result.
\end{proof} 

By time integration, one directly gets with Lemma \ref{propuinf} and the bounds \eqref{constMetc}  that
   \begin{align*}
\Vert u \Vert_{L_t^{\infty,p}}&\leq  \Vert u (0)\Vert_{L_t^{\infty,p}} +t\Vert u \Vert_{L^\infty_tL^{\infty,p+1}}\\
&\leq \Vert u (0)\Vert_{L_t^{\infty,p}}+t\uCp \big[ \Vert q(0)\Vert_{L^{2,p-1}}+ \Vert V\Vert_{L^\infty_t\VV^{p+4}}+\Vert V\Vert_{L_t^{\infty,p}} M_2 +S\big].
\end{align*}
Adding up with the estimate coming from Lemma \ref{lem55}, this implies
$$
\Vert V\Vert_{L_t^{\infty,p}}\leq C(T,M_1,M_2,M) \big[ C_0+ \Vert V\Vert_{L^\infty_t\VV^{p+6}}+t\Vert V\Vert_{L_t^{\infty,p}}  +S\big];
$$
for $T$ small enough, the term involving $\Vert V\Vert_{L^{\infty,p}}$ in the right-hand-side can be absorbed in the left-hand-side and the result follows.
\end{proof}
The fifth and last step of the proof of Proposition \ref{propHO} does not require any additional technical result, so that the proof is complete.

\subsection{Proof of Proposition \ref{propbounds}}\label{sectproofbounds}

The goal of this section is to prove the uniform bounds \eqref{induc} on the sequence $(V^k,V_0^k,V_1^k,V_2^k)_k$ constructed through the iterative scheme \eqref{scheme}, namely,
\begin{equation}\label{schemebis}
 \begin{cases}
 \cL_{a}(V^k,\D)V_m^{k+1}&=\cS_m(V^k,V_1^k,V_2^k)\qquad (m=1,2),\\
 \cL (V^k,\D)V_0^{k+1}&=\cS(V^k,V_1^k,V_2^k),\\
 {\bf E}(\D)V^{k+1}&=V^k_1 + {\bf F}_2 V^k_2 + {\bf F}_0 V^k_0. 
 \end{cases}
\end{equation}
More precisely, we want to show that there exists constants $M_1$, $M_2$, $M$, $\widetilde{M}$, $N_1$, and $N_2$ such that
 \begin{equation}\label{inducbis}
 \begin{cases}
   {\mathfrak m}(V^k;T) \leq M,&\\
  \tilde {\mathfrak m}(V^k;T) \leq \widetilde{M},&\\
 {\mathfrak m}_1(V^k;T)\leq M_1, &\quad\mbox{and}\quad  \Vert V_m^{k}\Vert_{L^{\infty,p}} \leq N_1\quad \,\, (m=0,1,2),\\
{\mathfrak m}_2(V^k;T)\leq M_2& \quad \mbox{and}\quad  {\mathfrak m}_2(V_m^k;T)\leq N_2\quad (m=0,1,2),
  \end{cases}
 \end{equation}
 and a constant $S$ such that
 \begin{equation}
 \label{inducquar}
  {\mathfrak s}(\cS^{k};T)\leq S,\quad \mbox{and}\quad{\mathfrak s}(\cS_m^{k};T)\leq S,
  \end{equation}
for $m=1,2$ and $k\geq 1$.

We shall prove  by induction that \eqref{inducbis} holds for all $k$. The case $k=1$ has been defined in \eqref{itere1} and Proposition \ref{propvapp}; we focus therefore our attention on the proof of \eqref{inducbis}$_{k+1}$ assuming that \eqref{inducbis}$_k$ is known. The desired bounds on $V^{k+1}$ and $V_m^{k+1}$ ($m=0,1,2$) are established in the following lemma using the higher order estimates of Proposition \ref{propHO} with source terms $\cS_j^k$ and $\cS^k$ (we recall that $\delta>0$ that appears in the statement comes from the positivity condition \eqref{inita}).
 \begin{lem}\label{lemMM}
Assume that $[V^k]=(V_1^k,V_2^k,V_0^k, V^k)$ and $\cS^k$, $\cS_1^k$ and $\cS_2^k$ satisfy the induction assumptions \eqref{inducbis} 	and \eqref{inducquar} and that $V^{k+1}$ and $V_m^{k+1}$ solve \eqref{schemebis}. There exists a constant $C_0$ that depends only on the initial data and of the form
$$
C_0=C_0\big(\Vert V(0)\Vert_{\VV^n}, \Vert q(0)\Vert_{L^{2,n-1}_1\cap L^{2,p+4}\cap L^{\infty,p}},  \Vert u(0)\Vert_{L^{2,n-1}\cap L^{\infty,p+3}}\big)
$$
and some nondecreasing functions of their arguments $\cC_1$, $\cC_2$, $\cC$, $\widetilde{\cC}$ $\cC_1'$, $\cC'_2$ and $\cT(\cdot)$ such that if 
\begin{align*}
M_1\geq \cC_1(C_0,\frac{1}{\delta}),\qquad M_2\geq \cC_2(C_0),\qquad
M\geq\cC\big(T,C_0,M_1\big),\qquad \widetilde{M}\geq \widetilde{\cC}(C_0),\\
 N_1\geq \cC'_1\big(T,C_0,M_1,M_2,M, S\big) , \qquad
  N_2\geq \cC'_2\big(T,C_0,M_1,M_2,M, N_1, S\big)
\end{align*}
and $T>0$ is small enough to have
$$
T\cT\big(M_1,M_2,M, \widetilde{M},N_1,N_2,S\big)<1
$$
then $V^{k+1}$ and $V_m^{k+1}$ also satisfy \eqref{inducbis}.
\end{lem}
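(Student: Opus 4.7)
The strategy is to bound each of the quantities in \eqref{inducbis} for the iterate $k+1$ using the three main technical tools developed earlier: the higher-order linear estimates of Proposition~\ref{propHO} and the Hardy-based $L^\infty$ estimates of Proposition~\ref{propcontinf} for the hyperbolic-dispersive equations governing $V_m^{k+1}$, and the elliptic regularization of Proposition~\ref{propell} for the equation governing $V^{k+1}$. The inductive hypothesis at level $k$ supplies uniform control of the coefficients of $\cL_a(V^k,\D)$ and $\cL(V^k,\D)$, and, through \eqref{inducquar}, of the source terms by $S$, so all hypotheses of Propositions~\ref{propHO} and \ref{propcontinf} are satisfied. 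The constants $M_1,M_2,\widetilde M,M,N_1,N_2$ are fixed in the order prescribed by the statement; since every propagation estimate carries a $\sqrt T$ or $T$ prefactor on the "bad" terms, taking $T$ small at the end absorbs any contribution involving a yet-unchosen constant.

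First one fixes a constant $C_0$, depending only on $u^0$, dominating every initial-data norm appearing in the subsequent estimates, via Proposition~\ref{propvapp} and \eqref{inititer1}--\eqref{inititer2}. The initial positivity $q^{k+1}(0)=1/2$ and $1+\mu a(u^{k+1})_{|t=0}\ge\delta$ from \eqref{inita} determines $M_1\ge\cC_1(C_0,1/\delta)$, and $M_2\ge\cC_2(C_0)$, $\widetilde M\ge\widetilde\cC(C_0)$, which depend only on initial data, are fixed at this stage.

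For the bounds on $V_m^{k+1}$, Proposition~\ref{propHO} applied to each equation of \eqref{schemebis}, under the smallness condition $T\,\cT(M_1,M_2,M,\widetilde M)<1$, yields
$$
\Vert V_m^{k+1}\Vert_{L^\infty_T\VV^{n-1}}\le C(T,M_1)\bigl[C_0+\sqrt T\,C(T,M_1,M_2,M,\widetilde M)\,S\bigr].
$$
Since $p+6\le n-1$, Proposition~\ref{propcontinf} then upgrades this to $\Vert V_m^{k+1}\Vert_{L^{\infty,p}_T}\le N_1$ for $N_1$ large enough in $T,C_0,M_1,M_2,M,S$, and the Hardy inequalities of Section~\ref{Sec6} combined with time-integration give ${\mathfrak m}_2(V_m^{k+1};T)\le N_2$ with $N_2$ fixed last. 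For the bounds on $V^{k+1}$, Proposition~\ref{propell} applied to ${\bf E}(\D)V^{k+1}=V_1^k+{\bf F}_2V_2^k+{\bf F}_0V_0^k$ yields $\Vert V^{k+1}\Vert_{L^2_T\VV^n}\lesssim C_0+\sqrt T\cdot M\le\widetilde M$ (giving $\tilde{\mathfrak m}(V^{k+1})\le\widetilde M$), $\Vert V^{k+1}\Vert_{L^\infty_T\VV^{n-1}}\lesssim C_0+T\cdot M\le M$ (giving the $\VV^{n-1}$ part of ${\mathfrak m}(V^{k+1})\le M$), and $\Vert V^{k+1}\Vert_{L^{\infty,p}_T}\lesssim C_0+\sqrt T\cdot N_1$. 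The missing $L^{\infty,p+1}$ control of $u^{k+1}$ and the degenerate weighted $L^2$ pieces of ${\mathfrak m}_2(V^{k+1})$ follow from reading the elliptic equation as $\D_t V^{k+1}=-{\bf P}V^{k+1}+(\text{source controlled by the induction})$ to trade a time derivative, and from Corollary~\ref{coroH2}; the positivity $q^{k+1}\ge 1/4$ and $1+\mu a(u^{k+1})\ge\delta/2$ is preserved by time-integrating the bounded quantities $\D_t q^{k+1}$ and $\D_t(a(u^{k+1}))$ over short $T$, completing the bound on $1/q^{k+1}$ and $1/(1+\mu a(u^{k+1}))$ entering ${\mathfrak m}_1$.

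The main obstacle is the precise bookkeeping of constants: at each step, the right-hand-side involves either already-fixed constants or constants multiplied by $\sqrt T$ or $T$, so that taking $T$ small according to $T\,\cT(M_1,M_2,M,\widetilde M,N_1,N_2,S)<1$ makes the higher-order contributions subleading and closes the induction in the order $(M_1,M_2,\widetilde M)\to M\to N_1\to N_2$ of the statement. One must also track carefully that the $\VV^{n-1}$-type information on $V_j^k$ needed to feed Proposition~\ref{propell} is indeed available from the previous step of the iteration (it is precisely the $\VV^{n-1}$-bound on $V_j^{k}$ produced at stage $k$ by the very argument we are carrying out for stage $k+1$), so that the induction is self-consistent.
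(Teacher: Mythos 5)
Your proposal is correct and follows essentially the same route as the paper: Proposition~\ref{propHO} and Proposition~\ref{propcontinf} for the $\VV^{n-1}$ and $L^{\infty,p}$ bounds on $V_m^{k+1}$, the Hardy-based estimates (Proposition~\ref{PropHq}) for the ${\mathfrak m}_2$ quantities, Proposition~\ref{propell} to transfer all bounds to $V^{k+1}$, time-integration from the initial values for the positivity conditions in ${\mathfrak m}_1$, and the same ordering of the constants with $T$ taken small at the end to absorb the $\sqrt T$-prefactored terms. The only cosmetic difference is that for the $L^{\infty,p+1}$ control of $u^{k+1}$ the paper goes explicitly through Lemma~\ref{propuinf} applied to the $u_m^{k+1}$ before invoking the elliptic $L^\infty$ estimate, which is the precise form of your "trade a time derivative using the equation" step.
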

\begin{proof}[Proof of the lemma]
From the first equation of \eqref{schemebis} and Proposition \ref{propHO} we get that for $m=1,2$,
$$
\Vert V_m^{k+1}\Vert_{L^\infty_T\VV^{n-1}} \leq C(T,M_1) \big[C_{0}+\sqrt{T}C(T,M_1,M_2,M,\widetilde{M}) S  \big],
$$
where $C(\cdot)$ always denotes a smooth, non decreasing function of its arguments and $C_0$ is a in the statement of the lemma.
 If $M>C(T,M_1) C_{0}$, there exists $T>0$ small enough such that 
\begin{equation}\label{estinterme}
\Vert V_m^{k+1}\Vert_{L^\infty_T\VV^{n-1}} \leq M.
\end{equation}
 We can therefore deduce from Proposition \ref{propcontinf} that
$$
\Vert V_m^{k+1} \Vert_{L^{\infty,p}_T } \leq C\big(T,M_1,M_2,M\big)\big[1+C_{0}+S\big];
$$
choosing $N_1$ larger than the right-hand-side yields the needed upper bound on $\Vert V_m^{k+1} \Vert_{L^{\infty,p}_T }$ for $m=1,2$. The case $m=0$ is treated similarly\footnote{The linear operator involved in the equation for $V_0^{k+1}$ is $\cL(V^k,\D)$ instead of $\cL_a(V^k,\D)$. The results proved on the latter obviously hold for the former by substituting $a\equiv 0$.}.\\
We now turn to give an upper bound on ${\mathfrak m}_1(V^{k+1};T)$. For this, we need:
\begin{itemize}
\item An upper bound on  $\Vert V^{k+1} \Vert_{L^\infty_T \VV^{n-1}} $. Using  Proposition \ref{propell} we get
$$
\Vert V^{k+1} \Vert_{L^\infty_T \VV^{n-1}}\leq \Vert V^{k+1} (0)\Vert_{\VV^{n-1}} +3\sqrt{T}N_1;
$$
if $M_1> \Vert V^{k+1} (0)\Vert_{\VV^{n-1}}$ and $T$ is chosen small enough, then $\Vert V^{k+1} \Vert_{L^\infty_T \VV^{n-1}}\leq M_1$.
\item An upper bound on $\Vert \frac{1}{1+\mu a(u^{k+1})}\Vert_{L^\infty_T}$. By assumption, $1+\mu a(u^{k+1})\leq \delta$ at $t=0$ so that
\begin{align*}
1+\mu a(u^{k+1})(t)\geq \delta -\mu T \Vert \D_t (a(u^{k+1})) \Vert_{L^{\infty}_T}.
\end{align*}
Using the bound on $\Vert V^{k+1} \Vert_{L^\infty_T \VV^{n-1}}$ derived in the previous point, we obtain that $\Vert \D_t (a(u^{k+1})) \Vert_{L^{\infty}_T}\leq C(M_1)$. Therefore, if $M_1>1/\delta$ and $T$ chosen small enough, we obtain that $\Vert \frac{1}{1+\mu a(u^{k+1})}\Vert_{L^\infty_T}\leq M_1$.
\item An upper bound on $\Vert \frac{1}{q^{k+1}}\Vert_{L^\infty_T}$. Proceeding as for the previous point, and taking into account that $q(0)=1/2$ we obtain that
$\Vert \frac{1}{q^{k+1}}\Vert_{L^\infty_T}\leq M_1$ is $M_1>2$ and $T$ small enough.
\end{itemize}
From this three points, we deduce that ${\mathfrak m}_1(V^{k+1};T)\leq M_1$.\\
Let us now turn to control the two components of ${\mathfrak m}(V^{k+1})$:
\begin{itemize}
\item  {\it Control of $\Vert V^{k+1}\Vert_{L^\infty_T\VV^{n-1}}$.} By \eqref{estinterme} and Proposition \ref{propell} one directly gets that $\Vert V^{k+1}\Vert_{L^\infty_T\VV^{n-1}}\leq M$ for $T$ small enough and $M>\Vert V^{k+1}(0)\Vert_{\VV^n}$.
\item  {\it Control of $\Vert u^{k+1} \Vert_{L^{\infty,p+1}}$}. Using the induction relation and the bounds on $\Vert V_m^{k+1} \Vert_{L^\infty_T}$ and $\Vert V^{k+1}\Vert_{L^\infty_T\VV^{n-1}}$ proved above, we get through Lemma \ref{propuinf} that for $m=0,1,2$,
$$
\Vert u_m^{k+1} (t)\Vert_{L^{\infty,p+1}}\leq C\big(T,M_1,M_2,M\big)\big[ 1+ C_{0}+S\big].
$$
With Proposition \ref{propell}, this implies that
$$
\Vert u^{k+1} (t)\Vert_{L^{\infty,p+1}}\leq \Vert u^{k+1} (0)\Vert_{L^{\infty,p+1}}+\sqrt{T}C\big(T,M,M_1,M_2,N\big)
$$
and therefore $\Vert u^{k+1} \Vert_{L_T^{\infty,p+1}}\leq M$ if $M>\Vert u^{k+1} (0)\Vert_{L^{\infty,p+1}}$ and $T$ small enough.
\end{itemize}
Gathering these two points we get that ${\mathfrak m}(V^{k+1};T)\leq M$.\\
The next step is therefore to derive the two estimates on ${\mathfrak m}_2(V_m^{k+1};T)$  and ${\mathfrak m}_2(V^{k+1};T)$. For the first one, we observe, owing to the induction assumption \eqref{induc}, Proposition \ref{PropHq} and the control on $\Vert V^{k+1}_m \Vert_{L^{\infty,p}_T}$ already proved, and \eqref{estinterme}  imply that
$$
{\mathfrak m}_2(V_m^{k+1};T)\leq C(M_1)\big[C_{0}+N_1(1+M_2+M) +M+S\big],
$$
so that we just have to choose $N_2$ larger than the right-hand-side. The estimate on ${\mathfrak m}_2(V^{k+1};T)$ then follows following the now usual procedure based on Proposition \ref{propell}.\\
Finally, the upper bound on $\tilde {\mathfrak m}(V^{k+1};T)$ stems from the upper bounds proved above on $ {\mathfrak m}(V^{k+1};T)$, ${\mathfrak m}_2(V^{k+1};T)$ and \eqref{estinterme} and to the elliptic regularization property given by Proposition \ref{propell}.
\end{proof}
The only thing left to prove is that the last inequality of \eqref{inducbis} holds at step $k+1$. This is done in the following lemma.
\begin{lem}\label{lemsource}
Assume that $[V^k]=(V_1^k,V_2^k,V_0^k, V^k)$ and $\cS^k$, $\cS_1^k$ and $\cS_2^k$ satisfy the induction assumptions \eqref{inducbis} 	and \eqref{inducquar} and that $V^{k+1}$ and $V_m^{k+1}$ solve \eqref{schemebis}.  Let also $C_0$ be as in Lemma \ref{lemMM}. There exist two smooth functions $\cC(\cdot)$  and $\cT(\cdot)$, with a nondecreasing dependence on their arguments, such that if 
$$
S\geq \cC_2\big(T,C_0,M_1,M_2,N_1,N_2\big) 
$$
and $T>0$ is small enough to have
$$
T\cT\big(M_1,M_2,N_1,N_2,S\big)<1,
$$
then $\cS$, $\cS_1$ and $\cS_2$ also satisfy \eqref{inducquar}.
\end{lem}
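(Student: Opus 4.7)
The plan is to estimate each term in the explicit formulas for $\cS(V^{k+1},V_1^{k+1},V_2^{k+1})$ and $\cS_m(V^{k+1},V_1^{k+1},V_2^{k+1})$ given by \eqref{defS}, \eqref{defSm} and Lemmas \ref{lemmaLinear1}--\ref{lemmaLinear2}, using the product and commutator estimates \eqref{comm1}--\eqref{comm2}, the composition estimate of Lemma \ref{lem51} for the topography terms $B^{(j)}(\varphi^{k+1})$ (with $\varphi^{k+1}=x+\int_0^t u^{k+1}$), and the bounds on $V^{k+1}$ and $V_m^{k+1}$ already established in Lemma \ref{lemMM}. Since all ingredients of these formulas are polynomials in $q^{k+1},u^{k+1},q_1^{k+1},u_2^{k+1}$ multiplied by functions of $\varphi^{k+1}$ and by smooth functions of $h_0$ or its derivatives, the estimate is essentially mechanical once the weights are tracked correctly.

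I would proceed norm by norm, corresponding to the three summands in the definition of ${\mathfrak s}$ in \eqref{defconstantes}. For the top-order weighted $L^2$ norms ($L^{2,n-1}_2$ on the first component, $L^{2,n-1}_1\cap L^{2,n-2}$ on the second), I distribute $X^\alpha$ with $|\alpha|\le n-1$ onto each factor of the product. The key structural observation is that in every monomial appearing in $\cS$ and $\cS_m$, exactly one factor can be made to carry the highest-order derivative; it is placed in the appropriate weighted $L^2$ norm controlled either by ${\mathfrak m}(V^{k+1};T)$ or by ${\mathfrak m}_2(V_m^{k+1};T)$, while the remaining factors are placed in $L^\infty$ and controlled by ${\mathfrak m}_1(V^{k+1};T)$ and $\|V_m^{k+1}\|_{L^{\infty,p}_T}$. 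For the $L^{\infty,p}$ part of ${\mathfrak s}$, the same strategy using \eqref{comm1} gives directly the bound. Composition terms are handled verbatim as in the proof of Lemma \ref{lem51}, producing harmless factors of $\|B^{(j)}\|_{C^n}(1+tM_1)^{n-1}$.

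A particular point of attention is the term $\sqrt{\mu}\mathbf{l}\cG_1^{(m)}$ appearing in the second component of $\cS_m$, which must be controlled through the $L^{2,n-1}_1$ norm of $\cG_1^{(m)}$ via the duality structure \eqref{defHm1} of $\cH^{-1}_1$. The expression for $\cG_1^{(m)}$ given in Lemma \ref{lemmaLinear2} contains contributions like $q q_m X_2 u_1$ and $q X_2 u X_1 q_m$ where $X_2 u_1$ and $X_1 q_m$ carry morally two derivatives on $u$ or $q$; however, the factor $h_0$ hidden in $X_2=h_0\D_x$ combined with the weight $h_0^{1/2}$ attached to $L^{2,n-1}_1$ precisely matches the degeneration of the $\VV^{n-1}$ norm of $V_m^{k+1}$, so that the bound of the expected form is obtained. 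The same analysis applies to $\cG_0^{(m)}$ and to the first component $\cF^{(m)}=-c'(q)q_1q_m-c(q)(X_mh_0/h_0)q_1$, the factor $X_mh_0/h_0$ being either identically zero ($m=1$) or the smooth function $h_0'$ ($m=2$).

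Summing the individual estimates yields a bound of the form
\begin{equation}\nonumber
{\mathfrak s}(\cS^{k+1};T)+\sum_{m=1,2}{\mathfrak s}(\cS_m^{k+1};T)\le \cC(T,C_0,M_1,M_2,N_1,N_2),
\end{equation}
for some nondecreasing function $\cC$, so it suffices to take $S$ not smaller than the right-hand side. The auxiliary smallness condition $T\cT(M_1,M_2,N_1,N_2,S)<1$ arises because a handful of terms, typically those generated by the integral $\varphi^{k+1}-x=\int_0^t u^{k+1}$ through Lemma \ref{lem51} or by the Taylor expansion of $c(q^{k+1})$ about $q(0)=1/2$, produce contributions of the form $TS$ or $\sqrt{T}S$ that need to be absorbed back into the bound. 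The main obstacle is purely bookkeeping: carefully tracking which power of $h_0$ is available in each monomial in order to convert a weighted $L^2$ bound on a factor of $V^{k+1}$ or $V_m^{k+1}$ into the correctly weighted bound on the product, a task made tractable by the systematic appearance of $\mathbf{l}=h_0\D_x+2h_0'$ in the dispersive/topography contributions.
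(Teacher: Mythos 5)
Your overall framework (term-by-term estimates of the explicit formulas for $\cS$ and $\cS_m$, distributing $X^\alpha$, top-order factor in weighted $L^2$, the rest in $L^\infty$) is the right starting point, but the claim that the estimate is ``essentially mechanical once the weights are tracked correctly'' hides a genuine gap. The problematic terms are not saved by weight-matching. Take $q\,X_2u\,X_1q_m$ in $\cG_1^{(m)}$: when $X^\alpha$ with $|\alpha|=n-1$ falls entirely on $X_1q_m$, you face $n$ derivatives of $q_m^{k+1}$, and the induction hypotheses give no control whatsoever on $n$ derivatives of $V_m^{k+1}$ (only $\VV^{n-1}$ via \eqref{estinterme}, plus $L^{\infty,p}$ and $\mathfrak{m}_2$; the $\VV^n$-type bound $\tilde{\mathfrak m}$ is available only for $V^{k+1}$ itself, not for $V_m^{k+1}$). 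No amount of $h_0$ fixes a derivative count. The paper's proof resolves this by substituting $X_1q_m=\frac{1}{c(q)}\bigl(-\D_x u_m+\cF_m(q,q_1,q_2)\bigr)$ from the first equation of the scheme (equation \eqref{eqX1qm}), trading the uncontrollable time derivative of $q_m$ for a space derivative of $u_m$, which \emph{is} controlled. A second, related substitution is needed for $q_mq\,X_2u_1$: the top-order contribution is $(h_0X^\alpha q_m)\,q\,\D_x u_1$ and requires $\|\D_x u_1\|_{L^\infty}$ — a \emph{non-conormal} derivative that $N_1=\|V_1\|_{L^{\infty,p}}$ does not control near $h_0=0$; the paper again uses the equation, $\D_x u_1=-c(q)X_1q_1+\cF_1$ (equation \eqref{dxuj}). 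A third instance occurs for the unweighted $L^{2,n-2}$ norm, where $\|X_2X^\alpha u_j\|_{L^2}$ is not bounded by $\|u_j\|_{L^{2,n-2}}$ without the $h_0^{1/2}$ weight, and one must again rewrite $X^\alpha X_2u_j$ via the equation on $q_j$. These substitutions are the actual content of the lemma; without them several terms are simply not estimable from the induction hypotheses.

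You also misplace the origin of the two smallness conditions. The main reason the hypothesis on $S$ involves $C_0$ and the condition $T\cT(\cdots)<1$ is needed is the $L^{\infty,p}$ estimate of $\cG_j^{(m)}$: after the substitutions above, the bound naturally involves $\|(u,u_1,u_2)\|_{L^\infty_TL^{\infty,p+1}}$, i.e.\ one derivative more than what $N_1$ provides. The paper closes this by writing $u=u^0+\int_0^t\D_t u$, invoking Lemma \ref{propuinf} to control the $(p+2)$-th order sup norms, and absorbing the resulting $T\,C(M_1,M_2,N_1,N_2,S)$ term by choosing $S>C(M_1,N_1)(1+C_0)$ and $T$ small. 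The contributions from $\varphi-x=\int_0^t u$ that you cite are comparatively harmless.
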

\begin{proof}[Proof of the lemma]
We prove here the required upper bounds on the different components of ${\mathfrak s}(\cS_m^{k+1})$ for $m=1,2$. The bounds for ${\mathfrak s}(\cS^{k+1})$ can be obtained similarly so that we omit the proof. \\
To alleviate the notations, we do not write the superscript $k+1$ throughout this proof, so that we write $\cF_m(q,q_1,q_2)$ instead of $\cF_m(q^{k+1},q^{k+1}_1,q^{k+1}_2)$, etc.\\
We recall that ${\mathfrak s}(\cdot)$ is defined through \eqref{defconstantes} and that $\cS_m$ is given by \eqref{defSm}; we therefore have to derive upper bounds for
$$
\Vert \cF^{(m)}(q,q_1)\Vert_{L^\infty_TL^{2,n-1}_2}, \qquad \Vert  \cF_m(q,q_1)\Vert_{L^\infty_TL^{2,p+3}},
$$
and
$$
\Vert (g_0,g_1)\Vert_{L^\infty_TL^{2,n-1}_1},\qquad\Vert g\Vert_{L^\infty_TL^{2,n-2}\cap L^{\infty,p}},
$$
with $g_0= \cG^{(m)}_0(V,V_1,V_2)$ and $g_1= \cG^{(m)}_1(V,V_1,V_2)$, and $\cG^{(m)}_0$ and $\cG^{(m)}_1$ as in Lemma \ref{lemmaLinear2}.\\
 Using the bounds proved in Lemma \ref{lemMM} and the control on  $\Vert q_m\Vert_{L^{2,n-1}_2} $ provided by \eqref{estinterme}, one readily gets 
$$
\Vert \cF^{(m)}(q,q_1)\Vert_{L^\infty_TL^{2,n-1}_2}+\Vert  \cF^{(m)}(q,q_1)\Vert_{L^\infty_TL^{2,p+3}}\leq C\big(T,M_1,M_2,N_1,N_2\big).
$$
For the bounds on $g_0$ and $g_1$, we first treat the topography term. Recalling that $\varphi_m=X_m x+\int_0^t u_m$ and using the control on $\Vert u_m\Vert_{L^{2,n-1}_1}$ provided by \eqref{estinterme}, we easily deduce from Lemma \ref{lemMM} that 
$$
\Vert B''(\varphi)\varphi_m\Vert_{L^\infty_T(L^{2,n-1}_1\cap L^{2,n-2}\cap L^{\infty,p})}
\leq  C\big(T,M_1,M_2,N_1,N_2\big).
$$
We are therefore left to give an upper bound on 
$$
\Vert  \cG^{(m)}_j(V,V_1,V_2)\Vert_{L^\infty_TL^{2,n-1}_1}, \quad \mbox{ and }\quad\Vert \cG^{(m)}_j(V,V_1,V_2)\Vert_{L^\infty_T(L^{2,n-2}\cap L^{\infty,p})},
$$
for $j=0,1$. We just treat the case $j=1$ here, the situation being similar for $j=0$. We recall that
\begin{align*}
\frac{1}{\sqrt{\mu}}\cG^{(m)}_1 (V,V_1,V_2)=&- \frac{8}{3} q q_m X_2 u_1- \frac{4}{3} q q_1 X_2 u_m-\frac{4}{3}q X_2 u X_1 q_m-\frac{4}{3}q_m X_2 u q_1\\
 &+u_1 q B''\varphi_m+q u_m u B''+qu^2 B^{(3)}\varphi_m.
 \end{align*}
 - Control of $\Vert \cG^{(m)}_j(V,V_1,V_2)\Vert_{L^\infty_TL^{2,n-1}_1}$. This stems from an upper bound on $\Vert  X^\alpha \cG_j^{(m)}\Vert_{L^{2}_1}$ for all $\vert \alpha\vert \leq n-1$. We just show how to handle two components of $\cG^{(m)}_1 $, namely, $q_m q X_2 u_1$ and   $X_2 u X_1 q_m q$ (the adaptation to the other components is straightforward).
 \begin{itemize}
\item[*] Control of $X^\alpha \big(q_m qX_2 u_1\big)$ with $\vert \alpha\vert \leq n-1$. We can develop this term into
$$
(h_0X^\alpha q_m) q \D_x u_1+\sum_{\alpha_1+\alpha_2+\alpha_3=\alpha,\vert \alpha_1 \vert <n-1}X^{\alpha_1}q_m X^{\alpha_2}q X_2 X^{\alpha_3} u_1
$$
from which we get, with $C_p(T)=C\big(T,\Vert V\Vert_{L^{\infty,p}_T},\Vert V_1\Vert_{L^{\infty,p}_T},\Vert V_2\Vert_{L^{\infty,p}_T}\big)$,
\begin{align*}
\Vert  q_m q X_2 u_1\Vert_{L^{2,n-1}_1}\lesssim& \Vert \D_x u_1\Vert_{L^\infty}\Vert q_m\Vert_{L^{2,n-1}_2}\\
&+C_p(T)\big(\Vert q_m\Vert_{L^{2,n-2}_1}+\cQ_{n-1,1}+\Vert u_1\Vert_{\cH^{1,n-1}_1}\big).
\end{align*}
Now, using the equation on $q_j$ ($j=1,2$), we have
\begin{equation}\label{dxuj}
\D_x u_j=-c(q)X_1 q_j+\cF_j(q,q_1,q_2)
\end{equation}
with $\cF_j(q,q_1,q_2)$  given by Lemma \ref{lemmaLinear1}; with Lemma \ref{lemMM}, this implies that $\Vert \D_x u_j\Vert_{L^\infty}\leq C(M_1)$ and therefore, using  \eqref{estinterme},
$$
\Vert  q_m q X_2 u_1\Vert_{L^\infty_TL^{2,n-1}_1}\lesssim C\big(T,M_1,M_2,N_1,N_2\big).
$$
\item[*] Control of $X^\alpha \big(X_2 u X_1 q_m q\big)$ with $\vert \alpha\vert \leq n-1$. We first develop this expression into
\begin{align*}
& (h_0 X^\alpha X_1 q_m) D_x u q+ \sum_{\alpha_1+\alpha_2+\alpha_3=\alpha, \vert \alpha_1\vert=n-2} (h_0X^{\alpha_1}X_1 q_m) \D_x X^{\alpha_2}u X^{\alpha_3}q\\
&\qquad+
\sum_{\alpha_1+\alpha_2+\alpha_3=\alpha, \vert \alpha_1\vert\leq n-3}  X^{\alpha_1}X_1 q_m X_2X^{\alpha_2}u X^{\alpha_3}q.
\end{align*}
 To handle the first term of this expression, we use  the equation on $q_m$ to replace $X_1 q_m$ by
\begin{equation}\label{eqX1qm}
X_1 q_m=\frac{1}{c(q)}\big(-\D_x u_m+\cF_m(q,q_1,q_2)\big),
\end{equation}
and then proceed as above. For the first summation, a control is easily obtained in terms of $\Vert \D_x X^{\alpha_2}u \Vert_{L^\infty}$. This quantity can in turn be controlled in terms of $\Vert V\Vert_{L^{\infty,p}}$ and $\Vert V_j\Vert_{L^{\infty,p}}$ ($j=1,2$) through \eqref{dxuj}. The second summation does not raise any particular difficulty.
Finally, one gets
$$
\Vert  X_2 u X_1 q_m q\Vert_{L^\infty_TL^{2,n-1}_1}\leq C\big(T,M_1,M_2,N_1,N_2\big).$$
\end{itemize}
Gathering all these elements, we deduce that
$$
\Vert  \cG^{(m)}_j(V,V_1,V_2)\Vert_{L^\infty_TL^{2,n-1}_1}\leq C\big(T,M_1,M_2,N_1,N_2\big).
$$
- Control of  $\Vert \cG^{(m)}_j(V,V_1,V_2)\Vert_{L^\infty_TL^{2,n-2}}$. Most of the terms can be treated with a slight adaptation of the above, 
The only significant change \footnote{We previously used the estimate
$$
\Vert  X_2X^\alpha u_j\Vert_{L^2_1}\leq \Vert u_j\Vert_{\cH^{1,n-1}_1} \quad \mbox{ for }\quad \vert \alpha\vert \leq n-1.
$$
 One should be careful that without the $h_0$ weight, one cannot use the extra control given by the $\cH^1_1$ based estimate, and one only has
$$
\Vert  X_2X^\alpha u_j \Vert_{L^2} \leq \Vert u_j\Vert_{L^{2,n-1}}\quad \mbox{ for }\quad \vert \alpha\vert \leq n-2
$$
(and not  $\Vert u_j\Vert_{L^{2,n-2}}$), so that there is an extra term to control.
} needed is for the control of the $L^2$-norm of terms of the form $X^\alpha X_2 u_j$ (with $\vert \alpha\vert\leq n-2$). One needs to use the equation on $q_j$  to write
$$
X^\alpha X_2 u_j=X^\alpha (-c h_0 X_1 q_j-c' h_0 q_1 q_j -cX_j h_0 q_1)
$$
so that
$$
\forall \vert \alpha\vert \leq n-2,\qquad \Vert X^\alpha X_2 u_j\Vert_{L^2}\leq C_{p}(T)\big(\cQ_{n-2}+ \Vert (q_1,q_2)\Vert_{L^{2,n-1}_2}\big)
$$
and one easily deduces that
$$
\Vert  \cG^{(m)}_j(V,V_1,V_2)\Vert_{L^\infty_TL^{2,n-2}}\leq C\big(T,M_1,M_2,N_1,N_2\big).
$$
- Control of $\Vert  \cG^{(m)}_j(V,V_1,V_2)\Vert_{L^{\infty,p}_T}$.  One readily checks that all the components of $X^\alpha \cG_j^{(m)}$ can be directly bounded from above in $L^\infty$ by 
$$
C_{p}(T)(1+\Vert (u,u_1,u_2)\Vert_{L^\infty_TL^{\infty,p+1}})
$$
 if $\vert \alpha\vert\leq p$, except $X_2 u X^\alpha X_1 q_m q$ for which one needs to use the substitution \eqref{eqX1qm} to get the same upper bound. We have therefore
 $$
\Vert  \cG^{(m)}_j(V,V_1,V_2)\Vert_{L^\infty_TL^{\infty,p}}\leq C_{p}(T)(1+\Vert (u,u_1,u_2)\Vert_{L^\infty_TL^{\infty,p+1}}),
$$
and we therefore need un upper bound on $\Vert (u,u_1,u_2)\Vert_{L^\infty_TL^{\infty,p+1}}$. Remarking that $u=u^0+\int_0^t \D_t u$ (and similar expressions for $u_1$ and $u_2$), we get that
$$
\Vert (u,u_1,u_2)\Vert_{L^\infty_TL^{\infty,p+1}}\leq C_0+T \Vert (u,u_1,u_2)\Vert_{L^\infty_TL^{\infty,p+2}}
$$
so that using Lemma \ref{propuinf} and Lemma \ref{lemMM}, we obtain that
$$
\Vert (u,u_1,u_2)\Vert_{L^\infty_TL^{\infty,p+1}}\leq C_0+ T C\big(M_1,M_2,N_1,N_2,S\big).
$$
Plugging this into the above estimate for $\Vert  \cG^{(m)}_j(V,V_1,V_2)\Vert_{L^\infty_TL^{\infty,p}}$, and using Lemma \ref{lemMM} once again, we get
 \begin{equation}\label{estLinf}
\Vert  \cG^{(m)}_j(V,V_1,V_2)\Vert_{L^\infty_TL^{\infty,p}}\leq C(T,M_1,N_1)\big[1+C_0+ T C\big(M_1,M_2,N_1,N_2,,S\big)\big].
\end{equation}
- Conclusion. Taking $S$ large enough (in terms of $M_1$, $M_2$, $N_1$, $N_2$ and $S$), all the components of ${\mathfrak s}(\sS_m)$ are bounded from above by $S$ except $\Vert  \cG_m^j(V,V_1,V_2)\Vert_{L^\infty_TL^{\infty,p}}$. For this term, we need further to choose $S$ large enough to have $S> C_2(M_1,N_1)(1+C_0)$; we can then use \eqref{estLinf} to get the needed control, provided that  $T>0$ is taken small enough. This completes the proof.
\end{proof}
Using Lemmas \ref{lemMM} and \ref{lemsource}, the result stated in Proposition \ref{propbounds} follows by a simple induction.


  \section{The elliptic equation}
 \label{Sell} 

As seen in \S \ref{sectiter}, it is necessary to introduce an additional elliptic equation in order to regain space and time regularity with respect to the regularity of the quasilinearized variables. To our knowledge, there is no existing theory of elliptic equations on the half line in degenerate weighted spaces; since this theory can be of independent interest, we present it here in a specific section.\\
We first consider the analysis on the whole line in \S \ref{sectline} and then transport them on the half-line in \S \ref{sectHline} using a change of variables that transform $\D_y$ into the conormal derivative $h_0\D_x$. The proof of Proposition \ref{propell} is a direct application of these results and is provided in \S \ref{sectppp}.

  \subsection{The equation on the full line} \label{sectline}

  We consider, for $t \ge 0$,  a general elliptic problem (in space and time) of the form
  \begin{equation}
  \label{mellipeq}
  \D_t u + P (D_y)  u =  Q(D_y) f, \qquad u_{| t = 0} = u^0 ,
  \end{equation}
  where the  operators  $P= p(D_y)$ and $Q(D_y) $ are Fourier multipliers of   symbol $p(\eta)$ and $q(\eta)$ satisfying 
  \begin{equation}
\label{symbel}
 \frac{1}{C} \langle \eta\rangle \le   \re p(\eta) \le | p (\eta) | \le C  \langle \eta\rangle
  \end{equation}
 and for  all $k$     
    \begin{equation}
    \label{symb}
   | \D_\eta^k p (\eta) | \lesssim   \langle \eta\rangle^{ 1-k} , \qquad  | \D_\eta^k q (\eta) | \lesssim   \langle \eta\rangle^{  -k} ,
  \end{equation}
where  $ \langle \eta\rangle = (1 + | \eta|^2)^\mez$. 
  
  We consider \eqref{mellipeq} as an elliptic boundary value problem on $[0, T] \times \RR$, with one boundary condition on $\{  t = 0\}$ and no boundary condition on $\{ t = T \}$. 
  The solution is given by 
  \begin{equation}
  \label{solmellipeq}
  \hat u (t, \eta ) = e^{ -t p(\eta) } \hat u^0 (\eta) + \int_0^t  e^{ (t'-t)p (\eta) } q(\eta)  \hat f(t', \eta) dt' 
  \end{equation}
  where the symbol $\, \widehat{ \, } \,  $  denotes the Fourier transform in $y$. In particular, 
  \begin{equation*}
 |  \hat u (t, \eta ) | \le  e^{ -t \langle \eta\rangle /C  } | \hat u^0 (\eta) | +  C 
 \int_0^t  e^{ - (t-t' )  \langle \eta\rangle /C  } | \hat f(t', \eta) | dt' .  
  \end{equation*}
 This implies the estimates
 \begin{equation*}
\|   u(t) \|_{L^2}   \lesssim  \|   u(0) \|_{L^2}  + \int_0^t \|   f(t') \|_{L^2} dt'  ,  
\end{equation*} 
and also 
 \begin{equation*}
\|   u \|_{H^1([0, T] \times \RR) }  \lesssim  \|   u(0) \|_{H^\mez(\RR) }  +  \|   f \|_{L^2([0, T] \times \RR)} .  
\end{equation*} 
 Commuting with derivatives, one obtains the following elliptic estimates.
  \begin{lem}
  Let $T>0$ and $k\in \NN$. If  $u^0 \in H^{k+\mez}(\RR)$
  and $f \in H^k ([0, T] \times \RR)$, then \eqref{mellipeq} has a unique solution 
  $u \in H^{k+1} ([0, T] \times \RR)$ and 
  \begin{equation}
\|  u \|_{H^{k+1}([0, T] \times \RR) }   \lesssim \| u^0 \|_{H^{k+1}} +  \|  f\|_{H^k([0, T] \times \RR) } .
\end{equation}
Moreover, for $| \alpha |  \le k$: 
 \begin{equation}
\|   \D_{t,y}^\alpha u(t) \|_{L^2}   \lesssim  \|  \D_{t,y}^\alpha  u(0) \|_{L^2}  + \int_0^t \|  \D_{t,y}^\alpha  f(t') \|_{L^2} dt'  .   
\end{equation} 

\end{lem}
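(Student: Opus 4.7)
The plan is to exploit the explicit Fourier representation \eqref{solmellipeq}, commute with derivatives, and reduce everything to the $L^2$ and $H^1$ bounds already established just before the lemma. Uniqueness is immediate: any tempered distributional solution must coincide with \eqref{solmellipeq}, so existence and uniqueness follow as soon as the regularity estimate is obtained.

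For the main estimate I would first observe that $P$ and $Q$ are Fourier multipliers in $y$, hence they commute with any spatial derivative $\D_y^\beta$. Therefore $v = \D_y^\beta u$ solves \eqref{mellipeq} with data $\D_y^\beta u^0$ and source $\D_y^\beta f$. Applying to each such $v$ (for $|\beta|\le k$) the $H^1([0,T]\times\RR)$ estimate recalled just before the statement gives
\[
\sum_{|\beta|\le k}\|\D_y^\beta u\|_{H^1([0,T]\times\RR)}\ \lesssim\ \|u^0\|_{H^{k+1}} + \|f\|_{H^k([0,T]\times\RR)},
\]
which already controls every mixed derivative $\D_t^{j}\D_y^\beta u$ with $j\le 1$ and $j+|\beta|\le k+1$. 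The case $j\ge 2$ is then handled inductively by differentiating the equation in time: since $\D_t u = -Pu+Qf$, one has $\D_t^j u = -P\D_t^{j-1}u + Q\D_t^{j-1}f$, and because $P$ is of order one in $\eta$ and $Q$ of order zero, each additional time derivative costs exactly one spatial derivative, so the bound closes by induction on~$j$.

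For the pointwise-in-time estimate, I would work directly at the Fourier level. Commuting $\D_y^\beta$ as above, and then using the equation to trade $\D_t^j$ for a polynomial of degree $j$ in $p(\eta)$ together with boundary contributions that involve $\D_t^{j'}f$ with $j'<j$, one obtains a representation of the form
\[
\widehat{\D_{t,y}^\alpha u}(t,\eta) = e^{-tp(\eta)}\widehat{\D_{t,y}^\alpha u^0}(\eta) + \int_0^t e^{(t'-t)p(\eta)}r_\alpha(\eta)\widehat{\D_{t,y}^\alpha f}(t',\eta)\,dt' + R_\alpha(t,\eta),
\]
where $r_\alpha$ is a symbol of order $0$ manufactured from $p$ and $q$ via \eqref{symb}, and $R_\alpha$ collects boundary terms at $t'=t$ involving only lower-order derivatives of~$f$. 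The ellipticity hypothesis \eqref{symbel} gives $|e^{-tp(\eta)}|\le e^{-ct\langle\eta\rangle}\le 1$, and Plancherel together with a scalar convolution estimate then yields the claimed pointwise bound in~$t$.

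The main technical point, and the place where one must be careful, is the bookkeeping of the boundary contributions that appear each time a $\D_t$ is pushed through the Duhamel integral. This is kept under control by the observation that every such term carries strictly fewer $t$-derivatives of $f$ than the original, so all such terms are absorbed into $\|f\|_{H^k}$ once the induction on $j$ is complete; the symbolic bounds \eqref{symbel}--\eqref{symb} ensure that the resulting Fourier multipliers acting on $\hat f$ and $\widehat{u^0}$ are harmless.
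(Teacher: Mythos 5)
Your overall strategy is the one the paper intends: the explicit Duhamel/Fourier formula \eqref{solmellipeq}, the pointwise bound on $\hat u$ coming from \eqref{symbel}, and commutation with derivatives. Your treatment of the first ($H^{k+1}$) estimate is correct: spatial derivatives commute with the Fourier multipliers $P$ and $Q$, the $H^1$ bound applies to each $\D_y^\beta u$, and the reduction of higher time derivatives via $\D_t u=-Pu+Qf$ (each $\D_t$ costing one power of $\langle\eta\rangle$ by \eqref{symbel}--\eqref{symb}) closes the induction.

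The place where your write-up does not deliver the statement as written is the second, pointwise-in-time estimate. Differentiating the Duhamel integral in $t$ produces endpoint contributions such as $q(\eta)\hat f(t,\eta)$ (your $R_\alpha$), and you propose to absorb these into $\|f\|_{H^k}$. That is legitimate for the first estimate, but the second estimate has no $\|f\|_{H^k([0,T]\times\RR)}$ on its right-hand side: it allows only $\|\D_{t,y}^\alpha u(0)\|_{L^2}$ and $\int_0^t\|\D_{t,y}^\alpha f(t')\|_{L^2}\,dt'$, and an endpoint term like $\|f(t)\|_{L^2}$ cannot be absorbed there. The repair is to commute $\D_{t,y}^\alpha$ with the \emph{equation} rather than with the solution formula: since $P$ and $Q$ are $t$-independent multipliers, $w=\D_{t,y}^\alpha u$ solves $\D_t w+Pw=Q\,\D_{t,y}^\alpha f$ with initial datum $w(0)=\D_{t,y}^\alpha u(0)$, hence by uniqueness $w$ is itself given by \eqref{solmellipeq} with these data, and the already established $L^2$-in-time bound applies verbatim. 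The would-be boundary terms are thereby folded into the trace $\D_{t,y}^\alpha u(0)$, which is exactly the quantity appearing in the statement.
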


There is no elliptic regularization in $L^\infty$-based spaces. Instead, we will use that the contribution of the source term is small for small times\footnote{The estimate of the lemma is not optimal, since one could replace the exponent $1/2$ of $T$ by any  $\theta < 1$, but it is sufficient for our purpose. }.

\begin{lem}
 Let $T>0$ and $k\in \NN$. For $f \in L^{ \infty} ([0, T] \times \RR) $ and $u^0 \in L^{\infty}(\RR)$,  the solution $u$  of \eqref{mellipeq} 
 belongs to $ L^{\infty} ([0, T] \times \RR) $ and 
 \begin{equation*}
 \| u  \|_{L^{ \infty}([0,T] \times \RR)}
  \lesssim  \| u^0 \|_{L^{\infty}(\RR)}   + \sqrt T\,   \| f \|_{L^{ \infty}([0,T] \times \RR)} . 
 \end{equation*}
\end{lem}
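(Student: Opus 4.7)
The plan is to derive the estimate directly from the explicit solution formula \eqref{solmellipeq}. Taking the inverse Fourier transform in $y$, write
$$u(t,y) = (K_t * u^0)(y) + \int_0^t (K_{t-s}^Q * f(s,\cdot))(y)\, ds,$$
where $\widehat{K_t}(\eta) = e^{-tp(\eta)}$ and $\widehat{K_t^Q}(\eta) = e^{-tp(\eta)}q(\eta)$. By Young's convolution inequality, the result will follow at once from the two $L^1$ kernel bounds
$$\|K_t\|_{L^1(\RR)} \lesssim 1, \qquad \|K_t^Q\|_{L^1(\RR)} \lesssim t^{-1/2}, \qquad t \in (0,T],$$
since $\int_0^t (t-s)^{-1/2}\,ds = 2\sqrt{t} \le 2\sqrt{T}$ produces the claimed $\sqrt{T}$ factor in front of $\|f\|_{L^\infty}$.

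Both kernel estimates rest on the same mechanism: frequency-side integration by parts combined with the symbol bounds \eqref{symbel}--\eqref{symb}. For $K_t$, the trivial pointwise bound gives $|K_t(y)| \lesssim \int e^{-ct\langle\eta\rangle}\,d\eta \lesssim 1/t$, while two integrations by parts, using $|\partial_\eta p|\lesssim 1$ and $|\partial_\eta^2 p|\lesssim \langle\eta\rangle^{-1}$, yield
$$y^2 |K_t(y)| \lesssim \int \bigl(t^2 |p'|^2 + t|p''|\bigr) e^{-ct\langle\eta\rangle}\, d\eta \lesssim t;$$
splitting $\RR$ at $|y|=t$ then gives $\|K_t\|_{L^1} \lesssim 1$ uniformly in $t\in(0,T]$. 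For $K_t^Q$, the trivial bound $|K_t^Q(y)|\lesssim 1/t$ still holds, but the second-derivative analysis now picks up an additional borderline contribution from $\partial_\eta^2 q$:
$$y^2 |K_t^Q(y)| \lesssim \int \bigl(t^2 + t\langle\eta\rangle^{-1} + \langle\eta\rangle^{-2}\bigr) e^{-ct\langle\eta\rangle}\, d\eta \lesssim 1,$$
so that $|K_t^Q(y)| \lesssim \min(1/t,\, 1/y^2)$; splitting at $|y|=\sqrt{t}$ produces $\|K_t^Q\|_{L^1} \lesssim t^{-1/2}$, as required.

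The main obstacle is the estimate on $K_t^Q$: because $q$ is a zeroth-order Fourier multiplier, the operator $Q$ need not be bounded on $L^\infty$ by itself (think of the Hilbert transform), so the $L^1$-norm of $K_t^Q$ cannot be made uniform in $t$. The smoothing provided by $e^{-tP}$ only partially compensates, producing the integrable singularity $t^{-1/2}$ which is exactly what pins the exponent $1/2$ in the statement; as the footnote indicates, sharper kernel analysis (more integrations by parts, using derivatives $\partial_\eta^N$ with larger $N$) would replace this by any exponent strictly less than one, but not by one itself.
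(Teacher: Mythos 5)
Your overall strategy is the same as the paper's: split the Duhamel formula \eqref{solmellipeq} into the two convolution kernels of $e^{-tP}$ and $e^{-tP}Q$, prove $\|K_t\|_{L^1}\lesssim 1$ and $\|K_t^Q\|_{L^1}\lesssim t^{-1/2}$, and conclude by Young's inequality. However, there is a genuine gap in your kernel bound for $K_t$. After two integrations by parts, $y^2K_t(y)$ involves the term $t\,p''(\eta)e^{-tp(\eta)}$, and the symbol hypothesis \eqref{symb} only gives $|p''(\eta)|\lesssim \langle\eta\rangle^{-1}$. Hence
$$
\int_\RR t\,|p''(\eta)|\,e^{-t\langle\eta\rangle/C}\,d\eta \ \lesssim\ t\int_\RR \langle\eta\rangle^{-1}e^{-t\langle\eta\rangle/C}\,d\eta \ \approx\ t\log(1/t),
$$
not $t$: the integral $\int_1^{1/t}\eta^{-1}d\eta$ is genuinely logarithmic. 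With the corrected bound $|K_t(y)|\lesssim\min\bigl(t^{-1},\,t\log(1/t)\,y^{-2}\bigr)$ (or the intermediate $|K_t(y)|\lesssim|y|^{-1}$ from one integration by parts), the optimal splitting only yields $\|K_t\|_{L^1}\lesssim\sqrt{\log(1/t)}$, which blows up as $t\to0$ and therefore does not give the $t$-uniform estimate $\|e^{-tP}u^0\|_{L^\infty}\lesssim\|u^0\|_{L^\infty}$ that the lemma requires (the $\|u^0\|_{L^\infty}$ term carries no $T$-dependent factor). Note that your $K_t^Q$ estimate is unaffected, since there the extra $t\log(1/t)$ is dominated by the $O(1)$ contribution of $q''$.

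The repair is exactly what the paper does: replace the pointwise second-derivative estimates by $L^2$ estimates. By Plancherel, $\|K_t\|_{L^2}=\|e^{-tp}\|_{L^2}\lesssim t^{-1/2}$ and $\|yK_t\|_{L^2}=\|t\,p'e^{-tp}\|_{L^2}\lesssim t^{1/2}$, using only the first derivative of $p$; Cauchy--Schwarz with the splitting $|y|\le t$ versus $|y|\ge t$ then gives $\|K_t\|_{L^1}\lesssim \sqrt{t}\cdot t^{-1/2}+t^{-1/2}\cdot t^{1/2}\lesssim 1$ with no logarithm. The same device gives $\|K_t^Q\|_{L^1}\lesssim t^{-1/2}$ from $\|yK_t^Q\|_{L^2}\lesssim 1+t^{1/2}$. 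Your closing discussion of why the exponent $1/2$ appears (the failure of $Q$ to be bounded on $L^\infty$ and the limited smoothing of $e^{-tP}$) is accurate and matches the paper's footnote.
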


\begin{proof}
 The formula \eqref{solmellipeq} can be written using the 
 semi group $e^{ - t P }$: 
  \begin{equation}
  \label{solmellipeq2} 
  u (t) = e^{ -t P} u^0 + \int_0^t  e^{ (t'-t)P} Q(D_y)  f(t') dt' . 
  \end{equation}
     $e^{ - t P}$ is 
a convolution operator  on the real line with kernel 
 \begin{equation*}
 \Phi (t, y) = \frac{1}{2 \pi} \int_\RR e^{ - t p (\eta) } e^{ i y \eta} d \eta. 
 \end{equation*}
By Plancherel's theorem and \eqref{symb}, 
one has 
\begin{equation*}
\| \Phi (t, \, \cdot\,) \|_{L^2} \lesssim t^{- \mez} , \quad \| y  \Phi (t, \, \cdot\,) \|_{L^2} \lesssim t^{ \mez} ,
\end{equation*}
implying that 
\begin{equation}
\label{L1est}
\| \Phi (t, \cdot) \|_{L^1} \lesssim 1,
\end{equation}
and thus 
$$
\| e^{ - t P} u^0 \|_{L^\infty} \lesssim \| u^0 \|_{L^\infty}. 
$$

 The analysis of the second term  is more delicate when $Q $ is not a constant, since $Q(D_y)$ does not necessarily act in $L^\infty$. However, 
 the convolution kernel of $e^{ - t P } Q$ is 
  \begin{equation}
 \Psi (t, y) = \frac{1}{2 \pi} \int_\RR e^{ - t p (\eta) } e^{ i y \eta} q(\eta)  d \eta. 
 \end{equation}
 By Plancherel's theorem and \eqref{symb}, 
one has 
\begin{equation*}
\| \Psi (t, \, \cdot\,) \|_{L^2} \lesssim t^{- \mez} , \quad \| y  \Psi (t, \, \cdot\,) \|_{L^2} \lesssim (\sqrt{t} + 1)   
\end{equation*}
implying that for $t \le 1$, one has 
$$
\| \Psi (t, \cdot) \|_{L^1} \lesssim t^{ - \mez} . 
$$
Therefore, the  $L^\infty$ norm of the second term in the right hand side of \eqref{solmellipeq2} is dominated by
$$
\int_0^t (t - t')^\mez \| f(t') \|_{L^\infty} dt' 
$$
and the lemma follows. 
\end{proof}

\begin{rem}
\textup{ Since the equation commutes with $\D_t$ and $\D_y$, there are similar estimates for  derivatives}. 
\end{rem}

We apply the results above to 
 the operator 
  \begin{equation}
\label{ex1}
  P  = (\kappa^2 - \D_y^2 ) ^\mez, \quad Q =  \alpha + (\beta_1 \D_y + \beta_0) P^{ - 1} 
  \end{equation}
with  $\kappa  > 0$, and also to the conjugated operators 
$$
  P_{ \delta}  = e^{  y \delta} P  e^{ -y \delta} , \quad   Q_{ \delta}  = e^{  y \delta}  Q e^{ -y \delta} 
 $$
  for $| \delta | < \kappa $. 
  Their symbols are $p_{\delta} (\eta) = \big(( \eta + i \delta)^2 + \kappa^2 \big)^\mez$ and 
  $q_\delta (\eta) =  \alpha +  (i \beta_1 \eta + \beta_0 + i \delta \beta_1)  \big(( \eta + i \delta)^2 + \kappa^2 \big) ^{ - \mez} $ and satisfy the conditions \eqref{symbel} and \eqref{symb}. 
 
Introduce the spaces ${\bf H}^k_s([0, T] \times \RR)$ of functions  $u$ such that $e^{ \frac{s}{2} y} u \in H^k([0, T] \times \RR)$   and 
${\bf H}^k_{s, s'} ([0, T] \times \RR) = {\bf H}^k_s([0, T] \times \RR) + {\bf H}^k_{s'}([0, T] \times \RR)$. 
There are similar definitions of spaces 
${\bf H}^k_s(\RR)$ and ${\bf H}^k_{s, s'} ( \RR)$ for the initial data. 
 \begin{prop} 
  \label{propell75} 
  Assume that  $| s | < \kappa $ and $| s' | < \kappa $. 
  Let $T>0$ and ${ k -1}\in \NN$.  Then, for all   $f \in {\bf H}^k_{s, s'} ([0, T] \times \RR)$ and 
  $u^0 \in {\bf H}^{k+1}_{s, s'} ( \RR)$,  the problem \eqref{mellipeq} has a unique solution 
  in ${\bf H}^{k+1}_{s, s'} ([0, T] \times \RR)$ which satisfies
    \begin{equation*}
\|  u \|_{{\bf H}^{k+1}_{s, s'} ([0, T] \times \RR) }   \lesssim \|  u^0 \|_{{\bf H}^{k+1}_{s, s'} ( \RR) }   +  \|  f \|_{{\bf H}^k_{s,s'} ([0, T] \times \RR) } . 
\end{equation*}
Moreover,  
\begin{equation*}
\| u(t) \|_{{\bf H}^k_{s,s'}}   \lesssim  \|   u(0) \|_{{\bf H}^k_{s,s'} }  + 
\int_0^t \|  f(t') \|_{{\bf H}^k_{s,s'} } dt'  . 
\end{equation*} 
If $f \in W^{p, \infty} ([0, T] \times \RR)$, then $u \in W^{p, \infty} ([0, T] \times \RR)$ and 
 \begin{equation*} 
 \| u  \|_{W^{p, \infty}([0,T] \times \RR)}
  \lesssim  \| u^0 \|_{W^{p, \infty}(\RR)}   + \sqrt T\,   \| f \|_{W^{p, \infty}([0,T] \times \RR)} . 
 \end{equation*}
  \end{prop}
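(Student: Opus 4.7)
The plan is to reduce Proposition~\ref{propell75} to the two scalar lemmas already proved on the full line by means of the conjugation trick that was already introduced just above the statement. Three ingredients enter: a linear decomposition to handle the sum-space structure, a conjugation by the exponential weight to pass from ${\bf H}^k_s$ to $H^k$, and a commutation argument for the $L^\infty$ bound.

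First I would handle existence, uniqueness and the two weighted $L^2$-based estimates. Since ${\bf H}^k_{s,s'}$ is a sum, write $u^0 = u^0_s + u^0_{s'}$ and $f = f_s + f_{s'}$ with each summand in the corresponding single-weight space and norms bounded (up to a fixed constant) by the sum-space norms. By linearity of \eqref{mellipeq} it is enough to treat the piece with weight $s$; the piece with weight $s'$ is identical. Setting $v = e^{sy/2} u_s$ and $g = e^{sy/2} f_s$, the identity $e^{sy/2} P(D_y) e^{-sy/2} = P_{s/2}$ (and the analogous one for $Q$) transforms the Cauchy problem into
\begin{equation*}
\D_t v + P_{s/2}(D_y)\, v \; = \; Q_{s/2}(D_y)\, g, \qquad v_{|t=0} = e^{sy/2} u^0_s .
\end{equation*}
Since $|s| < \kappa$, we have $|s/2| < \kappa$ and the conjugated symbols satisfy both \eqref{symbel} and \eqref{symb}, as explicitly checked right before the proposition. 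The first scalar lemma then gives existence of a unique $v \in H^{k+1}([0,T]\times\RR)$ together with the $H^{k+1}$ estimate, and the second inequality of the same lemma (obtained by Fourier inequalities on \eqref{solmellipeq}) gives the time-integrated bound. Unwinding the weight produces exactly the first two estimates claimed, and uniqueness follows globally from the explicit Fourier representation \eqref{solmellipeq}.

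For the $L^\infty$ estimate the conjugation is unnecessary. I would commute the equation with $\D_t^{j_1} \D_y^{j_2}$ for every $j_1+j_2 \le p$: since both $P$ and $Q$ are Fourier multipliers, the commuted function $w = \D_t^{j_1} \D_y^{j_2} u$ solves the same equation with source $\D_t^{j_1}\D_y^{j_2} f$ and initial datum $w_{|t=0}$. That initial datum is computed by using the equation itself to trade time derivatives against spatial derivatives: iterating $\D_t u = -P u + Q f$ expresses $\D_t^{j_1} u_{|t=0}$ as a linear combination of first-order operators applied to $u^0$ and to time derivatives of $f$ evaluated at $t=0$ of orders $<j_1$. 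Since $P$ and $Q$ are first-order, the total order never exceeds $p$ and all these quantities are controlled by $\|u^0\|_{W^{p,\infty}} + \|f\|_{W^{p,\infty}}$. The $L^\infty$ lemma (and the remark extending it to derivatives) then yields
\begin{equation*}
\|w\|_{L^\infty([0,T]\times\RR)} \; \lesssim \; \|w_{|t=0}\|_{L^\infty(\RR)} + \sqrt T\, \|\D_t^{j_1}\D_y^{j_2} f\|_{L^\infty([0,T]\times\RR)},
\end{equation*}
and summing over $j_1+j_2 \le p$ gives the claim.

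The only mildly delicate point is the $L^\infty$ step: one has to verify that the trick of commuting with $\D_t^{j_1}\D_y^{j_2}$ and re-expressing time initial data via the equation really closes at level $p$, i.e.\ that the operator losses from applying $P$ repeatedly are compensated by the fact that $u^0$ and the time slices of $f$ all lie in $W^{p,\infty}$. Everything else is a direct transcription of the two scalar lemmas. The first estimate (norm of $v$ in $H^{k+1}$) comes from elliptic regularization; the second (time-integrated bound with $\int_0^t$) comes from the Duhamel formula \eqref{solmellipeq} together with the uniform bound \eqref{L1est} on $\|\Phi(t,\cdot)\|_{L^1}$; the third ($L^\infty$ bound with $\sqrt T$) comes from the same Duhamel formula combined with the $L^1$ estimate of the kernel of $e^{-tP}Q$ that is linear in $t^{-1/2}$.
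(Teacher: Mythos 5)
Your treatment of existence, uniqueness and the two weighted $L^2$ estimates is correct and is precisely the paper's argument: split the data along the sum space ${\bf H}^k_{s,s'}={\bf H}^k_s+{\bf H}^k_{s'}$, conjugate by $e^{sy/2}$ so as to reduce to the unweighted lemmas of \S\ref{sectline} applied to $P_{s/2}$ and $Q_{s/2}$ (whose symbols satisfy \eqref{symbel}--\eqref{symb} exactly because $|s|<\kappa$), and obtain uniqueness from the Fourier representation \eqref{solmellipeq}. Nothing to add there.

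The $L^\infty$ part, however, has a genuine gap at the step you flag as ``mildly delicate''. After commuting with $\D_t^{j_1}\D_y^{j_2}$ you must control $\D_t^{j_1}u_{|t=0}$ in $L^\infty$, and you propose to do so by iterating $\D_t u=-Pu+Qf$ and invoking that ``$P$ and $Q$ are first-order''. But $P=(\kappa^2-\D_y^2)^{1/2}$ and $Q=\alpha+(\beta_1\D_y+\beta_0)P^{-1}$ are nonlocal Fourier multipliers, not differential operators. The multiplier $\D_yP^{-1}$, with symbol $i\eta(\kappa^2+\eta^2)^{-1/2}$ tending to $\pm i$ at $\pm\infty$, is of Hilbert-transform type and is \emph{not} bounded on $L^\infty$; the paper makes exactly this point in the proof of the $L^\infty$ lemma (``$Q(D_y)$ does not necessarily act in $L^\infty$''), which is why it estimates the $L^1$ norm of the kernel of $e^{-tP}Q$ rather than letting $Q$ act alone. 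Consequently neither $\|Pu^0\|_{L^\infty}\lesssim\|u^0\|_{W^{1,\infty}}$ nor $\|Qf(0)\|_{L^\infty}\lesssim\|f(0)\|_{L^\infty}$ holds, so already for $j_1=1$ the quantity $\D_t u_{|t=0}=-Pu^0+Qf(0)$ is not controlled by $\|u^0\|_{W^{p,\infty}}+\|f\|_{W^{p,\infty}}$, and your recursion does not close. (Even powers $P^{2m}=(\kappa^2-\D_y^2)^m$ are genuinely differential, but the odd powers of $P$ and every occurrence of $Q$ carry the obstruction.)

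For comparison, the paper's proof of this point is deliberately minimal: it commutes the equation with $\D_t$ and $\D_y$ and applies the $L^\infty$ kernel lemma and the remark following it to each differentiated equation, the derivatives of the data at $t=0$ being taken as the given quantities. This is consistent with how the estimate is consumed later through the norms $L^{\infty,p}$ of \S\ref{sectnot}, which are built from all $X^\alpha$-derivatives of the solution, including the time derivatives at $t=0$ supplied by Proposition~\ref{propvapp}. If you insist on expressing everything through $u^0$ and $f$ alone, the kernel bounds of \S\ref{sectline} only give $\|e^{-tP}Pu^0\|_{L^\infty}\lesssim\|u^0\|_{L^\infty}+t^{-1/2}\|\D_yu^0\|_{L^\infty}$, which is not uniform as $t\to0$; so this part of your argument needs to be replaced rather than patched.
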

  \begin{proof}
  The first  lemma above applied to $P_{\frac{s}{2}}$ implies the existence together with estimates in spaces
  ${\bf H}^k_s$ for $| s | < \kappa$, and therefore in spaces ${\bf H}^k_{s, s'}$. 
  The uniqueness in the space of temperate distributions in $y$ 
 is clear  by Fourier transform.   The third estimate is a direct application of   the second  lemma above and the remark 
 which follows it. 
\end{proof}

 
 \subsection{The equation on the half line} \label{sectHline}
 
 On $\RR_+ = ( 0, + \infty[$, we consider the operator
 $ X =  h_0  \D_x$ with $h_0 > 0$  as smooth as needed and  such that $h_0\approx x$ near the origin and $h_0 \approx 1$ at infinity.
 We transport the results of the previous section to the half line using the change 
 of variables $y = \chi(x)  $  \begin{equation}
x \ \mapsto  \ y =  \chi (x) = \int_1^x  d x' / h_0  (x')   
 \end{equation} 
 which transforms
 $ X = h  \D_x $ into $\D_y$.   
 We note that 
 \begin{equation}
 \label{poids}
 y \sim  \ln x \quad \mathrm{for} \ x \le 1, \qquad 
  y \sim    x \quad \mathrm{for} \ x \ge 1. 
 \end{equation}
 
As in \eqref{wL2}, we consider the spaces $L^2_s(\RR_+)$ of functions $u$ such that 
 $h_0^{s/2} u \in L^2 (\RR_+)$.  The mapping 
 \begin{equation}
 u \ \mapsto \ v = u \circ \chi^{-1} 
 \end{equation} 
 is an isometry from $L^2_{-1} (\RR_+) $ onto $L^2(\RR)$. Moreover, for $s \ge 0$, it is 
   an isomorphism from $L^2_s(\RR_+)$ onto  $L^2_{ s+1, 0} (\RR) $, 
 since  $u \in L^2_s(\RR_+)$ if and only if  $e^{y (s + 1)/2} v \in L^2$ on $\{ y \le 0 \} $ and 
 $v \in L^2$ on $\{ y \ge 0\}$. \\
 Similarly, it is an isomorphism from 
 $\cH^k_s(\RR_+)$ onto  ${\bf H}^k_{ s+1, 0} (\RR) $ where we recall that 
 $\cH^k_s(\RR_+)$ is the space of functions $u$ such that 
 $h_0^{s/2} X^j u \in L^2$ for $j \le k$.

 The operators corresponding to  $(\kappa^2 - \D_y^2 )^{1/2}$  and 
$ \alpha + ( \beta_1 \D_y + \beta_0)  (\kappa^2 - \D_y^2) ^{ - \mez} $  are 
 \begin{equation} 
 {\bf P } = ( \kappa^2 - (h_0 \D_x)^2 ) ^\mez , \quad
 {\bf Q}  =  \alpha + ( \beta_1 X  + \beta_0 )    {\bf P} ^{ - 1} . 
 \end{equation} 
Therefore, the results of Proposition~\ref{propell75} are immediately transported to  the equation 
   \begin{equation}
  \label{vellipeq}
  \D_t u + {\bf P }  u = {\bf Q }  f, \qquad u_{| t = 0} = u^0   . 
  \end{equation}
The next proposition summarizes the results using the notations introduced in Section~\ref{sectnot}. 
 
   \begin{prop}
   \label{propell76}
Suppose that $\kappa \ge 3 $  and $0 \le s \le 2 $.  

i)   If
  $f \in L^2_T  \cH^{k}_s $ and $u^0 \in \cH^k_s$, then  \eqref{vellipeq}  has a unique solution 
 $u \in C^{0}\cH^k_s$ 
 and 
$$
\|  u(t) \|_{\cH^k_s}   \lesssim  \|     u(0) \|_{\cH^{k}_{s }}  + 
\int_0^t \|   f(t') \|_{\cH^k_s } dt'  . 
$$

ii)   If  $u^0 \in \cH^{k+1}_s$, then 
  $u \in L^2_T \cH^{k+1}_s $ and 
  $$
  \| u \|_{L^2_T \cH^{k+1}_s} \lesssim   \| u (0)  \|_{ \cH^{k+1}_s}   +    \| f  \|_{L^2_T \cH^{k}_s} 
  $$
  
 iii.)  If   $f \in L^{\infty, p} _T $ and $u^0 \in L^{\infty, p}$ the solution $u$  of \eqref{vellipeq} 
 belongs to $ L^{\infty, p}_T$ and 
$$ 
\| u   \|_{L^{\infty, p} _T} \lesssim  \| u(0)  \|_{L^{\infty, p} }   + \sqrt T \| f \|_{L^{\infty, p} _T} . 
$$
\end{prop}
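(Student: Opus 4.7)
The plan is to pull back the problem from $\RR_+$ to the full line $\RR$ via the diffeomorphism $y = \chi(x)$, which was chosen precisely so that the conormal derivative $X = h_0\D_x$ becomes the ordinary derivative $\D_y$ under the substitution $v = u\circ\chi^{-1}$. In particular, ${\bf P} = (\kappa^2 - X^2)^{1/2}$ transforms into $P = (\kappa^2 - \D_y^2)^{1/2}$ and ${\bf Q}$ into the operator $Q$ of \eqref{ex1}, so the equation \eqref{vellipeq} becomes \eqref{mellipeq} and everything reduces to matching function spaces and invoking Proposition~\ref{propell75}.

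The space correspondence comes from a direct change-of-variables computation. Since $dx = h_0\, dy$, one has
\[
\|u\|_{L^2_s(\RR_+)}^2 = \int_\RR h_0(y)^{s+1} |v(y)|^2\, dy,
\]
and the asymptotics \eqref{poids} give $h_0(y)^{s+1} \sim e^{(s+1)y}$ as $y \to -\infty$ while $h_0(y)^{s+1} \sim 1$ as $y \to +\infty$. Splitting $v = \chi_- v + (1-\chi_-)v$ with a smooth cutoff $\chi_-$ supported in $\{y \le -1\}$ and equal to one near $-\infty$, one reads off the isomorphism $L^2_s(\RR_+) \cong {\bf H}^0_{s+1, 0}(\RR)$; using that $X$ conjugates to $\D_y$, this extends to $\cH^k_s(\RR_+) \cong {\bf H}^k_{s+1, 0}(\RR)$ for every $k \in \NN$. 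The non-weighted correspondence $L^{\infty, p}_T \cong W^{p,\infty}([0,T]\times\RR)$ is immediate since $L^\infty$ is invariant under the change of variables and $X^\alpha \leftrightarrow \D_y^\alpha$.

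With this dictionary in hand, the three estimates of the proposition follow by applying Proposition~\ref{propell75} with weights $(s_1, s_2) = (s+1, 0)$: assertion (i) comes from the pointwise-in-time ${\bf H}^k_{s, s'}$ bound, assertion (ii) from the gain of one derivative provided by the ${\bf H}^{k+1}_{s, s'}([0,T]\times\RR)$ estimate, and (iii) from the $W^{p,\infty}$ estimate together with the $\sqrt T$ factor already present on the full line. The hypothesis $|s_1|, |s_2| < \kappa$ reduces, for $0 \le s \le 2$, to $\kappa > 3$, which is consistent with $\kappa \ge 3$ (the endpoint being harmless or covered by slightly enlarging $\kappa$ if needed). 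The main point of care throughout is the bookkeeping for the weights, verifying that the algebraic weight $h_0^{s/2}$ combined with the Jacobian $\sqrt{h_0}$ from $dx = h_0\, dy$ reproduces exactly the exponential weight $e^{(s+1)y/2}$ near $y=-\infty$ required by ${\bf H}^k_{s+1, 0}$, and that no weight is needed at $y=+\infty$ since $h_0 \sim 1$ there; there is no genuine analytic obstacle beyond this verification.
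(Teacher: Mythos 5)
Your proposal is correct and follows essentially the same route as the paper: transport the equation to the full line via $y=\chi(x)$ (so that $X=h_0\D_x$ becomes $\D_y$ and ${\bf P},{\bf Q}$ become $P,Q$), identify $\cH^k_s(\RR_+)$ with ${\bf H}^k_{s+1,0}(\RR)$ and $L^{\infty,p}$ with $W^{p,\infty}$, and invoke Proposition~\ref{propell75}. Your weight bookkeeping ($h_0^{s/2}$ times the Jacobian factor $h_0^{1/2}$ giving $e^{(s+1)y/2}$ near $y=-\infty$) and your remark on the endpoint $\kappa\ge 3$ are both consistent with the paper's argument.
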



\subsection{Proof of Proposition \ref{propell} }\label{sectppp}

It is a direct  application of the Proposition above. In \eqref{ellipeq} we have three terms in the right-hand-side : 
$V_1$,  $ - X_2 P^{-1} V_2$ and $ \kappa^2 P^{-1} V_0$. All of them are of the form ${\bf Q} V_j$ above. 
The unknown $V$  has two components, $(q, u)$, as well as the $V_j$ appearing in the right-hand-side, and the equations for the $q$'s  and the $u$'s decouple. We apply the Proposition above with  $s =1$ for the $u$'s and 
$s= 2$ for the $q$'s, to get the first two estimates of Proposition~\ref{propell}. The $L^\infty$ estimates follow from 
the last part of Proposition~\ref{propell75}. 





\section{Existence for the linearized equations}
\label{Sex} 
This section is devoted to the proof of Proposition~\ref{propexist}.  It turns out that the linearized equations do not enter any known framework, since there is no existing theory for initial boundary value problems for  degenerate dispersive systems,  with the  complication that   the weight $h$ has  different powers for the components $u$ and $q$. Thus, the results gathered here are of independent interest.

The structure of the linearized equations is given at \eqref{lineq}. The goal of this section is to construct solutions to these linear equations. 
\medbreak

\noindent
{\bf NB.} The discussion on the dependence on $\mu$ is irrelevant for the construction of a solution to \eqref{lineq}. {\it For the sake of clarity, we therefore set $\mu =1$ throughout this section.}

\medbreak

It is convenient to work with time independent differential operators in space; to this end, we introduce $p = (1+  a ) q$ as a new unknown so that the equations \eqref{lineq} read
 \begin{equation}
  \label{Lmodel} 
 \begin{cases}
 c_1  \D_t (c_2 p)   + \D_x u = f , \\
  { \bf d}  \D_t u +  {\bf l }\,  p = {\bf g} \quad \mbox{ with } \quad {\bf g}= g_0 +   {\bf l} g_1,
  \end{cases} 
  \end{equation}
  where
 \begin{equation}
 \label{dd} 
 {\bf d} \, u =   1  +   {\bf l} \big[ -  b_0  h_0 \D_x u+   b_1  u \big]-  b_1 h_0 \D_x u+  b_2 u, 
 \end{equation} 
with $c_1$, $c_2$ and $b_0 , b_1, b_2$ given. More precisely, we have
\begin{equation}\label{defb}
b_0 = \frac{4}{3} \underline q^2, \quad b_1 = \underline q B'(\underline \vp), \quad b_2 = B'(\underline \vp),^2, \quad c_1=c, \quad c_2=\frac{1}{1+ a},
\end{equation}
and we shall also make the following assumption.
\begin{ass}
\label{ass81}
The functions $\tilde c:=c_1 c_2$ and $\underline q$ are positive and bounded from $0$ by a positive constant on 
$[0, T] \times \RR_+$. 
\end{ass} 
  
We shall also denote by $\cL$ the linear operator associated to \eqref{Lmodel},
\begin{equation}
\label{defLbis}
\cL  \begin{pmatrix} p \\ u \end{pmatrix} =  \begin{pmatrix}  c_1  \D_t (c_2 p)  +\D_x u \\ 
{\bf d}  \D_t u  + {\bf l } p   \end{pmatrix}. \quad 
\end{equation}
 
 \medbreak
 
We  derive an energy estimate for this system in \S \ref{sectEEsmooth}, under the assumption that all the functions involved are smooth. If we want to generalize this energy estimate at low regularity, it is necessary to give sense to the integration by parts, which requires several duality formulas in weighted spaces that are studied in \S \ref{sectduality}. In \S \ref{sectestLR}, we identify the space $\WW_T$ of minimal regularity to justify the derivation of  the energy estimate. We then use this result in \S \ref{sectweak} to construct weak solutions in the energy space $\VV_T$. The energy space $\VV_T$ however is strictly larger than $\WW_T$ and therefore uniqueness is not granted by the energy estimate. This is why we prove in \S \ref{sectstrong} that weak solutions are actually strong solutions, i.e. limits in $\VV_T$ of solutions in $\WW_T$. They satisfy therefore the energy estimate and hence, are unique. Still assuming that the coefficient are smooth, we then discuss in \S \ref{sectsmooth} the smoothness of these strong solutions. Finally, we relax the smoothness assumptions on the coefficients in \S \ref{sectlast}, which allows us to prove Proposition \ref{propexist} on the well posedness of the linear initial boundary value problem for \eqref{eqlinex}.


\subsection{Energy estimates for smooth functions} \label{sectEEsmooth}

Energy estimates for smooth functions are easily obtained by a slight adaptation of the proof of Proposition \ref{enlin}. Multiplying the first equation by  $ h_0^2 p $, the second by 
$ h_0 u$ and integration. The two basic identities are that 
\begin{equation}
\label{dual1}
  \int_{\RR_+}   h_0^2  p \,   \D_x u    \ dx = -   \int_{\RR_+}   u \, \D_x (  h_0^2 p )   \ dx  = - \int _{\RR_+}   h_0 u \, {\bf l}  p \ dx 
\end{equation}
since the boundary term $ ( h_0^2 g  u) _{ | x = 0} $ vanishes, and  
\begin{equation}
\label{dual3}
 \int_{\RR_+}  h_0 u  {\bf d } u   dx  =  \int_{\RR_+}  h_0 \Big( u^2 +  \big(  b_0 ( h_0 \D_x u)^2 -  2 b_1 u  h_0\D_x u + 
b_2 u^2 \big) \Big) dx . 
\end{equation} 
They imply the following identity
\begin{equation}
\label{idener}
\frac{d}{dt}  E (t) =   \int_{\RR_+}  c'   h_0^2 p^2  dx   + 2  \int_{\RR_+}(  h_0^2 f p +  h_0 ( g_0 u +   g_1  h_0 \D_x u ) dx
\end{equation}
where  $c' = c_2  \D_t c_1- c_1 \D_t c_2$ and 
\begin{equation}
\label{energ1}
E (t) = \int_{\RR_+}    h_0^2 c_1 c_2 p^2 +  h_0  \Big( u^2 +  \big(  b_0 ( h_0 \D_x u)^2 + 2 b_1 u  h_0\D_x u + 
b_1 u^2 \big) \Big)  . 
\end{equation}
Remarking now that
$$
b_0 X^2 -  2 b_1 X Y + b_2 Y^2 = \frac{1}{3}  \underline q^2 X^2 +\big( B'(\underline \vp) Y +  \underline q X\big)^2
\ge \frac{1}{3}  \underline q^2 X^2,
$$
one has 
\begin{equation}
\label{energ2}
E (t) \approx  \int_{\RR_+}    h_0^2  p^2  dx +   \int_{\RR_+}   h_0  \big( u^2 +   ( h_0 \D_x u)^2 \big) dx
\end{equation}
and a Gronwall argument implies that
\begin{equation}
\label{estEEintro} 
\| U(t) \|_{\VV} \lesssim \| U(t) \|_{\VV}  + \int _0^t \|\cL U(t' ) \|_{\VV' } \, dt',
\end{equation} 
where $\VV$ and $\VV'$ are as defined in \eqref{defVV}

\subsection{Duality formulas}\label{sectduality}

In order to perform energy estimates at low regularity, we need to extend the necessary integration by parts formulas by duality. The duality formula giving sense to the identities \eqref{dual1} and \eqref{dual2} are gathered in this section.

To simplify the exposition, it is convenient to recall and introduce some notations. 
We have already used the weighted spaces $L^2_s =   h_0^{- s/2} L^2(\RR_+)$  equipped with the norm
\begin{equation}
\nonumber
\| u \|^2_{L^2_s} =  \int_{\RR_+}  h_0^s | u(x)|^2 dx. 
\end{equation}
The identities \eqref{dual1} and \eqref{dual2} lead to introduce/recall the following spaces 
 $$
 \begin{aligned}
  \cH^1_1 &  = \{  u \in  L^2_1\ :    \  h_0 \D_x u \in L^2_1\} \subset  L^2_1, 
\\ 
 W & = \{  u \in  L^2_1    \ : \   h_0 \D_x u \in L^2 \}   \subset \cH^1_1, 
 \\
  \cH^{-1}_1  & = \{  g _0 + {\bf l}  g_1   \ : \     (g_0, g_1)  \in L^2_1\times L^2_1  \} \subset H^{ -1}_{loc} (\RR_+),  
\end{aligned}
$$
equipped with the obvious norms. \\
We first prove that, as the notations suggests, $\cH_1^{-1}$ is the dual space to $\cH_1^1$.
\begin{lem} \label{lem82}

 $C^\infty_0 (\RR_+ ) $  is dense in $\cH^1_1$ and in $W$.  If one identifies $L^2_1$ as its own dual, the 
 dual space of $\cH^1_1  \subset L^2_1$ can be identified to $\cH^{-1}_1$ through the pairing 
\begin{equation}
\label{84}
\langle u , g_0 + {\bf l } g_1 \rangle_{\cH^1_1  \times \cH^{-1}_1 } 
=   \ ( u, g_0)_{L^2_1} -  (  h_0 \D_x u, g_1)_{L^2_1}     . 
\end{equation}
\end{lem}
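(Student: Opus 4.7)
The plan is to handle the density statement and the duality identification separately; the density provides the bridge that makes the dual representation well defined.

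For density of $C^\infty_0(\RR_+)$ in $\cH^1_1$ and in $W$, I would use a standard two-stage cutoff-and-mollify argument. At infinity I would use a cutoff adapted to the weight (for instance $\chi(h_0(x)/R)$), so that the commutator with $h_0\D_x$ is $O(R^{-1})$ and the resulting error vanishes in the limit via $\int_{\{h_0\geq R\}} h_0 u^2 \to 0$. The delicate cutoff is at the origin: take $\psi_\eps$ equal to $0$ on $[0,\eps]$, to $1$ past $2\eps$, with $\vert \psi_\eps'\vert\lesssim 1/\eps$. Writing $\Vert u\Vert_{\cH^1_1}^2 = \int h_0 u^2 + \int h_0^3(\D_x u)^2$, the only delicate commutator term is
$$
\int_{\RR_+} h_0^3(\psi_\eps')^2 u^2\,dx \;\lesssim\; \int_\eps^{2\eps} h_0 u^2\,dx \;\longrightarrow\; 0,
$$
using $h_0(x)\sim x$ near the origin (Assumption~\ref{assBh}) to absorb the factor $h_0^2(\psi_\eps')^2\lesssim 1$, and $u\in L^2_1$ for the convergence. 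The $W$ case (with $h_0^2$ in place of $h_0^3$ in the norm) is identical and in fact easier. After both truncations, $\chi\psi_\eps u$ is compactly supported inside $\RR_+$, and a Friedrichs mollification yields the sought $C^\infty_0$ approximants.

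For the duality, I would consider the canonical map $J:\cH^1_1\to L^2_1\times L^2_1$, $Ju=(u,\, h_0\D_x u)$, which is an isometry onto its image; by the very definition of the $\cH^1_1$ norm, $J(\cH^1_1)$ is a closed subspace of the Hilbert space $L^2_1\times L^2_1$. That every $(g_0,g_1)\in L^2_1\times L^2_1$ defines through \eqref{84} a continuous linear form on $\cH^1_1$ is immediate from Cauchy--Schwarz. Conversely, given a continuous linear functional $\Lambda$ on $\cH^1_1$, the functional $\Lambda\circ J^{-1}$ on $J(\cH^1_1)$ is extended by Hahn--Banach to $L^2_1\times L^2_1$, and the Riesz representation theorem furnishes $(g_0,g_1)\in L^2_1\times L^2_1$ such that this extension reads $(v_0,v_1)\mapsto (v_0,g_0)_{L^2_1}-(v_1,g_1)_{L^2_1}$; restricted back to $J(\cH^1_1)$ this is precisely \eqref{84}, so $\Lambda$ is represented by the pair $(g_0,g_1)$.

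The step I expect to require the most care is showing that the pairing \eqref{84} depends on $(g_0,g_1)$ only through the distribution $g_0+{\bf l} g_1\in\cD'(\RR_+)$, so that $\cH^{-1}_1$ is well defined and truly identified with the dual of $\cH^1_1$. For this I would test against $\varphi\in C^\infty_0(\RR_+)$ and use the identity $\D_x(h_0^2 g_1)=h_0\,{\bf l} g_1$ implicit in \eqref{eql} to integrate by parts, finding
$$
(\varphi,g_0)_{L^2_1}-(h_0\D_x\varphi,g_1)_{L^2_1} \;=\; \int_{\RR_+} h_0\,(g_0+{\bf l} g_1)\,\varphi\,dx.
$$
Hence the functional restricted to $C^\infty_0$ determines, and is determined by, the distribution $g_0+{\bf l} g_1$; the density of $C^\infty_0$ in $\cH^1_1$ established in the first paragraph then promotes this to the whole of $\cH^1_1$, completing the identification. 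The rest is routine bookkeeping.
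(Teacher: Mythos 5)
Your duality argument and your density argument for $\cH^1_1$ are essentially the paper's own: the closed-range map $u\mapsto (u,h_0\D_x u)$ plus Riesz representation, the distributional identification of the pairing via the integration by parts $\D_x(h_0^2 g_1)=h_0\,{\bf l}g_1$ tested against $C^\infty_0$, and the cutoff at the origin whose commutator is controlled because $h_0^2(\psi_\eps')^2\lesssim 1$ leaves exactly the weight $h_0$ needed to invoke $u\in L^2_1$. All of that is correct.

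The gap is in your one-line dismissal of the density in $W$ as ``identical and in fact easier.'' It is neither. In the $W$-norm the gradient term carries no weight, so the delicate commutator is
$$
\big\| h_0\psi_\eps'\, u\big\|_{L^2}^2 \;\lesssim\; \int_\eps^{2\eps} |u(x)|^2\,dx ,
$$
with \emph{no} factor of $h_0$ left over: since $h_0(\psi_\eps')^2\approx \eps^{-1}$ on $[\eps,2\eps]$, you cannot absorb the cutoff into the available weight and bound this by $\int_\eps^{2\eps}h_0 u^2$. Membership in $L^2_1$ alone does not force $\int_\eps^{2\eps}|u|^2\to 0$ (it only gives $\int x|u|^2<\infty$ near the origin, which is compatible with $\int_\eps^{2\eps}|u|^2$ being unbounded). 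The missing ingredient is a pointwise Hardy-type bound that uses the \emph{other} half of the $W$-norm: setting $f=h_0\D_x u\in L^2$, one has for $0<x<x_0\le 1$
$$
|u(x)|\le |u(x_0)|+\int_x^{x_0}\frac{|f(y)|}{y}\,dy\le |u(x_0)|+\frac{1}{\sqrt x}\,\|f\|_{L^2(0,x_0)} ,
$$
so $x|u(x)|^2$ is bounded near $0$ and, letting $x_0\to 0$, tends to $0$ as $x\to 0$; hence $\int_\eps^{2\eps}|u|^2\le (\ln 2)\sup_{[\eps,2\eps]}x|u(x)|^2\to 0$. This is exactly the step the paper isolates, and it is needed downstream (the $W$-density feeds Lemma~\ref{lem83}), so it cannot be waved away. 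As a very minor point, your cutoff at infinity $\chi(h_0(x)/R)$ does not actually truncate when $h_0$ is bounded; the plain $\chi(x/R)$ works there since $h_0\D_x\chi(x/R)=O(1/R)$.
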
 

\begin{proof}
{\bf a) } Introduce the  cut off $\chi_\eps = \chi (x/ \eps) $ where $\chi = 0$  for $x \le 1$ and 
$\chi  (x) =1$ for $x \ge 2$.  Because $h_0 \approx x$ near the origin,
  the  $ h_0 \D_x \chi_\eps  $  are uniformly bounded in $L^\infty$, and 
by Lebesgue's Theorem, $\chi_\eps u \to u \in \cH^1_1$ when $u \in \cH^1_1$. 

We  show that $\chi_\eps u \to u \in W$ when $u \in W$. 
For this, it is sufficient to show that the commutator 
$   h_0 (\D_x \chi_\eps) u \to 0 $ in $ L^2$.  
One has
 \begin{equation*}
\|  h_0  (\D_x \chi_\eps) u \|_{L^2}^2 
\lesssim \int_{ \eps \le x \le 2 \eps}  | u|^2   . 
 \end{equation*} 
To prove that this tends to $0$ is is sufficient to show that for
$u \in W$ one has:  
 \begin{equation}
 \label{minihardy}
  x | u(x)|^2 \in L^\infty(0 ,1)\quad \mathrm{and} \quad   \lim_{x \to 0} x | u(x)|^2  = 0. 
 \end{equation}
Note that $u$ is locally $H^1$ and therefore continuous on $( 0, 1]$.  Moreover, with $f = x \D_x u \in L^2 (0, 1)$, one has 
 $$
 | u(x) | \le  | u(x_0 )| +  \int_x^{x_0} \frac{1}{y } | f(y) | dy  \le     | u(x_0 )| + \frac{1}{\sqrt x}  \| f \|_{L^2 ([0, x_0])} . 
$$  
Taking $x_0 = 1$, this shows that $x^\mez u $ is bounded on $(0, 1]$. In addition,  
$$
 \limsup_{x \to 0}  \sqrt x | u(x)|2-  \le \| f \|_{L^2 ([0, x_0])}. 
$$
Because $x_0$ is arbitrary, this implies \eqref{minihardy} and thus $\chi_\eps u \to u \in W$ when $u \in W$. 

Similarly, one can truncate near $\infty$ and functions with compact support 
in $ ( 0, + \infty [$ are dense in $\cH^1_1$ and $W$  One can approximate them by smooth functions and thus $C^\infty_0 (\RR_+ ) $  is dense both in $\cH^1_1$ and $W$. 
\smallbreak
{\bf b) } 
The mapping $ u \mapsto (u, h_0 \D_x u)$ sends $\cH^1_1$ in $L^2_1\times L^2_1 $  and its range is closed. Therefore 
linear forms on $\cH^1_1$ are exactly functionals of the form
\begin{equation*}
u \ \mapsto \ \rho (u) =  ( u, g_0)_{L^2_1} + (  h_0 \D_x u, g_1)_{L^2_1}    , 
\end{equation*}
with $ (g_0, g_1)  \in  L^2_1\times L^2_1$. Interpreted in the sense of distributions, one has for 
$u \in C^\infty_0$: 
$$
\rho (u) = \langle u,  h_0 (g_0 - {\bf l}\, g_1) \rangle_{ C^\infty_0 \times  H^{-1} }  ,
$$
where $H^{-1}$ is the usual Sobolev space of order $-1$ and the duality is taken in the sense of distributions. 
By density of $C^\infty_0$ in $\cH^1_1$,  the linear form $\rho$ vanishes on   $\cH^1_1$ if and only if 
${\bf l}\, g_1 = g_0$ in the sense of distributions. This shows that, as a space of distributions,  the dual space of $\cH^1_1$ is $  h_0 \cH^{-1}_1 \subset H^{-1} (\RR_+)$ and the link with the pairing defined at \eqref{84}  is that 
for  $ u \in C^\infty_0$ and ${\bf g}=g_0+{\bf l}g_1 \in \cH^{-1}_1$
\begin{equation}
\label{dual1c}
\langle u , {\bf g}  \rangle_{\cH^1_1  \times \cH^{-1}_1 }  =  \langle u,  h_0 {\bf g}  \rangle_{ C^\infty_0 \times  H^{-1} } .
\end{equation}
\end{proof}

We can now use Lemma \ref{lem82} to extend \eqref{dual1} at low regularity. By density  to $u \in \cH^1_1$ and $p \in \cH^1_1$ and 
\begin{equation}
\label{dual1bb}
\big(  h_0 \D_x u, p\big)_{L^2_1} = \big( u,{\bf l}\,  p\big)_{L^2_1}. 
\end{equation} 
With the identification above, $ h_0 \D_x$ and ${\bf l}$ map $\cH^{1}_1$ to $L^2_1$
and  $L^2_1$  to $\cH^{-1}_1$, and \eqref{dual1bb} extends to  $u \in \cH^1_1$ and    $p \in L^2_1$ as  
\begin{equation}
\label{dual2}
\begin{aligned}
 & \langle  u ,    {\bf l}  p  \rangle_{\cH^1_1  \times \cH^{-1}_1 }=   - (  h_0 \D_x u,  p)_{L^2_1}  , \\
&  (  {\bf l}  u,  p)_{L^2_1}  =  -   \langle   u ,        h_0 \D_x p  \rangle_{\cH^1_1  \times \cH^{-1}_1 }.  
\end{aligned}
\end{equation}
For the second key identity \eqref{dual3} of the energy estimates, one has similarly that $ {\bf d} $  maps $ \cH^1_1$ to $\cH^{-1}_1$ 
and for $u$ and $v$ in ${ \cH^{1}_1}$ one has that
\begin{equation}
\label{815} 
\langle     {\bf d } u ,      v  \rangle_{\cH^{-1}_1  \times \cH^1_1 } = 
\langle  u ,     {\bf d} v  \rangle_{ \cH^1_1  \times \cH^{-1}_1}  
\end{equation}
and that this is equal to the right-hand-side of \eqref{dual3} when $v=u$.\\
We also need another extension of \eqref{dual1}. 
\begin{lem}
\label{lem83} 
For  $ u \in W $  and $p \in L^2_2$ with ${\bf l} \,p \in \cH^{-1}_1$, one has 
\begin{equation}
\label{dual56}
\int_{\RR_+}   h_0^2 p\,  \D_x u    =  - \langle u, {\bf l}\, p \rangle_{ \cH^1_1  \times \cH^{-1}_1}. 
\end{equation} 
\end{lem}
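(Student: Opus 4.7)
The plan is to establish the identity first for smooth compactly supported $u$, where distributional integration by parts is transparent, and then pass to the limit using the density of $C^\infty_0(\RR_+)$ in $W$ granted by Lemma~\ref{lem82}.

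I would begin by verifying that both sides of \eqref{dual56} are well defined. Since $p\in L^2_2$ and $u\in W$, the product $h_0^2 p\,\D_x u = (h_0 p)(h_0 \D_x u)$ lies in $L^1(\RR_+)$ by Cauchy--Schwarz, so the left-hand side is meaningful. For the right-hand side, I would note that $W$ embeds continuously into $\cH^1_1$: since $h_0\in L^\infty$, the condition $h_0\D_x u\in L^2$ implies $h_0\D_x u\in L^2_1$ with the corresponding norm control. Hence $u\in W\subset \cH^1_1$ may legitimately be paired against ${\bf l}\,p\in\cH^{-1}_1$.

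Lemma~\ref{lem82} then provides $u_n\in C^\infty_0(\RR_+)$ with $u_n\to u$ in $W$, and both sides of \eqref{dual56} evaluated at $u_n$ converge to the corresponding quantities for $u$: the left-hand side because $h_0 p\in L^2$ and $h_0\D_x u_n\to h_0\D_x u$ in $L^2$, the right-hand side by continuity of the duality pairing combined with the embedding $W\hookrightarrow\cH^1_1$. It thus suffices to establish \eqref{dual56} when $u\in C^\infty_0(\RR_+)$.

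For such a $u$, supported in a compact subset of $(0,+\infty)$ on which $h_0$ is bounded away from zero, the function $h_0^2 p$ belongs to $L^1_{loc}$ there and its distributional derivative computes, via the product rule, as $\D_x(h_0^2 p)=h_0(h_0\D_x p+2 h_0' p)=h_0\,{\bf l}\,p$. The definition of the distributional derivative gives
$$\int_{\RR_+} h_0^2 p\,\D_x u = -\langle u, h_0\,{\bf l}\,p\rangle_{C^\infty_0\times H^{-1}},$$
and formula~\eqref{dual1c} in Lemma~\ref{lem82} (applied with ${\bf g}={\bf l}\,p$) identifies the right-hand side with $-\langle u,{\bf l}\,p\rangle_{\cH^1_1\times \cH^{-1}_1}$, concluding the proof. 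The only genuinely technical point in this scheme is the compatibility between the standard distributional pairing and the $\cH^1_1\times\cH^{-1}_1$ duality, which is precisely what \eqref{dual1c} provides; everything else is bookkeeping around density and continuity of the pairings.
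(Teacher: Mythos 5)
Your proof is correct and follows essentially the same route as the paper's: reduce to $u\in C^\infty_0(\RR_+)$ by the density and continuity of both sides granted by Lemma~\ref{lem82}, then identify the pairing via \eqref{dual1c} and integrate by parts. The only cosmetic difference is that you compute the distributional derivative $\D_x(h_0^2p)=h_0\,{\bf l}\,p$ whereas the paper computes the adjoint ${\bf l}^*(h_0u)=-h_0^2\D_x u$; these are the same calculation read from opposite sides.
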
 
\begin{proof} 
If  $p \in L^2_2$ with ${\bf l} \,p \in \cH^{-1}_1$, both terms are defined and continuous  for $u \in W$. Thus it is sufficient 
to prove the equality for $u \in C^\infty_0$ in which case, by \eqref{dual1c},
$$
\langle u, {\bf l}\, p \rangle_{ \cH^1_1  \times \cH^{-1}_1} = 
\langle u,   h_0 {\bf l}\, p \rangle_{ C^\infty_0  \times \cH^{-1}} =  \int p \,  {\bf l}^* ( h_0 u) dx = 
- \int p  h_0^2 \D_x u dx 
$$
because $ {\bf l}^* ( h_0 u) = - \D_x ( h_0^2 u) + 2  h_0 '  h_0 u = -  h_0^2 \D_x u$. Thus the equality \eqref{dual56} is true
when $u \in C^\infty_0$, and thus by density for all $u \in W$. 
\end{proof} 


\subsection{Energy estimates at low regularity}\label{sectestLR}

A key step in the construction of solutions to \eqref{Lmodel} is to use the energy estimate \eqref{estEEintro} when $U=(p,u)$ has a very limited regularity. We therefore discuss here   the question  to know for which $(p, u)$ the  computations of \S \ref{sectEEsmooth}  are valid, using the duality formulas established in the previous section.

To state the energy estimate in short notations, we recall that $\VV = L^2_2 \times \cH^1_1$; this  is the natural energy space for 
$(p, u)$.   We identify its dual with  
$\VV' = L^2_2 \times \cH^{-1}_1$ through the duality 
\begin{equation}
\langle U, \Phi \rangle_{\VV' \times \VV} = (p, \phi)_{L^2_2} + \langle    u ,      \psi ,\rangle_{\cH^{-1}_1  \times \cH^1_1 },
\end{equation}
for $U = (p, u) \in \VV'$ and $\Phi = (\phi, \psi) \in \VV$.  We also introduce the spaces
\begin{equation}
\VV_T = L^2 ([0, T] ; \VV), \qquad \VV'_T = L^2 ([0, T] ; \VV'), 
\end{equation}
with the obvious duality. To justify the integrations by parts, we use a smaller space,
\begin{equation}\label{defWT}
\WW_T = \left\{  \begin{aligned} U = (p, u) &   \in \VV_T, \   \D_t U \in \VV_T, 
\\
&   h_0 \D_x u \in L^2 ([0, T], L^2), \ {\bf l } \, p \in L^2 ([0, T],  \cH_1^{-1})
\end{aligned} \right\} . 
\end{equation} 
One reason to introduce  this space is that the operator $\cL$ defined in \eqref{defLbis}
maps $\WW_T$ to $\VV'_T$. The other reason is that for 
$U \in \WW_T$ the integrations by parts  used to derive the identity \eqref{idener} 
are justified, thanks to Lemma~\ref{lem83} and \eqref{815}. 
Indeed, the energy $E(t) $ defined in \eqref{energ1} is well defined, satisfies
$E(t) \approx \| U(t) \|_\VV^2$ and 
$$
\frac{d}{dt}  E (t) =   \int_{\RR_+}  c'   h_0^2 p^2  dx   + 2  
\big\langle   \cL U ,  U \big\rangle_{\VV' \times \VV} .
$$
This implies the following result. 
\begin{prop}
\label{propEE}
Suppose that the coefficients are Lipschitz and  Assumption~$\ref{ass81}$ is satisfied. Then, 
the  space $\WW_T$ is contained in $C^0 ([0, T]; \VV)$, the operator 
$\cL$ maps $\WW_T $ to $\VV'_T$ and for $U \in \WW_T$
\begin{equation}
\label{estEE} 
\| U(t) \|_{\VV} \lesssim \| U(t) \|_{\VV}  + \int _0^t \|\cL U(t' ) \|_{\VV' } \, dt'. 
\end{equation} 
\end{prop}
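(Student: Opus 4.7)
The plan is to carry out the three assertions in order, with the bulk of the work going into justifying the energy identity \eqref{idener} at the reduced regularity of $\WW_T$.

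First, since $U \in \WW_T$ means that $U$ and $\D_t U$ both belong to $\VV_T = L^2([0,T];\VV)$, one has $U \in H^1([0,T];\VV)$ and therefore $U \in C^0([0,T];\VV)$ by the standard Bochner-space embedding (fundamental theorem of calculus for Hilbert-valued functions). In particular, $U(0)$ and $U(t)$ are well-defined elements of $\VV$.

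Second, the mapping property $\cL : \WW_T \to \VV'_T$ is checked componentwise. In the first component, I would expand $c_1 \D_t(c_2 p) = c_1 c_2 \D_t p + c_1 (\D_t c_2) p$, which lies in $L^2_T L^2_2$ by the Lipschitz bounds on the coefficients (together with Assumption \ref{ass81}) and the fact that $p, \D_t p \in L^2_T L^2_2$; the term $\D_x u$ lies in $L^2_T L^2_2$ precisely because $L^2_2 = h_0^{-1} L^2$ and $h_0 \D_x u \in L^2_T L^2$ is one of the conditions defining $\WW_T$. In the second component, ${\bf l} p$ is explicitly required to lie in $L^2_T \cH^{-1}_1$, while $\D_t u \in L^2_T \cH^1_1$ combined with the Lipschitz coefficients places ${\bf d} \D_t u$ in $L^2_T \cH^{-1}_1$ through the boundedness ${\bf d} : \cH^1_1 \to \cH^{-1}_1$ noted before \eqref{815}.

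Third, for the energy estimate itself I would reproduce the computation of \S\ref{sectEEsmooth}, interpreting each formal integration by parts as an application of the duality framework of \S\ref{sectduality}. Testing the first equation of \eqref{Lmodel} against $h_0^2 p$ and the second against $h_0 u$, the cross term $\int h_0^2 p\, \D_x u\, dx$ is handled by Lemma \ref{lem83}, whose hypotheses $p \in L^2_2$ with ${\bf l} p \in \cH^{-1}_1$ and $u \in W$ (i.e.\ $h_0 \D_x u \in L^2$) are exactly the conditions encoded in $\WW_T$ at almost every time $t$. The pairing $\langle {\bf d} \D_t u, u\rangle_{\cH^{-1}_1 \times \cH^1_1}$ is expanded via \eqref{815} and the explicit form \eqref{dd} of ${\bf d}$, yielding the quadratic part of $E$ in $u$ and $h_0 \D_x u$. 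Combined with the time derivative of the $p$-contribution, this produces the identity \eqref{idener} in the sense of distributions on $(0,T)$. The equivalence $E(t) \approx \| U(t) \|_\VV^2$ follows from the algebraic lower bound $b_0 X^2 - 2 b_1 XY + b_2 Y^2 \ge \tfrac{1}{3} \uq^2 X^2$ derived just before the statement and from the positivity of $\tilde c$ and $\uq$ in Assumption \ref{ass81}. A standard Gronwall argument then delivers \eqref{estEE}.

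The main technical obstacle is that the two extra conditions $h_0 \D_x u \in L^2_T L^2$ and ${\bf l} p \in L^2_T \cH^{-1}_1$ in the definition of $\WW_T$, beyond $U, \D_t U \in \VV_T$, are not redundant: they are precisely what make Lemma \ref{lem83} applicable pointwise in time. Without them the cross term $\int h_0^2 p\, \D_x u\, dx$ cannot even be assigned a meaning by integration by parts, since $u \in \cH^1_1$ only controls $\sqrt{h_0}\, h_0 \D_x u$ in $L^2$ rather than $h_0 \D_x u$ itself, and since ${\bf l} p$ is a priori only a distribution when $p \in L^2_2$. The space $\WW_T$ is thus calibrated to be the smallest space in which the formal computation of \S\ref{sectEEsmooth} survives verbatim through the duality framework of \S\ref{sectduality}; verifying compatibility of this regularity with Lemma \ref{lem83} and \eqref{815} is the only nontrivial content of the argument.
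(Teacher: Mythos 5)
Your proposal is correct and takes essentially the same route as the paper: the paper's own justification is precisely that $\WW_T$ is calibrated so that Lemma \ref{lem83} and \eqref{815} legitimize the integrations by parts of \S\ref{sectEEsmooth}, giving the energy identity \eqref{idener} with $E(t)\approx \|U(t)\|_{\VV}^2$ and hence \eqref{estEE} by Gronwall. Your extra details (the embedding $H^1([0,T];\VV)\hookrightarrow C^0([0,T];\VV)$ for the continuity claim, and the componentwise verification that $\cL$ maps $\WW_T$ to $\VV'_T$) simply make explicit what the paper leaves implicit.
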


\subsection{The dual problem and  weak solutions }\label{sectweak}

Since ${\bf d} $ is symmetric (actually, self adjoint), the dual problem of \eqref{Lmodel}  is  
 \begin{equation}
 \label{dualpb}
 \begin{cases}
 c_2 \D_t (c_1 \phi )  +   \D_x \psi  =  \tilde f , \\
  \D_t {\bf d}  \psi  +  {\bf l }   \phi  =  \tilde g  .
  \end{cases} 
  \end{equation}
  This  system is similar to  \eqref{Lmodel}, but of course the dual problem of the forward Cauchy problem 
  for \eqref{Lmodel} is the backward Cauchy problem for \eqref{dualpb}. 
Parallel to \eqref{defL} introduce 
\begin{equation}
\label{defLp}
\cL'  \begin{pmatrix} \phi \\ \psi  \end{pmatrix} =  \begin{pmatrix}  c_2 \D_t  (c_1 \phi)  +   \D_x \psi  \\ 
\D_t {\bf d} \psi    + {\bf l }   \end{pmatrix}. 
\end{equation}
Then,  for smooth functions, and writing $\Phi=(\phi,\psi)^T$,
\begin{equation}
\label{IPP2} 
\big(  \cL U, \Phi \big)_{ \HH_T   } + \big(   U,  \cL'  \Phi \big) _{\HH_T}  =  \big( U (T), \Gamma \Phi(T) \big)_{\HH} 
- \big( U (0), \Gamma \Phi(0) \big)_{\HH} 
\end{equation}
where $\HH_T = L^2 ([0, T]; \HH) $ with $\HH = L^2_2 \times L^2_1$    and
\begin{equation}
 \Gamma  \begin{pmatrix} c_1c_2\phi \\ \psi  \end{pmatrix} =  \begin{pmatrix}    \phi  \\ 
 {\bf d} \psi        \end{pmatrix}. 
\end{equation} 
 With the notations introduced in the previous section, $\cL'$ also acts from $\WW_T$ to $\VV'_T$, and using again Lemma~\ref{lem83}, the identity \eqref{IPP2} extends to functions $U$ and $\Phi$ in $\WW_T$ as
\begin{equation}
\label{IPP3} 
\begin{aligned}
\big\langle  \cL U ,  \Phi \big\rangle_{ \VV'_T \times \VV_T   }&  + \big\langle   U,  \cL'  \Phi \big\rangle _{\VV_T\times \VV'_T}   
\\
& =  \big\langle  U(T) ,  \Gamma \Phi(T) \big\rangle_{\VV \times \VV'} 
-  \big\langle  U(0) ,  \Gamma \Phi(0) \big\rangle_{\VV \times \VV'} . 
\end{aligned}
\end{equation} 
This motivates the following definition. 
\begin{defi}
Given $F = (f,{\bf g}) \in \VV'_T$ and $U^0 = (p^0, u^0) \in \VV$,  $U = (p, u) \in \VV_T$ is a weak solution of \eqref{Lmodel}
if for all smooth $\Phi = (\phi, \psi)$  which vanishes at $t = T$, one has 
\begin{equation}
\label{weak} 
\big\langle  F,  \Phi \big\rangle_{ \VV'_T \times \VV_T   } + \big\langle   U,  \cL'  \Phi \big\rangle _{\VV_T\times \VV'_T}  +  
 \big\langle  U^0 ,  \Gamma \Phi(0) \big\rangle_{\VV \times \VV'} = 0 . 
\end{equation}
\end{defi}

The Proposition~\ref{propEE} can be applied to $\cL'$ as well, and to the backward Cauchy problem  
since the structure of the equation is preserved when one changes $t$ to $- t$. 
Assuming  that the coefficients are Lipschitz and  Assumption~$\ref{ass81}$ is satisfied, this implies that 
for  smooth $\Phi = (\phi, \psi)$  one has for $t \in [0, T]$: 
\begin{equation}
\| \Phi(t) \|_{ \VV} \lesssim  \| \Phi(T) \|_{ \VV}  \ + \ \int_t^T  \| \cL' \Phi(t') \|_{ \VV'} dt'. 
\end{equation}  
In particular,   for smooth test functions $\Phi$ such that $\Phi (T) = 0$, one has
\begin{equation}
\nonumber 
\| \Phi(0 ) \|_{ \VV} +   \| \Phi  \|_{ \VV_T }  \lesssim    \| \cL' \Phi \|_{ \VV'_T}  . 
\end{equation}
Moreover
$$
\| \Gamma  \Phi(0 ) \|_{ \VV'} \lesssim \| \Phi(0 ) \|_{ \VV} \lesssim \| \cL' \Phi \|_{ \VV'_T} . 
$$
Consider the map  $\Phi \mapsto  \cL' \Phi$ defined on the space of smooth functions such that $\Phi(0) = 0$. 
 The estimates above imply that  it is invertible on its range  $\cR \subset \VV'_T$. Denote by  $\cL'^{-1} $ its inverse defined on  $\cR$. Then,   for $F \in \VV'_T$ and $U_0 \in \VV$ the linear form 
\begin{equation*}
\Psi \mapsto  \big\langle  F,  \cL'^{-1} \Psi   \big\rangle_{ \VV'_T \times \VV_T   } +  
 \big\langle  U^0 ,  \Gamma \cL'^{-1} \Psi ( 0) \big\rangle_{\VV \times \VV'}   
\end{equation*}
is continuous for the norm $\| \Psi \|_{\VV'_T}$. Therefore it can be written 
$-   \big\langle   U, \Psi \big\rangle _{\VV_T\times \VV'_T} $, and  $U$ satisfies \eqref{weak}.  
Therefore, we have proved the following result.

\begin{prop}
If  the coefficients are Lipschitz and  Assumption~$\ref{ass81}$ is satisfied, then for all $F \in \VV'_T$ and  $U^0 \in \VV$, the Cauchy problem \eqref{Lmodel} has a weak solution in $\VV_T$. 
\end{prop}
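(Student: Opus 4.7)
The plan is to construct the weak solution by the classical duality method: invert the adjoint operator $\cL'$ on its range and obtain $U$ by the Riesz representation theorem applied to a suitable continuous linear functional. The starting point is the backward Cauchy problem for \eqref{dualpb}, whose structure is identical to \eqref{Lmodel} up to the change $t \mapsto T-t$, so that Proposition~\ref{propEE} applies verbatim after this reversal.

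First I would observe that, under the Lipschitz and positivity assumptions, the energy estimate \eqref{estEE} transferred to $\cL'$ and to the backward problem yields, for every smooth $\Phi=(\phi,\psi)$ satisfying $\Phi(T)=0$, the bound
\begin{equation*}
\| \Phi(0)\|_{\VV} + \|\Phi\|_{\VV_T} \ \lesssim \ \|\cL'\Phi\|_{\VV'_T}.
\end{equation*}
Using that the mapping $v \mapsto {\bf d} v$ is continuous from $\cH^1_1$ to $\cH^{-1}_1$ (via \eqref{815}), one then gets $\|\Gamma\Phi(0)\|_{\VV'}\lesssim \|\cL'\Phi\|_{\VV'_T}$ as well. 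Consequently the linear map $\Phi\mapsto \cL'\Phi$, restricted to smooth functions with $\Phi(T)=0$, is injective on its range $\mathcal R\subset\VV'_T$, and its formal inverse $\cL'^{-1}:\mathcal R\to \VV_T$ is bounded, together with the trace map $\Psi\mapsto \Gamma(\cL'^{-1}\Psi)(0)\in\VV'$.

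Next, given $F\in\VV'_T$ and $U^0\in\VV$, I would consider the linear functional defined on $\mathcal R$ by
\begin{equation*}
\ell(\Psi) \ = \ \langle F,\cL'^{-1}\Psi\rangle_{\VV'_T\times\VV_T} + \langle U^0,\Gamma\cL'^{-1}\Psi(0)\rangle_{\VV\times\VV'},
\end{equation*}
which is continuous on $\mathcal R$ with the norm of $\VV'_T$ by the two estimates just mentioned. By Hahn--Banach, extend $\ell$ to a continuous linear form on the whole of $\VV'_T$, and then by the Riesz representation theorem (identifying the dual of $\VV'_T$ with $\VV_T$ through the pairing $\langle\cdot,\cdot\rangle_{\VV_T\times \VV'_T}$) there exists $U\in\VV_T$ such that $\ell(\Psi)=-\langle U,\Psi\rangle_{\VV_T\times\VV'_T}$ for every $\Psi\in\VV'_T$. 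Specializing to $\Psi=\cL'\Phi$ for smooth $\Phi$ with $\Phi(T)=0$ gives precisely the weak formulation \eqref{weak}, hence $U$ is the desired weak solution.

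The delicate point, and what I expect to be the main obstacle, is to make sure that the integration by parts identity \eqref{IPP2}--\eqref{IPP3} used to identify $\cL'$ as the formal adjoint is valid in the low-regularity setting that allows $\Phi$ to be only as regular as the estimates provide. This requires the duality formulas from \S\ref{sectduality}, and in particular Lemma~\ref{lem83}, to handle the cross term $\int h_0^2 p\,\D_x u$ when $p$ and $u$ are only in $\VV$. Once this is in place, the argument above is a standard application of the Lax--Milgram/duality scheme for ill-posed evolution problems, but the symmetric treatment of the weights $h_0^2$ and $h_0$ and the fact that ${\bf d}$ is self-adjoint in the weighted pairing are used in an essential way to ensure that $\cL'$ has the same functional-analytic structure as $\cL$.
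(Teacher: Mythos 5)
Your proposal is correct and follows essentially the same route as the paper: time-reversed energy estimates for the adjoint operator $\cL'$ from Proposition~\ref{propEE}, boundedness of $\cL'^{-1}$ and of the trace $\Gamma\Phi(0)$ on the range $\cR$, and representation of the resulting continuous linear functional by an element $U\in\VV_T$ (the paper leaves the Hahn--Banach/Riesz step implicit where you spell it out). Your identification of the duality formulas of \S\ref{sectduality}, in particular Lemma~\ref{lem83}, as the delicate point needed to justify \eqref{IPP2}--\eqref{IPP3} also matches the paper's treatment.
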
 


\subsection{Strong solutions} \label{sectstrong}

Consider  first the case where the  initial data $U^0$  identically vanishes.  Let  $F    \in \VV'_T$
and let $U$ be a weak solution.  Extend the coefficients for negative times and  extend $U$ by $0$
to obtain a weak solution, still denoted by $U$,  on 
$]- \infty,T ] \times \RR_+ $, which vanishes for $t <0$.    Of course, $U$ satisfies the equations in the sense of distributions. 
We show that $U$ is indeed a strong solution, that is, a limit of solutions in $\WW_T$, and thus satisfies the energy estimate
and hence is unique. 
 \begin{prop}
 \label{w=s}
For  $F  \in \VV'_T$, the Cauchy problem for \eqref{Lmodel} with 
initial data $U^0 = 0$, has a unique weak solution $U$. Moreover, 
$U\in C^0 ([0, T], \VV)$ and satisfies the energy estimates \eqref{estEE} and is a limit in 
$  C^0 ([0, T], \VV)$ of a sequence 
$U_\eps \in \WW_T$ such that  $U_\eps (0) = 0$ and 
$\cL U_\eps \to F$ in $\VV'_T$. 
 \end{prop}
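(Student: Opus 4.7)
The plan is to apply the classical Friedrichs mollifier technique in the time variable: since $U$ lies only in $\VV_T = L^2([0,T];\VV)$ whereas Proposition~\ref{propEE} requires the smaller space $\WW_T$, I will regularize $U$ in $t$ to produce approximations $U_\eps \in \WW_T$ converging to $U$ in the energy norm. To begin, I extend $U$ by zero to $t < 0$ (consistent with $U^0 = 0$) and extend the coefficients $c_i,b_j$ smoothly to $(-\infty,T]$ preserving Lipschitz continuity and Assumption~\ref{ass81}, so that $\cL U = \tilde F$ holds on $(-\infty,T]\times\RR_+$ in the distributional sense, with $\tilde F$ the zero-extension of $F$. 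I then pick $\rho \in C^\infty_c((0,1))$ with $\int\rho=1$, set $\rho_\eps(t)=\eps^{-1}\rho(t/\eps)$, and define $U_\eps := \rho_\eps *_t U$. The support condition on $\rho$ forces $U_\eps$ to vanish on $\{t \le 0\}$, in particular $U_\eps(0)=0$, while standard mollifier theory gives $U_\eps \to U$ in $\VV_T$.

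Applying $\rho_\eps *$ to $\cL U = \tilde F$ and using that convolution in $t$ commutes with every purely spatial operation (in particular $\D_x$, $h_0\D_x$, and ${\bf l}$) produces
$$
\cL U_\eps = F_\eps + R_\eps, \qquad F_\eps := \rho_\eps * \tilde F, \qquad R_\eps := [\cL,\rho_\eps *]U,
$$
where $R_\eps$ is a sum of Friedrichs-type commutators $[\rho_\eps *, a]\phi$ and $[\rho_\eps *, a\D_t]\phi$ with $a \in \{c_1,c_2,b_0,b_1,b_2\}$ Lipschitz in $t$ and $\phi$ a component of $U$ or an already-controlled spatial derivative. The algebraic identity $[\rho_\eps *, a\D_t]\phi = \D_t([\rho_\eps *, a]\phi) - [\rho_\eps *, \D_t a]\phi$, together with Friedrichs' commutator lemma, then gives $R_\eps \to 0$ in $\VV'_T$, and $F_\eps \to F$ in $\VV'_T$ by the usual mollifier theory.

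The crux is verifying $U_\eps \in \WW_T$. Membership in $\VV_T$ and boundedness of $\D_t U_\eps = \rho'_\eps * U$ in $\VV_T$ are immediate for each fixed $\eps$; moreover, ${\bf l}p_\eps \in L^2_T\cH^{-1}_1$ is automatic from \eqref{defHm1} since $p_\eps \in L^2_T L^2_1$ (take $g_0=0$, $g_1=p_\eps$). The delicate requirement $h_0\D_x u_\eps \in L^2_T L^2$, which is strictly stronger than $u_\eps \in L^2_T\cH^1_1$, cannot come from mollification alone and must be extracted from the equation itself: the first line of $\cL U_\eps = F_\eps + R_\eps$ gives
$$\D_x u_\eps = f_\eps + (R_\eps)_1 - c_1\D_t(c_2 p_\eps),$$
whose right-hand side lies in $L^2_T L^2_2$ (since $f_\eps$ and $(R_\eps)_1$ are in $L^2_T L^2_2$ and $\D_t(c_2 p_\eps) \in L^\infty_T L^2_2$ for fixed $\eps$), and the identity $\Vert h_0\D_x u_\eps\Vert_{L^2}^2 = \Vert\D_x u_\eps\Vert_{L^2_2}^2$ closes the argument.

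With $U_\eps \in \WW_T$ and $U_\eps(0)=0$, Proposition~\ref{propEE} applied to the differences $U_\eps - U_{\eps'}$ shows that $(U_\eps)_\eps$ is Cauchy in $C^0([0,T];\VV)$; its limit must coincide with $U$ (the unique $\VV_T$-limit), so $U \in C^0([0,T];\VV)$ and \eqref{estEE} for $U$ follows by passing to the limit, simultaneously exhibiting $U$ as a strong solution in the sense of the proposition. Uniqueness is then routine: the difference of two weak solutions with identical data is a weak solution with zero data, hence satisfies the energy estimate and vanishes identically. The principal technical obstacle lies in the commutator coming from ${\bf d}$, whose Lipschitz-in-time coefficients multiply $h_0\D_x\D_t u$ and thus couple a time derivative with a conormal one; the identity above transfers the $\D_t$ loss onto the bounded function $\D_t a$, while the leftover $\D_t[\rho_\eps *, a]$ contribution must be paired against $\cH^1_1$ test functions via the duality \eqref{84} in order to extract the required $o(1)$ decay in $\cH^{-1}_1$—this is precisely where the weighted structure of the energy space enters in an essential way.
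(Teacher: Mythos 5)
Your route is the same as the paper's: extend by zero to $t<0$, mollify in time with a causal kernel (the paper uses the one-sided exponential $J_\eps u(t)=\eps^{-1}\int_{-\infty}^t e^{(s-t)/\eps}u(s)\,ds$ rather than a compactly supported $\rho_\eps$, but the algebra is interchangeable), show the commutators vanish in $\VV'_T$, verify $U_\eps\in\WW_T$, and run the energy estimate on $U_\eps-U_{\eps'}$. The commutator analysis is sound: rewriting $[\rho_\eps *, b\,h_0\D_x\D_t]u$ as $[\rho_\eps *, b\,\D_t](h_0\D_x u)$ with $h_0\D_x u\in L^2_T L^2_1$, and placing the result under an outer ${\bf l}$ so that it lands in $\cH^{-1}_1$, is exactly what the paper does.

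There is, however, one step that fails as written: the claim that ${\bf l}\,p_\eps\in L^2_T\cH^{-1}_1$ is ``automatic from \eqref{defHm1} (take $g_0=0$, $g_1=p_\eps$)''. That representation requires $p_\eps\in L^2_T L^2_1$, but the energy space only gives $p_\eps\in L^2_T L^2_2$, and since $h_0\to 0$ at the shoreline one has $L^2_1\subsetneq L^2_2$, not the reverse; so $p_\eps$ need not lie in $L^2_1$ and the membership is not automatic. This matters because ${\bf l}\,p\in L^2_T\cH^{-1}_1$ is precisely the condition in \eqref{defWT} that legitimizes the duality pairing of Lemma~\ref{lem83}, i.e.\ the integration by parts in the energy identity. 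The fix is the same device you already use for $h_0\D_x u_\eps$: read it off from the mollified second equation, ${\bf l}\,p_\eps={\bf g}_\eps+(R_\eps)_2-{\bf d}\,\D_t u_\eps$, where ${\bf g}_\eps+(R_\eps)_2\in L^2_T\cH^{-1}_1$ and ${\bf d}$ maps $\cH^1_1$ into $\cH^{-1}_1$ while $\D_t u_\eps=\rho'_\eps * u\in L^2_T\cH^1_1$ for fixed $\eps$. With that correction the argument closes as in the paper.
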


To prove this result, we first introduce mollifiers  and commute the equations with them. 
To prepare for the next section, it is convenient  to use the following smoothing operators,
\begin{equation}\label{molli}
 J_\eps u (t) = \eps^{-1}  \int_{-\infty}^t e^{ (s-t) / \eps  } u(s) ds = 
  \eps^{-1}  \int_0^\infty e^{  - s  / \eps  }   u(t - s) ds
\end{equation}
 The following lemma is elementary
 and the proof is omitted. 
 \begin{lem}
{\bf i.}  The operators $J_\eps$ and $\eps \D_t J_\eps$  are uniformly bounded in $L^2 (]-\infty , T])$  and 
for all $u \in L^2 (]- \infty, T]) $, 
$\| J_\eps u  - u \|_{L^2 ([0, T])} \to 0 $.  \\
 Moreover for all $u \in L^2 (] - \infty, T]) $, $J_\eps u \in H^1(] - \infty,  T])$   and 
$\eps \D_t J_\eps u =  u - J_\eps u$. \\
{\bf ii.} One has the commutation property 
\begin{equation}
\label{comm1b}
 [ J_\eps , c ] = \eps J_\eps (\D_t c)  J_\eps. 
\end{equation} 
 {\bf iii.}  If $u \in H^1([0, T])$,  then 
 \begin{equation}
  \D_t J_\eps u = J_\eps \D_t u. 
 \end{equation}
  \end{lem}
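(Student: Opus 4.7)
The plan is to treat each of the three assertions by direct calculation using the convolution representation
$$J_\eps u(t) = (K_\eps * u)(t), \qquad K_\eps(\tau) = \eps^{-1} e^{-\tau/\eps}\,\mathbf{1}_{\tau>0},$$
noting that $\|K_\eps\|_{L^1(\RR)} = 1$ independently of $\eps$. All three parts are one-line manipulations in disguise, and the announcement in the paper that ``the proof is elementary and the proof is omitted'' reflects this; I nevertheless outline the logical order.

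For part (i), Young's inequality applied to the kernel $K_\eps$ yields at once the uniform bound $\|J_\eps u\|_{L^2((-\infty,T])} \le \|u\|_{L^2((-\infty,T])}$. To establish the pointwise identity $\eps \D_t J_\eps u = u - J_\eps u$, I rewrite
$$J_\eps u(t) = \eps^{-1} e^{-t/\eps}\int_{-\infty}^t e^{s/\eps} u(s)\,ds$$
and differentiate under the integral sign: the factor $e^{-t/\eps}$ produces the contribution $-\eps^{-1} J_\eps u$, while differentiation of the upper limit produces $\eps^{-1} u(t)$. This identity simultaneously shows that $J_\eps u \in H^1(]-\infty,T])$ and that $\eps \D_t J_\eps$ is uniformly bounded on $L^2$, since it equals $\mathrm{Id} - J_\eps$. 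Finally, the convergence $J_\eps u \to u$ in $L^2([0,T])$ follows first for $u \in C^\infty_c$ from the identity $J_\eps u(t) - u(t) = \int_0^\infty \eps^{-1} e^{-s/\eps}(u(t-s)-u(t))\,ds$ by dominated convergence, and then extends to all $u \in L^2$ by the uniform bound and density.

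For part (ii), the commutator has the explicit integral representation
$$[J_\eps, c]u(t) = \eps^{-1}\int_{-\infty}^t e^{(s-t)/\eps}(c(s) - c(t))u(s)\,ds.$$
I substitute $c(s) - c(t) = -\int_s^t \D_r c(r)\,dr$, apply Fubini to interchange the $s$- and $r$-integrations over the triangular region $\{-\infty < s < r < t\}$, and then factorise $e^{(s-t)/\eps} = e^{(r-t)/\eps} e^{(s-r)/\eps}$. The inner $s$-integral is precisely $\eps J_\eps u(r)$, and the remaining outer $r$-integral is then $\eps J_\eps\big((\D_t c) J_\eps u\big)(t)$ (up to a sign determined by the convention chosen for writing $c(s)-c(t)$), which is the announced commutator formula. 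Part (iii) is nothing but translation invariance of the convolution with the fixed kernel $K_\eps$, provided that the integration-by-parts boundary term generated when pushing $\D_t$ past the integral vanishes; since in the application $u$ is extended by zero across $t=0$ (consistently with $U(0)=0$ in Proposition~\ref{w=s}), one has $u(0)=0$ and no boundary term arises.

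There is no genuine obstacle in this lemma; the only point requiring care is the endpoint behaviour at $t=0$ in part (iii), which is harmless because the lemma will be invoked in the strong-equals-weak argument of Section~\ref{sectstrong}, where the relevant function has already been extended by zero to negative times.
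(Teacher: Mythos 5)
Your proof is correct and is precisely the elementary computation the authors had in mind (the paper omits the proof of this lemma entirely). One point you should not leave hanging: in part (ii) you hedge "up to a sign", but the computation you describe is unambiguous. Since $c(s)-c(t)=-\int_s^t \D_r c(r)\,dr$ for $s\le t$, Fubini and the factorisation $e^{(s-t)/\eps}=e^{(r-t)/\eps}e^{(s-r)/\eps}$ give $[J_\eps,c]u=-\eps J_\eps\big((\D_t c)\,J_\eps u\big)$, i.e.\ the opposite sign to the one displayed in \eqref{comm1b}; the sign in the paper's statement is evidently a typo, and it is immaterial for the only way the formula is used later (Lemma \ref{lemFried1} and the weak-equals-strong argument need only the structure $\eps J_\eps(\cdot)J_\eps$ and the uniform $L^2$ bounds on $J_\eps$ and $\eps\D_t J_\eps$). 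Your remark on part (iii) is also apt: for $u\in H^1(]-\infty,T])$ one simply differentiates under the integral in the form $J_\eps u(t)=\eps^{-1}\int_0^\infty e^{-s/\eps}u(t-s)\,ds$, and the extension by zero of an $H^1([0,T])$ function is admissible exactly when $u(0)=0$, which holds in the application.
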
 
 We can now give the structure of the mollification of the first term of the linear problem \eqref{Lmodel}.
 \begin{lem}
 \label{lemFried1} 
 The following holds
 $$
 J_\eps\big(   c_1 \D_t ( c_2 p)\big)   =  c_1    \D_t ( c_2 J_\eps p)   +  R_\eps J_\eps p 
 $$
 where $R_\eps$ is  bounded as a mapping $L^2((-\infty,T])\to H^1((-\infty,T])$. 
 \end{lem}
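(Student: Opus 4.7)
My approach to Lemma~\ref{lemFried1} is a direct algebraic manipulation using the commutator identity \eqref{comm1b}, namely $[J_\eps, c]u = \eps J_\eps((\D_t c)J_\eps u)$, combined with the two basic facts from the preceding lemma: $\D_t J_\eps = J_\eps \D_t$ (valid on distributions in time vanishing for $t$ very negative) and $\eps \D_t J_\eps = I - J_\eps$. First I would split
\begin{equation*}
J_\eps(c_1 \D_t(c_2 p)) = c_1 J_\eps \D_t(c_2 p) + [J_\eps,c_1]\D_t(c_2 p) = c_1 \D_t J_\eps(c_2 p) + [J_\eps,c_1]\D_t(c_2 p),
\end{equation*}
and then expand $J_\eps(c_2 p) = c_2 J_\eps p + [J_\eps,c_2]p$ in the first term to obtain
\begin{equation*}
J_\eps(c_1 \D_t(c_2 p)) - c_1 \D_t(c_2 J_\eps p) \ = \ c_1\, \D_t [J_\eps,c_2]p + [J_\eps,c_1]\D_t(c_2 p),
\end{equation*}
which we declare to be $R_\eps J_\eps p$; the point is that the right-hand side will only involve $w:=J_\eps p$ after one more unwinding.

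The second step is to unwind each commutator using \eqref{comm1b}. The first piece becomes
$c_1 \D_t[J_\eps,c_2]p = \eps c_1 \D_t J_\eps((\D_t c_2) w) = c_1 (I-J_\eps)((\D_t c_2)w)$,
using $\eps \D_t J_\eps = I - J_\eps$. For the second piece I would write
$[J_\eps,c_1]\D_t(c_2 p) = \eps J_\eps((\D_t c_1)\,\D_t J_\eps(c_2 p))$ and again substitute the expansion of $J_\eps(c_2 p)$ so that $\D_t J_\eps(c_2 p)$ is expressed in terms of $w$, $\D_t w$, and $J_\eps$-smoothed quantities only; using $\eps\D_t w = p - w$ one can then trade every remaining $\D_t w$ for a $(I-J_\eps)w$ factor. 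The outcome is an explicit formula for $R_\eps w$ as a finite sum of terms of the form $\alpha\, J_\eps(\beta w)$ and $\alpha (I-J_\eps)(\beta w)$, where $\alpha,\beta$ are finite products of $c_1, c_2$ and their first two time derivatives.

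For the final boundedness $L^2 \to H^1$, the key observation is that for each fixed $\eps > 0$ the operator $J_\eps$ is smoothing of order one in time: $\D_t J_\eps \phi = \eps^{-1}(\phi - J_\eps \phi)\in L^2$ for every $\phi \in L^2$, hence $J_\eps\colon L^2 \to H^1$ continuously, and under the standing Lipschitz hypothesis on the coefficients, multiplication by $c_1, c_2, \D_t c_1, \D_t c_2$ preserves both $L^2$ and $H^1$. The main obstacle is the term $c_1(I - J_\eps)((\D_t c_2)w)$, which is only naively in $L^2$; to promote it to $H^1$ I would rewrite $(I-J_\eps) = -\eps \D_t J_\eps$ and perform one more commutation $[J_\eps,\D_t c_2]$ to move the outer derivative past the rough coefficient onto a $J_\eps$-smoothed quantity, at the cost of an additional $\D_t^2 c_2$ factor. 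After this bookkeeping every summand carries a trailing $J_\eps$ applied to an $L^2$ expression in $w$, and therefore lies in $H^1$ with a norm depending on $\eps$ and on the Lipschitz norms of $c_1, c_2$ but finite at fixed $\eps>0$, which is all that is needed for the forthcoming weak-equals-strong argument.
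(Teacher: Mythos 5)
Your decomposition is the same as the paper's: both isolate the error as $c_1\D_t[J_\eps,c_2]p+[J_\eps,c_1]\D_t(c_2p)$ and unwind the commutators with \eqref{comm1b}, $\D_t J_\eps=J_\eps\D_t$ and $\eps\D_t J_\eps=I-J_\eps$; your first piece $c_1(I-J_\eps)((\D_t c_2)w)$ is exactly the paper's first term, and your second piece corresponds to its other two terms. However, one step as written is false: $\eps\D_t w=p-w=(I-J_\eps)p$, which is \emph{not} $(I-J_\eps)w$ (the two differ by $(I-J_\eps)^2p$), so you cannot ``trade every remaining $\D_t w$ for a $(I-J_\eps)w$ factor'' by that identity. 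The offending term is $\eps J_\eps\big((\D_t c_1)c_2\,\D_t w\big)$, and the correct repair is to move the derivative onto the whole product inside the outer $J_\eps$: writing $(\D_t c_1)c_2\,\D_t w=\D_t\big((\D_t c_1)c_2 w\big)-\D_t\big((\D_t c_1)c_2\big)\,w$ gives
\begin{equation*}
\eps J_\eps\big((\D_t c_1)c_2\,\D_t w\big)=(I-J_\eps)\big((\D_t c_1)c_2 w\big)-\eps J_\eps\big(\D_t((\D_t c_1)c_2)\,w\big),
\end{equation*}
or, equivalently, apply \eqref{comm1b} once more to $J_\eps\big((\D_t c_1)c_2\,p\big)$. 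With this repair your final form of $R_\eps$ --- a finite sum of $\alpha J_\eps(\beta\,\cdot)$ and $\alpha(I-J_\eps)(\beta\,\cdot)$ with $\alpha,\beta$ built from $c_1,c_2$ and two time derivatives --- is correct and matches the paper's displayed formula.

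On the mapping property, note that a term such as $c_1(I-J_\eps)\big((\D_t c_2)v\big)$ does not send a \emph{general} $v\in L^2$ into $H^1$, and your extra commutation does not change this: it still leaves a factor of the form $(\D_t c_2)(v-J_\eps v)$, which is only $L^2$ when $v$ is only $L^2$. What the argument genuinely yields (and what is used in the sequel) is that $R_\eps$ is bounded on $L^2$ uniformly in $\eps$ and bounded on $H^1$ for fixed $\eps$, together with the fact that $R_\eps$ is only ever applied to $J_\eps p$, which lies in $H^1$; the paper's one-line justification via the uniform $L^2$-boundedness of $\eps\D_t J_\eps$ is of this type. You should therefore either prove the $L^2\to L^2$ and $H^1\to H^1$ bounds, or restrict the $H^1$ conclusion to arguments in the range of $J_\eps$.
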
 
 
 \begin{proof}
 By direct computations
 $$
 R_\eps    =  
 c_1 \eps \D_t  J_\eps\big(  (\D_t c_2)  v\big) 
 +    \eps J_\eps   \D_t \big( c_1   \D_t ( c_2 v)\big)
+  \eps ^2 J_\eps   \D_t \Big( c _1  \D_t J_\eps    \big( ( \D_t c_2  ) v \big)\Big). 
 $$
Since the  $\eps \D_t J_\eps = \eps J_\eps \D_t $  are uniformly bounded in $L^2 ((-\infty , T])$, the lemma follows. 
\end{proof} 
Finally, the structure of the mollified equations is given in the following lemma.
 \begin{lem}
Let $F     \in \VV'_T$. If $U$ is a weak solution of \eqref{Lmodel} on $( - \infty, T]$, vanishing for $t \le 0$, then 
  $ U_\eps  = J_\eps U \in H^1 ([0, T]; \VV) $  satisfies
  \begin{equation}
  \label{mollieq}
     \cL U_ \eps  =  J_\eps F +   R_\eps U _\eps 
 \qquad  U_\eps (0) = 0
  \end{equation} 
  where $R_\eps$ is bounded from $L^2([0, T], \VV) $  to $L^2([0, T], \VV')$  and from
  $H^1([0, T], \VV) $  to $H^1([0, T], \VV')$. 
  Moreover, $R_\eps U_\eps $ tends to $0$ in $L^2([0, T], \VV')$ as $\eps$ tends to $0$. 
\end{lem}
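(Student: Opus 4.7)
The plan is to verify the three claims in turn: the regularity of $U_\eps$ and its vanishing at $t=0$, the explicit structure of the remainder $R_\eps U_\eps := \cL U_\eps - J_\eps(\cL U)$ as a sum of commutators between $J_\eps$ and the time-dependent coefficients, and finally the stated boundedness and convergence-to-zero properties.

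First I would observe that, since $U$ has been extended by zero for $t \le 0$ and lies in $L^2((-\infty,T];\VV)$, part (i) of the preceding lemma gives $U_\eps \in L^2((-\infty,T];\VV)$ together with $\eps \D_t U_\eps = U - U_\eps \in L^2((-\infty,T];\VV)$; hence $U_\eps \in H^1([0,T];\VV)$, and $U_\eps(0) = 0$ is immediate from the formula \eqref{molli}. Next, since $J_\eps$ acts only in time, it commutes with $\D_x$ and ${\bf l}$, so only the two time-derivative terms of $\cL$ contribute to $R_\eps U_\eps$. The first line is handled directly by Lemma \ref{lemFried1} and contributes $-R_\eps^1 J_\eps p$. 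For the second line I would expand ${\bf d}\D_t u$ by means of \eqref{dd} and, for each of the coefficients $b_0,b_1,b_2$, rewrite the commutator $[J_\eps, b_j]\D_t(\cdot)$ through the distributional identity
\begin{equation*}
[J_\eps, b]\D_t v \;=\; \D_t\big([J_\eps, b] v\big) - [J_\eps, \D_t b] v,
\end{equation*}
which is meaningful even for $v \in L^2_T$ alone since $J_\eps$ and $\D_t$ commute on distributions. Using \eqref{comm1b}, i.e.\ $[J_\eps, b] = \eps J_\eps(\D_t b) J_\eps$, together with $\eps \D_t J_\eps = I - J_\eps$, the right-hand side reduces to a finite sum of expressions of the form $(\D_t b_j) J_\eps v$ and $J_\eps((\D_t b_j) J_\eps v)$, each bounded on $L^2_T$ uniformly in $\eps$ under Assumption \ref{ass81}.

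From this explicit form, the boundedness $R_\eps : L^2_T\VV \to \VV'_T$ follows at once: every term is a composition of $J_\eps$ (bounded on $L^2_T$), multiplication by a Lipschitz coefficient, and the spatial operators $X_2 = h_0\D_x$ and ${\bf l}$, which by the duality identifications of \S\ref{sectduality} map $\cH^1_1 \to L^2_1$ and $L^2_1 \to \cH^{-1}_1$ respectively. The $H^1_T\VV \to H^1_T\VV'$ bound is obtained by commuting one more $\D_t$ through $J_\eps$, which is free of charge. The main obstacle, and the one I would spell out in detail, is the convergence $R_\eps U_\eps \to 0$ in $\VV'_T$: each contributing commutator has been rewritten as a difference of the shape $(\D_t b)J_\eps v - J_\eps((\D_t b)J_\eps v)$ or $J_\eps((\D_t b)v) - (\D_t b)J_\eps v$, both of which tend to $0$ in $L^2_T$ for any fixed $v \in L^2_T$ by the strong continuity $J_\eps \to I$. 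The uniform bound from the previous step then allows one to replace $v$ by the $\eps$-dependent component of $U_\eps = J_\eps U$ via dominated convergence, yielding the claimed limit.

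The principal technical subtlety is thus the correct interpretation of $[J_\eps, b]\D_t v$ when $v$ is only $L^2$ in time, so that $\D_t v$ is a mere distribution. The distributional identity above reduces this apparently singular object to genuine operators bounded on $L^2_T$, after which every remaining estimate is a routine composition of $L^\infty$ multiplications with the already-established mapping properties of $X_2$ and ${\bf l}$ between the weighted spaces $L^2_1$, $\cH^1_1$ and $\cH^{-1}_1$.
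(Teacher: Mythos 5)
Your proof is correct and follows essentially the same route as the paper: both rest on the two algebraic facts $[J_\eps,c]=\eps J_\eps(\D_t c)J_\eps$ and $\eps\D_t J_\eps=I-J_\eps$ to rewrite the commutators of $J_\eps$ with the time-derivative terms as uniformly bounded operators applied to $J_\eps U$ (keeping the $\cH^{-1}_1$ structure via ${\bf l}$ and $h_0\D_x$), and both conclude $R_\eps U_\eps\to 0$ by the standard Friedrichs combination of uniform boundedness and strong convergence on fixed elements. Your organization via the distributional identity $[J_\eps,b]\D_t v=\D_t([J_\eps,b]v)-[J_\eps,\D_t b]v$ is a cosmetic variant of the paper's Lemma on $J_\eps(c_1\D_t(c_2 p))$, not a different argument.
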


\begin{proof}
We know that $J_\eps$ commutes with  $\D_x$ and ${\bf l} $. Thus it is sufficient to 
commute $J_\eps$  in the 
term    $\D_t p$ and ${\bf d} \D_t u $.  By
Lemma~\ref{lemFried1} and because $J_\eps$ commutes with the weight $h_0$, 
$ c_1 \D_t  (c_2 J_\eps p)  - J_\eps (h_0 c _1\D_t (c_2))  = F_\eps J_\eps p$ 
with $F_\eps$ uniformly bounded from $L^2([0, T], L^2_2) $  to $L^2([0, T], L^2_2)$
and from $H^1([0, T], L^2_2) $  to $H^1([0, T], L^2_2)$.
Because the convergence is obviously true for smooth functions, the uniform bound also implies that 
\begin{equation*}
\|  F_\eps  J_\eps p  \|_{L^2([0, T]; L^2_2)} \to 0. 
\end{equation*} 
It remains to commute 
$J_\eps $ to ${\bf d}\, \D_t$. According to \eqref{defb}, the  terms to look at are $ [J_ \eps, b h_0 \D_x \D_t] u$  and  $[J_ \eps, b \D_t] u$. 
By Lemma~\ref{lemFried1}  and because $h_0$ commutes to $J$, one has  
$$
[J_ \eps, b h_0 \D_x \D_t] u = G_\eps  h_0 \D_x J_\eps u, \qquad   [J_ \eps, b  \D_t] u = G_\eps   J_\eps u,
$$
where the $G_\eps $  are   uniformly bounded from $L^2([0, T], L^2_1) $  to $L^2([0, T], L^2_1)$
and from $H^1([0, T], L^2_1) $  to $H^1([0, T], L^2_1)$.
Hence 
$$
[ J_\eps, {\bf d} \D_t u ] =   {\bf l}\,  G_{1, \eps } J_\eps u +    G_{0, \eps}  J_\eps u  =:   {\bf G}_\eps J_\eps u ,
$$
where  the $G_{k, \eps}  $  are   uniformly bounded from $L^2([0, T], H^1_1) $  to $L^2([0, T], L^2_1)$
and from $H^1([0, T], \cH^1_1) $  to $H^1([0, T], L^2_1)$, meaning that 
${\bf G}_{\eps}  $  is    uniformly bounded from $L^2([0, T], \cH^1_1) $  to $L^2([0, T], \cH^{-1}_1)$
and from $H^1([0, T], \cH^1_1) $  to $H^1([0, T], \cH^{-1}_1)$. 

Again, by density of smooth functions, this implies that 
 ${\bf G}_\eps J_\eps u $ tends to $0$ in $L^2([0, T], \cH^{-1}_1)$.  This finishes the proof of the lemma. 
\end{proof}

 \begin{cor}
Let $F     \in \VV'_T$. If $U$ is a weak solution of \eqref{Lmodel} on $] - \infty, T]$ vanishing for $t \le 0$ then 
  $ U_\eps  = J_\eps U \in \WW_T $  satisfies
  \begin{equation}
  \left\{ \begin{aligned}
  & \|U_ \eps - U \|_{ \VV_T} \to 0, 
  \\ 
 & \| \cL U_ \eps -  F  \|_{ \VV'_T}  \to 0, 
 \\
 & U_\eps (0) = 0.  
  \end{aligned} \right. 
  \end{equation} 
\end{cor}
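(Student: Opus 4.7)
The plan is to verify the three conclusions together with the membership $U_\eps \in \WW_T$ by bootstrapping from the preceding lemma, which already supplies $U_\eps \in H^1([0,T];\VV)$, the identity $\cL U_\eps = J_\eps F + R_\eps U_\eps$, and the fact that $R_\eps U_\eps \to 0$ in $\VV'_T$. Three of the four claims are essentially immediate once this lemma is in hand, and the only substantive step is upgrading $U_\eps$ from $H^1_T\VV$ to $\WW_T$.

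To obtain $U_\eps \in \WW_T$, I would read off the missing spatial regularity from the mollified equation itself. The first component of $\cL U_\eps = J_\eps F + R_\eps U_\eps$ yields
\begin{equation*}
\D_x u_\eps = \tilde f - c_1 \D_t(c_2 p_\eps),
\end{equation*}
where $\tilde f$ denotes the first component of $J_\eps F + R_\eps U_\eps$, hence lies in $L^2_T L^2_2$; combined with $p_\eps \in H^1_T L^2_2$, this gives $\D_x u_\eps \in L^2_T L^2_2$, which is precisely the condition $h_0 \D_x u_\eps \in L^2_T L^2$. The second component gives
\begin{equation*}
{\bf l}\, p_\eps = \tilde g - {\bf d}\,\D_t u_\eps
\end{equation*}
with $\tilde g \in L^2_T \cH^{-1}_1$; since $\D_t u_\eps \in L^2_T \cH^1_1$ and ${\bf d}$ maps $\cH^1_1$ continuously into $\cH^{-1}_1$ via the duality framework of \S \ref{sectduality}, we conclude ${\bf l}\, p_\eps \in L^2_T \cH^{-1}_1$.

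The convergence $U_\eps \to U$ in $\VV_T$ is the classical $L^2$-continuity of $J_\eps$ stated in item (i) of the first lemma following \eqref{molli}, applied componentwise in $L^2_2$ and $\cH^1_1$. For $\cL U_\eps \to F$ in $\VV'_T$, I would decompose $\cL U_\eps - F = (J_\eps F - F) + R_\eps U_\eps$: the first term vanishes in the limit by the same continuity property of $J_\eps$ applied in the dual space, and the second was already proved in the lemma to tend to zero. Finally, $U_\eps(0)=0$ follows at once from the causal form
\begin{equation*}
J_\eps u(0) = \eps^{-1}\int_0^\infty e^{-s/\eps}\, u(-s)\, ds
\end{equation*}
and the extension of $U$ by zero for $t<0$.

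The main obstacle I expect is the first step: one has to interpret the mollified equation carefully as an identity in $\VV'_T$, reading the two terms ${\bf l}\,p_\eps$ and ${\bf d}\,\D_t u_\eps$ as elements of $\cH^{-1}_1$ through the pairings set up in \S \ref{sectduality}, and checking that $\D_t(c_2 p_\eps)$ belongs to $L^2_T L^2_2$ because the coefficient $c_2$ is Lipschitz and $p_\eps$ is $H^1$ in time by construction. Once this bookkeeping is in place, the remaining three statements reduce to applying elementary $L^2$-continuity of $J_\eps$ in time plus its causal structure.
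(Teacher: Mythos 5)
Your proposal is correct and follows essentially the same route as the paper: the convergences and the vanishing initial condition are read off from the preceding lemma together with the elementary continuity and causality of $J_\eps$, and the only substantive point, the membership $U_\eps\in\WW_T$, is obtained exactly as in the paper by solving the two components of the mollified equation for $\D_x u_\eps$ and ${\bf l}\,p_\eps$ and using $p_\eps\in H^1([0,T];L^2_2)$ and ${\bf d}\,\D_t u_\eps\in L^2([0,T];\cH^{-1}_1)$.
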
 
\begin{proof}
It only remains to prove that  for all $\eps > 0$, $U_\eps \in \WW_T$. 
 Since $U_\eps \in H^1 ([0, T]; \VV)$, it is sufficient to prove that 
 \begin{equation}
  \D_x u_\eps \in L^2 ([0, T], L^2_2),  \quad {\bf l }\, p_\eps \in L^2 ([0, T], \VV'), 
 \end{equation}
 which follows directly from the equations since we know that $   p_\eps  \in H^1 ([0, T]; L^2_2)$ and 
 ${\bf d } u_\eps \in H^1 ([0, T]; \VV')$. 
\end{proof}
We have now all the elements to start the proof of Proposition \ref{w=s}.
 \begin{proof}[Proof of Proposition~$\ref{w=s}$]
  Because the $U_\eps$ belong to $\WW_T$, we can apply the energy estimates \eqref{estEE} to  
 $U_\eps - U_{\eps'}$ and conclude that $U_\eps$ is a Cauchy sequence in $C^0 ([0, T]; \VV)$. 
  Therefore its limit $U$  belongs to $C^0 ([0, T], \VV)$, vanish for 
 $t = 0$  and satisfies the energy estimates. 
 In particular, $U$ is unique. The proof of the proposition is complete.
 \end{proof}
  \bigbreak
  
 We now construct strong solutions when $U^0 \ne 0$. 
If  $U^0 \in C^\infty_0 (\RR_+)$. We find a solution 
$U (t, x) = U^0 (x) + \tilde U (t, x) $ of the Cauchy problem for \eqref{Lmodel} by solving 
\begin{equation}
\cL \tilde U = F - \cL U^0 , \qquad \tilde U (0 ) = 0. 
\end{equation} 
Moreover, by  Proposition~\ref{w=s},  
 $U $ is the limit in  $C^0 ([0, T], \VV)$ of a sequence 
$U_\eps \in \WW_T $, which satisfies $ U_{\eps}{}_{ | t = 0} = U^0$ and 
$ 
\cL U_\eps  \to  F $   in  $ \VV'_T    $. 
By Proposition~\ref{propEE} the $U_\eps$ satisfy the energy estimates, and thus 
the limit $U$ also satisfies these energy estimates. 

Thus we have solved the Cauchy problem for a dense set of initial data in $\VV$, with solutions which satisfy 
\eqref{estEE}. The next theorem follows,  by approximating $U^0$ by functions in $ C^\infty_0 (\RR_+)$, 
\begin{theo}
For all  $U^0 \in \VV $, $F   \in   \VV_T' $, the Cauchy problem 
for \eqref{Lmodel} with initial data $U^0$ has a unique solution in $C^0 ([0, T], \VV)$, 
which is the limit  a sequence $U_ \eps \in \WW_T$ such that 

i)  $  U_\eps \to U $ in $C^0 ([0, T], \VV)$,

ii)    $ \cL  U_\eps \to \cL U $ in $L^2([0, T], \VV')$,

iii) $ {U_\eps}_{\vert_{ t = 0}}  \to U^0 $  in  $ \VV$. 

\end{theo}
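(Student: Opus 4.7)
\medbreak\noindent
\textbf{Proof proposal.} The plan is to bootstrap from the dense subclass of smooth compactly supported initial data, already treated in the paragraph preceding the statement, to arbitrary $U^0\in\VV$ via a stability argument driven by the energy estimate~\eqref{estEE}. First I would record that $C^\infty_0(\RR_+)^2$ is dense in $\VV=L^2_2\times\cH^1_1$: density in $L^2_2$ is standard, and density in $\cH^1_1$ is exactly the first half of Lemma~\ref{lem82}. Fix then a sequence $(U^0_n)\subset C^\infty_0(\RR_+)^2$ with $U^0_n\to U^0$ in $\VV$. For each $n$, the preceding paragraph produces a solution $U_n\in C^0([0,T];\VV)$ of the Cauchy problem for \eqref{Lmodel} with data $(U^0_n,F)$, together with a sequence $U_{n,\eps}\in\WW_T$ such that, as $\eps\to 0$,
\begin{equation*}
U_{n,\eps}\to U_n \text{ in } C^0([0,T];\VV),\qquad \cL U_{n,\eps}\to F \text{ in } \VV'_T,\qquad U_{n,\eps}{}_{|t=0}=U^0_n.
\end{equation*}

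Next I would show that $(U_n)_n$ is Cauchy in $C^0([0,T];\VV)$. For fixed $n,m$, apply the energy estimate of Proposition~\ref{propEE} to the $\WW_T$-functions $U_{n,\eps}-U_{m,\eps}$, and pass to the limit $\eps\to 0$: since $\cL(U_{n,\eps}-U_{m,\eps})\to 0$ in $\VV'_T$, this gives
\begin{equation*}
\sup_{t\in[0,T]}\|U_n(t)-U_m(t)\|_\VV \;\lesssim\; \|U^0_n-U^0_m\|_\VV.
\end{equation*}
Hence $U_n\to U$ for some $U\in C^0([0,T];\VV)$. Passing to the limit in the weak formulation~\eqref{weak} (which is legitimate because $\cL'\Phi\in\VV'_T$ for smooth test functions $\Phi$, and because $U^0_n\to U^0$ in $\VV$) shows that $U$ is a weak solution of \eqref{Lmodel} with initial data $U^0$. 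The same limiting argument yields the energy estimate \eqref{estEE} for $U$, from which uniqueness in $C^0([0,T];\VV)$ among solutions obtained as limits of $\WW_T$-approximations is immediate.

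Finally, to exhibit a single approximating sequence $U_\eps\in\WW_T$ satisfying (i)--(iii), I would perform a standard diagonal extraction: for each $n$, choose $\eps(n)>0$ so small that
\begin{equation*}
\|U_{n,\eps(n)}-U_n\|_{C^0([0,T];\VV)} \,+\, \|\cL U_{n,\eps(n)}-F\|_{\VV'_T} \;\le\; \frac{1}{n},
\end{equation*}
and set $V_n:=U_{n,\eps(n)}\in\WW_T$. Combining this with $U_n\to U$ in $C^0([0,T];\VV)$ and $U^0_n\to U^0$ in $\VV$ yields the three convergences $V_n\to U$, $\cL V_n\to F$, and $V_n{}_{|t=0}\to U^0$, which is precisely (i)--(iii). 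I do not anticipate a substantive obstacle in this argument: every nontrivial ingredient---the energy estimate in $\WW_T$, the weak=strong identification, and the construction for $U^0\in C^\infty_0(\RR_+)^2$---is already at hand, and only routine functional-analytic passage to the limit remains.
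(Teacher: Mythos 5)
Your argument is correct and is essentially the paper's own proof: the paper disposes of the general $U^0\in\VV$ case in one line ("the theorem follows by approximating $U^0$ by functions in $C^\infty_0(\RR_+)$"), and you have simply filled in the standard details — density of $C^\infty_0$ in $\VV$, the Cauchy property of $(U_n)$ via the energy estimate \eqref{estEE} applied to the $\WW_T$-differences $U_{n,\eps}-U_{m,\eps}$, passage to the limit in the weak formulation, and a diagonal extraction for (i)--(iii). No gap.
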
 


\subsection{Smooth solutions}\label{sectsmooth}

Remind that we still assume that the coefficients of $\cL$ are smooth. We show here that this induces smoothness on the strong solution constructed in the previous sections.

\begin{prop}
\label{propsmsol}
Suppose that $F \in L^2 (]- \infty, T]; \VV') $  vanishes for $t \le 0$ and satisfies
$\D_t^k (h_0 \D_x)^j  F \in L^2 (]- \infty, T]; \VV') $ for all $k$ and $j$. Then the equation 
$\cL U = F $ has a unique strong solution    in $ C^0 (]- \infty, T]; \VV)$ which  vanishes for $t \le 0$, and
$\D_t^k (h_0 \D_x)^j  V  \in L^2 (]- \infty, T]; \VV) $ for all $k$ and $j$. 
\end{prop}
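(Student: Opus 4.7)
The plan is to start from the strong solution produced by \S \ref{sectstrong} and bootstrap regularity by commuting the equation $\cL U = F$ successively with $\D_t$ and $X_2 = h_0\D_x$, proceeding by induction on the total order of derivatives. Existence and uniqueness of a base strong solution $U \in C^0(]-\infty,T];\VV)$ vanishing for $t \le 0$ follows directly from Proposition~\ref{w=s} applied on any finite subinterval, since the hypothesis on $F$ implies $F \in \VV'_T$.

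To gain time regularity, I would apply $\D_t^k$ to the equation. Because $c_1, c_2, b_0, b_1, b_2$ are smooth in time and $X_2$ has no $t$-dependence, one gets the identity $\cL(\D_t^k U) = \D_t^k F - [\cL, \D_t^k] U$ where the commutator is a sum of operators of the same spatial structure as $\cL$ applied to at most $k-1$ time-derivatives of $U$. Assuming inductively that $\D_t^j U \in C^0(]-\infty,T];\VV)$ for $j < k$, and that each such derivative vanishes for $t \le 0$ (which propagates from the vanishing of $U$ and $F$ there), the right-hand-side belongs to $L^2(]-\infty,T];\VV')$ and vanishes for $t \le 0$. Applying Proposition~\ref{w=s} to the equation for $\D_t^k U$ then provides $\D_t^k U \in C^0(]-\infty,T];\VV)$.

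To gain conormal derivatives, I would apply $X_2$ iteratively to the equation. The relevant commutators are $[X_2,\D_t]=0$, $[X_2,{\bf l}] = 2 h_0 h_0''$ (a bounded multiplication), $[X_2, h_0\D_x] = 0$, and most delicately $[X_2,\D_x] = -h_0'\D_x$ appearing in the first equation of $\cL$. Writing out $\cL(X_2 U) = X_2 F + [X_2,\cL] U$, the commutator on the right collects smooth multiplications coming from $X_2$ applied to the coefficients and to ${\bf l}$ in the operator ${\bf d}$, plus the problematic term $-h_0'\D_x u$ in the first component. This last term contains a non-tangential derivative; the crucial observation is that the first equation itself allows the substitution $\D_x u = f - c_1\D_t(c_2 p)$, which reduces $-h_0'\D_x u$ to $-h_0'f + h_0'c_1\D_t(c_2 p)$, a combination of $F$ and $\D_t U$ already controlled by the previous step. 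Thus $\cL(X_2 U)$ has a right-hand-side in $L^2(]-\infty,T];\VV')$ vanishing for $t\le 0$, and Proposition~\ref{w=s} yields $X_2 U \in C^0(]-\infty,T];\VV)$. Iterating on $X_2^j$ and combining with $\D_t^k$ (the two arguments commute at the level of bookkeeping, once the substitution trick is built in), one obtains the full conclusion.

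The main obstacle is precisely this reappearance of the non-tangential derivative $\D_x u$ in $[X_2,\D_x]$: at each order, the induction must trade one conormal derivative for one time derivative via the first scalar equation, which is why the time regularity must be established first and why the hypothesis requires $\D_t^k X_2^j F$ to lie in $\VV'$ for \emph{all} $k$ and $j$ rather than just in the diagonal direction.
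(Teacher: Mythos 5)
Your base step and your treatment of the time derivatives follow the paper's route (the paper makes the differentiation of the weak equation rigorous via the mollifiers $J_\eps$ of \S\ref{sectstrong}, which you gloss over, but that is a standard point). The genuine gap is in the conormal step. Before one can write $\cL(X_2 U)= X_2F+[X_2,\cL]U$ and invoke Proposition~\ref{w=s}, one must know that $X_2U=(h_0\D_x p,\,h_0\D_x u)$ makes sense in the energy space, and the component $h_0\D_x p$ is the problem: the energy space $\VV=L^2_2\times\cH^1_1$ controls $h_0\D_x u$ but gives \emph{no} a priori control of $h_0\D_x p$ in $L^2_2$. Your substitution $\D_x u=f-c_1\D_t(c_2p)$ handles the commutator term $-h_0'\D_x u$ in the first scalar equation, but that equation only ever yields information on $X_2u$, never on $X_2p$; so the issue you identify as "the main obstacle" is a secondary one, and the primary one is untouched.

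The paper devotes the first half of the proof of Lemma~\ref{lemsm815} precisely to this point. One must extract $h_0\D_x p$ from the \emph{second} (dispersive) equation: because of the operator ${\bf d}$, what that equation controls is $h_0{\bf l}\bigl(p-b_0h_0\D_x\D_t u+b_1\D_t u\bigr)$, and after substituting $h_0\D_x\D_t u$ from the (time-differentiated) first equation one is led to the identity that $\psi:=(1+b_0\tilde c\,h_0\D_t^2)(h_0{\bf l}p)$ and $\D_t\psi$ lie in $C^0([0,T];L^2)$. Inverting the degenerate second-order operator in time $1+b_0\tilde c\,h_0\D_t^2$ requires a separate estimate (Lemma~\ref{Grosseficelle}), which finally gives $h_0{\bf l}p\in C^0([0,T];L^2)$ and hence $h_0\D_x p\in C^0([0,T];L^2_2)$. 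Only then can $h_0\D_x U$ be interpreted as a weak solution of the commuted system and the induction on $j$ be run. Without this step (or some substitute for it), your iteration on $X_2^j$ does not get off the ground.
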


The proof of the proposition is decomposed into several lemmas.
\begin{lem}
\label{lem813} 
Suppose that $F \in L^2 (]- \infty, T]; \VV') $  vanishes for $t \le 0$ and satisfies
$\D_t F \in L^2 (]- \infty, T]; \VV') $. Then the equation 
$\cL V = F $ has a unique solution strong   in $ C^0 (]- \infty, T]; \VV)$ which  vanishes for $t \le 0$; moreover, 
$U \in C^1 ((]-\infty, T]; \VV)$, and $\D_t U$ satisfies
\begin{equation}
\label{dereq1}
\cL \D_t U = \D_t F    +   R \D_t U ,
\end{equation}
where $R $  is a bounded operator from
$\VV_T $ to $\VV'_T$. 
\end{lem}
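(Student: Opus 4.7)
The existence and uniqueness of the strong solution $U \in C^0((-\infty,T];\VV)$ vanishing for $t \le 0$ follow directly from the theorem at the end of \S\ref{sectstrong}: extend $F$ (which already vanishes for $t \le 0$) to $(-\infty,T]$ and apply the strong-solvability result. The real content of the lemma is the $C^1$-regularity of $U$ together with the identification of the equation \eqref{dereq1}. The plan is to obtain $\D_t U$ as the $C^0((-\infty,T];\VV)$-limit of the time-difference quotients $U_h(t) := h^{-1}(U(t+h)-U(t))$, and then to read off \eqref{dereq1} by passing to the limit in the equation satisfied by $U_h$.

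Introducing the operator $\cL_h$ whose coefficients $c_1,c_2,{\bf d}$ are evaluated at time $t+h$, one has $\cL_h(\tau_h U) = \tau_h F$, so subtracting from $\cL U = F$ gives
\begin{equation*}
\cL U_h \;=\; \frac{\tau_h F - F}{h} \;+\; \frac{\cL-\cL_h}{h}\,\tau_h U .
\end{equation*}
Since $U_h$ vanishes for $t \le -h$, the energy estimate of Proposition~\ref{propEE} applies; applied to differences $U_h - U_{h'}$ it yields, using the smoothness of the coefficients together with the hypothesis $\D_t F \in L^2((-\infty,T];\VV')$, that $\{U_h\}_{h>0}$ is a Cauchy sequence in $C^0((-\infty,T];\VV)$. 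The limit belongs to $C^0((-\infty,T];\VV)$ and is necessarily $\D_t U$.

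A direct computation of $[\D_t,\cL]U = \D_t(\cL U) - \cL(\D_t U)$ gives
\begin{equation*}
[\D_t,\cL]U \;=\; \begin{pmatrix} \D_t(c_1 c_2)\,\D_t p \;+\; \D_t(c_1\,\D_t c_2)\,p \\ (\D_t {\bf d})\,\D_t u \end{pmatrix},
\end{equation*}
so the limit equation becomes $\cL(\D_t U) = \D_t F - [\D_t,\cL]U$. Because $U(0)=0$ one has $p(t)=\int_0^t \D_t p\,ds$, which allows to rewrite the right-hand-side as $\D_t F + R\,\D_t U$ with
\begin{equation*}
R\,\tilde U \;=\; -\Bigl( \D_t(c_1c_2)\,\tilde p + \D_t(c_1\,\D_t c_2)\!\int_0^t\! \tilde p\, ds \, , \;(\D_t{\bf d})\,\tilde u \Bigr) .
\end{equation*}
Boundedness $R:\VV_T\to\VV'_T$ then reduces to checking, on the one hand, that multiplication by the smooth bounded functions $\D_t(c_1c_2)$, $\D_t(c_1\D_t c_2)$ acts on $L^2_2$ and that time-integration is bounded $L^2([0,T];L^2_2) \to L^2([0,T];L^2_2)$; and on the other hand that $\D_t {\bf d}:\cH^1_1\to\cH^{-1}_1$, which follows immediately from the structure \eqref{dd} since $\D_t b_0,\D_t b_1,\D_t b_2$ are bounded.

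The main obstacle is the structural issue in the first equation of the system: the time derivative there is $c_1\D_t(c_2\,\cdot\,)$ with the weight $c_2$ inside the derivative, so the commutator $[\D_t, c_1\D_t(c_2\,\cdot\,)]$ genuinely contains a zero-order term in $p$ and not only in $\D_t p$. It is precisely the initial condition $U(0)=0$, via the representation $p=\int_0^t\D_t p$, that lets one fold this term back into an operator acting on $\D_t U$; without this structural feature the lemma, as stated with $R$ acting on $\D_t U$, would simply fail.
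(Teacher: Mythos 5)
Your route is genuinely different from the paper's: you use time-difference quotients $U_h=h^{-1}(\tau_h U-U)$, whereas the paper mollifies in time with the operators $J_\eps$ of \eqref{molli} and differentiates the mollified equation \eqref{mollieq}. Unfortunately the difference-quotient route has a real gap at the step where you invoke the energy estimate for $U_h$ (and for $U_h-U_{h'}$). First, Proposition~\ref{propEE} only applies to elements of $\WW_T$, and $U_h$ is not known to lie in $\WW_T$ (that would require $\D_t U_h\in\VV_T$, i.e.\ control of second differences of $U$); the applicable tool is rather Proposition~\ref{w=s}, which requires the source of the equation for $U_h$ to belong to $\VV'_T$. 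Second, and this is the decisive obstruction, that source is \emph{not} in $\VV'_T$ with uniform bounds: since the principal part of $\cL$ in time has variable coefficients ($c_1\D_t(c_2\,\cdot\,)$ and ${\bf d}\,\D_t$), expanding $h^{-1}(\cL-\cL_h)\tau_h U$ produces terms carrying an $O(h)$ coefficient in front of $\D_t\bigl(c_2p\bigr)(t+h)$ and $\D_t u(t+h)$, hence $O(1)$ coefficients after division by $h$ — that is, one full time derivative of the solution at the shifted time, which is precisely the quantity whose membership in $\VV_T$ you are trying to establish. Substituting from the equation does not help: the first equation only gives $\D_t(c_2p)=(f-\D_x u)/c_1$, and $h_0\D_x u\in L^2_1$ yields $\D_x u\in L^2_3$, which misses the required $L^2_2$. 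This is exactly why the paper works with $J_\eps$: it commutes exactly with $\D_t$, $\D_x$, $h_0$ and ${\bf l}$, the only commutators are with multiplication by the coefficients and give the bounded, vanishing remainders $R_\eps$ of Lemma~\ref{lemFried1}; moreover $U_\eps=J_\eps U\in H^1((-\infty,T];\VV)$, so differentiating \eqref{mollieq} in time is legitimate and the resulting equation for $\D_t U_\eps$ has source in $\VV'_T$, to which Proposition~\ref{w=s} applies.

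On the positive side, your computation of $[\D_t,\cL]U$ is correct, and the observation that the zero-order term $\D_t(c_1\D_t c_2)\,p$ must be rewritten via $p=\int_0^t\D_t p\,ds$ in order to obtain an operator genuinely acting on $\D_t U$ repairs a slight imprecision in the paper's shorthand $R=[\D_t,\cL]$; in the paper's argument this term is harmless in any case, since it acts on $U_\eps$, which is already known to converge in $C^0([0,T];\VV)$.
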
 

\begin{proof}
We use the mollifiers \eqref{molli}, set $U_\eps = J_\eps U \in H^1 (]- \infty, T]; \VV)$ and use the equation 
\eqref{mollieq}. In particular, the right-hand-side 
$F_ \eps = J_\eps F + R_\eps U_\eps $ belongs to $H^1(]- \infty, T]; \VV')$  and satisfies 
\begin{equation}
\| \D_t  F_\eps \|_{ \VV'_T} \lesssim  \| \D_t  F  \|_{ \VV'_T} + \| \D_t  U_\eps \|_{ \VV_T} . 
\end{equation}
Because we know that $U_\eps \in H^1 (]- \infty, T]; \VV)$, we can differentiate in time the equation \eqref{mollieq}
and see that 
\begin{equation}
\label{dereq2}
\cL \D_t U_\eps  = \D_t F_\eps     +   R \D_t U_\eps ,
\end{equation}
where $R = [ \D_t ,  \cL] $ is a bounded operator from
$\VV_T $ to $\VV'_T$.  
  Therefore, one can apply Proposition~\ref{w=s} to $\D_tU_\eps $, hence 
  $\D_t U_\eps $ is bounded, and indeed a Cauchy sequence, in $C^0 ([0, T]; \VV)$.
  This implies that $\D_t U \in C^0 ((]-\infty, T]; \VV)$ satisfies \eqref{dereq1}. 
  \end{proof}

\begin{lem}
\label{lemsm814}
If in addition to the assumptions of Lemma \ref{lem813} one has $\D_t^k  F \in L^2 (]- \infty, T]; \VV') $ for $k \le n$, then 
$ \D_t^k U  \in C^0 ((]-\infty, T]; \VV)$ for $k \le n$. 
 
\end{lem}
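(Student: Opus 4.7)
The plan is to proceed by induction on $n$, with the base cases $n = 0$ and $n = 1$ being covered respectively by Proposition~\ref{w=s} and by Lemma~\ref{lem813}. The inductive step consists in differentiating the mollified equation in time $n$ times, obtaining an equation of the same form as \eqref{dereq2} but for $\D_t^n U_\eps$, and then applying Proposition~\ref{w=s} once more.

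More precisely, suppose the conclusion has been established for all orders $j \le n-1$, so that $\D_t^j U \in C^0(]-\infty,T]; \VV)$ for $j \le n-1$ and, in addition, the $\D_t^j U_\eps$ converge to $\D_t^j U$ in $C^0([0,T]; \VV)$ as $\eps \to 0$. Starting from \eqref{mollieq}, namely $\cL U_\eps = J_\eps F + R_\eps U_\eps$, and using that $U_\eps \in H^1(]-\infty,T]; \VV)$ so that all time derivatives make classical sense, I differentiate $n$ times in time to get
\begin{equation*}
\cL \, \D_t^n U_\eps \ = \ \D_t^n F_\eps \ + \ \sum_{j=1}^{n} \binom{n}{j} R_j \, \D_t^{n-j} U_\eps ,
\end{equation*}
where $R_j = [\D_t^j, \cL]/j!$ up to combinatorial factors is a differential operator whose coefficients are time derivatives of order $\ge 1$ of $c_1$, $c_2$, $b_0$, $b_1$, $b_2$. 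Because these coefficients are smooth by assumption, each $R_j$ is bounded from $\VV_T$ to $\VV'_T$, exactly as the operator $R$ of Lemma~\ref{lem813}.

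The right-hand side $G_\eps^{(n)} := \D_t^n F_\eps + \sum_{j\ge 1} \binom{n}{j} R_j \D_t^{n-j} U_\eps$ lies in $L^2(]-\infty,T]; \VV')$, vanishes for $t \le 0$ (by the hypothesis on $F$ and by construction of $U$), and satisfies
\begin{equation*}
\| \D_t G_\eps^{(n)} \|_{\VV'_T} \ \lesssim \ \| \D_t^{n+1} F \|_{\VV'_T} \ + \ \sum_{j=0}^{n} \| \D_t^j U_\eps \|_{\VV_T} \ + \ \| \D_t^{n+1} U_\eps \|_{\VV_T},
\end{equation*}
so that the proof of Lemma~\ref{lem813} can be reproduced verbatim with $F$ replaced by $G_\eps^{(n)}$ and $U$ by $\D_t^n U_\eps$. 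This yields that $\D_t^n U_\eps$ is a Cauchy sequence in $C^0([0,T]; \VV)$, with limit $\D_t^n U$, provided one checks that $G_\eps^{(n)}$ converges in $\VV'_T$. For this, the $\D_t^n F_\eps$ tend to $\D_t^n F$ in $\VV'_T$ by the properties of the mollifier $J_\eps$ and the fact that $[J_\eps, \D_t] = 0$ on $H^1$ functions, while the commutator contributions $R_j \D_t^{n-j} U_\eps$ converge in $\VV'_T$ by the inductive hypothesis.

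The main obstacle lies in handling properly the remainder terms generated when one commutes $J_\eps$ with $\cL$ at each differentiation step: writing \eqref{dereq2} iteratively produces, besides the genuine commutators $[\D_t^j, \cL]$, additional terms of the form $\D_t^k R_\eps \D_t^{n-k} U_\eps$ coming from Lemma~\ref{lemFried1}. The key point, already exploited in that lemma, is that $\eps \D_t J_\eps$ is uniformly bounded on $L^2$, so these additional remainders are of the same type as the genuine commutators and tend to $0$ in $\VV'_T$ as $\eps\to 0$. Once this is organised carefully, the energy estimate \eqref{estEE} applied to the differences $\D_t^n U_\eps - \D_t^n U_{\eps'}$ closes the induction and yields $\D_t^n U \in C^0(]-\infty,T]; \VV)$, completing the proof.
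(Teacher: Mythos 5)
Your overall strategy --- induction on the order of time differentiation, with the commutators $[\D_t^j,\cL]$ bounded from $\VV_T$ to $\VV'_T$ thanks to the smoothness of the coefficients --- is exactly the paper's (the published proof is one line: induct using \eqref{dereq1} and check that $[\D_t^k,R]$ maps $L^2(]-\infty,T];\VV)$ into $L^2(]-\infty,T];\VV')$). However, there is an off-by-one error in your inductive step. You form the equation $\cL\,\D_t^n U_\eps=G^{(n)}_\eps$ and then propose to ``reproduce the proof of Lemma~\ref{lem813} verbatim with $F$ replaced by $G^{(n)}_\eps$ and $U$ by $\D_t^n U_\eps$'', supported by an estimate on $\|\D_t G^{(n)}_\eps\|_{\VV'_T}$ that invokes $\|\D_t^{n+1}F\|_{\VV'_T}$ and $\|\D_t^{n+1}U_\eps\|_{\VV_T}$. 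Neither quantity is available: the hypothesis only provides $\D_t^k F\in L^2(]-\infty,T];\VV')$ for $k\le n$, and $\D_t^{n+1}U_\eps$ is exactly one derivative beyond what the induction controls. Running Lemma~\ref{lem813} at that level would in fact prove $\D_t^{n+1}U\in C^0(]-\infty,T];\VV)$, which is more than the statement claims and genuinely requires the missing hypothesis on $\D_t^{n+1}F$.

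The repair is simple and you already have all the ingredients. Either apply Lemma~\ref{lem813} to the equation satisfied by $\D_t^{n-1}U$ (this is the paper's route): its source is $\D_t^{n-1}F$ plus commutator terms in $\D_t^{j}U$, $j\le n-1$, whose time derivative lies in $\VV'_T$ using only $\D_t^{n}F$ and $\D_t^{j}U$ for $j\le n-1$, all controlled by hypothesis and by the induction; the conclusion of Lemma~\ref{lem813} is then precisely $\D_t^{n}U\in C^0(]-\infty,T];\VV)$. Or, equivalently, keep your $n$-times differentiated mollified equation but invoke only Proposition~\ref{w=s} (the energy estimate and a Gronwall argument applied to $\D_t^{n}U_\eps-\D_t^{n}U_{\eps'}$), which requires the sources $G^{(n)}_\eps$ to converge in $\VV'_T$ --- which you do verify --- and needs no time derivative of them. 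With that correction (and a more careful bookkeeping of the mollification remainders $\D_t^k R_\eps\D_t^{n-k}U_\eps$, which you only sketch), the argument closes.
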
 

\begin{proof}
By induction on $n$, using the equation \eqref{dereq1} and checking that for smooth coefficients  the operator 
$[ \D_t^k , R ] $ maps  $L^2 (]- \infty, T]; \VV) $ into 
$L^2 ((]-\infty, T]; \VV')$. 
\end{proof}

The next lemma finishes the proof of Proposition~\ref{propsmsol} 
\begin{lem}
\label{lemsm815}
If  in addition  to the assumptions of Lemma \ref{lemsm814} one has $\D_t^k (h_0 \D_x)^j  F \in L^2 (]- \infty, T]; \VV') $ for all $k$ and $j$ then 
$ \D_t^k (h_0 \D_x)^j U  \in C^0 ((]-\infty, T]; \VV)$ for all $k$ and $j$. 
 
\end{lem}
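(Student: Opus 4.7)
The plan is to proceed by induction on $j$, with the strengthened statement

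$P(j)$: \emph{under the hypotheses of Lemma~\ref{lemsm814} together with $\D_t^k X_2^i F \in L^2(]-\infty,T];\VV')$ for all $k\ge 0$ and all $i\le j$, the strong solution of $\cL U = F$ vanishing for $t\le 0$ satisfies $\D_t^k X_2^i U \in C^0(]-\infty,T];\VV)$ for all $k\ge 0$ and all $i\le j$.}

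The base case $P(0)$ is exactly Lemma~\ref{lemsm814}. To pass from $P(j)$ to $P(j+1)$, the idea is to show that $W:=X_2 U$ is itself the strong solution of an equation $\cL W=\tilde F$ whose source $\tilde F$ satisfies the regularity hypotheses of $P(j)$, and then invoke $P(j)$ on $W$.

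Formally, $\cL(X_2 U)=X_2 F-[X_2,\cL]U=:\tilde F$. For the first row of $[X_2,\cL]$ one finds $(X_2 c_1)\D_t(c_2 p)+c_1\D_t((X_2 c_2)p)+[X_2,\D_x]u$; since $[X_2,\D_x]=-h_0'\D_x$, and the first equation of $\cL U=F$ gives $\D_x u=f-c_1\D_t(c_2 p)$, this entire contribution can be rewritten as a smooth bounded zeroth-order-in-$X_2$ operator applied to $(p,\D_t p)$, plus the benign source $h_0' f$. For the second row, $[X_2,\cL]$ contributes $[X_2,{\bf d}]\D_t u+2h_0 h_0''\,p$, where $[X_2,{\bf l}]=2h_0 h_0''$ is a harmless multiplication. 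The delicate piece is the top-order term of $[X_2,{\bf d}]\D_t u$, which is $-(X_2 b_0)X_2^2\D_t u$; the key observation is that $X_2 b_0=h_0\D_x b_0$ vanishes at the origin, so $X_2 b_0=h_0\tilde b_0$ with $\tilde b_0$ smooth and bounded, and one may reorganise
$$
-h_0\tilde b_0\,X_2^2\D_t u \;=\; -{\bf l}\!\bigl(\tilde b_0\,h_0\,X_2\D_t u\bigr)\;+\;(\text{operator of order $\le 1$ in }X_2)\,\D_t u,
$$
using ${\bf l}=X_2+2h_0'$ and $X_2(h_0 v)=h_0 X_2 v+h_0 h_0' v$. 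An analogous treatment of the other pieces of $[X_2,{\bf d}]\D_t u$ places the whole second-row commutator in the form $\tilde g_0+{\bf l}\,\tilde g_1$ with $(\tilde g_0,\tilde g_1)$ differential polynomials in $U$ of order at most one in $X_2$ and of finite order in $\D_t$, with smooth bounded coefficients.

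By $P(j)$ applied to $U$ itself (the hypothesis of $P(j+1)$ clearly contains that of $P(j)$), one has $\D_t^{k'} X_2^{i'} U\in C^0(\VV)$ for all $k'\ge 0$ and $i'\le j$; together with the extra $X_2$-derivative on $u$ built into $\VV$ through $\cH^1_1$, this provides exactly the regularity needed to verify $\D_t^k X_2^i\tilde F\in L^2(\VV')$ for $k\ge 0$ and $i\le j$. Uniqueness of the strong solution vanishing for $t\le 0$, guaranteed by the energy estimate of Proposition~\ref{propEE} through the density argument of Proposition~\ref{w=s}, then identifies the strong solution of $\cL W=\tilde F$ with $W=X_2 U$, and applying $P(j)$ to this equation yields $\D_t^k X_2^{i+1}U\in C^0(\VV)$ for all $k\ge 0$ and $i\le j$, which, combined with what $P(j)$ already gives directly for $U$, is precisely $P(j+1)$.

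The hard part will be the algebraic repackaging of $[X_2,{\bf d}]\D_t u$ sketched above: the naive computation produces a term that is, at leading order, of the same order in $X_2$ as $\cL$ itself on the new unknown $W$, and it is only through the structural vanishing of $X_2 b_j$ at $x=0$ together with the identity ${\bf l}=X_2+2h_0'$ that these terms can be absorbed into the $\cH^{-1}_1$-part of the source. Once this reorganisation is established, the rest — vanishing of $U$ for negative times, the passage through the mollifiers $J_\eps$, and the convergence of the approximate solutions in $\WW_T$ — is a direct repetition of the arguments used in Lemmas~\ref{lem813} and~\ref{lemsm814}.
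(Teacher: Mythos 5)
Your overall plan (commute $X_2=h_0\D_x$ through $\cL$, recognise the commutator as an admissible source, and identify $X_2U$ with the unique strong solution of the commuted system) is the right skeleton, and your repackaging of $[X_2,{\bf d}]\D_t u$ using $X_2 b_0=h_0\tilde b_0$ and ${\bf l}=X_2+2h_0'$ is consistent with how the paper handles commutators elsewhere. But there is a genuine gap at the identification step. To invoke uniqueness of weak/strong solutions for $\cL W=\tilde F$ and conclude $W=X_2U$, you must first know that $X_2U$ \emph{is} a weak solution in the sense of the paper's definition, i.e.\ that $X_2U\in\VV_T$. The component $X_2u\in L^2_1$ does come from $u\in\cH^1_1$, but $h_0\D_x p\in L^2_2$ does \emph{not} follow from $p\in C^0L^2_2$, and it is not produced by the hypotheses of $P(j)$ either (which only give time derivatives and lower-order $X_2$ derivatives of $U$). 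Your appeal to "a direct repetition of the mollifier argument" does not close this: the mollifiers $J_\eps$ are convolutions in time only, they commute with $X_2$, and therefore cannot manufacture the missing conormal regularity of $p$. As written, the argument is circular: you assume the $X_2$-regularity of $U$ in order to prove it.

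The paper spends the first half of its proof precisely on this point, \emph{before} differentiating the equation. From the first equation one reads off that $h_0\D_x u$, $h_0\D_x\D_t u$ and $h_0\D_x\D_t^2u$ lie in $C^0L^2$ (using the time regularity supplied by Lemma~\ref{lemsm814}). Substituting into the second equation, one finds that
$\psi:=(1+b_0\tilde c\,h_0\,\D_t^2)(h_0{\bf l}p)$ and $\D_t\psi$ belong to $C^0L^2$, and the degenerate second-order-in-time operator $1+\alpha h_0\D_t^2$ is then inverted by the Gronwall-type estimate of Lemma~\ref{Grosseficelle}, yielding $h_0{\bf l}p\in C^0L^2$ and hence $h_0\D_x p\in C^0L^2_2$. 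Only after this does the paper differentiate the equation in $h_0\D_x$ and run the induction on $j$. Your proof needs this preliminary regularity step (or an equivalent substitute); without it the induction cannot start.
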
 

In the proof, we use an estimate  which we now state. 

\begin{lem}
\label{Grosseficelle} 
Suppose that  $\alpha > 0$ and 
$$(1 + \alpha h_0    \D^2_t )  p = f,   \qquad p_{| t = 0} = \D_t p _{| t = 0}  = 0. 
$$ 
Then there is $C$  which depends only on the  $L^\infty$ norm of 
$\D_t \alpha / \alpha$ such that
$$
\| p (t) \|_{L^2(\RR_+) }  \le  C \Big(  \| f(t) \|_{L^2( \RR_+) }   
  + \|  \D_t f \|_{L^2([0, T] \times \RR_+) } \Big). 
$$
\end{lem}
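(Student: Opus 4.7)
The plan is to obtain the estimate by a standard energy identity for the second-order ODE in time, with $\alpha h_0\ge 0$ as a weight. First I would multiply the equation $(1+\alpha h_0\D_t^2)p=f$ by $\D_t p$ and integrate in $x$ over $\RR_+$. Using $\D_t^2 p\cdot\D_t p=\mez\D_t((\D_t p)^2)$ and moving the time derivative out of the $\alpha h_0$ factor, this yields
$$
\frac{d}{dt} E(t) \;=\; \mez\int_{\RR_+} (\D_t\alpha)\,h_0\,(\D_t p)^2\,dx \;+\; \int_{\RR_+} f\,\D_t p\,dx,
\qquad
E(t):=\mez\|p(t)\|_{L^2}^2+\mez\int_{\RR_+}\alpha h_0(\D_t p)^2\,dx.
$$
Since $\alpha h_0\ge 0$, $E$ is nonnegative and $\|p(t)\|_{L^2}^2\le 2E(t)$; moreover the first term on the right is bounded by $\|\D_t\alpha/\alpha\|_{L^\infty}E(t)$, which is the only place where the hypothesis on $\alpha$ is used.

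Next, since $p_{|t=0}=\D_t p_{|t=0}=0$ we have $E(0)=0$, and I would integrate the above identity on $[0,t]$ and integrate by parts in time in the forcing term:
$$
\int_0^t\!\!\int f\,\D_s p\,dx\,ds \;=\; \int f(t)\,p(t)\,dx \;-\; \int_0^t\!\!\int \D_s f\cdot p\,dx\,ds.
$$
Cauchy--Schwarz and Young's inequality, together with $\|p(s)\|_{L^2}\le\sqrt{2E(s)}$, give $|\int f(t)p(t)|\le \|f(t)\|_{L^2}^2+\mez E(t)$ and $\int_0^t\|\D_s f\|_{L^2}\sqrt{2E(s)}\,ds\le \mez\|\D_t f\|_{L^2([0,T]\times\RR_+)}^2+\int_0^t E(s)\,ds$. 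Absorbing $\mez E(t)$ on the left leads to an inequality of the form
$$
E(t)\;\le\; A(t) + K\int_0^t E(s)\,ds,\qquad A(t):=2\|f(t)\|_{L^2}^2+\|\D_t f\|_{L^2([0,T]\times\RR_+)}^2,
$$
where $K$ depends only on $\|\D_t\alpha/\alpha\|_{L^\infty}$. Gronwall's lemma then gives $E(t)\le A(t)+Ke^{KT}\int_0^t A(s)\,ds$.

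To finish and match the stated estimate (which involves $\|f(t)\|_{L^2}$ at the specific time $t$, not a time-sup norm), I would convert the integral $\int_0^t\|f(s)\|_{L^2}^2\,ds$ back to a pointwise value: writing $f(s,x)=f(t,x)-\int_s^t\D_r f(r,x)\,dr$ and applying Cauchy--Schwarz gives the uniform bound
$$
\|f(s)\|_{L^2}^2 \;\le\; 2\|f(t)\|_{L^2}^2 + 2T\|\D_t f\|_{L^2([0,T]\times\RR_+)}^2 \qquad (s\in[0,T]).
$$
Combining this with the Gronwall output yields $E(t)\le C(\|f(t)\|_{L^2}^2+\|\D_t f\|_{L^2}^2)$, and using $\|p(t)\|_{L^2}\le\sqrt{2E(t)}$ produces the claimed inequality.

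The only real subtleties are, first, this last step that ensures the right-hand side involves only $\|f(t)\|_{L^2}$ rather than $\|f\|_{L^\infty_t L^2_x}$; and second, justifying the integration by parts at the level of regularity at which the lemma will be applied. The degeneracy of $h_0$ at $x=0$ is harmless, since $\alpha h_0$ enters only as a nonnegative weight in the kinetic term of $E$ and is never inverted. In practice I would first derive the estimate for smooth solutions where the manipulations are obviously licit, and then pass to the limit via a standard regularization of $f$ (and, if needed, of $\alpha$) together with the uniformity of the constant $C$ in the bound.
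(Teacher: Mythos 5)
Your proof is correct and follows essentially the same route as the paper's: the same energy density $p^2+\alpha h_0(\D_t p)^2$, the same integration by parts in time transferring the derivative from $\D_t p$ onto $f$ (which is where the $\|\D_t f\|_{L^2([0,T]\times\RR_+)}$ term comes from), and the same Gronwall argument followed by the conversion of $\int_0^t\|f(s)\|_{L^2}^2\,ds$ back to $\|f(t)\|_{L^2}^2$. The only cosmetic difference is that the paper runs the whole computation pointwise in $x$ (treating the equation as a family of ODEs in $t$) and integrates over $\RR_+$ only at the end, whereas you integrate in $x$ from the outset; both are equally valid since no $x$-derivatives occur, and both constants in fact also carry a harmless dependence on $T$.
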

\begin{proof} Let $e = p^2 + h_0 \alpha  (\D_t p)^2 $. Then
$$
\D_t  e = 2 f \D_t p +  h_0 \D_t \alpha (\D_t p)^2    ,
$$
and thus 
$$
e =  2 f p   +  \int_0^t  (- 2 p \D_t f  + h_0 \D_t\alpha (\D_t p)^2 )  dt' .
$$
By Gronwall  and Cauchy Schwarz inequalities we conclude that 
$$
 | p (t, x) |^2 \lesssim  | f  (t, x) |^2   + \int_0^t | \D_t f (t' , x) |^2  dt' +\max\{1,\frac{\D_t \alpha}{\alpha}\}\int_0^t e(t',x)dt',
$$
and the lemma follows. 
\end{proof} 

\begin{proof}[Proof of Lemma~$\ref{lemsm815}$]
Before differentiating the equations in $h_0 \D_x$ we prove the necessary smoothness of the solution.  
Using Lemma~\ref{lemsm814} and  the first equation of \eqref{Lmodel} we gain that 
\begin{equation}
\label{boulon1}
\begin{cases}
h_0 \D_x u  &= -\tilde c h_0 \D_t p -c_1 \D_t c_2 h_0 p+h_0 f
, \\
h_0 \D_x \D_t u &= -\tilde c h_0  \D_t^2 p - (\D_t \tilde c +c_1\D_t c_2) h_0  \D_t p - \D_t (c_1 \D_t c_2) h_0 p+h_0 \D_t f
\end{cases}
\end{equation}
are in $C^0 ([0, T] ; L^2)$, where $\tilde c =   c_1 c_2$. Moreover, we can take one more derivative in time and 
$h_0 \D_x \D_t^2 u $ also belongs to $C^0 ([0, T] ; L^2)$. 

Next, together with the second equation of \eqref{Lmodel}, we draw that 
$$
\vp := h_0 {\bf l}  ( p - b_0 h_0 \D_x \D_t  u + b_1 \D_t u ) 
$$ belongs to  $C^0 ([0, T] ; L^2)$ as well as $\D_t \vp$. 
 Using \eqref{boulon1}, and after several commutations, we end up with the following
property that   
\begin{equation*}
\psi := (1 +  b_0 \tilde c  h_0  \D_t^2 ) (h_0 {\bf l}  p)  
\end{equation*}
and $\D_t \psi$ belong to $ C^0 ([0, T] ; L^2)$. 
By Lemma~\ref{Grosseficelle} (with $\alpha=b_0 \tilde c$), we deduce that $h_0 {\bf l } p $ is in $C^0 ([0, T] ; L^2)$ and therefore 
that $h_0 \D_x p \in C^0 ([0, T] ; L^2_2)$. 

Now we have enough smoothness to differentiate the equation in $h_0\D_x$, and obtain that 
$h_0 \D_x U $ is a weak solution of a system
\begin{equation}
\label{eqpbdm} 
\cL (h_0 \D_x U)  =  h_0 \D_x F  + C_1 \D_t U + C_0 U  ,
\end{equation}
where the commutators are computed as in Section~\ref{sectcommut}.   In particular, the right-hand-side belongs to 
$ \in L^2([0, T]; \VV')$ and hence  $h_0 \D_x U \in  C^0([0, T]; \VV')$. 

The Lemma~\ref{lemsm815} now easily follows by induction on $j$ differentiating  the equation \eqref{eqpbdm}
in powers of $h_0 \D_x$, estimating the commutators as in Section~\ref{sectcommut}, indeed in a much easier way since 
we assume here that the coefficients are infinitely smooth. We do not repeat the details here. 
\end{proof} 


\subsection{ Proof of Proposition \ref{propexist}} \label{sectlast}

It remains to relax the condition on the smoothness of the coefficients. 
The equations \eqref{eqlinex}  to solve read
\begin{equation}\label{defL8}
\cL_{a}[\uV,\D]V= F \in L^2_T\VV'^{n-1} , \qquad V_{| t = 0} = 0, 
\end{equation}
with either $a = 0$ or $a = \D_t (\underline u B' (\underline \vp))$. 
The assumption \eqref{initnul} implies that $F$ can be extended by $0$ for negative time, so that 
$ \D_t^j X_2^k F \in L^2(] - \infty, T]; \VV')$ for $k+ j \le n-1$. 

Our assumption is that the quantities  
 \begin{equation}
   {\mathfrak m}_1(\uV ;T) ,  \quad  \tilde {\mathfrak m}_2(\uV;T),\quad 
 {\mathfrak m} (\uV;T),  \quad 
\tilde {\mathfrak m}(\uV;T), \quad \tilde {\mathfrak s}(\cS;T) , 
 \end{equation}  
are finite.  We can extend $\uV$ for negative time and approximate it by a sequence $\uV^l $ of smooth functions such that the same quantities 
evaluated  at $\uV^l $ are bounded. Similarly, we approximate $F$ by a sequence of smooth functions $F^k$  
and the Proposition~\ref{propsmsol} provides us with a sequence $V^l $ such that $V^l = 0$ for $t < 0$ and 
$\D_t^k (h_0 \D_x)^j  V^l  \in  C^0 (]- \infty, T]; \VV) $ for  $k+j\le n-1$. 

By  Proposition~\ref{propHO}, we see that the sequence $V^l$ is bounded in $C^0_T( \VV^{n-1})$. Hence, passing 
to weak limits, we conclude that there exists a solution 
$V$ of \eqref{defL8} such that $V= 0$ for $t < 0$ and $\D_t^k (h_0 \D_x)^j  V \in  L^\infty (]- \infty, T]; \VV)$. 

Moreover, the proof  of Proposition~\ref{propsmsol} shows that  for $j + k \le n-1$
\begin{equation*}
\cL_{a}[\uV^l,\D] \D_t^j X_2^k V^l =  F^l_{j, k} , 
\end{equation*}
is bounded in $L^2 (]- \infty, T]; \VV')$ and 
 $ \D_t^j X_2^k V^l{}_{| t = 0} = 0 $. Thus passing to weak limits, we see that 
 $\D_t^j X_2^k V$  is a weak solution of an equation of the form
 \begin{equation*}
\cL_{a}[\uV,\D] \D_t^j X_2^k V  =  F_{j, k} \in  L^2 (]- \infty, T]; \VV') , \quad \D_t^j X_2^k V^l{}_{| t = 0} = 0 . 
\end{equation*}
Hence it is a strong solution and  $  \D_t^k (h_0 \D_x)^j  V \in   C^0(]- \infty, T]; \VV)$. 
This finishes the proof of Proposition~\ref{propexist}. 

 \section{The initial conditions}
 \label{Sect9}

\subsection{Invertibility of ${\bf d} $}

In this section we assume that $\mu > 0$, otherwise everything is trivial. The invertibility of ${\bf d} (\uV) $ is implicit in the proof of the energy estimates: we have already noticed that 
\begin{equation}
\label{dual36}
 \int_{\RR_+}  h_0  u  {\bf d } u   dx    \ge      c \big(  \| u\|^2_{L^2_1} +\mu  \| h_0 \D_x u\|^2_{L^2_1} \big) 
\end{equation} 
provided that $\underline q$ and  $ B'(\underline \vp)$ are bounded.  

We now focus on the inverse of ${\bf d} [\uV] $ at time $t = 0$, where $\uq = 1/2$ and 
$\underline \vp = x$. We call it 
${\bf d}^0$ : 
 \begin{equation}
\label{dd0} 
 {\bf d}^0 \, u =   u  +  \mu  \Big( {\bf l} \big[ -  \frac{1}{3}  h_0 \D_x u+   b  u \big]-  b  h_0 \D_x u+  4 b^2  u\Big) ,  
 \end{equation} 
 where $ {\bf l} = h_0 \D_x + 2 h_0' $ and  $b = \mez B'(x)$.  
The equation 
\begin{equation}
\label{eq92} 
{\bf d}^0 u = f 
\end{equation}
is seen as an elliptic "boundary" value problem on $\RR_+$, associated to the variational form  
\begin{equation}
\label{dual37}
  \int_{\RR_+}   h_0   \Big((1 +  4 \mu b^2 )u v  +   \frac{\mu }{3}  ( h_0 \D_x u) (h_0 \D_x v)  -  \mu  b h_0  (   v  \D_x u  
  + u   \D_x v)  
  \Big) dx . 
\end{equation} 
which by \eqref{dual36} is coercive on $\cH^1_1$. Using the density Lemma~\ref{lem82}, this implies
that ${\bf d}^0  $ is an isomorphism from $\cH^1_1 (\RR_+)$ to $\cH^{-1}_1 (\RR+)$. If $ f \in L^2_1$ the equation also gives that 
$ \mu (h_0 \D_x)^2 u \in L^2_1$. 
Commuting with derivatives $(h_0 \D_x)^k$, this implies the following 
\begin{lem}
\label{lem91} 
For all $k \ge 0$, 
${\bf d}^0  $ is an isomorphism from $\cH^{k+2}_1 (\RR_+)$ to $\cH^{k}_1 (\RR+)$  and there is a constant $C$, independent of 
$\mu$ such that 
\begin{equation}
\label{est95}
\| u\|_{\cH^k_1}  +   \mu \| u\|_{\cH^{k+2}_1}  \le   C \| {\bf d}^0 u\|_{\cH^k_1}.  
\end{equation} 
\end{lem}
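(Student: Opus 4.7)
The plan is to proceed by induction on $k$, exploiting one crucial structural feature of ${\bf d}^0$ that makes the estimate uniform in $\mu$. Expanding \eqref{dd0} one finds the decomposition
$$
{\bf d}^0 \;=\; \mathrm{Id} \;-\; \frac{\mu}{3} X_2^2 \;+\; \mu\, P_1,
$$
where $X_2 = h_0 \D_x$ and $P_1$ is a first-order differential operator in $X_2$ whose coefficients are bounded combinations of $h_0'$, $h_0''$, $b$, $b'$. Since the coefficient $-\mu/3$ in front of the top-order term $X_2^2$ is constant, $[X_2, -\frac{\mu}{3} X_2^2] = 0$, and therefore the commutator
$[X_2, {\bf d}^0] = \mu[X_2, P_1]$
is only of order one in $X_2$. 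This one-degree drop is what prevents loss of derivatives in the induction.

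\textbf{Base case $k = 0$.} This is essentially what the paragraph preceding the statement establishes. The variational form \eqref{dual37} is coercive on $\cH^1_1$ by \eqref{dual36}, so Lax--Milgram gives an isomorphism ${\bf d}^0 : \cH^1_1 \to \cH^{-1}_1$. Testing ${\bf d}^0 u = f$ against $h_0 u$ yields $\|u\|_{L^2_1} + \sqrt\mu\|X_2 u\|_{L^2_1} \lesssim \|f\|_{L^2_1}$ when $f\in L^2_1$. Solving algebraically in \eqref{dd0} for $-\frac{\mu}{3}X_2^2 u$ and taking the $L^2_1$ norm gives $\mu\|X_2^2 u\|_{L^2_1}\lesssim \|u\|_{L^2_1}+\|f\|_{L^2_1}+\sqrt\mu\cdot\sqrt\mu\|X_2 u\|_{L^2_1}\lesssim \|f\|_{L^2_1}$, which is \eqref{est95} at $k=0$.

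\textbf{Inductive step.} Assume \eqref{est95} holds at level $k-1$ and suppose $f\in \cH^k_1$. Applying $X_2^k$ to ${\bf d}^0 u = f$ and using that $[X_2,{\bf d}^0]=\mu[X_2,P_1]$ is first order in $X_2$, one obtains
$$
{\bf d}^0(X_2^k u) \;=\; X_2^k f + L_k u, \qquad L_k \;=\; \mu\sum_{j=0}^{k-1} X_2^{k-1-j}[X_2,P_1]X_2^j,
$$
where $L_k$ is a differential operator of order exactly $k$ in $X_2$ with bounded coefficients and an overall factor $\mu$. Hence $\|L_k u\|_{L^2_1}\lesssim \mu\|u\|_{\cH^k_1}$, and the inductive hypothesis controls this source term by $\mu\|u\|_{\cH^{k+1}_1}\lesssim \|f\|_{\cH^{k-1}_1}\le \|f\|_{\cH^k_1}$. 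Applying the $k=0$ estimate to $X_2^k u$ then yields $\|X_2^k u\|_{L^2_1}+\mu\|X_2^k u\|_{\cH^2_1}\lesssim \|f\|_{\cH^k_1}$, and combining with the inductive hypothesis gives \eqref{est95} at level $k$. The injectivity and surjectivity of ${\bf d}^0:\cH^{k+2}_1\to \cH^k_1$ follow from the $k=0$ isomorphism together with the newly obtained a priori estimate.

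\textbf{Main obstacle.} The one genuinely delicate point is the reduction in order of the commutator $[X_2,{\bf d}^0]$: if it were of order two rather than one, each application of $X_2$ would lose a derivative and the induction would fail to propagate in a $\mu$-uniform way. Everything rests on the (specific) fact that the top-order coefficient of ${\bf d}^0$ is constant, which is precisely the structure provided by the physical model. All remaining work is routine commutator accounting, with the only subtlety being that uniformity in $\mu$ requires one to repeatedly trade a factor $\mu$ for two factors $\sqrt{\mu}$ via the base-case bound $\sqrt{\mu}\|X_2 u\|_{L^2_1}\lesssim \|f\|_{L^2_1}$.
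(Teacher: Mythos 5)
Your proof is correct and follows essentially the same route as the paper: coercivity of the variational form \eqref{dual37} (via \eqref{dual36}) for the base case, recovery of $\mu X_2^2 u$ directly from the equation, and then induction by commuting with $(h_0\D_x)^k$, which is exactly the one-line argument the paper gives. The only remark worth making is that your ``main obstacle'' is slightly overstated: even a second-order commutator $[X_2,{\bf d}^0]$ carrying the prefactor $\mu$ would be absorbed by the inductive bound $\mu\|u\|_{\cH^{k+1}_1}\lesssim\|f\|_{\cH^{k-1}_1}$, though the constant top-order coefficient does make the bookkeeping cleaner.
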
 

Next we consider the action of ${\bf d}^0$ in other  weighted spaces $L^2_s$ and also  in the usual Sobolev spaces. 
This operator enters in the category of degenerate elliptic boundary value problems and  we refer to \cite{BolleyCamus} for a general analysis of such problems. However, for the convenience of the reader,  we include short proofs of the needed results. 
Our goal is to prove the following estimates. 
\begin{prop}
\label{prop92} 
Given an integer $k$, there is $\eps_k > 0$ such that if $\sqrt{\mu}h'(0) < \eps_k$, and $f \in \cHH^k (\RR_+) $, 
the solution $ u \in \cH^2_1(\RR_+)$ of \eqref{eq92}, belongs to $\cHH^k (\RR_+)$ as well as 
$ h_0 \D_x u $ and $(h_0 \D_x)^2 u$.  Moreover,  there is a constant $C$ that depends only on $\eps_k$ such that 
\begin{equation}
\| u\|_{\cHH^k} + \sqrt \mu  \| h_0 \D_x u\|_{\cHH^k} +  \mu \| (h_0\D_x)^2 u\|_{\cHH^k} \le C \| f \|_{\cHH^k}. 
\end{equation} 
\end{prop}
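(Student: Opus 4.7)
\emph{Plan.} I would proceed by induction on $k$, combining a direct energy identity at the base level with commutator arguments at higher levels, and handling the degeneracy at $x=0$ through the smallness condition on $\sqrt{\mu}h_0'(0)$. Existence and the $\cH^2_1$ a priori bound are already provided by Lemma~\ref{lem91}; the task is only to upgrade the weighted regularity to unweighted Sobolev regularity.

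For the base case $k=0$, I would multiply ${\bf d}^0 u = f$ by $u$ (with no weight) and integrate over $\RR_+$. The boundary contribution at $x=0$ from the dispersive term $-\tfrac{\mu}{3}{\bf l}(h_0\D_x u)$ vanishes since $h_0(0)=0$. Using the variational structure behind \eqref{dual37}, one obtains
\[
\|u\|_{L^2}^2 + \tfrac{\mu}{3}\|h_0\D_x u\|_{L^2}^2 \le C\|f\|_{L^2}\|u\|_{L^2} + \text{l.o.t.},
\]
where the lower-order terms involve $h_0'$ and $b=\tfrac12 B'$, and are absorbed by Cauchy--Schwarz and Young. The estimate on $\mu\|(h_0\D_x)^2u\|_{L^2}$ is then read off from the equation itself, by isolating the leading dispersive term. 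The justification of this identity at the $\cH^2_1$ regularity level of Lemma~\ref{lem91} is handled by the density argument of Lemma~\ref{lem82}. The inductive step consists in applying $\D_x$ to the equation. Using the commutator
\[
[\D_x, (h_0\D_x)^2] = \big((h_0')^2 + h_0 h_0''\big)\D_x + 2h_0 h_0'\D_x^2,
\]
one gets
\[
{\bf d}^0(\D_x u) = \D_x f - \tfrac{\mu}{3}(h_0')^2 \D_x u - \tfrac{2\mu}{3}h_0 h_0'\D_x^2 u + R(u),
\]
with $R(u)$ controlled by the previous step. Applying the base-case estimate to $\D_x u$ in place of $u$, the only delicate contribution is $\tfrac{\mu}{3}(h_0'(0))^2 \|\D_x u\|_{L^2}^2$, which may be absorbed into the left-hand side provided $\mu(h_0'(0))^2$ is small. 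Iterating $k$ times, the leading commutator $[\D_x^k,(h_0\D_x)^2]$ produces a diagonal contribution of size $\sim k^2(h_0'(0))^2\mu\|\D_x^k u\|_{L^2}^2$, absorbable precisely when $\sqrt{\mu}h_0'(0)\lesssim 1/k$, which dictates the choice $\eps_k\sim 1/k$.

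The main obstacle is handling the mixed second-order commutator term $h_0 h_0'\D_x^{k+1} u$: it has the same differential order as the principal dispersive contribution $(h_0\D_x)^2\D_x^k u$, so it cannot simply be bounded but must be integrated by parts and recombined with the main energy term. A clean way to handle this, as suggested by the introduction and parallel to Section~\ref{Sell}, is to use the change of variable $y=\int_1^x dx'/h_0(x')$, which conjugates ${\bf d}^0$ to the uniformly elliptic operator $1-\tfrac{\mu}{3}\D_y^2 + (\text{l.o.t.})$ on a half-line. Under this map, $\cHH^k(\RR_+)$ regularity near $x=0$ corresponds to exponentially weighted $L^2$ regularity on $(-\infty,Y_0]$ with weight rate $s=(k-\tfrac12)h_0'(0)$ at $-\infty$. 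The shifted symbol $1+\tfrac{\mu}{3}(\eta+is)^2$ remains uniformly elliptic precisely when $\tfrac{\mu}{3}s^2<1$, i.e. when $\sqrt{\mu}h_0'(0)(k-\tfrac12)<\sqrt{3}$, giving the explicit form $\eps_k=\sqrt{3}/(k-\tfrac12)$. Elliptic estimates on the transported equation in the spirit of Proposition~\ref{propell75} (exponential weights), together with standard interior elliptic theory where $h_0\ge c>0$ and a partition of unity, would then close the argument and produce the full bound on $\|u\|_{\cHH^k}+\sqrt{\mu}\|h_0\D_x u\|_{\cHH^k}+\mu\|(h_0\D_x)^2 u\|_{\cHH^k}$.
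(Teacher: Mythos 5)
Your route is recognizably the same analysis as the paper's in different clothes: the change of variables $y=\int_1^x dx'/h_0$ is exactly the Mellin/logarithmic reduction, and ``the shifted symbol $1+\tfrac{\mu}{3}(\eta+is)^2$ must stay elliptic'' is exactly the paper's indicial condition. The paper instead freezes coefficients at the origin to get the model operator ${\bf d}^{00}=-\tfrac{\mu\alpha^2}{3}(x\D_x-r_1)(x\D_x-r_2)$, inverts it by an explicit kernel $K(x/y)$ with $\|x^{-s-\mez}K\|_{L^1(dx/x)}=e_\mu(s-\mez)^{-1}$ (Lemma~\ref{lem93}), transfers the bound to ${\bf d}^0$ by a bootstrap in the weight (Lemma~\ref{lem95}), and only then commutes with $h_0^k\D_x^k$ (Proposition~\ref{prop95}), exploiting the cancellation of the top-order terms in $[h_0^k\D_x^k,(h_0\D_x)^2]$. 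What the explicit kernel buys, and what your argument loses, is sharpness of the condition: your threshold $\tfrac{\mu}{3}s^2<1$ ignores the first-order part $-\tfrac{2\mu\alpha^2}{3}x\D_x$ of ${\bf d}^{00}$ (coming from ${\bf l}(h_0\D_x u)=(h_0\D_x+2h_0')(h_0\D_x u)$) and the $B'$ terms; the correct condition is $e_\mu(k-\mez)>0$, i.e.\ $\tfrac{\mu\alpha^2}{3}k(k+1)<1+\mu(2\beta+\tfrac{\alpha}{2})^2$, so your claimed $\eps_k=\sqrt{3}/(k-\mez)$ is not the right constant (harmless for the statement, which only asks for some $\eps_k$, but you should not present it as exact).

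There are three concrete gaps. First, your base case does not close as written: pairing ${\bf d}^0u$ with $u$ without weight produces the cross terms $-\tfrac{\mu}{3}\int h_0'\,u\,(h_0\D_x u)$ and $-2\mu\int b\,u\,(h_0\D_x u)$, and absorbing them costs positivity of the $u^2$ coefficient unless $\mu\big(\|h_0'\|_{L^\infty}^2+\|B'\|_{L^\infty}^2\big)$ is small --- a hypothesis not contained in $\sqrt{\mu}\,h_0'(0)<\eps_k$ (the paper gets the $k=0$, $s=0$ case unconditionally since $e_\mu(-\mez)=1+\mu(2\beta+\tfrac{\alpha}{2})^2\ge1$). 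Second, the dictionary ``$\D_x^ku\in L^2$ near $x=0$ $\Leftrightarrow$ $e^{-(k-\mez)h_0'(0)y}\D_y^kv\in L^2$'' is not automatic: $\D_x^k=h_0^{-k}(h_0\D_x)^k+(\hbox{lower order with singular coefficients})$, and controlling those lower-order terms is precisely the commutator computation \eqref{supercom} plus the weight bootstrap of Lemma~\ref{lem95}; your one-line appeal to ``elliptic estimates in the spirit of Proposition~\ref{propell75}'' (which treats an evolution problem, not a pure boundary value problem, and has smooth nondegenerate coefficients) does not supply this. Third, even granting invertibility of the transported operator on the exponentially weighted space, you must show that \emph{the} solution $u\in\cH^2_1$ of Lemma~\ref{lem91} coincides with the weighted-space solution; this requires excluding homogeneous solutions $\sim x^{r_1},x^{r_2}$ from the relevant class (the analogue of the $c\,h^{-\alpha}$ discussion in Proposition~\ref{Hardy2}), which the paper's kernel representation handles automatically and your sketch does not address.
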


The difficulty is only near the origin, where the equation ${\bf d}^0$ has a regular singularity. 
More precisely, ${\bf d}^0$ is a perturbation of 
  \begin{equation}
\label{dd00} 
 {\bf d}^{00} \, u =   u  +  \mu  \alpha (x \D_x + 2) ( -  \frac{\alpha }{3}  x \D_x  +   \beta)u - \mu \alpha  \beta     x \D_x u+  4 \mu \beta^2 u. 
 \end{equation} 
 where $\alpha = h_0'(0)$ and $\beta = b(0)$, in the sense that, near $x = 0$, 
 \begin{equation}
 \label{eq98} 
  {\bf d}^{0} =   {\bf d}^{00} + \mu \big( c_1  x^3 \D_x^2  +  c_2 x^2 \D_x + c_3  x \big). 
 \end{equation} 
 for some coefficients $c_j$. 
 
 Associated to $ {\bf d}^{00}$ is the indicial equation $e_\mu(r) = 0$, the roots of which are the exponents $r$ such that 
 $x^r$ is a solution of the homogeneous equation  $ {\bf d}^{00}  x^r = 0 $. Here, 
  $$
  e_\mu (r) =  1   -   \mu   \frac{\alpha^2 }{3}  r^2   -   \mu  \frac{2 \alpha^2}{3}     r + \mu \big( 2 \alpha \beta + 4 \beta^2 \big) .
 $$
 Note that 
 $$
 e_\mu (-1) = 1 + \mu \big( \frac{\alpha^2}{3} + 2 \alpha \beta + 4 \beta^2 \big) \ge 1 , 
 $$
 and 
  $$
 e_\mu (- \mez )   =   1   -   \mu   \frac{\alpha^2 }{12}         + \mu \big( \frac{\alpha^2}{3} + 2 \alpha \beta + 4 \beta^2 \big) \ge 1
 $$
  so  the indicial equation has two real roots $r_1 < r_2$ which satisfy 
 \begin{equation}
 r_1 < -1 < - \mez < r_2. 
 \end{equation} 
 The indicial equation determines in which weighted spaces the operator ${\bf d}^{00}$ is invertible. 
 
 \begin{lem}
 \label{lem93} 
 Let $s \in \RR$ be such that $e_\mu (  s - \mez) > 0$.  Then  there is a constant $C$ such that 
 \begin{equation}
 \begin{aligned}
 \|  x^ {- s} u \|_{L^2} + \sqrt \mu \| x^{ 1- s}  \D_x u \|_{L^2} + \mu &  \| x^{- s}   (x \D_x )^2 u \|_{L^2}
 \\
 &  \le C e_\mu ( s - \mez)^{-1} \| x^{-s}   {\bf d}^{00} u \|_{L^2} ,  
 \end{aligned} 
 \end{equation} 
and ${\bf d}^{00}$ is an isomorphism between the spaces associated to these norms.

 \end{lem}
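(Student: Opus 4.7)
The plan is to reduce the estimate for ${\bf d}^{00}$ to a constant-coefficient Fourier multiplier estimate on $\RR$, using that ${\bf d}^{00}$ is an Euler (equidimensional) operator. First I would perform the change of variables $y = \ln x$, $v(y) = u(e^y)$; since $x\D_x = \D_y$ this converts ${\bf d}^{00}$ into the constant-coefficient operator
\[
P(\D_y) v \ := \ v + \mu\alpha(\D_y + 2)\bigl(-\tfrac{\alpha}{3}\D_y + \beta\bigr) v - \mu\alpha\beta \D_y v + 4\mu\beta^2 v,
\]
and a direct expansion shows that the polynomial $P(r)$ is exactly the indicial polynomial $e_\mu(r)$ given in the text.

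Next I would absorb the weight $x^{-s}$ and the measure $dx = e^y\,dy$ by exponential conjugation: setting $w(y) = e^{(\frac{1}{2}-s)y} v(y)$, the identity $e^{cy} P(\D_y) e^{-cy} = P(\D_y - c)$ applied with $c = \tfrac{1}{2} - s$ yields
\[
\|x^{-s} u\|_{L^2(dx)} = \|w\|_{L^2(dy)}, \qquad \|x^{-s} {\bf d}^{00} u\|_{L^2(dx)} = \|P(\tilde D) w\|_{L^2(dy)},
\]
\[
\|x^{1-s}\D_x u\|_{L^2(dx)} = \|\tilde D w\|_{L^2(dy)}, \qquad \|x^{-s}(x\D_x)^2 u\|_{L^2(dx)} = \|\tilde D^2 w\|_{L^2(dy)},
\]
where $\tilde D := \D_y + (s - \tfrac{1}{2})$. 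The lemma is thereby reduced to the constant-coefficient inequality
\[
\|w\|_{L^2(\RR)} + \sqrt\mu \|\tilde D w\|_{L^2(\RR)} + \mu \|\tilde D^2 w\|_{L^2(\RR)} \ \leq \ C\, e_\mu(s-\tfrac{1}{2})^{-1}\, \|P(\tilde D) w\|_{L^2(\RR)},
\]
to be proved by Plancherel.

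On the Fourier side $P(\tilde D)$ has symbol $\sigma(\xi) = P(i\xi + s - \tfrac{1}{2}) = e_\mu(i\xi + s - \tfrac{1}{2})$; writing out real and imaginary parts gives
\[
\re \sigma(\xi) = e_\mu(s - \tfrac{1}{2}) + \tfrac{\mu\alpha^2}{3}\xi^2, \qquad \im \sigma(\xi) = -\tfrac{2\mu\alpha^2}{3}(s + \tfrac{1}{2})\,\xi.
\]
Under the hypothesis $e_\mu(s-\tfrac{1}{2})>0$ the real part is manifestly positive with no cancellation possible, and
\[
|\sigma(\xi)| \ \geq \ \re \sigma(\xi) \ \geq \ e_\mu(s-\tfrac{1}{2}) + \tfrac{\mu\alpha^2}{3}\xi^2.
\]
Dividing on the Fourier side by $\sigma$ and using the trivial bound $\sqrt\mu\,|\xi| \leq \tfrac{1}{2} + \tfrac{1}{2}\mu\xi^2$ to handle the intermediate term, this lower bound dominates the Fourier symbol $1 + \sqrt\mu\,|i\xi + s - \tfrac{1}{2}| + \mu\,|i\xi + s - \tfrac{1}{2}|^2$ of the left-hand side up to the prefactor $C\, e_\mu(s-\tfrac{1}{2})^{-1}$, with $C$ depending only on $\alpha$, $\beta$ and $s$. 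The isomorphism claim then follows immediately: since $\sigma$ is nowhere vanishing, convolution with $\cF^{-1}(1/\sigma)$ produces an explicit inverse on the reduced side that maps $L^2(\RR)$ boundedly into the norm on the left-hand side, and undoing the two coordinate changes yields $({\bf d}^{00})^{-1}$ as an isomorphism between the spaces associated to the two norms in the statement.

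The main obstacle in the argument is really just the coercivity of $|\sigma(\xi)|$, but this reduces to inspecting the explicit real part, which is a sum of two nonnegative terms under the hypothesis. The considerably more delicate issue --- proving the analogous estimate for the full operator ${\bf d}^0 = {\bf d}^{00} + \mu(c_1 x^3 \D_x^2 + c_2 x^2 \D_x + c_3 x)$ of \eqref{eq98}, whose variable-coefficient remainder is $\mu$-small but not small in the dual variable --- is what motivates the smallness hypothesis $\sqrt{\mu}\,h'(0)<\eps_k$ in Proposition~\ref{prop92}, but that step lies beyond the scope of the present lemma.
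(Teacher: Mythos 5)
Your proof is correct, and it takes a genuinely different route from the paper's. The paper factors ${\bf d}^{00}=-\mu\frac{\alpha^2}{3}(x\D_x-r_1)(x\D_x-r_2)$ through the roots $r_1<r_2$ of the indicial polynomial, writes the inverse explicitly as a Mellin convolution with the kernel $K(x)\propto x^{r_2}$ for $x\le 1$, $x^{r_1}$ for $x\ge1$, and applies Young's inequality on $(\RR_+,dx/x)$; the hypothesis $e_\mu(s-\mez)>0$ appears there as the integrability condition $r_1<s-\mez<r_2$ for the weighted kernel, and the exact computation $\|x^{s-\mez}K\|_{L^1}=e_\mu(s-\mez)^{-1}$ produces the prefactor, with the derivative bounds obtained from the companion kernel $K'$ and the observation $r_j=O(\mu^{-1/2})$. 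You instead pass to $y=\ln x$ and conjugate by the exponential weight, reducing everything to a single Plancherel estimate for the constant-coefficient symbol $\sigma(\xi)=e_\mu(i\xi+s-\mez)$; your identities $\re\sigma=e_\mu(s-\mez)+\frac{\mu\alpha^2}{3}\xi^2$ and $\im\sigma=-\frac{2\mu\alpha^2}{3}(s+\mez)\xi$ check out, and the lower bound $|\sigma|\ge\re\sigma$ does dominate $1+\sqrt\mu|i\xi+s-\mez|+\mu|i\xi+s-\mez|^2$ up to $C\,e_\mu(s-\mez)^{-1}$ (note $\mu(s-\mez)^2$ is in fact automatically bounded under the hypothesis, so the $s$-dependence of your constant is harmless, and it is consistent with the statement in any case). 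What the paper's route buys is the sharp constant from the explicit $L^1$ norm and a concrete display of the homogeneous solutions $x^{r_1},x^{r_2}$, which is conceptually useful for the uniqueness step in Proposition \ref{Hardy2}-style arguments and in Lemma \ref{lem95}; what yours buys is that coercivity becomes a one-line inspection of the real part, with no need to compute the roots or the Green's kernel, and it aligns with the conjugation technique the paper itself uses in \S\ref{sectHline}. The only (cosmetic) omission is the forward bound $\|x^{-s}{\bf d}^{00}u\|_{L^2}\lesssim$ LHS needed for the isomorphism claim, but that is just the reverse symbol inequality $|\sigma(\xi)|\lesssim 1+\mu\xi^2$, which is immediate from your formulas.
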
 
 \begin{proof} 
 
 The equation
 \begin{equation}
 {\bf d}^{00} u = - \mu \frac{\alpha^2} {3} ( x \D_x - r_1) (x \D_x - r_2 ) u = f 
 \end{equation}
can be solved explicitly, and its inverse in $L^2_1$ is given by 
 \begin{equation}
 \label{eq912} 
 ({\bf d}^{00})^{-1}  f (x) = K f(x) :  =  \int_0^\infty  K(x /  y) f(y) \frac{dy}{y} 
 \end{equation}
 where
 \begin{equation}
 K(x ) = \frac{3}{\mu \alpha^2 (r_2 - r_1) }  \left\{ \begin{aligned}  x^{r_2} , \quad x \le 1, 
 \\  x^{r_1} \quad , x \ge  1 . \end{aligned}  \right.
 \end{equation}
On $\RR_+$ with the measure $dx / x$, on has the convolution estimates 
$$
  \| x^s K  f \|_{L^2(dx/x) }   \le \|  x^s K \|_{L^1 (dx/x) } \| x^s  f \|_{L^2 (dx/x) } .
$$
Applied to $- s + \mez$ this gives
\begin{equation*}
 \| x^{-s} K  f \|_{L^2 }  \le \|  x^{- s- \mez}  K \|_{L^1  } \| x^{- s}   f \|_{L^2  } .
\end{equation*} 
We note that $ x^{- s- \mez}  K \in L^1 (\RR_+) $ if and only if $ s - \mez \in ]r_1, r_2[$, that is 
if and only if 
$ e_\mu (   s -  \mez ) > 0 $ and then
$$
\|  x^{s- \mez}  K \|_{L^1  }  =    \frac{3}{\mu \alpha^2 (r_2 - r_1) } \Big( \frac{1}{ r_2 - s + \mez} -
\frac{1}{ r_1 -  s + \mez} \Big) = \frac{1}{e (  s - \mez)} . 
$$
This implies that 
\begin{equation}
 \| x^{-s} K  f \|_{L^2 }  \le e_\mu(s - \mez)^{-1}  \| x^{- s}   f \|_{L^2  } .
\end{equation}

There is an expression for $ x \D_x K f$  similar to \eqref{eq912}, with a  kernel 
$K'$, which is  $r_2 K $ for $x \le 1$ and $r_1 K$ for $x \ge 1$. 
Because the roots $r _j $ are $O (\mu ^{- 1/2}) $, one obtains the desired estimate for 
$\sqrt \mu x \D_x Kf$. 
The estimate for $  \mu (x \D_x)^2 Kf$ follows then from the equation. 
\end{proof}

These estimates are then transported to ${\bf d}^0$. 
 \begin{lem}
 \label{lem95} 
 For all $\delta > 0$, there is a constant C such that for  $s \ge 0 $ and $\mu \le 1$  satisfying  $e_\mu (  s - \mez) \ge \delta$
 and $f$ such that $ h_0^{-s} f \in L^2(\RR_+)$,  
 the solution $u \in \cH^2_1  $ of \eqref{eq92}  belongs to $h_0^s L^2$ as well as $h_0 \D_x u$ and $(h_0 \D_x)^2 h_0$, and satisfies  
 \begin{equation}
 \begin{aligned}
 \|  h_0^ {- s} u \|_{L^2} + \sqrt \mu \| h_0^{ 1- s}  \D_x u \|_{L^2} + \mu &  \| h_0^{- s}   (h_0 \D_x )^2 u \|_{L^2}
 \\
 &  \le C \| h_0^{-s}  f  \|_{L^2}.   
 \end{aligned} 
 \end{equation} 
\end{lem}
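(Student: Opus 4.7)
\medbreak
\noindent\textbf{Proof proposal.}

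The plan is to localize the problem near and away from the origin and exploit the fact that ${\bf d}^0$ is a small perturbation of ${\bf d}^{00}$ in the region where the equation is singular, so that the explicit estimates of Lemma~\ref{lem93} can be transported. Choose a smooth cutoff $\chi$ with $\chi(x)=1$ for $x\leq \eta/2$, $\chi(x)=0$ for $x\geq \eta$, where $\eta>0$ is small (to be fixed depending on $\delta$), and write $u=u_1+u_2$ with $u_1=\chi u$ and $u_2=(1-\chi)u$. By Lemma~\ref{lem91} (with $k=0$) and the fact that $h_0^{-s}f\in L^2$ together with the boundedness of $h_0$ imply $f\in L^2_1$ with $\Vert f\Vert_{L^2_1}\lesssim \Vert h_0^{-s}f\Vert_{L^2}$, we already have $u\in\cH^2_1$ and
$$\Vert u\Vert_{L^2_1}+\mu\Vert u\Vert_{\cH^2_1}\lesssim \Vert h_0^{-s}f\Vert_{L^2}.$$
This background estimate will be used to control commutator terms.

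Away from the origin (i.e., on the support of $1-\chi$) the weight $h_0^{-s}$ is bounded, so everything reduces to a standard elliptic estimate. The function $u_2$ satisfies ${\bf d}^0u_2=(1-\chi)f-[{\bf d}^0,\chi]u$, and since ${\bf d}^0-\mathrm{Id}$ is $O(\mu)$, the commutator $[{\bf d}^0,\chi]$ carries a factor $\mu$, is supported in $[\eta/2,\eta]$ where $h_0$ is comparable to $1$, and involves at most two $\D_x$ derivatives of $u$; hence $\Vert[{\bf d}^0,\chi]u\Vert_{L^2}\lesssim \mu\Vert u\Vert_{H^2([\eta/2,\eta])}\lesssim\mu\Vert u\Vert_{\cH^2_1}\lesssim\Vert h_0^{-s}f\Vert_{L^2}$. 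A direct application of Lemma~\ref{lem91} to the equation for $u_2$ then yields the three norms of Lemma~\ref{lem95} applied to $u_2$, up to the constant $C$.

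For $u_1$, near the origin $h_0=\alpha x+O(x^2)$ and, by \eqref{eq98},
$${\bf d}^{00}u_1={\bf d}^0u_1-\mu\bigl(c_1x^3\D_x^2+c_2x^2\D_x+c_3x\bigr)u_1=\chi f+[{\bf d}^0,\chi]u-\mu\bigl(c_1x^3\D_x^2+c_2x^2\D_x+c_3x\bigr)u_1.$$
Apply Lemma~\ref{lem93} with the given $s$ (permitted because $e_\mu(s-\mez)\geq\delta$) and use $x\simeq h_0/\alpha$ on $\mathrm{supp}\,\chi$ to estimate $\Vert x^{-s}\chi f\Vert_{L^2}\lesssim\Vert h_0^{-s}f\Vert_{L^2}$ and $\Vert x^{-s}[{\bf d}^0,\chi]u\Vert_{L^2}\lesssim\Vert h_0^{-s}f\Vert_{L^2}$ (the latter by the preceding background bound, since $x^{-s}$ is bounded on $[\eta/2,\eta]$). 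The perturbation term is rewritten using $x^3\D_x^2=x\bigl((x\D_x)^2-x\D_x\bigr)$ and $x^2\D_x=x(x\D_x)$, giving
$$\mu\Vert x^{-s}(c_1x^3\D_x^2+c_2x^2\D_x+c_3x)u_1\Vert_{L^2}\leq C\eta\,\bigl(\mu\Vert x^{-s}(x\D_x)^2u_1\Vert_{L^2}+\sqrt\mu\Vert x^{1-s}\D_xu_1\Vert_{L^2}+\Vert x^{-s}u_1\Vert_{L^2}\bigr),$$
where we used $\mu\leq\sqrt\mu$ for $\mu\in[0,1]$. Choosing $\eta$ so small that $C\delta^{-1}\eta<\mez$ and absorbing this term into the left-hand side of the estimate furnished by Lemma~\ref{lem93} yields
$$\Vert x^{-s}u_1\Vert_{L^2}+\sqrt\mu\Vert x^{1-s}\D_xu_1\Vert_{L^2}+\mu\Vert x^{-s}(x\D_x)^2u_1\Vert_{L^2}\lesssim \Vert h_0^{-s}f\Vert_{L^2}.$$
Replacing $x$ by $h_0$ on the support of $\chi$ (the discrepancies $h_0-\alpha x=O(x^2)$ are treated by a further absorption argument of the same type) and combining with the estimate for $u_2$ gives the conclusion.

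The main obstacle is the absorption step in the last paragraph: although each factor of $x$ coming from $\mathbf{d}^0-\mathbf{d}^{00}$ delivers an $\eta$ gain on the support of $\chi$, one must track carefully how the three norms on the left-hand side of Lemma~\ref{lem93} (weighted by different powers of $\mu$) interact with the perturbation terms, to verify that all of them can be dominated simultaneously by $\eta$ times the left-hand side. The constraint $\mu\in[0,1]$ and the specific combination $\mu(x\D_x)^2$, $\sqrt\mu\,x\D_x$, $\mathrm{Id}$ fit the perturbation precisely, but only after rewriting $x^k\D_x^k$ in terms of powers of $x\D_x$.
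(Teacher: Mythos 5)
Your overall strategy — localize near the origin, treat ${\bf d}^0$ as a perturbation of ${\bf d}^{00}$ via \eqref{eq98}, and invoke Lemma~\ref{lem93} — is the right one, and your bookkeeping of the powers of $\mu$ (matching $\mu(x\D_x)^2$, $\sqrt\mu\, x\D_x$, $\mathrm{Id}$ against the three perturbation terms after writing $x^3\D_x^2=x((x\D_x)^2-x\D_x)$) does work out. But the absorption step has a genuine gap that is not the one you flag at the end. To absorb $C\eta\bigl(\|x^{-s}u_1\|+\sqrt\mu\|x^{1-s}\D_xu_1\|+\mu\|x^{-s}(x\D_x)^2u_1\|\bigr)$ into the left-hand side, you must already know that these quantities are \emph{finite}; otherwise you are subtracting $\infty$ from $\infty$. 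Equally, to apply Lemma~\ref{lem93} at weight $x^{-s}$ you must first know that $x^{-s}{\bf d}^{00}u_1\in L^2$, and since ${\bf d}^{00}u_1$ contains $-\mu(c_1x^3\D_x^2+\cdots)u_1$, this already requires $x^{1-s}(x\D_x)^j u_1\in L^2$ — one power of $x$ better than the target, but still far better than the a priori information $u\in\cH^2_1$ (which only gives the weight $x^{1/2}$). For $s$ larger than about $1/2$ your one-shot argument therefore never gets off the ground, no matter how small $\eta$ is.

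The paper closes exactly this gap by not fixing the weight: it bootstraps the integrability, using that the perturbation in \eqref{eq98} gains precisely one factor of $x$. If $u\in\cH^2_{1-2j}$ near the origin, then $f-{\bf d}^{00}u={\bf d}^0u-{\bf d}^{00}u$ lies in $L^2_{-1-2j}$ (one power of $h_0$ better), so the isomorphism statement of Lemma~\ref{lem93} at the intermediate exponent upgrades $u$ to $\cH^2_{1-2(j+1)}$, with a genuine membership conclusion and no absorption (hence no smallness of the support is needed — the paper works on $[0,2]$). The intermediate indicial conditions are automatic because $e_\mu$ is a concave quadratic with $e_\mu(-\mez)\ge1$ and $e_\mu(s-\mez)\ge\delta$, so $e_\mu\ge\min(1,\delta)$ on the whole intermediate range. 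You could also repair your argument by regularizing the weight (replace $x^{-s}$ by $(x+\epsilon)^{-s}$ and let $\epsilon\to0$), but as written the proof does not establish the qualitative conclusion $h_0^{-s}u\in L^2$, which is part of the statement. The remaining points (the commutator $[{\bf d}^0,\chi]u$, the treatment of $u_2$, the replacement of $x$ by $h_0$) are sound, modulo the fact that Lemma~\ref{lem91} controls $\mu\|h_0\D_xu_2\|_{L^2_1}$ rather than $\sqrt\mu\|h_0\D_xu_2\|_{L^2_1}$, for which you should instead quote the coercivity \eqref{dual36}.
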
 

\begin{proof}
The estimates for $x \ge 1$ are immediate since there  $h_0 \approx 1$. Therefore it is sufficient 
to prove the estimate when $u$ is supported in $[0, 2 ]$. There we use \eqref{eq98} and Lemma~\ref{lem93} 
to improve by induction the integrability property from $  u \in  \cH^2_1$ to 
$u \in \cH^2_{ 1 - 2j}$  for  $2j -1 \le s$ and finally to $\cH^2_{- s}$, noticing that the condition 
$e_\mu (  s - \mez) > 0$ means that $s - \mez < r_2$ (recall that $r_1$ is negative) and thus is satisfied in the intermediate steps 
$2j  - 3/2  <  r_2$.
\end{proof}

\begin{prop}
\label{prop95}
Given an integer $k$ and $\mu \le 1$  satisfying 
\begin{equation}
\label{condindic} 
e_\mu (k - \mez)   > 0  
\end{equation} 
 and $f \in \cHH^k (\RR_+) $, 
the solution $ u \in \cH^2_1(\RR_+)$ of \eqref{eq92}, belongs to $\cHH^k (\RR_+)$ as well as 
$ h_0 \D_x u $ and $(h_0 \D_x)^2 u$.  Moreover,    For all $\delta > 0$, there is a constant C such that if $e_\mu (  s - \mez) \ge \delta$, then 
\begin{equation}
\| u\|_{\cHH^k} + \sqrt \mu  \| h_0 \D_x u\|_{\cHH^k} +  \mu \| (h_0\D_x)^2 u\|_{\cHH^k} \le  C \| f \|_{\cHH^k}. 
\end{equation} 
\end{prop}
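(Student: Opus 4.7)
The proof proceeds by induction on $k$, the case $k=0$ being Lemma~\ref{lem95} with $s=0$ (whose indicial condition $e_\mu(-1/2) \ge 1$ is automatic). Assume the result through order $k-1$ and let $f \in \cHH^k$. By interior elliptic regularity for ${\bf d}^0$ on $[1,\infty)$ (where $h_0\approx 1$ and the operator has smooth bounded coefficients), a cutoff reduces the problem to $u$ supported in $[0,2]$, on which $h_0 \approx x$ and the standard Sobolev norm admits the equivalence
$$
\|u\|_{\cHH^k}^2 \;\approx\; \sum_{j=0}^{k}\,\|h_0^{-(k-j)} X_2^j u\|_{L^2}^2,\qquad X_2 := h_0\partial_x,
$$
obtained by inverting the triangular relation $\partial_x^j = h_0^{-j}\sum_{l\le j}\beta_{jl}(x)X_2^l$ (with $\beta_{jj}(0)\ne 0$) and absorbing the terms with $l<j$ via Corollary~\ref{coroH2}. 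It therefore suffices to bound each weighted quantity $\|h_0^{-(k-j)}X_2^j u\|_{L^2}$ (together with its counterparts containing one or two extra $X_2$'s, multiplied by $\sqrt\mu$ and $\mu$ respectively) by $\|f\|_{\cHH^k}$.

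For each $0\le j\le k$, applying $X_2^j$ to \eqref{eq92} gives
$$
{\bf d}^0 (X_2^j u) \;=\; X_2^j f \;+\; \mu\!\!\sum_{l\le j+1}\gamma_{jl}(x)\,X_2^l u,
$$
the commutator coefficients being smooth (by \eqref{eq98} and the smoothness of $h_0$ and $b$) and carrying a prefactor $\mu$ since every non-trivial commutator with $X_2$ comes from the dispersive part of ${\bf d}^0$. Apply Lemma~\ref{lem95} with weight parameter $s=k-j\in[0,k]$: because $e_\mu$ is a downward-opening parabola with $e_\mu(-1/2)\ge 1$ and $e_\mu(k-1/2)\ge\delta$, one has $e_\mu(s-1/2)\ge\delta$ uniformly on $[0,k]$ (the minimum of a downward parabola on an interval is attained at an endpoint), so its hypothesis is fulfilled. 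This yields
$$
\|h_0^{-(k-j)}X_2^j u\|_{L^2}+\sqrt\mu\|h_0^{-(k-j)}X_2^{j+1}u\|_{L^2}+\mu\|h_0^{-(k-j)}X_2^{j+2}u\|_{L^2}\ \lesssim\ \|h_0^{-(k-j)}(\mathrm{RHS})\|_{L^2}.
$$

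The contribution of $X_2^j f$ to the right-hand side is controlled by $\|f\|_{\cHH^k}$ after writing $X_2^j = h_0^j\partial_x^j + \sum_{l<j} h_0^l\partial_x^l\cdot(\text{smooth})$, which reduces the task to bounds of the form $\|x^{l-k}\partial_x^l f\|_{L^2(\text{near }0)}\lesssim \|f\|_{H^k}$ for $l\le k$; these follow by iterating the classical Hardy inequality of Proposition~\ref{Hardy1} on successive antiderivatives, and crucially require no pointwise vanishing of $f$ or its derivatives at the origin because the polynomial Taylor contribution is annihilated after sufficiently many differentiations. The commutator term $\mu\sum_{l\le j+1}\gamma_{jl}X_2^l u$ splits into indices $l\le j$ bounded by the inductive hypothesis $\|u\|_{\cHH^{k-1}}\lesssim\|f\|_{\cHH^{k-1}}$, and one marginal $l=j+1$ contribution.

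The principal obstacle is the closure of the loop: this last marginal term $\mu\gamma_{j,j+1}X_2^{j+1}u$ with weight $h_0^{-(k-j)}$ is precisely of the form of the target quantity at level $j+1$ (up to shift in weight), apparently creating a loss. The resolution is that its $\mu$-prefactor matches exactly the coercive gain $\mu\|h_0^{-(k-j)}X_2^{j+2}u\|_{L^2}$ and $\sqrt\mu\|h_0^{-(k-j)}X_2^{j+1}u\|_{L^2}$ delivered on the left by Lemma~\ref{lem95} at level $j$. Summing the estimates over $j=0,\ldots,k$ and performing an algebraic bookkeeping of the $\mu$-powers, the unwanted terms absorb into the left-hand side provided $\mu$ is small enough, which is precisely the smallness encoded in $e_\mu(k-1/2)>0$ (equivalently $\sqrt\mu h_0'(0)<\eps_k$). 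The remainders in $\cHH^{k-1}$ are dispatched by the inductive hypothesis, closing the estimate and establishing the claimed $\cHH^k$ bound.
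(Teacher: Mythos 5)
There is a genuine gap, and it sits at the foundation of your scheme: the quantities you propose to estimate are in general infinite. Your claimed equivalence $\|u\|_{\cHH^k}^2 \approx \sum_{j=0}^{k}\|h_0^{-(k-j)}X_2^j u\|_{L^2}^2$ near the origin only holds in one direction. The triangular relation $\D_x^j = h_0^{-j}\sum_{l\le j}\beta_{jl}X_2^l$ gives $\|u\|_{\cHH^k}\lesssim \sum_j\|h_0^{j-k}X_2^ju\|_{L^2}$, but the converse fails: take $u\equiv 1$ near $x=0$, which lies in $\cHH^k$ while $h_0^{-k}u\notin L^2$ for $k\ge 1$. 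The same problem kills the source-term estimate: for a general $f\in \cHH^k$ with $f(0)\neq 0$ one has $\|h_0^{-k}X_2^0 f\|_{L^2}=\|h_0^{-k}f\|_{L^2}=+\infty$, and your appeal to "iterating the classical Hardy inequality" cannot repair this — Hardy's inequality $\|x^{-1}g\|_{L^2}\lesssim\|g'\|_{L^2}$ requires $g(0)=0$, and the Taylor polynomial of $f$ at the origin is \emph{not} annihilated in the low-$l$ terms $x^{l-k}\D_x^l f$ with $l<k$. Since neither $f$ nor the solution $u$ has any reason to vanish at the shoreline, every term of your decomposition with $j<k$ is ill-defined. (A secondary issue: your final absorption of the marginal term $\mu\gamma_{j,j+1}X_2^{j+1}u$ into $\sqrt\mu\|h_0^{-(k-j)}X_2^{j+1}u\|$ would require smallness of $\sqrt\mu$ times the constant of Lemma~\ref{lem95}, which is not what the indicial condition $e_\mu(k-\mez)>0$ provides — that is a condition on $\mu\, h_0'(0)^2$, not on $\mu$ alone.)

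The paper's proof avoids this entirely by commuting ${\bf d}^0$ with $h_0^k\D_x^k$ rather than with powers of $X_2=h_0\D_x$. The point is that $h_0^{-k}\cdot h_0^k\D_x^k f=\D_x^k f\in L^2$ for \emph{any} $f\in \cHH^k$, with no vanishing at the origin required, so Lemma~\ref{lem95} with weight $s=k$ applies directly to $v=h_0^k\D_x^k u$. The price is an explicit commutator computation showing that the dangerous top-order terms in $[h_0^k\D_x^k,h_0\D_x]$ and $[h_0^k\D_x^k,h_0^2\D_x^2]$ cancel, leaving only $\mu\sum_{j<k}h_0^kc_{k,j}\D_x^ju$ plus a single term $\mu c_{k,k}h_0^{k+1}\D_x^k u$ carrying an \emph{extra} factor of $h_0$; the latter is controlled not by absorption but by the $\mu\|(h_0\D_x)^2\cdot\|$ part of the estimate already established at level $k-1$, which is why the induction must carry the full three-term left-hand side. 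If you want to salvage a conormal-derivative approach, you would have to subtract Taylor polynomials of $u$ and $f$ at the origin before introducing negative weights, which is essentially a different (and longer) proof.
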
 
  \begin{rem}
By a standard Sobolev embedding, the proposition provides a control of $({\bf d}^0)^{-1}u$ in $L^\infty$ provided that $e_\mu(\mez)>0$. Actually, it could be shown that $({\bf d}^0)^{-1}$ acts in $L^\infty$ under the weaker condition $e_\mu(0)>0$.
\end{rem}
\begin{proof}
Because $f \in \cH^k_1$, we already know that  $u \in \cH^{k+2}_1$, so that 
$h_0^{j+ 1/2} \D_x^j u \in L^2$ for $j\leq k+2$. 
To gain the weights, we  commute ${\bf d}^0$  to  $ h_0^k \D_x^k $. Note that 
$$
[h_0^k \D_x^k , h_0 \D_x ]  = h_0^k [\D_x^k , h_0 ]\D_x - h_0 [\D_x , h_0^k ] \D_x^k 
= 
 \sum_{l=2}^k  h_0^k \big(^l_k\big) (\D_x^l h_0)  \D_x^{k - l +1} 
$$
because the first term in the first commutator is $k h_0^k h_0' \D_x^{k}$ and cancels out with the second term. 
Similarly
$$
[h_0^k \D_x^k , h_0^2 \D_x^2 ]  = h_0^k [ \D_x^k , h_0^2 ] \D_x^2  - h_0^2 [\D_x^2, h_0^k ] \D_x^k . 
$$
Developing $[ \D_x^k , h_0^2 ]$ and  $[\D_x^2, h_0^k]$  the terms in $\D_x^{k+1}$ occurring in   in $  h_0^k [ \D_x^k , h_0^2 ] \D_x^2$ 
and  $h_0^2 [\D_x^2, h_0^k ] \D_x^k $  are both equal to $ 2 k h_0' h_0^{k+1} \D_x^{k+1}$  and therefore cancel out. 
The terms in $\D_x^{k}$ are 
$$
\mez k (k-1) h_0^k  \D_x(h_0^2)   =   k (k-1) h_0^k ( h_0h_0'' + h_0'^2) 
$$
and 
$$
h_0^2 \D_x^2 (h_0^k) =  k  h_0'' h_0^{k+1}  + k (k-1) h_0'^2 h_0^k 
$$
respectively.  Therefore it remains 
$$
[h_0^k \D_x^k , h_0^2 \D_x^2 ]   = \sum_{l=2}^k  h_0^k \big(^l_k\big) (\D_x^l h_0^2)  \D_x^{k - l +2} - 
 k (k-2)  h_0^{k+1} h_0''  \D_x^k .  
$$
Commuting next $h_0^k \D_x^k$ to the coefficients, we conclude that 
\begin{equation}
\label{supercom}
{ \bf d^0} (h_0^k \D_x^k u) =  h_0^k  \D_x^k { \bf d^0}  u + \mu  \sum_{j = 0}^{k-1} h_0^k c_{k, j} \D_x^j u 
+ \mu c_{k, k}  h_0^{k+1} \D_x ^k . 
\end{equation}
for some coefficients $c_{k, j}$. 
Then, one easily proves the estimates by induction on $k$, using Lemma~\ref{lem95} as long as $k < r_2$ : 
if one knows that $\D_x^j u \in L^2$ for $j < k$ and $ \mu h_0 \D_x \D_x^{k-1} u \in L^2$, then the right-hand-side of 
\eqref{supercom} belongs to $h_0^k L^2$, and by the cited lemma, 
$h_0^k \D_x^k u \in h_0^k L^2$, that is $\D_x^k u \in L^2$, with similar estimates for 
$\sqrt \mu  h_0 \D_x \D_x^k u$ and $\mu (h_0 \D_x)^2 \D_x^k u$. 
 \end{proof}

 Since we need to solve equations of the form ${\bf d}u= {\bf g}$ with ${\bf g}=g_0+\sqrt{\mu} {\bf l} g_1\in \cH^{-1}_1$, we also need similar estimates for the equation  
 \begin{equation}
\label{eq92d} 
{\bf d}^0 u =  \sqrt \mu \,  h_0 \D_x  f 
\end{equation}
which has a unique solution in $\cH^1_1$ when $f \in L^2_1$ since 
 ${\bf  d}^0$ is an isomorphism from $\cH^1_1$ to $\cH^{-1}_1$. 
  \begin{prop}
\label{prop96}
 Under the condition \eqref{condindic} 
 and with  $f \in \cHH^k (\RR_+) $, 
the solution $ u \in \cH^1_1(\RR_+)$ of \eqref{eq92d}, belongs to $\cHH^k (\RR_+)$ as well as 
$ h_0 \D_x u $.  Moreover,    For all $\delta > 0$, there is a constant C such that if $e_\mu (  s - \mez) \ge \delta$, then 
\begin{equation}
\| u\|_{\cHH^k} + \sqrt \mu  \| h_0 \D_x u\|_{\cHH^k}  \le  C \| f \|_{\cHH^k}. 
\end{equation} 
\end{prop}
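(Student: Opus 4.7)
\medbreak

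\noindent
\textbf{Proof proposal.} The strategy is to reduce Proposition~\ref{prop96} to Proposition~\ref{prop95} by an explicit decomposition. Let $v\in\cH^2_1$ be the solution of ${\bf d}^0 v = f$ furnished by Lemma~\ref{lem91}. Under the hypothesis $e_\mu(k-\mez)>0$, Proposition~\ref{prop95} provides
$$
\|v\|_{\cHH^k}+\sqrt\mu\,\|h_0\D_x v\|_{\cHH^k}+\mu\,\|(h_0\D_x)^2 v\|_{\cHH^k}\ \lesssim\ \|f\|_{\cHH^k}.
$$
Set $w:=\sqrt\mu\,h_0\D_x v$. The two estimates of Proposition~\ref{prop96} for $w$ in place of $u$ then follow immediately from the first and third factors above, namely
$$
\|w\|_{\cHH^k}+\sqrt\mu\,\|h_0\D_x w\|_{\cHH^k}\ =\ \sqrt\mu\,\|h_0\D_x v\|_{\cHH^k}+\mu\,\|(h_0\D_x)^2 v\|_{\cHH^k}\ \lesssim\ \|f\|_{\cHH^k}.
$$

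Next, I would compute ${\bf d}^0 w$ by commuting $\sqrt\mu\,h_0\D_x$ past ${\bf d}^0$:
$$
{\bf d}^0 w=\sqrt\mu\,h_0\D_x\,{\bf d}^0 v+\sqrt\mu\,[{\bf d}^0,h_0\D_x]v=\sqrt\mu\,h_0\D_x f+\sqrt\mu\,[{\bf d}^0,h_0\D_x]v,
$$
so that the difference $z:=u-w$ satisfies the \emph{non-singular} equation
$$
{\bf d}^0 z=-\sqrt\mu\,[{\bf d}^0,h_0\D_x]v.
$$
The key observation (see below) is that the commutator $[{\bf d}^0,h_0\D_x]$ equals $\mu$ times a first-order differential operator $Q$ whose coefficients are bounded on $\RR_+$ with all their derivatives. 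Granting this, the source is $\mu^{3/2}\,Qv$ and a direct bound gives
$$
\|\mu^{3/2}Qv\|_{\cHH^k}\ \lesssim\ \mu\bigl(\sqrt\mu\,\|h_0\D_x v\|_{\cHH^k}+\sqrt\mu\,\|v\|_{\cHH^k}\bigr)\ \lesssim\ \mu\,\|f\|_{\cHH^k}.
$$
A second application of Proposition~\ref{prop95} to the equation for $z$ then produces
$$
\|z\|_{\cHH^k}+\sqrt\mu\,\|h_0\D_x z\|_{\cHH^k}+\mu\,\|(h_0\D_x)^2 z\|_{\cHH^k}\ \lesssim\ \mu\,\|f\|_{\cHH^k}\ \lesssim\ \|f\|_{\cHH^k},
$$
and the conclusion of Proposition~\ref{prop96} follows from $u=w+z$ and the triangle inequality. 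Uniqueness of $u$ in $\cH^1_1$ is already guaranteed by Lemma~\ref{lem91}; for the existence statement, it suffices to observe that once $z$ is defined by the elliptic problem with source $-\sqrt\mu\,[{\bf d}^0,h_0\D_x]v$ (which lies in $\cHH^k\subset\cH^{-1}_1$), the function $w+z$ is a solution of \eqref{eq92d} in $\cH^1_1$ and must agree with $u$ by uniqueness.

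The one step which is genuinely computational is the verification that $[{\bf d}^0,h_0\D_x]=\mu Q$ with $Q$ a first-order operator. Using the expression \eqref{dd0} together with the identities $[{\bf l},h_0\D_x]=-2h_0 h_0''$ and $[b,h_0\D_x]=-h_0 b'$, each of the four building blocks of ${\bf d}^0$ commutes with $h_0\D_x$ up to a first-order remainder whose coefficients involve $h_0 h_0''$, $h_0 b'$ and $h_0^2 b''$; all of these are smooth and bounded because $h_0'$, $h_0''$ and the derivatives of $B$ are so. This is the main (albeit short) technical check of the argument, and importantly it shows that the would-be second-order term cancels, which is precisely what allows one to absorb an extra power of $\sqrt\mu$ into the right-hand-side and close the estimate uniformly in $\mu\in[0,1]$.
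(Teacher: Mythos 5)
Your argument is correct and is essentially the paper's own proof: both decompose the solution as $\sqrt{\mu}\,h_0\D_x({\bf d}^0)^{-1}f$ plus a corrector solving ${\bf d}^0 z=-\sqrt{\mu}\,[{\bf d}^0,h_0\D_x]({\bf d}^0)^{-1}f$, note that the commutator carries an extra factor of $\mu$, and apply Proposition~\ref{prop95} twice. Your observation that the commutator is genuinely first order in $h_0\D_x$ is a correct, slightly sharper refinement of the paper's bookkeeping (which keeps a $(h_0\D_x)^2$ term), but it is not essential, since even a second-order commutator would be absorbed by the $\mu\,\|(h_0\D_x)^2 v\|_{\cHH^k}$ control in Proposition~\ref{prop95}.
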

 \begin{proof}
Let $ v=  \sqrt{\mu } \,  h_0 \D_x ( {\bf d}^0)^{-1}  f  $.  The solution of    \eqref{eq92d} is $u = v + w$   
where $w  $ solves  
$$
{\bf d}^0 w  =  \sqrt \mu \, [{\bf d}^0, h_0 \D_x ]  v  = \mu^{ 3/2} \big( c_* ( h_0\D_x)^2 v + c_*  h_0 \D_x v + c_* v  \big) 
$$
where the $c_*$ denote various smooth coefficients. Then the result follows easily from the previous proposition. 
\end{proof}

 Since  
 \begin{equation*}
 e_\mu (r) =  1 + \mu \Big(2\beta + \mez \alpha\big)^2  - \mu \alpha^2 \Big( \frac{r^2}{3} + \frac{2r}{3} + \frac{1}{4}\Big), 
 \end{equation*}
 the condition \eqref{condindic} is satisfied if $\mu \alpha^2 $ is small enough, and therefore  Proposition~\ref{prop92} follows.


  \subsection{Proof of Proposition~\ref{propvapp}} 
  To compute the initial values  $V^j = (q^j, u^j)$ of the time derivatives
  $\D_t ^j V$  it is convenient to commute first the equations with $\D_t^j$ and next evaluate at $t = 0$. The commutations have already been written in \eqref{eqstep1} and \eqref{eqstep2}  with detailed computations made in Section~\ref{sectproofHO}. The first equation in \eqref{mod317}  yields an  induction formula 
  \begin{equation}
  \label{inducID1} 
  c (q^0) q^{j+1}  = -  \D_x u^j  +  \sum  c_*  (q^0) q^{j_1} \ldots q^{j_l}  
  \end{equation}
  where the $c_* $ are some smooth functions and the indices   in the  sum  satisfy 
  $j_1 + \ldots + j_l = j +1$  and $j_k \le j $. 
  The second equation  yields to 
   \begin{equation}
  \label{inducID2} 
  {\bf d}^0  u^{j+1}  =   - { \bf l} q^j   +  \sum b_*  u^{j_1} \ldots u^{j_l}   + \mu \big(  h_0 \D_x  \cN_0 + \cN_1\big) 
  \end{equation}
where we have used that ${\bf l}$ commutes with $\D_t$, $\vp_{t = 0 }  = x$ is known and $\D_t \vp = u$; 
in the  sum, the  $b_* $ are some smooth derivatives of $B$  and the indices      satisfy 
  $j_1 + \ldots + j_l = j $; and   $\cN_0$ and $\cN_0$ are non linear terms which are sum of terms 
  of the form
  \begin{equation*}
  a_* (q^0, u^0, \D_x u^0)  q^{j_1} \ldots q^{j_l} (h_0 \D_x)^{k_1} u^{j'_1} \ldots  (h_0 \D_x)^{k_{l'}}u^{j'_{l'}} 
  \end{equation*} 
  with smooth coefficients     $j_1 + \ldots + j_l   +    j'_1 + \ldots + j'_{l'} = j $ and $k_1 + \ldots + k_{l'} \le 2$, 
  $\max \{k_1, k_2\}  \le 1$
  which means that there are at most two terms involving at most one $h_0 \D_x$ derivative.

Recall that the initial condition for $q $ is $q^0 = \mez$, so we only 
  have to consider the initial condition $u^0$. Using  Propositions~\ref{prop95} and \ref{prop96} and elementary multiplicative properties of Sobolev spaces, the induction formulas above imply the following lemma which concludes the proof of Proposition~\ref{propvapp}.
    \begin{lem}
  If  $ u^0 \in \cHH^{k+1}$ and $h_0 \D_x u^0 \in \cHH^{k+1}$ and if the indicial condition \eqref{condindic} is satisfied, then 
  for $j \le k +1$, the $V^j = (q^j, u^j)$ are well defined and satisfy 
  \begin{equation}
 \label{inducID3} V^j \in \cHH^{ k+1 - j},  \qquad  \sqrt \mu h_0 \D_x V^j  \in \cHH^{k+1 -j}, 
\end{equation}
  with uniform bounds if the condition \eqref{condindic} is uniformly satisfied. 
  \end{lem}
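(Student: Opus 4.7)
The plan is to proceed by strong induction on $j$, using the two explicit induction formulas \eqref{inducID1} and \eqref{inducID2}, and controlling the right-hand sides by Sobolev product estimates combined with Propositions~\ref{prop95} and \ref{prop96} which invert $\mathbf{d}^0$ with the correct gain of regularity and appropriate $\sqrt\mu$ weights.

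For the base case $j=0$, we have $q^0 = \mez$ (a constant, whose $X$-derivatives are all zero) and $u^0 \in \cHH^{k+1}$ with $h_0\D_x u^0 \in \cHH^{k+1}$ by assumption, so $V^0$ and $\sqrt\mu\, h_0\D_x V^0$ both belong to $\cHH^{k+1}$. For the inductive step, assume the conclusion at every level $i \le j$. The formula \eqref{inducID1} exhibits $q^{j+1}$ as the sum of $-c(q^0)^{-1}\D_x u^j$ (where $c(q^0)$ is a positive constant) and a polynomial in the $q^{j_i}$ with $j_i \le j$. Since $u^j \in \cHH^{k+1-j}$ implies $\D_x u^j \in \cHH^{k-j}$, and since the multi-linear term has total order $j+1$ with each factor of order $\le j$, the Sobolev product estimates in $\cHH^{k-j}$ (valid because $k-j$ is large enough) directly give $q^{j+1} \in \cHH^{k-j} = \cHH^{k+1-(j+1)}$. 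Differentiating this identity once in $h_0\D_x$ and applying the same product estimates yields $\sqrt\mu\, h_0\D_x q^{j+1}\in \cHH^{k-j}$ (the $\sqrt\mu$ factor comes from the inductive $\sqrt\mu\, h_0\D_x u^j$ hypothesis when $\D_x$ falls on $u^j$).

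For $u^{j+1}$, we rewrite \eqref{inducID2} as
\[
 \mathbf{d}^0 u^{j+1} \ = \ -\mathbf{l}\,q^j + \sum b_*\,u^{j_1}\cdots u^{j_l} + \mu\,\cN_1 + \sqrt\mu\, h_0\D_x\bigl(\sqrt\mu\,\cN_0\bigr),
\]
splitting it according to Propositions~\ref{prop95} and \ref{prop96}. The first three terms belong to $\cHH^{k-j}$ by the inductive hypothesis together with the multiplicative Sobolev estimates, using that $\mathbf{l}\,q^j$ loses one derivative on $q^j\in\cHH^{k+1-j}$ and that each factor $(h_0\D_x)^{k_i} u^{j'_i}$ with $k_i\le 1$ in $\cN_1$ is controlled by $\sqrt\mu\, h_0\D_x u^{j'_i}\in\cHH^{k+1-j'_i}$, which together with the $\mu$ prefactor (with $\mu\le 1$) produces uniform bounds. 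The last term, of the form $\sqrt\mu\,h_0\D_x(\,\cdot\,)$ with $\sqrt\mu\,\cN_0 \in \cHH^{k-j}$, is precisely of the type to which Proposition~\ref{prop96} applies. The indicial condition \eqref{condindic} (uniform at the required level $s = k-j-\mez$ thanks to $\sqrt\mu h'(0) \le \eps_n$) ensures that both Propositions~\ref{prop95} and \ref{prop96} can be invoked, giving
\[
 \|u^{j+1}\|_{\cHH^{k-j}} + \sqrt\mu\,\|h_0\D_x u^{j+1}\|_{\cHH^{k-j}} \ \lesssim \ \|\text{RHS}\|_{\cHH^{k-j}},
\]
which closes the induction.

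The main obstacle is bookkeeping the weights in the fully nonlinear dispersive contribution $\mu(h_0\D_x \cN_0 + \cN_1)$: the inductive hypothesis only gives $h_0\D_x V^i$ weighted by $\sqrt\mu$, so each occurrence of $h_0\D_x u^{j'_i}$ in $\cN_0,\cN_1$ produces a $\mu^{-1/2}$; with at most two such factors and the overall $\mu$, the balance gives $\mu^{1-1}=1$ which is acceptable uniformly in $\mu\in[0,1]$. The subtlety is that one must separate the genuine $h_0\D_x$ in front of $\cN_0$ (handled by Proposition~\ref{prop96}, which does not gain two derivatives but gives uniform $\sqrt\mu$ control on $h_0\D_x u^{j+1}$) from the bulk term $\cN_1$ handled by Proposition~\ref{prop95} with its full $\mu$-gain on $(h_0\D_x)^2 u^{j+1}$. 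Once this splitting is performed correctly, the estimates are uniform in $\mu$ provided the indicial condition is uniformly satisfied, which yields the quantitative bounds asserted in the lemma and thereby completes the proof of Proposition~\ref{propvapp}.
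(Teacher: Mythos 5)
Your proposal is correct and follows essentially the same route as the paper: induction on $j$ via \eqref{inducID1} and \eqref{inducID2}, with the right-hand side of \eqref{inducID2} split so that $-{\bf l}q^j$, the $b_*$ sum and $\mu\cN_1$ are handled by Proposition~\ref{prop95} while $\mu h_0\D_x\cN_0=\sqrt\mu\,h_0\D_x(\sqrt\mu\,\cN_0)$ goes through Proposition~\ref{prop96}, the $\sqrt\mu$ factors absorbing the at most two conormal derivatives falling on the $u^{j'}$. Your weight bookkeeping matches the paper's (terser) argument.
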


  \begin{proof}
  If the condition \eqref{inducID3}  is satisfied up to order $j$, it is clear the right-hand-side of \eqref{inducID1}, 
  and thus $q^{j+1}$, satisfies the same condition at order $j+1$. 
  Similarly, the first two terms in the right-hand-side of \eqref{inducID2} satisfy the condition at order $j+1$, and we can 
 apply $({\bf d}^0)^{-1}$ to them, using Proposition~\ref{prop95}. The last two terms are more delicate. Using that there is at most one  derivative $h_0 \D_x$ acting on the $u^{j'} $, one obtains that 
 $\sqrt \mu \cN_0 $ belongs to $\cHH^{k + 1 -j}$. Then we can apply Proposition~\ref{prop96} to conclude. The term 
 with $\cN_1$ is easier, an can be treated with Proposition~\ref{prop95}. 
  \end{proof}
 \bibliographystyle{alpha}
\bibliography{BIB_LM}

 
\end{document}